\tikzset{
  symbol/.style={
    draw=none,
    every to/.append style={
      edge node={node [sloped, allow upside down, auto=false]{$#1$}}}
  }
}
\title{Vanishing cycles of symplectic foliations}
\author[F.\ Gironella]{Fabio Gironella}
\address[F.\ Gironella]{CNRS, Laboratoire de Mathématiques Jean Leray (UMR6629), Nantes Université, 44322 Nantes, France}
\email[F.\ Gironella]{fabio.gironella@cnrs.fr}
\author[K.\ Niederkrüger]{Klaus Niederkrüger}
\address[K.\ Niederkrüger]{ICJ, Université Claude Bernard Lyon 1, CNRS, Ecole Centrale de Lyon, INSA Lyon, Université Jean Monnet, ICJ UMR5208,
69622 Villeurbanne, France}
\email[K.\ Niederkrüger]{niederkruger@math.univ-lyon1.fr}
\author[L.\ Toussaint]{Lauran Toussaint}
\address[L.\ Toussaint]{Vrije Universiteit Amsterdam, De Boelelaan 1111, 1081 HV Amsterdam, The Netherlands}
\email[L.\ Toussaint]{l.e.toussaint@vu.nl}
\date{}
\newcommand{\cB}{\mathcal{B}}
\newcommand{\C}{{\mathbb{C}}}
\newcommand{\cC}{\mathcal{C}}
\newcommand{\D}{{\mathbb{D}}}
\newcommand{\cE}{{\mathcal{E}}}
\newcommand{\cF}{\mathcal{F}}
\newcommand{\cJ}{{\mathcal{J}}}
\newcommand{\cK}{\mathcal{K}}
\newcommand{\Jmod}{{J_{\mathrm{mod}}}}
\newcommand{\cM}{{\mathcal{M}}}
\newcommand{\cMtilde}{\widetilde{\cM}}
\newcommand{\cMline}{\overline{\cM}}
\newcommand{\N}{\mathbb{N}}
\newcommand{\bfq}{{\mathbf{q}}}
\newcommand{\bfp}{{\mathbf{p}}}
\newcommand{\bfx}{{\mathbf{x}}}
\newcommand{\bfy}{{\mathbf{y}}}
\newcommand{\R}{{\mathbb{R}}}
\newcommand{\fS}{\mathfrak{S}}
\newcommand{\cS}{\mathcal{S}}
\renewcommand{\S}{{\mathbb{S}}}
\newcommand{\T}{{\mathbb{T}}}
\newcommand{\univ}{{\operatorname{univ}}}
\newcommand{\cU}{\mathcal{U}}
\newcommand{\Umod}{{U_{\mathrm{mod}}}}
\newcommand{\X}{{\mathfrak{X}}}
\newcommand{\Z}{{\mathbb{Z}}}
\newcommand{\cCext}{\cC_{\mathrm{ext}}}
\newcommand{\delbar}{\overline{\partial}}
\newcommand{\antiClinear}{{\overline{\Hom}_\C}}
\newcommand{\Op}{{\mathcal{O}p}}
\newcommand{\Id}{{\operatorname{Id}}}
\DeclareMathOperator{\rank}{rank}
\renewcommand{\d}{{\operatorname{d}}}
\newcommand{\pr}{{\operatorname{pr}}}
\newcommand{\wtd}{\widetilde}
\DeclareMathOperator{\Aut}{Aut}
\DeclareMathOperator{\Cpx}{Cpx}
\DeclareMathOperator{\Diff}{Diff}
\DeclareMathOperator{\End}{End}
\DeclareMathOperator{\ev}{ev}
\DeclareMathOperator{\Hol}{Hol}
\DeclareMathOperator{\Hom}{Hom}
\DeclareMathOperator{\Image}{Image}
\DeclareMathOperator{\ind}{ind}
\DeclareMathOperator{\Vol}{Vol}
\DeclareMathOperator{\transv}{transv}
\newcommand{\defin}[1]{\textbf{#1}}
\newcommand{\Fabio}[1]{{\color{orange} Fabio: #1 }}
\newtheorem{lemma}{Lemma}
\newtheorem*{lemma*}{Lemma}
\newtheorem{proposition}[lemma]{Proposition}
\newtheorem{theorem}[lemma]{Theorem}
\newtheorem{corollary}[lemma]{Corollary}
\newtheorem{definition}[lemma]{Definition}
\newtheorem*{theorem*}{Theorem}
\newtheorem*{question*}{Question}
\newtheorem*{proposition*}{Proposition}
\newtheorem*{predef*}{Preliminary Definition}
\theoremstyle{remark}
\newtheorem{remark}[lemma]{Remark}
\newtheorem{example}[lemma]{Example}
\newtheorem{observation}[lemma]{Observation}
\theoremstyle{definition}
\newtheorem{convention}[lemma]{Convention}
\begin{document}

\maketitle

\begin{abstract}
    Several results in recent years have shown that the usual generalizations of taut foliations to higher dimensions, based only on topological concepts, lead to a theory that lacks the complexity of its $3$-dimensional counterpart.
    Instead, we propose strong symplectic foliations as natural candidates for such a generalization and we prove in this article that they do yield some interesting rigidity results, such as potentially topological obstructions on the underlying ambient manifold.

    We introduce a high-dimensional generalization of $3$-dimensional vanishing cycles for symplectic foliations, which we call \emph{Lagrangian vanishing cycles}, and prove that they prevent a symplectic foliation from being strong, just as vanishing cycles prevent tautness in dimension~$3$ due to the classical result of Novikov from 1964.

    We then describe, in every codimension, examples of symplectically foliated manifolds which admit Lagrangian vanishing cycles, but for which more classical arguments fail to obstruct strongness.
    In codimension~$1$, this is achieved by a rather explicit modification of the symplectic foliation, which allows us to open up closed leaves having non-trivial holonomy on both sides, and is thus of independent interest.
    
    Since there is no comprehensive source on holomorphic curves with boundary in symplectic foliations, we also give a detailed introduction to much of the analytic theory, in the hope that it might serve as a reference for future work in this direction.
\end{abstract}

\section{Introduction}

Thurston showed in his foundational work \cite{Thu} that general codimension~one foliations on closed manifolds are surprisingly flexible. 
This motivates the study of special subclasses of such foliations, which are more rigid and reflect the topology of the ambient manifold on which they live.

\subsection{Taut foliations in dimension 3}

One such class consists of \textbf{taut} foliations. 
While there are several equivalent definitions, in the $1$-codimensional case the most common one is to say that a foliation~$\cF$ on $M$ is taut if at any point of $M$ there exists a closed loop transverse to $\cF$.
These foliations have many remarkable properties.
For instance, Sullivan showed that a foliation is taut if and only if it admits a Riemannian metric for which the leaves are minimal surfaces, and the following theorem of Novikov illustrates the close interaction with the topology of the ambient manifold:

\begin{theorem}[Novikov \cite{Nov64,Nov65}]\label{thm:novikov}
    Let $\cF$ be a taut foliation on a closed $3$-manifold $M$. Then:
    \begin{itemize}
        \item the leaves of $\cF$ are incompressible hypersurfaces of $M$, that  is, the natural inclusion $\iota\colon F\to M$ of any leaf~$F$ induces an injective homorphism $\iota_*\colon \pi_1(F) \to \pi_1(M)$;
        \item every transverse loop $\gamma$ has infinite order in $\pi_1(M)$.
    \end{itemize}
\end{theorem}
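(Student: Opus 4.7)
The plan is to prove both statements by contradiction, following Novikov's original strategy: each failure is reduced to the existence of a \emph{vanishing cycle}, which in turn forces a \emph{Reeb component} in $\cF$, and the latter is directly incompatible with tautness.

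First I would assume that one of the two conclusions fails. If the first fails, there exist a leaf $F$ and an essential embedded loop $\gamma \subset F$ which bounds a singular disk $f\colon D^2 \to M$. If the second fails, a transverse loop $\gamma$ satisfies $\gamma^n \simeq 0$ in $\pi_1(M)$, and thus extends to a disk $f\colon D^2 \to M$ whose boundary is everywhere transverse to $\cF$. In both cases I would perturb $f$ to be transverse to $\cF$, so that the pull-back $f^{-1}\cF$ is a singular $1$-dimensional foliation on $D^2$ with only isolated Morse-type tangencies. Among all such disks with the given type of boundary, I would choose one of minimal complexity, say minimizing the number of tangencies or the area in a fixed auxiliary Riemannian metric. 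A careful analysis of how leaves of the characteristic foliation accumulate inside $D^2$ then extracts a \emph{vanishing cycle}: an embedded essential loop $\alpha$ in some leaf $L$ of $\cF$, bounding an embedded disk $\Delta \subset M$ whose interior is everywhere transverse to $\cF$.

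The central step is Novikov's closed leaf theorem, upgrading a vanishing cycle to a Reeb component. Pushing $\alpha$ slightly into $\Delta$ yields a family of loops $\alpha_t$ in nearby leaves; analyzing the holonomy of $\cF$ along paths in $\Delta$ and invoking a compactness argument, which crucially uses that $M$ is closed, one then produces a compact toral leaf $T$ bounding an embedded solid torus $V \cong D^2 \times S^1 \subset M$ on which $\cF|_V$ is the standard Reeb foliation. This is the deepest and most technical part of the argument, and where I expect the main difficulty to lie.

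Finally, the existence of a Reeb component contradicts tautness: inside $V$ every leaf is a properly embedded plane spiraling toward $T$ with attracting holonomy, so any curve starting in the interior of $V$ and staying transverse to $\cF$ must converge monotonically to $T$ without ever closing up. Hence no closed transverse loop can pass through a point of $V$, contradicting tautness. The reductions to a vanishing cycle and this last obstruction step are comparatively soft; the real heart of the proof is the construction of the Reeb component from a vanishing cycle via global holonomy analysis.
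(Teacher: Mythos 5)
The paper does not prove this statement: Theorem~\ref{thm:novikov} is quoted directly from \cite{Nov64,Nov65} as a classical result and used as motivation, so there is no in-paper proof to compare your argument against. Your sketch reproduces the standard Novikov strategy — put a disc in general position, analyze the induced singular characteristic foliation, extract a vanishing cycle, invoke the closed leaf theorem to promote it to a Reeb component, and observe that a Reeb component kills tautness — and at this level of granularity the outline is correct, with the genuinely deep step (vanishing cycle $\Rightarrow$ Reeb component) honestly flagged as a black box.

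Two smaller remarks. First, in the reduction for the first bullet, a loop in $F$ that dies in $\pi_1(M)$ need not be embedded; in dimension~$3$ one can upgrade via the loop theorem, but Novikov's original argument works directly with a singular spanning disc and the embeddedness is part of what the characteristic-foliation analysis produces. Second, the quickest version of the final contradiction is homological rather than dynamical: the boundary torus $T$ of a Reeb component is null-homologous since it bounds the solid torus, so a closed transverse loop through a point of $T$ would have zero algebraic intersection with $T$; yet for a cooriented foliation every intersection of a transverse loop with a leaf carries the same sign, so the geometric and algebraic intersection numbers agree — contradiction. Your dynamical phrasing (a transverse arc in the interior spirals toward $T$ without closing) also works but needs a sentence to handle the case of a transverse loop exiting and re-entering $V$.

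Finally, it is worth noting that the paper does contain a \emph{different} proof of the closely related Theorem~\ref{thm:Novikov_vanishingcycles}, as the dimension-$3$ specialization of its main Theorem~\ref{thm:trivial_lagr_vanish_cycle}: there the argument is a filling-by-holomorphic-discs (Bishop family) scheme in the spirit of Gromov and \cite{Nie06}, replacing the entire characteristic-foliation and closed-leaf machinery. That route is therefore a genuinely orthogonal approach to the one you sketch, trading Novikov's combinatorial/dynamical analysis for pseudo-holomorphic curve compactness bounded by the strong (taut) hypothesis.
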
 

Note that the second conclusion implies that there is no taut foliation on $\S^3$.

\subsection{Reeb components and vanishing cycles}

The main source of flexibility for codimension~one foliations are so called Reeb components \cite{Thu,Eynard}.
A Reeb component consists of a solid torus $\S^1 \times \D^2$ foliated in such a way that its boundary is a (closed, separating) leaf, and its interior is foliated by copies of $\R^2$ accumulating on the boundary, see \Cref{fig:Reeb_component}.

\begin{wrapfigure}{r}{0.5\textwidth}
     \centering
     \def\svgwidth{0.3\textwidth}
    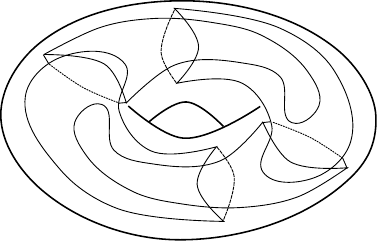  
     \def\svgwidth{0.5\textwidth}
     \vspace{7pt}
  \caption{The Reeb foliation on $\S^1\times \D^2$.} 
  \label{fig:Reeb_component}
\end{wrapfigure}

The Reeb foliation $\cF_{Reeb}$ induces a foliation by concentric circles on the disc $\{*\} \times \D^2 \subset \S^1 \times \D^2$. 
All of these circles are contractible inside their respective leaves of $\cF_{Reeb}$, except for the boundary circle $\{*\} \times \partial\D^2$, which is not trivial in $\pi_1(\S^1 \times \partial \D^2$).
We call such a disc (or its boundary circle, see \Cref{rmk:vanishing_cycle}) a \textbf{(non-trivial) vanishing cycle}. 
Novikov showed that the whole Reeb component can in fact be recovered from this disc. 
That is, any vanishing cycle of a foliation $(M,\cF)$ is a section of a Reeb component. 
As such, Reebless foliations are precisely those foliations without vanishing cycles, and we obtain another manifestation of rigidity for taut foliations:

\begin{theorem}[Novikov \cite{Nov64,Nov65}]\label{thm:Novikov_vanishingcycles}
A taut foliation does not contain any (non-trivial) vanishing cycles.
\end{theorem}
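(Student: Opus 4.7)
My strategy is to combine the hypothesis with the structural result recalled in the paragraph preceding the theorem---any vanishing cycle of $(M, \cF)$ is a section of an embedded Reeb component---and then to feed the resulting toral leaf into \Cref{thm:novikov}. Concretely, a non-trivial vanishing cycle of $(M,\cF)$ produces a Reeb component $R \simeq \S^1 \times \D^2 \subset M$ whose boundary $T = \S^1 \times \partial \D^2$ is a closed toral leaf of $\cF$, and whose meridian $\alpha := \{*\} \times \partial \D^2 \subset T$ is, up to isotopy, the given vanishing cycle. By construction, $[\alpha]$ is non-trivial in $\pi_1(T) \cong \Z^2$; on the other hand, $\alpha$ bounds the embedded disc $\{*\} \times \D^2 \subset R \subset M$ and is therefore null-homotopic in $M$.

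From here the theorem follows in a single line: the first conclusion of \Cref{thm:novikov}, applied to the taut foliation $\cF$ and the toral leaf $T$, asserts that the inclusion $\iota\colon T \hookrightarrow M$ is $\pi_1$-injective, which contradicts precisely the situation just exhibited, since $0 \neq [\alpha] \in \pi_1(T)$ but $\iota_\ast[\alpha] = 0 \in \pi_1(M)$.

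As an alternative that side-steps \Cref{thm:novikov}, one can argue directly from the existence of transverse loops. After passing, if necessary, to the orientation double cover of the normal bundle of $\cF$ so that $\cF$ becomes co-oriented (and noting that tautness lifts), one observes that along any connected smooth curve transverse to $\cF$ the sign of the derivative against the co-orientation is locally, hence globally, constant, so any transverse loop may be assumed \emph{positively} transverse. The co-orientation restricted to $T$ is then a nowhere-vanishing normal section that points into $R$ everywhere or out of $R$ everywhere; in either case, a positively transverse loop based at some $p \in T$ must cross $T$ with that single uniform direction at every intersection, and hence can never return to $p$---contradicting tautness at $p$. The bulk of the work in either route lies not in the short arguments above but in the external Novikov structure theorem that upgrades a vanishing cycle to a genuine embedded Reeb component; once that is granted, the only subtle point is the double-cover reduction to the co-orientable case in the direct approach.
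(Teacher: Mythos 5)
Your proposal is correct, and both routes sit squarely inside the same circle of ideas that the paper relies on. The paper does not spell out a self-contained proof of this theorem (it is attributed wholesale to Novikov), but the surrounding discussion makes the intended logic explicit: Novikov's structure theorem upgrades a non-trivial vanishing cycle to an embedded Reeb component, the boundary torus of a Reeb component is a closed separating leaf, and a closed separating leaf is incompatible with the existence of transverse loops through each of its points. That last step is exactly your ``direct'' second route, so route 2 is essentially the paper's (implicit) argument. The only delta is your reduction to the co-orientable case via the normal orientation double cover, which the paper tacitly assumes away (it speaks directly of ``the coorientation of $\cF$''); your remark that tautness lifts and that the Reeb component, having co-orientable induced foliation, lifts to two disjoint copies is the correct way to discharge that assumption, and it is a genuine (if minor) improvement in rigor. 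Your first route, invoking $\pi_1$-injectivity of leaves (the first conclusion of \Cref{thm:novikov}) applied to the boundary torus, is a legitimate but logically heavier alternative: it uses a deeper consequence of Novikov's work than is needed, whereas the separating-leaf argument is elementary once one grants the structure theorem. Both variants, like the paper, hinge entirely on the one external input that a vanishing cycle produces an embedded Reeb component; you identify this correctly as the crux.
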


\begin{wrapfigure}[11]{l}{0.45\textwidth}
\begin{center}
\begin{overpic}[width = 0.45\textwidth]{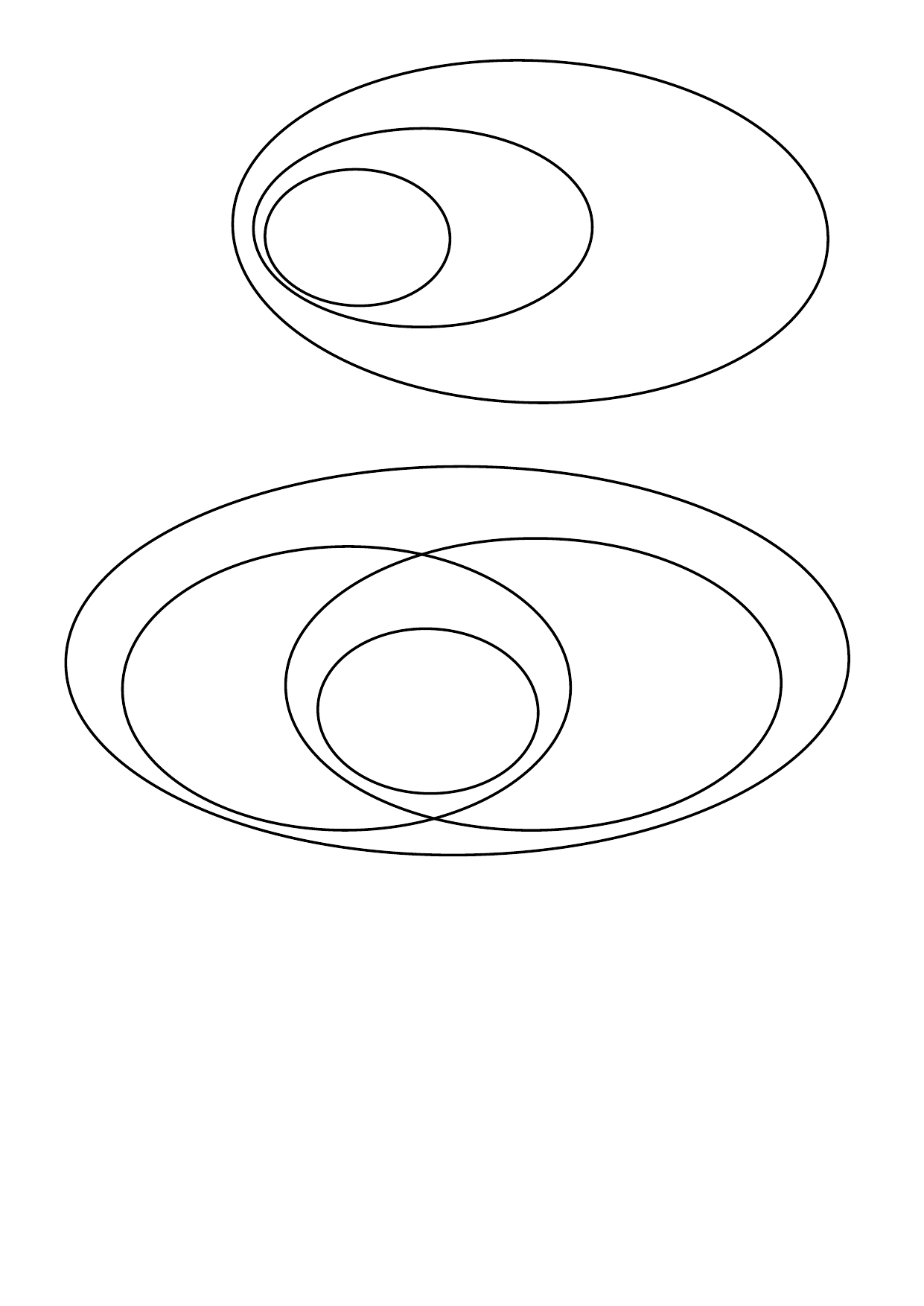}
\put(13,28){Taut}
\put(30,39){Reebless}
\put(51,48){Foliations}
\end{overpic}
\caption{The various classes of foliations on a $3$-dimensional manifold.}
\label{fig:situation_dim3}
\end{center}
\end{wrapfigure}

\begin{remark}\label{rmk:vanishing_cycle}
The usual definition of a vanishing cycle is slightly different, not requiring the existence of a full disc, but rather just a neighborhood of its boundary. 
However, by Novikov's result both definitions are equivalent, and the one we gave above is the one we will generalize to higher dimensions, see \Cref{def:lagr_van_cycle}. 
\end{remark}

\subsection{Taut and Reebless foliations}

Taut and Reebless foliations are closely related.
Any transverse curve inherits an orientation from the coorientation of $\cF$.
This implies that a taut foliation cannot contain any closed separating leaves: 
indeed, a transverse loop would have to intersect such a leaf both positively and negatively, leading to a contradiction. 
Since the boundary of a Reeb component is a closed separating leaf, this in particular implies that taut foliations are Reebless. 

The converse is not true.
Novikov \cite{Nov65} showed that a Reebless foliation~$(M,\cF)$ is taut if and only if there is no positive linear combination of torus leaves giving zero in $H_2(M)$. 
Thus, the situation in dimension-$3$ can be summarized as in \Cref{fig:situation_dim3}.

\subsection{Taut foliations in higher dimensions}

In higher dimensions the above definition of tautness 
does not imply the same rigidity as in dimension~$3$, even when considering the codimension-$1$ case.
It was shown by Meigniez that in dimension $4$ and higher, every hyperplane distribution is homotopic to a (codimension $1$) foliation all of whose leaves are dense.
These foliations are in particular taut, as it can easily be seen from the definition of tautness via transverse loops; in particular, the Euler characteristic is the only obstruction to the existence of taut foliations.
Moreover, it is not hard to see that the foliations from Meigniez's construction in general do not satisfy the conclusion of \Cref{thm:novikov}, for instance on the high dimensional sphere.
Thus, if we hope to extend the theory to higher dimensions while retaining some of the rigid behavior found in dimension~$3$, we need to consider a more restrictive class of foliations agreeing in dimension~$3$ with taut foliations.

\subsection{Strong symplectic foliations}

It is a classical result \cite[Proposition~10.4.1]{CandelConlon} that a codimension $1$ foliation is taut (in the sense that it admits a closed transverse loop through every point) if and only if there is a differential form $\mu\in\Omega^{\rank\cF}(M)$ such that $\d\mu=0$ on $M$ and that is a volume form on every leaf.
In order to give the definition of the class of foliations that we are interested in, as we will not work just with codimension~$1$ foliations, we recall a natural high-codimensional generalization of tautness in the style of the just mentioned equivalent formulation: a foliation (of arbitrary codimension) is taut if it admits a leafwise volume form whose ``leafwise variation'' is zero. Let us briefly explain what this means.

We let $\Omega^\bullet(\cF)$ denote the space of leafwise differential forms, and $\Omega^\bullet(\cF,\nu^*)$ the differential forms with values in the conormal bundle $\nu^* = (TM/\cF)^*$. 
We define the transverse differential:
\begin{equation*}
    \d_\nu\colon H^k(\cF) \to H^{k}(\cF,\nu^*), \quad [\alpha] \mapsto [\d_\nu \alpha] \;, 
\end{equation*}
where $\d_\nu\alpha$ is defined as:
\begin{equation*}
    (\d_\nu \alpha)(X_1,\dotsc,X_k)(\overline{V}) := (\d \wtd{\alpha})(X_1,\dotsc,X_k,V)  \;,  
\end{equation*}
for any choice of extension $\wtd{\alpha} \in \Omega^k(M)$ of $\alpha$, $X_1,\dotsc,X_k$ vectors tangent to $\cF$, and $\overline{V}$ the image of $V \in TM$ under the quotient map $TM \to \nu = TM/\cF$.
It is not hard to show \cite{GMP,TorresThesis} that $\d_\nu[\alpha]$ does not depend on the representative $\alpha$ or the choice of extension $\wtd{\alpha}$.

\begin{remark}\label{rmk:StrongAlternative}
    Suppose that $\mu$ is a leafwise volume form on a codimension-$k$ foliation $\cF$ on $M^{n+k}$. Then, $\d_\nu[\mu] = 0$ if and only if there exists an extension $\wtd{\mu} \in \Omega^n(M)$ such that
    \[ \d\wtd{\mu}(X_1,\dots,X_{k+1}) = 0,\]
    if at least $k$ of the $X_i$ are tangent to $\cF$. That is, $\mu$ is $\cF$-closed as defined classically e.g.\ in \cite[Definition~10.5.8]{CandelConlon}.
\end{remark}

A foliation $\cF$ of arbitrary codimension on $M$ is \textbf{taut} if there exists a $\mu\in\Omega^{\rank\cF}(M)$ such that $\d_\nu[\mu]=0$ and with $\mu$ a leafwise volume form.

\medskip

As in the non-foliated case, we can pass from the flexible realm to the rigid one by replacing volume forms with symplectic structures. 
We say that a foliation $\cF$ on a $(2n+1)$-dimensional manifold $M$ is \textbf{strong symplectic} if there exists $[\omega] \in H^2(\cF)$ satisfying
\begin{equation*}
    [\omega]^n \neq 0 \in H^{2n}(\cF),\quad \text{and} \quad \d_\nu[\omega ]  = 0  \; .
\end{equation*}
For a codimension~$1$ foliation, this condition is equivalent to the existence of a globally closed form $\Omega \in \Omega^2(M)$ whose restriction to $\cF$ defines a leafwise symplectic form.

Note that strong symplectic foliations are taut, since $\omega^n$ defines a leafwise volume form whose leafwise variation is zero.
In particular, in dimension~$3$ a foliation is taut if and only if it is strong symplectic.   

Strong symplectic foliations are good candidates to observe rigidity phenomena.
Two of the most powerful techniques yielding rigidity results in symplectic topology, namely pseudo-holomorphic curves and Donaldson techniques have already been succesfully applied in the setting of codimension-one strong symplectic foliations \cite{IboMar04a,IboMar04b,Mar09,Mar13,MTdPP18,AlbThesis,PreVen}. One of the most remarkable consequences is the existence of a transverse 3-dimensional manifold having the same transverse geometry (more precisely holonomy groupoids) as the ambient foliation.

\subsection{Lagrangian vanishing cycles}

The most pressing question is then if strong symplectic foliations, unlike taut foliations, do display the desired rigidity properties analogous to \Cref{thm:novikov}.
The most naive high-dimensional reformulation of \Cref{thm:novikov} does not generally hold as Venugopalan has found \cite{Venugopalan} examples of manifolds in any odd dimension at least $5$  which are equipped with a codimension~$1$ strong symplectic foliation admitting both a null-homotopic transverse loop and a non-trivial vanishing cycle (meant in the same sense as in the $3$-dimensional case described earlier, i.e.\ as an embedding of a $2$-disc with an induced foliation given by concentric circles such that the boundary is the only circle which is not contractible in the leaf of the ambient foliation in which it is contained).
Thus, even strong symplectic foliations do not satisfy the obvious generalizations of \Cref{thm:novikov} and \Cref{thm:Novikov_vanishingcycles}.

The rationale underlying the present work is that an appropriate notion of vanishing cycle for strong symplectic foliations should take into account the symplectic geometry of the leaves. We introduce the following notion of Lagrangian vanishing cycle. 
Since the full definition is somewhat technical, we defer to \Cref{def:lagr_van_cycle} for the precise formulation, and state the following preliminary version:

\begin{predef*}
    A \textbf{Lagrangian vanishing cycle} (modeled on a closed $(n-1)$-manifold $S$) in a symplectically foliated manifold $(M,\cF,\omega)$ is an embedding $e:\D^2 \times S \hookrightarrow M$ such that $e^*\cF$ is a (singular) $\omega$-Lagrangian foliation whose leaves are $\S^1_r \times S$, with $\S_r^1 \subset \D^2$ the circle of radius $r \in (0,1]$.

    \noindent

    A Lagrangian vanishing cycles is \textbf{trivial} if both the following hold: denoting by $F_1$ the leaf of $\cF$ containing $e(\S^1_1\times S)$, then
    \begin{itemize}
        \item $e(\S^1_1\times \{*\})$ bounds a disc with positive $\omega$-area in $F_1$;
        \item $L_1 = \partial \cC$ is null-homologous in $H_{n}(F_1;\Z_2)$.
        If $\cF$ is co-orientable and if $S$ is stably parallelizable, then it is also trivial in $H_{n}(F_1;\Z)$ and if  additionally $\rank(\cF)=4$ (in which case $S$ is necessarily a circle), $L_1\cong \T^2$ contracts to a loop inside $F_1$.
    \end{itemize}
\end{predef*}

In dimension~$3$, these conditions imply that $\dim S = 0$, so that a Lagrangian vanishing cycle with respect to any leafwise area form on $\cF$ is just the classical vanishing cycle found in a Reeb component.

Our main result is then the following: 

\begin{theorem}\label{thm:trivial_lagr_vanish_cycle}
    Every Lagrangian vanishing cycle of a strong symplectic foliation (of arbitrary codimension) on a closed manifold must be trivial.
\end{theorem}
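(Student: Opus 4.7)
The plan is a Gromov-type filling argument: construct a compact moduli space of $J$-holomorphic discs associated with the vanishing cycle, and use it to produce the required capping chain in $F_1$. Fix an almost complex structure $J$ on $T\cF$ tamed by $\omega$ and chosen so that, near the central stratum $e(\{0\}\times S)$, the model slices $e(\D^2 \times \{s\})$ are $J$-holomorphic. Let $\cM$ denote the parametric moduli of unparameterized $J$-holomorphic discs $u\colon (\D^2, \partial \D^2) \to (F, e(\S^1_r \times S))$ into leaves $F$ of $\cF$, in the homotopy class of the model slices, with $r \in (0,1]$. Maslov index $2$ for the model discs together with the additional $r$-parameter gives $\dim \cM = n$.

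The strong symplectic hypothesis produces a closed $2$-form $\Omega \in \Omega^2(M)$ whose leafwise restriction represents $[\omega] \in H^2(\cF)$, as in \Cref{rmk:StrongAlternative}. Consequently, the $\omega$-area of any disc in $\cM$ is bounded by the $\Omega$-area of a fixed topological capping, which is uniformly bounded on the compact manifold $M$. This uniform energy bound is exactly where strongness enters essentially: without a global closed extension of $\omega$, disc areas could blow up and compactness would fail.

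With the uniform energy bound in hand, Gromov compactness for the foliated setting (whose analytic foundations the paper develops in detail) applies. Sphere bubbles are excluded by generic $J$ together with the area threshold, and disc bubbles near the central stratum are controlled via Lagrangian monotonicity in the singular foliation. Starting from the non-empty stratum near $r=0$, obtained by an implicit function argument around the $J$-holomorphic model slices, continuation in $r$ extends $\cM$ to a compact manifold whose boundary at $r=1$ consists of discs with boundary on $L_1 \subset F_1$.

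Finally, triviality is extracted from the compactified moduli. Restricting to $\cM_1 := \cM|_{r=1}$ of dimension $n-1$ and evaluating the full disc, one obtains an $(n+1)$-chain in $F_1$ whose boundary is an $n$-chain on $L_1$. A local degree computation near $(r,s) \to (0, s_0)$ shows that this boundary chain represents $[L_1] \in H_n(L_1; \Z_2)$ with multiplicity one (and with integral multiplicity under the orientability/parallelizability hypotheses), yielding $[L_1] = 0 \in H_n(F_1; \Z_2)$. The area-bounding condition follows from any individual holomorphic disc in $\cM_1$: it has positive $\omega$-area, its image lies in $F_1$, and its boundary is homologous on $L_1$ to the meridian $e(\S^1_1 \times \{*\})$. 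The principal obstacle is the compactness package: ensuring that holomorphic discs behave well across varying leaves (holonomy, leafwise regularity of $J$, interaction with the singular vanishing-cycle stratum), and that the degree computation connecting the boundary chain to the fundamental class of $L_1$ goes through uniformly.
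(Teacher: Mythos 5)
Your overall strategy — a Bishop family of leafwise $J$-holomorphic discs emanating from the core, uniform energy bounds via a globally closed extension of $\omega$, Gromov compactness, and extracting the capping chain and null-homology of $L_1$ from the compactified moduli — is precisely the architecture of the paper's proof. You also correctly identify strongness as entering through the a priori energy bound. However, two of your key claims would not survive closer inspection and, in the form stated, would leave real gaps.

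First, you assert that disc bubbles near $\cC_0$ are ``controlled via Lagrangian monotonicity in the singular foliation.'' There is no monotonicity assumption on the Lagrangians $L_r$, and none is available: the conditions of \Cref{def:lagr_van_cycle} give no control of the Maslov index of arbitrary disc classes with boundary on $L_r$. What actually rules out disc bubbling are hypotheses \eqref{item:lvc_sympl_aspherical_leaves} and \eqref{item:lvc_pi1_condition} of \Cref{def:lagr_van_cycle}, combined once more with the energy bound: condition~\eqref{item:lvc_pi1_condition} forces the boundary of each bubble component to be homotopic in $\cC^*$ to an integer multiple $a_j\,\gamma_0$ of the Bishop boundary, the sum $\sum a_j = 1$, condition~\eqref{item:lvc_sympl_aspherical_leaves} kills zero-energy spheres and lets one cap off any bubble whose $a_j = 0$, and a sign-comparison argument (using that suitable covers of two bubbles would have equal energy by \Cref{prop:bounded_energy}) excludes any $a_j < 0$. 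Without invoking these specific combinatorial/energy arguments, the bubbling analysis has no replacement, and Gromov compactness fails at exactly the step your proof treats as routine.

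Second, your moduli space is described as ``unparameterized'' discs for a domain-independent tamed $J$, with regularity coming from ``generic $J$.'' For discs with Lagrangian boundary there is no density of somewhere-injective points, so domain-independent genericity does not yield transversality of the linearized operator — this is why the paper develops the entire Fredholm theory for \emph{domain-dependent} $J$ (\Cref{prop:mod_space_is_manifold} and \Cref{rmk:domain_independent_J_for_closed_curves}). Domain-dependence in turn destroys Möbius invariance, so one cannot quotient by reparametrizations; the paper instead cuts down the parametrized moduli space by three evaluation constraints at boundary marked points landing on transverse hypersurfaces $N_1, N_i, N_{-1}$, which replaces your quotient. Your dimension count happens to land on $n$, but the mechanism you describe for reaching that dimension is not the one that actually works. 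You would need to either verify somewhere-injectivity (which is generally false here) or switch to the domain-dependent framework with marked-point constraints.

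These are not cosmetic; they are the two places the paper spends the most technical effort (\Cref{sec:transversality_CR_operator} and \Cref{prop:compactness}), and a blind reader cannot reconstruct them from the phrases ``generic $J$'' and ``Lagrangian monotonicity.'' The remaining steps — the primitive $\lambda$ on $\cC$ with $\lambda|_{L_0}=0$, the action functional depending only on $r$, the arc $\gamma$ whose preimage under $\ev_\partial$ is an interval connecting a constant Bishop disc to a disc in $F_1$ for condition~\eqref{item:lvc_circle_bound_sympl_disc}, and the degree-$1$ argument on the blown-down moduli space for condition~\eqref{item:lvc_boundary_null_homologous} — match the paper, though your proposal also omits the Nielsen-theorem step needed for the $\rank\cF = 4$ torus-contraction claim in \eqref{item:lvc_boundary_null_homologous}.
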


In other words, with our notion of Lagrangian vanishing cycle, \Cref{thm:Novikov_vanishingcycles} generalizes to higher dimensions (and as a by-product we also obtain in ambient dimension~$3$ a new proof of \Cref{thm:Novikov_vanishingcycles} using symplectic techniques).

\subsection{Constructing examples}

It is natural to ask how strong symplectic foliations, and those without Lagrangian vanishing cycles are related, i.e.\ what is the high dimensional analogue of \Cref{fig:situation_dim3}?

One of the most prominent examples of symplectic foliations is the one constructed by Mitsumatsu \cite{Mit18} on $\S^5$. 
This foliation is obtained from an open book decomposition of $\S^5$ whose binding is a Boothby-Wang fibration over $\T^2$, and whose monodromy has order~$3$. 
From this it is not hard to see that Mutsumatsu's foliation contains a (non-trivial) Lagrangian vanishing cycle, and hence cannot be strong symplectic.

This example of a Lagrangian vanishing cycle is not very satisfactory from the perspective of \Cref{thm:trivial_lagr_vanish_cycle} since it already follows from the following elementary observation that the foliation cannot be strong symplectic:

\begin{observation}\label{obs:obstr_strongness}
    If one of the following conditions hold, $(\cF^{2n},\omega)$ is not strong:
    \vspace{-7pt}
    \begin{enumerate}
        \item $\cF$ is not taut (e.g. $\operatorname{codim} \cF = 1$ and $\cF$ has a closed separating leaf);
        \item $\cF$ has a closed leaf and the image of the map $\cdot^{\wedge n}\colon H^2(M)\to H^{2n}(M)$ is trivial (e.g.\ $H^2(M;\R)=\{0\}$).    
    \end{enumerate}
\end{observation}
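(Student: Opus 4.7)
My plan is to handle the two cases independently.

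For (i), I argue that strong symplectic foliations are automatically taut, so the contrapositive gives the claim. A straightforward computation from the definition shows that $\d_\nu$ is a graded derivation of the leafwise cohomology ring with values in the twisted cohomology $H^\bullet(\cF,\nu^*)$ (this follows from the Leibniz rule for $d$ on $M$ and the independence of $\d_\nu$ from the choice of extension noted right after its definition). Hence if $\d_\nu[\omega]=0$, then
\[
\d_\nu[\omega^n] \;=\; n\,[\omega]^{n-1}\wedge \d_\nu[\omega] \;=\; 0.
\]
Since $\omega$ is leafwise symplectic, $\omega^n$ is a leafwise volume form, so by definition $\cF$ is taut. This contradicts the assumption that $\cF$ is not taut.

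For (ii), assume for contradiction that $(\cF,\omega)$ is strong symplectic with a closed leaf $F$ and that the image of $\cdot^{\wedge n}\colon H^2(M)\to H^{2n}(M)$ is trivial. The strategy is to exhibit a class $[\Omega]\in H^2(M)$ lifting $[\omega]\in H^2(\cF)$ along the natural restriction map, and then to restrict $[\Omega]^n$ to $F$. In the codimension-$1$ setting, this lift exists on the nose: the equivalence stated right after the definition of strong symplectic gives a globally closed $\Omega\in\Omega^2(M)$ with $\Omega|_\cF=\omega$. In higher codimension, one proceeds analogously via the leafwise-to-ambient spectral sequence (whose first differential is precisely $\d_\nu$) to push $[\omega]$ through the higher differentials.

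Given such a lift $[\Omega]$, restrict to the closed symplectic $2n$-manifold $F$. Since $\Omega|_F$ is a closed representative of $[\omega|_F]$, one has
\[
[\Omega]^n|_F \;=\; [\omega|_F]^n \;\neq\; 0 \;\in\; H^{2n}(F),
\]
because $\omega|_F$ is a symplectic form on a closed $2n$-manifold. In particular $[\Omega]^n\neq 0$ in $H^{2n}(M)$; but $[\Omega]^n$ lies in the image of $\cdot^{\wedge n}$, contradicting the hypothesis that this image is trivial.

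The main technical obstacle is producing the cohomological lift $[\Omega]\in H^2(M)$ when the codimension is at least $2$: in that range $\d_\nu[\omega]=0$ is strictly weaker than admitting a globally closed extension, and one needs to run a spectral-sequence argument to ensure $[\omega]$ survives all higher differentials. Once such a lift is available, the remainder of the proof reduces to the codimension-$1$ argument sketched above.
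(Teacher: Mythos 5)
Your argument for part (i) is correct and is exactly the reasoning the paper alludes to in the sentence ``Note that strong symplectic foliations are taut, since $\omega^n$ defines a leafwise volume form whose leafwise variation is zero'': the transverse differential $\d_\nu$ is a graded derivation over the leafwise cup product, so $\d_\nu[\omega]=0$ forces $\d_\nu[\omega^n]=0$, and $\omega^n$ is a leafwise volume form since $\omega$ is leafwise symplectic.

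Your argument for part (ii) is correct in codimension one, which is in fact the only setting in which the paper invokes this observation (the Mitsumatsu foliation on $\S^5$, and all the examples in \Cref{thm:codim_1_examples}, are codimension-one). There you correctly use the equivalence, stated right after the definition of strongness, between $\d_\nu[\omega]=0$ and the existence of a globally closed extension $\Omega\in\Omega^2(M)$, and then the nonvanishing of $\int_F\Omega^n$ over the closed leaf $F$ gives $[\Omega]^n\ne0$, contradicting triviality of $\cdot^{\wedge n}\colon H^2(M)\to H^{2n}(M)$.

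Where the proposal has a genuine gap is in the higher-codimension extension of (ii). You correctly observe that for $\operatorname{codim}\cF=q>1$ the condition $\d_\nu[\omega]=0$ only says that $[\omega]$ survives to the $E_2$-page of the leafwise-to-ambient spectral sequence; it does \emph{not} force the subsequent differentials $d_2,\dots,d_q$ on $[\omega]$ to vanish, so $[\omega]$ need not lift to $H^2(M)$. Your text acknowledges this, but then asserts that one ``proceeds analogously\dots to push $[\omega]$ through the higher differentials''---this is not a proof, and in fact there is no reason the higher differentials should vanish in general. As written, the argument establishes (ii) only for codimension one. If you want to claim the higher-codimension case you either need an independent argument that does not go through a lift of $[\omega]$ to $H^2(M)$, or you should restrict the statement to codimension one, which is all the paper actually uses.
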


In order to provide examples of symplectic foliations where strongness is obstructed by the presence of a non-trivial vanishing cycle, and not already from \Cref{obs:obstr_strongness} above, we present a new way to modify symplectic foliations, which is of independent interest.
The following construction is based on the strategy in \cite{Meigniez} and allows us to open up certain closed leaves of symplectic foliations:

\begin{proposition}\label{prop:eliminating_closed_leaves_intro}
    Let $(M,\cF,\omega)$ be a codimension~$1$ symplectic foliation, and let $F_0$ be a closed leaf.
    Suppose $F_0$ lies in the limit set of an open leaf on both sides.
    Then there exists an arbitrarily small neighborhood~$\cU$ of $F_0$ in $M$ and a symplectic foliation $(\cF',\omega')$ on $M$ which coincides with $(\cF,\omega)$ outside $\cU$ and for which every leaf of $\cF'$ intersecting $\cU$ is open.
\end{proposition}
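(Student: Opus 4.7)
My plan is to open up $F_0$ by modifying the holonomy of a well-chosen loop in $F_0$, following the strategy of Meigniez \cite{Meigniez}, and then to transport the leafwise symplectic form to the new foliation.

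\emph{Opening up the leaf.} By the hypothesis, there exists a loop $\gamma \subset F_0$ whose holonomy germ $h \colon ((-\delta,\delta), 0) \to ((-\delta,\delta), 0)$ has orbits accumulating at $0$ from both sides. I would fix a tubular neighborhood $U \cong \gamma \times \D^{2n-1} \times (-\delta, \delta)$ of $\gamma$ in $M$, on which $\cF$ realizes as the suspension of $h$, with the closed leaf $F_0$ corresponding to the fixed point $t = 0$. Replace $h$ by $h'(t) := h(t) + c\, \chi(t)$, with $c > 0$ small and $\chi$ a bump supported near $0$, so that $h'$ has no fixed point near $0$; since $h' = h$ off a compact set, the suspension of $h'$ defines a codimension-$1$ foliation $\cF'$ on $U$ that glues smoothly to $\cF|_{M \setminus U}$. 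The leaves of $\cF'$ near $F_0$ inside $U$ spiral around $\gamma$, exit $U$ at heights close to $c$, and continue outside $U$ along open leaves of $\cF$ (open by the limit set hypothesis, for $c$ small enough). Consequently, for any sufficiently small neighborhood $\cU$ of $F_0$, every leaf of $\cF'$ through $\cU$ is open.

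\emph{Symplectic transfer.} Since $h'$ is $C^0$-close to $h$, the tangent distribution $T\cF'$ is $C^0$-close to $T\cF$ inside $U$, and projecting along the transverse direction yields a smooth leafwise identification of nearby leaves of $\cF$ and $\cF'$. I would define $\omega'$ on $\cF'$ by pulling back $\omega$ along this identification, keeping $\omega' = \omega$ outside $U$. Non-degeneracy of $\omega'$ on each leaf of $\cF'$ then follows for small $c$ from the non-degeneracy of $\omega$ and the $C^0$-smallness of the modification.

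\emph{Main obstacle.} The delicate point is \emph{leafwise closedness} of $\omega'$: since $\cF$ is not assumed strong symplectic, $\omega$ has no a priori closed extension to $M$, and a naive transverse pullback need not preserve leafwise closedness on $\cF'$. This can be addressed either by a direct construction that exploits the product/suspension structure of $U$ (defining $\omega'$ leafwise in terms of the family $\{\omega_t\}$ of symplectic forms on the slices), or by a leafwise Moser argument along a one-parameter family of foliations interpolating between $\cF$ and $\cF'$. This will be the technical heart of the proof; smoothness of the gluing at $\partial U$ follows from the locality of the modification.
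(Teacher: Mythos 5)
Your proposal hinges on the claim that, after replacing the holonomy germ $h$ of the loop $\gamma \subset F_0$ by $h'$ in a tubular neighborhood $U \cong \gamma \times \D^{2n-1} \times (-\delta,\delta)$, the resulting suspension foliation on $U$ glues smoothly to $\cF\vert_{M\setminus U}$. This cannot work. The holonomy of $\cF$ around $\gamma$ is not localized near $\gamma$: since $\dim F_0 = 2n \geq 2$, any loop $\gamma' \subset F_0$ freely homotopic to $\gamma$ and disjoint from the tube $U \cap F_0$ realizes the same (conjugate) holonomy germ, and your modification leaves the foliation near $\gamma'$ untouched. Concretely, on the ``vertical'' boundary $\gamma \times \partial\D^{2n-1} \times (-\delta,\delta)$ of $U$ the suspension of $h'$ does not match $\cF$: a leaf of the modified foliation entering at transverse height $t$ and circling $\gamma$ exits at height $h'(t)$, while the leaf of $\cF$ reached by going around through $F_0 \setminus U$ sits at height $h(t)$; the offset $c\chi(t)$ is precisely the discontinuity. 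No choice of $c$ (or of a $\chi$ tapering toward $\partial\D^{2n-1}$, which would reintroduce the fixed point and hence the closed leaf) resolves this.

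This is exactly the obstruction that forces the machinery of the paper (following Meigniez), which you have essentially skipped. The paper first removes a ``straight hole'' $H \cong [-\delta,\delta]\times \partial T \times \D^2$ that punctures the closed leaf and its neighbors, so that $\pi_1$ of the punctured leaf acquires an extra generator; it then cuts along a wall $W = [-\delta,\delta]\times T$ and reglues with a shift $\widetilde\phi$, which now is consistent precisely because the alternative paths around $\gamma$ have been severed by the hole; and the substantial remaining work is to refill the hole --- after enlarging it along a transverse arc $\gamma$ with $1$-handles so that the union is a handlebody $X$ on which $\omega'$ is leafwise exact --- via the symplectic adaptation of Meigniez's hole-filling lemma. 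Your secondary worry about leafwise closedness of $\omega'$ is also real, but it is addressed in the paper not by a Moser argument between $\cF$ and $\cF'$, but by choosing a global vector field $Y$ (agreeing with $\partial_t$ away from $W$ and tangent to the arc $\gamma$ near $W$) and taking $\pi_Y^*\omega_{\mathrm{st}}$ as an ambiently closed extension; this closedness, combined with $H^2(X)=0$, yields leafwise exactness of $\omega'$ on $X$, which is what the hole-filling requires. Without the hole-cutting and hole-filling steps the construction does not go through, so the proposal has a genuine gap rather than merely a technical one to be patched.
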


The construction in the proof of \Cref{prop:eliminating_closed_leaves_intro} is relatively explicit, giving us considerable control over the topology of the resulting foliation.
In particular, we can perform the modification in such a way (see \Cref{lem:RemovingClosedLeaf}) that an existing Lagrangian vanishing cycle is preserved, which, according to the previous discussion of Mitsumatsu's example \cite{Mit18}, immediately implies the following:

\begin{corollary}\label{cor:mitsumatsu_sympl_fol_no_closed_leaves}
    The symplectic foliation on $\S^5$ described by Mitsumatsu \cite{Mit18} can be modified to obtain infinitely many non-isomorphic taut codimension~$1$ symplectic foliations containing a Lagrangian vanishing cycle and without closed leaves.
\end{corollary}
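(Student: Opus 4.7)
The plan is to eliminate the closed leaves of Mitsumatsu's symplectic foliation $(\S^5,\cF_M,\omega_M)$ by applying the modification of Proposition~\ref{prop:eliminating_closed_leaves_intro}, in the vanishing-cycle-preserving form provided by Lemma~\ref{lem:RemovingClosedLeaf} mentioned just before the statement of the corollary. First I would locate the closed leaves of $\cF_M$ concretely. Since the foliation is built from an open book on $\S^5$ with binding $B$ a Boothby--Wang $\S^1$-bundle over $\T^2$ and monodromy of order $3$, a Reeb-style turbulization near $B$ produces a closed hypersurface leaf $F_0$ (topologically an $\S^1$-thickening of $B$) separating the tubular neighborhood of the binding from the rest of the open book. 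The Lagrangian vanishing cycle from the introduction is modeled on a Boothby--Wang fiber in a page and, after an arbitrarily small isotopy, can be assumed to lie in a page away from $F_0$, hence disjoint from some small neighborhood $\cU$ of $F_0$.

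Next I would verify the hypotheses of Proposition~\ref{prop:eliminating_closed_leaves_intro}: the leaf $F_0$ is accumulated by open leaves on both sides --- on the binding side by standard Reeb-component behavior of the turbulization, and on the page side because the (nontrivial) monodromy composed with the turbulization produces a nontrivial contracting/expanding germ of transverse holonomy along $F_0$. Applying Lemma~\ref{lem:RemovingClosedLeaf} inside $\cU$ then produces a symplectic foliation $(\cF',\omega')$ on $\S^5$ that coincides with $\cF_M$ outside $\cU$, still carries the original Lagrangian vanishing cycle, and has only open leaves intersecting $\cU$. Since $F_0$ is the unique closed leaf of $\cF_M$ (further closed leaves, if any, would be treated by iterating the construction on disjoint neighborhoods), the resulting $\cF'$ has no closed leaf. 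Tautness is inherited from $\cF_M$: outside $\cU$ nothing changes, while inside $\cU$ the Meigniez-style construction explicitly provides closed transversals through all new leaves.

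Finally, to obtain \emph{infinitely many} pairwise non-isomorphic such foliations, I would exploit a discrete parameter in the construction of Proposition~\ref{prop:eliminating_closed_leaves_intro}, for instance the number of spiraling sheets, or equivalently the choice of a class in $\pi_1(F_0)$ recording the holonomy of the newly opened leaves. For each $n\in\N$, the resulting $(\cF'_n,\omega'_n)$ has, on a short transverse arc through $\cU$, a holonomy pseudogroup whose germ depends on $n$ --- distinguished, for instance, by the algebraic type of the finitely generated subgroup of $\Diff(\R,0)$ it realizes, or by the number of ends of leaves accumulating on the region where $F_0$ used to be. The main obstacle is to show that these invariants genuinely separate the $\cF'_n$ up to foliation isomorphism, i.e.\ that they are not absorbed by some ambient self-diffeomorphism of $\S^5$. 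I expect this to follow directly from the explicit normal form produced in the proof of Proposition~\ref{prop:eliminating_closed_leaves_intro}, which pins down the local holonomy data up to conjugacy and thereby distinguishes infinitely many values of $n$.
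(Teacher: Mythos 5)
Your plan matches the paper's intended argument: use the Meigniez-style hole-digging of Proposition~\ref{prop:RemovingClosedLeaf} (and more precisely Lemma~\ref{lem:RemovingClosedLeaf}), together with Lemma~\ref{lem:closedleafvanishingcycle} to keep the Lagrangian vanishing cycle, applied to Mitsumatsu's foliation. Your identification of the closed leaf as a turbulization boundary $\S^1\times B$, the observation that it is accumulated by open leaves on both sides, and the tautness argument (every open leaf of a cooriented foliation on a closed manifold meets a closed transversal) are all exactly what the paper's reduction uses; the last point is spelled out in the proof of Theorem~\ref{thm:codim_1_examples}.

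The one soft spot, which you correctly flag yourself, is the ``infinitely many non-isomorphic'' claim. The paper also does not prove this separately; it attributes it implicitly to the ``considerable control over the topology of the resulting foliation'' afforded by the explicit construction. Your proposed invariant — the conjugacy class of the germ of transverse holonomy at the surgered region — is a reasonable candidate, but making it a foliation invariant requires arguing that any foliated diffeomorphism must carry the surgered region of $\cF'_n$ to that of $\cF'_m$, which you acknowledge rather than prove. A slightly more robust variant, which pins down a genuinely combinatorial invariant, is to iterate the hole-digging in $n$ disjoint Darboux charts so that the resulting foliation has $n$ ``turbulized'' regions, or to count ends of the newly created open leaves; but either way the step from ``many choices'' to ``pairwise non-isomorphic'' deserves a sentence that neither your sketch nor the paper supplies. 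This is an honest gap rather than a wrong approach, and it is inherited from the source.
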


In fact, using \Cref{prop:eliminating_closed_leaves_intro} we can give further examples of symplectic foliations \emph{in every odd ambient dimension at least $5$}:

\begin{theorem}\label{thm:codim_1_examples}
    Let $n \geq 0$ and $\Sigma^{2n}$ be a closed, symplectically aspherical symplectic manifold, admitting a weakly exact Lagrangian~$L$.
    Then, $\S^3 \times \T^2 \times \Sigma$, and $\S^1 \times \S^2 \times \T^2 \times \Sigma$ admit a codimension~$1$ symplectic foliation which contains a non-trivial Lagrangian vanishing cycle, and does not satisfy any of the conditions in \Cref{obs:obstr_strongness} (and are, in particular, taut).
\end{theorem}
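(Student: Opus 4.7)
The plan is to take a codimension-$1$ foliation with a single closed separating torus leaf on a Reeb-type base, product it with $\T^2\times\Sigma$, exhibit a product Lagrangian vanishing cycle, and then remove the closed leaf via \Cref{prop:eliminating_closed_leaves_intro}. For $N\in\{\S^3,\S^1\times\S^2\}$, I would start from a codimension-$1$ foliation $\cF_N$ on $N$ with a unique closed separating torus leaf $T^2_N$ and two Reeb-type components glued along it; for $\S^1\times\S^2$ this uses the genus-one Heegaard splitting with identity gluing, and in both cases each leaf is equipped with a smoothly varying area form $\omega_N$, making $(N,\cF_N,\omega_N)$ a codimension-$1$ symplectic foliation. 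Setting $M:=N\times\T^2\times\Sigma$, the product foliation $\cF$ with leafwise symplectic form $\omega:=\omega_N\oplus\omega_{\T^2}\oplus\omega_\Sigma$ is a codimension-$1$ symplectic foliation whose unique closed leaf is $F_0:=T^2_N\times\T^2\times\Sigma$, and this leaf is separating.

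To build the Lagrangian vanishing cycle, I fix inside one Reeb component of $\cF_N$ a transverse disc $\D^2$ meeting each leaf $F_r$ in the concentric circle $\S^1_r$ of radius $r\in(0,1]$, pick a primitive circle $\S^1_\alpha\subset\T^2$, and take the tautological product embedding $e\colon\D^2\times(\S^1_\alpha\times L)\hookrightarrow M$. Each $\S^1_r\times\S^1_\alpha\times L$ is a product of Lagrangians in the three symplectic factors $(\R^2,\omega_N)$, $(\T^2,\omega_{\T^2})$, $(\Sigma,\omega_\Sigma)$, hence Lagrangian in the leaf of $\cF$ in which it lies, so $e$ is a Lagrangian vanishing cycle modeled on $S=\S^1_\alpha\times L$; the symplectic asphericity of $\Sigma$ and the weak exactness of $L$ guarantee the remaining technical hypotheses in \Cref{def:lagr_van_cycle}. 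Non-triviality is immediate from the first bullet of the preliminary definition: the circle $e(\S^1_1\times\{*\})$ is a meridian of $T^2_N$, which is non-contractible in $F_1=T^2_N\times\T^2\times\Sigma$ already at the level of the projection to $T^2_N$, and therefore bounds no disc at all, let alone one of positive $\omega$-area.

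To eliminate the remaining obstructions from \Cref{obs:obstr_strongness}, I would then apply \Cref{prop:eliminating_closed_leaves_intro} to the separating closed leaf $F_0$, whose hypotheses hold since $F_0$ lies in the limit set of open leaves on both sides (coming from the two Reeb-type regions of $\cF_N$). Using the refined version \Cref{lem:RemovingClosedLeaf} already invoked in the proof of \Cref{cor:mitsumatsu_sympl_fol_no_closed_leaves}, the modification can be carried out while preserving the Lagrangian vanishing cycle constructed above. The resulting symplectic foliation $(\cF',\omega')$ on $M$ has no closed leaves at all, so condition~(ii) of \Cref{obs:obstr_strongness} fails, and it is taut by the properties of the Meigniez-type construction underlying the proposition, so (i) fails as well; the preserved Lagrangian vanishing cycle remains non-trivial by the same argument as above. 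The main difficulty is therefore concentrated in this final step: one needs the explicit modification provided by \Cref{prop:eliminating_closed_leaves_intro} to be flexible enough to simultaneously preserve the concrete vanishing cycle of the previous paragraph and output a taut symplectic foliation; everything else reduces to elementary product-of-Lagrangians bookkeeping together with the standard Reeb-type structure on $\S^3$ and $\S^1\times\S^2$.
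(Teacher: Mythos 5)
Your proposal follows essentially the same strategy as the paper's own proof: build a product symplectic foliation on $N \times \T^2 \times \Sigma$ with $(N,\cF_N)$ a Reeb-type codimension-$1$ foliation having a single closed torus leaf, exhibit a product Lagrangian vanishing cycle with core inside one Reeb tube, and then invoke \Cref{prop:eliminating_closed_leaves_intro} together with \Cref{lem:RemovingClosedLeaf} to open the closed leaf while keeping the vanishing cycle. You are in fact somewhat more careful than the published write-up, which only spells out the $\S^1\times\S^2$ case and takes $\Sigma$ to be a product of positive-genus surfaces with $L$ a torus of primitive isotropic circles rather than the general $(\Sigma,L)$ of the theorem statement.

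That extra generality, however, surfaces a hypothesis you need but have not checked. Condition~\eqref{item:lvc_pi1_condition} of \Cref{def:lagr_van_cycle} requires that $\ker\bigl(\pi_1(L_r)\to\pi_1(F_r)\bigr)$ be either trivial or generated by $[\S^1_r\times\{p\}]$. For your $S=\S^1_\alpha\times L$, with $F_r\cong\R^2\times\T^2\times\Sigma$, this kernel is $\Z\times\{1\}\times\ker\bigl(\pi_1(L)\to\pi_1(\Sigma)\bigr)$, so the condition forces $\pi_1(L)\to\pi_1(\Sigma)$ to be injective. Weak exactness of $L$ does \emph{not} give incompressibility, so the sentence "the symplectic asphericity of $\Sigma$ and the weak exactness of $L$ guarantee the remaining technical hypotheses in \Cref{def:lagr_van_cycle}" is too quick; the paper's choice of $L=\T^{n+1}$ in a product of aspherical surfaces sidesteps this because such tori are automatically $\pi_1$-injective. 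A second, shared, soft spot is the phrase "the preserved Lagrangian vanishing cycle remains non-trivial by the same argument as above": your non-contractibility argument is made inside the closed leaf $T^2_N\times\T^2\times\Sigma$ of the original $\cF$, but after the surgery $L_1$ lies in a different, open leaf of $\cF'$, so the argument does not transfer verbatim and some extra topological bookkeeping (e.g.\ showing that the relevant homology class or $\pi_1$-class survives the cut-and-paste along the wall $W$, which is supported away from $\cC$) is needed; the paper itself passes over this point without comment.
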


\begin{remark}
    Note that taking the product of the examples from \Cref{thm:codim_1_examples} with a closed  manifold of dimension $n$ directly gives closed manifolds $V^{2m+1}$ with a codimension-$(n+1)$ symplectic foliation that has a non-trivial Lagrangian vanishing cycle and don't satisfy any of the conditions in \Cref{obs:obstr_strongness}.
\end{remark}

The situation in higher dimensions can then be summarized as in \Cref{fig:situation_highdim}.
We have indicated in the figure which of the following foliations shows that a certain inclusion is strict:
\begin{enumerate}
    \item\label{exa:fol_mit} $\cF_{\operatorname{Mit}}$ is the codimension-one symplectic foliation on $\S^5$ constructed by Mitsumatsu \cite{Mit18}. It contains a Lagrangian vanishing cycle, and a Reeb component, and hence is not taut.
    \item\label{exa:fol_mit_modified} $\wtd{\cF}_{\operatorname{Mit}}$ is the modified Mitsumatsu foliation from \Cref{cor:mitsumatsu_sympl_fol_no_closed_leaves}, where we used \Cref{prop:eliminating_closed_leaves_intro} to open up the closed leaf while preserving the Lagrangian vanishing cycle. This results in a taut codimension-one foliation containing a Lagrangian vanishing cycle.
    \item The following construction is due to Venogupalan \cite{Venugopalan} and Seidel \cite{Seidel4dim}. 
    Let $\phi$ be a symplectomorphism of $(M^{2n},\omega)$ that is smoothly homotopic, but not symplectically homotopic to the identity.
    Then, the mapping torus $M(\phi)$ defines a symplectic fibration over the circle. Any choice of $\wtd{\omega} \in \Omega^2(M(\phi))$ that restricts to $\omega$ on the fibers defines a connection on this fibration. 
    Moreover, its monodromy is symplectic if and only if $\d\wtd{\omega} = 0$. See for example \cite[Lemma 6.3.5]{McDuffSalamonBookBasics}. 
    Now, since the chosen $\phi$ is smoothly isotopic to the identity, $M(\phi)$ is just smoothly of the form $\S^1\times X$ with foliation given by the second factor.
    However, since we assumed that $\phi$ is not symplectically homotopic to the identity, by \Cref{rmk:StrongAlternative} we deduce that the symplectic foliation defined by the fibers of $M(\phi)$ is not strong symplectic. 
    Moreover, again due to the fact that it is smoothly a product, it is not hard to see that it is taut and contains no non-trivial Lagrangian vanishing cycles $\cC$ just by topological reasons.
    Indeed, via the projection $\S^1\times M\to M$ one can naturally see every leaf $L_r$ of the induced foliation on $\cC$ in $M$, in which they would all be homotopic.
    \item\label{exa:fol_reebless_not_taut} Let $\cF$ be a codimension-one foliation on a $3$-manifold $(M,\cF_M)$, that is Reebless but not taut.
    Given a simply connected, closed, symplectic manifold $(W,\omega)$, the product foliation $\cF_{\operatorname{prod}}$ on $M \times W$ is a non-taut codimension-one symplectic foliation that doesn't contain any Lagrangian vanishing cycle. 
    Indeed, suppose by contradiction that $\cF_{\operatorname{prod}}$ does contain a Lagrangian vanishing cycle $\D^2 \times S$.
    For each $0<r \leq 1$ the leafwise curve $\S^1_r \times \{*\}$ induces an element
    \begin{equation*}
        [\S^1_r \times \{*\}] \in \pi_1(L_r \times W) = \pi_1(L_r) \;,    
    \end{equation*}
    for some leaf $L_r$ of $\cF_M$. By definition, this class is non-zero for $r = 0$ while it vanishes for $r>0$ sufficiently small. This implies that $(M,\cF_M)$ contains a vanishing cycle which contradicts the assumption that $\cF_M$ is Reebless
\end{enumerate}

\noindent
All the examples above are codimension-one foliations. By taking products we can obtain examples in any codimension (and in ambient dimension at least $5$ in the cases of \eqref{exa:fol_mit}, \eqref{exa:fol_mit_modified} and \eqref{exa:fol_reebless_not_taut}).

\begin{wrapfigure}{r}{0.55\textwidth}
\begin{center}
\begin{overpic}[width = 0.67\textwidth]{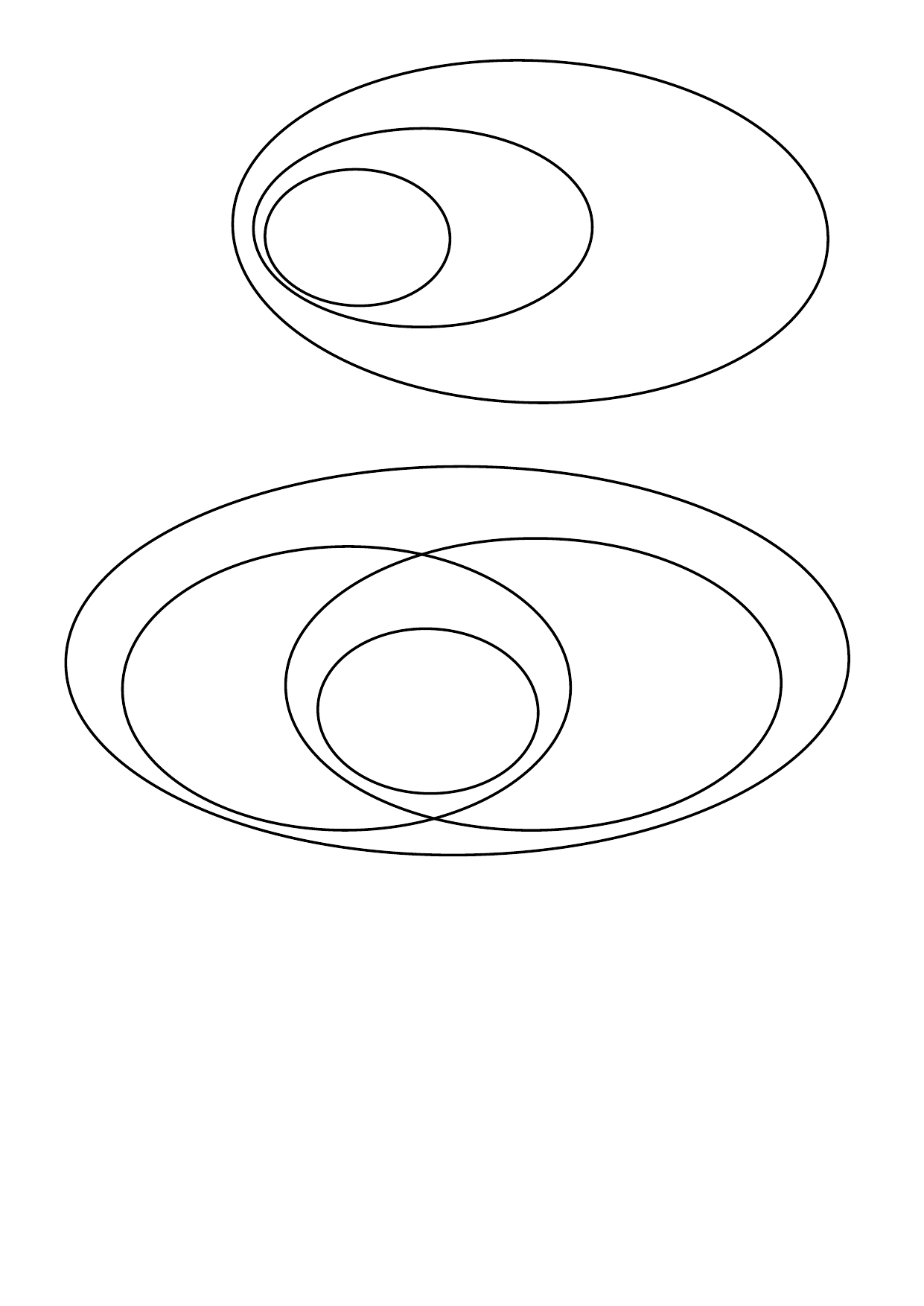}
\put(16,28){\parbox{2cm}{\scriptsize Taut}}
\put(35,25){\parbox{2cm}{ \scriptsize \centering Strong Symplectic}}
\put(61,32){\parbox{2.5cm}{\scriptsize No non-trivial \\ Lagrangian \\ Vanishing Cycles}}
\put(35,49){\parbox{3cm}{\scriptsize Symplectic foliations}}
\put(40,45){$\cF_{\operatorname{Mit}}$}
\put(17,21){$\widetilde{\cF_{\operatorname{Mit}}}$}
\put(42,37){$\cF_{\phi}$}
\put(68,21){$\cF_{\operatorname{prod}}$}
\end{overpic}
  \caption{The various types of symplectic foliations in higher dimensions; compare with \Cref{fig:situation_dim3}. 
  }
  \label{fig:situation_highdim}
\end{center}
\end{wrapfigure}

\subsection{Holomorphic techniques and the proof of the main theorem}

The proof of \Cref{thm:trivial_lagr_vanish_cycle} is inspired by the non-fillability proof for contact manifolds containing Plastikstufes \cite{Nie06} which itself is based on the classical idea of filling by pseudo-holomorphic discs due to \cite{Gro85}.
More precisely, after choosing a suitable leafwise almost complex structure~$J$, one finds a Bishop family of leafwise pseudo-holomorphic discs near the core $e(\{0\}\times S)$ of the Lagrangian vanishing cycle $\cC = e(\D^2\times S)$.
This family extends to a whole moduli space of discs of the same type, and the analysis of the possible degenerations leads to the desired conclusion.
The strongness assumption is crucial for obtaining global energy bounds for the moduli space.
Without them, the foliated version of Gromov's compactness theorem (see \cite{AlbNied}) fails and there is no control on the way discs can degenerate.

It should also be noted that in the symplectically foliated setup treated in this paper, there are several technical differences with respect to the the contact Plastikstufe in \cite{Nie06}. 
These are the reason behind the technical assumptions in \Cref{def:lagr_van_cycle}, that will be required to rule out disc and sphere bubbling, and the need to use domain-dependent perturbations of the almost complex structure to achieve generic regularity (c.f.\ \Cref{prop:mod_space_is_manifold_intro} and \Cref{rmk:domain_independent_J_for_closed_curves} below).

\medskip

Pseudo-holomorphic techniques have already been successfully applied to the study of foliations in previous works.
In \cite{dPiPre17}, the authors study a foliated version of the Weinstein conjecture generalizing ideas by Hofer.
Considering the symplectization of a given rank-$3$ contact foliation with an overtwisted leaf, they study the usual Bishop family of pseudo-holomorphic discs having boundary on the overtwisted disc.
The authors prove that there must be a closed contractible Reeb orbit for any defining foliated $1$-form in the closure of the overtwisted leaf.
Interestingly, the result is sharp in the sense that said closed orbit will in general not be on the overtwisted leaf itself.
Another instance where pseudo-holomorphic techniques have been used is \cite{PreVen}.
Here the authors study contact foliations that admit a filling by a strong symplectic foliation, and they prove uniqueness of such minimal foliated symplectic fillings for the unit cotangent bundle of the Reeb foliation on the $3$-sphere, which is naturally a contact codimension~$2$ foliation on a $5$-dimensional ambient manifold.
In his thesis \cite{AlbThesis}, Alboresi developed a large part of the analysis of closed pseudo-holomorphic curves in the foliated setting and applied it both to regular and to $\log$-symplectic foliations, that is, foliations having a mild type of singular leaves.
He then classifies all compact leaves of a codimension~$1$ foliation in dimension~$5$, containing a symplectic $2$-sphere with trivial normal bundle in one of its leaves.

At this point, it should be noted that a large part of the content of the present paper consists in carefully laying-out the foundations for the use of compact pseudo-holomorphic curves with boundary in a symplectic foliation.
More precisely, one first foundational result that we prove is the following (see \Cref{thm:leafwise_maps_trivial_hol_is_banach_submanifold} for a more precise statement):

\begin{theorem}
    \label{thm:leafwise_maps_trivial_hol_is_banach_submanifold_intro}
    Let $(M,\cF)$ be a foliated manifold and let $\Sigma$ be a compact connected $n$-manifold that has either empty or non-empty but connected boundary.
    In the latter case, fix also a foliated submanifold $P\subset (M,\cF)$.
    The space of leafwise $C^k$-maps $u\colon \Sigma \to M$ that have trivial holonomy and whose boundary (if non-empty) is mapped to $P$ is a smooth Banach manifold.
    The same also holds for Sobolev $W^{k,p}$-maps with $(k-1)\,p>n$.
\end{theorem}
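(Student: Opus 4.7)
The plan is to build explicit Banach-manifold charts around each point $u_0$ of the space in question, parametrizing nearby leafwise maps by a small transverse deformation in $\R^q$ (with $q=\operatorname{codim}\cF$) together with a ``leafwise perturbation'' lying in a $W^{k,p}$- or $C^k$-section space. The central construction, and the essential use of the trivial holonomy hypothesis, is a smooth \emph{foliated thickening}
\[
  \Phi\colon \Sigma\times T\longrightarrow M,\qquad \Phi(\cdot,0)=u_0,
\]
with $T\subset\R^q$ a small ball, such that each $\Phi(\cdot,t)$ is a leafwise map and each $\Phi(\{x\}\times T)$ is a small transverse section of $\cF$ through $u_0(x)$. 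Locally, such a thickening exists by choosing foliated charts along $u_0$; trivial holonomy is precisely what allows one to glue the locally-defined transverse parameters into a single globally defined $T$. In the boundary case, a foliated tubular neighborhood of $P$ in $M$ lets one additionally arrange $\Phi(\partial\Sigma\times T)\subset P$, using that trivial holonomy of $u_0$ restricts to trivial holonomy of $u_0|_{\partial\Sigma}$.

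Fix then a leafwise Riemannian metric on $T\cF$ with leafwise exponential $\exp^\cF$, and a connection on $T\cF$ with associated parallel transport $\tau_{x,t}$ along $s\mapsto\Phi(x,st)$. The chart around $u_0$ is defined by
\[
  \Psi_{u_0}(t,\xi)(x)\ =\ \exp^\cF_{\Phi(x,t)}\bigl(\tau_{x,t}\,\xi(x)\bigr),
\]
where $t$ lies in a small neighborhood of $0$ in $\R^q$ and $\xi$ ranges over the Banach space of $W^{k,p}$-sections (or $C^k$-sections) of $u_0^*T\cF$ satisfying the closed linear boundary condition $\xi|_{\partial\Sigma}\in T(P\cap L_0)$, where $L_0$ is the leaf through $u_0(\Sigma)$. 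The resulting map is automatically leafwise; its trivial holonomy follows from the explicit leafwise homotopy $s\mapsto\exp^\cF_{\Phi(\cdot,t)}(s\tau_{\cdot,t}\xi)$ to $\Phi(\cdot,t)$, which has trivial holonomy by construction; and its boundary lies in $P$ by construction. For the chart inverse, the Sobolev embedding $W^{k,p}\hookrightarrow C^0$ (valid because of the assumption $(k-1)p>n$) guarantees that any leafwise $v$ sufficiently close in norm to $u_0$ is $C^0$-close; pick any $x_0\in\Sigma$ and let $t(v)\in T$ be the unique transverse parameter for which $\Phi(x_0,t(v))$ lies in the same local plaque as $v(x_0)$. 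Leafwiseness of $v$ together with local transverse rigidity of $\cF$ forces $t(v)$ to be independent of the choice of $x_0$, so that $v=\exp^\cF_{\Phi(\cdot,t(v))}(\eta)$ for a unique section $\eta$, and $\xi:=\tau^{-1}\eta$ recovers the desired pre-image.

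Transition maps $\Psi_{u_1}^{-1}\circ\Psi_{u_0}$ are expressed by finite-dimensional smooth formulas built from $\exp^\cF$, $\tau$, and the two thickenings, and are therefore smooth maps of Banach spaces by the standard theory of Nemytskii/composition operators---which applies to both the $C^k$ and the $W^{k,p}$ cases precisely because $(k-1)p>n$ puts us in a Banach-algebra regime for the relevant section spaces. The principal obstacle is Step~1, the globally coherent construction of the thickening $\Phi$: everything after it is a routine foliated adaptation of the classical Eliasson--Palais manifold structure on $\operatorname{Map}(\Sigma,M)$, but the existence of $\Phi$, together with the consequent well-definedness of the transverse parameter $t(v)$ appearing in the chart inverse, rely essentially on trivial holonomy of $u_0$ and, when $\partial\Sigma\neq\emptyset$, on the compatibility between $P$ and the ambient foliation.
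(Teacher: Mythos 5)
Your approach is genuinely different from the paper's: where the paper realizes $\fS_0(\cF,P)$ as the zero set of a submersion $F\colon\cU_{f_0}\to\prod\cB^{-1}\bigl(\Omega^1(\Sigma)\bigr)$ from the ambient Banach manifold $\fS(M,P)$ (using the Poincar\'e inequality to check that the target of exact $1$-forms is a Banach space, then applying the implicit function theorem), you build explicit charts directly. Both routes hinge on the same foliated-geometric ingredient -- a semi-local Reeb trivialization along the image of $u_0$ (your ``thickening'' $\Phi$ is the same object as the paper's $\Psi_g$ restricted near the graph) -- so the uses of trivial holonomy are essentially identical. The chart-building route is more hands-on and avoids the Poincar\'e inequality, but it shifts the burden to the boundary case, and that is exactly where the proposal has a gap.

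The problem is the claim that, in the boundary case, one can arrange $\Phi(\partial\Sigma\times T)\subset P$ with $T$ a ball in $\R^q$. This is dimensionally impossible unless $P$ is \emph{transverse} to $\cF$. If $\cF_P=\cF\cap TP$ has codimension $s<q$ in $P$ (e.g.\ $\cF_P$ Lagrangian, the case of interest in the paper), then a foliated thickening of $u_0(\partial\Sigma)$ inside $P$ can only have $s$ transverse parameters; there is no way to push a full $q$-parameter transverse family into $P$, and invoking trivial holonomy or a tubular neighborhood of $P$ does not change this. The chart must instead have the form $\R^s\times W^{k,p}(u_0^*T\cF,\,T\cF_P)$ (compare the paper's identification $T_f\fS_0\simeq\fS(f^*TL_f)\oplus\R^s$ in Remark \ref{rmk:tangent_bdle_to_space_maps}), and the chart inverse then has to first argue that a nearby leafwise map with boundary in $P$ automatically lies on the $s$-parameter subfamily of leaves selected by $\cF_P$ -- at which point the hypothesis that $\partial\Sigma$ is connected (or that $P$ is transverse) becomes essential, since otherwise different boundary components could impose incompatible $s$-dimensional constraints, exactly the issue the paper addresses by singling out one component $(\partial\Sigma\times P)_0$. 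As written, the proposal does not acknowledge this restriction and the chart it defines does not map into the space $\fS_0(\cF,P)$ when $P$ is not transverse to $\cF$.
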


We then explicitly develop all the Fredholm analysis necessary to talk about moduli spaces of pseudo-holomorphic maps for Riemann surfaces~$\Sigma$ with either connected or empty boundary.
More precisely, under this assumption we prove the following (see \Cref{prop:mod_space_is_manifold} for a more precise version, allowing to fix an arbitrary almost complex structure on some open set):

\begin{proposition}
    \label{prop:mod_space_is_manifold_intro}
    Let $(M,\cF,\omega)$ be a symplectically foliated manifold, and $\Sigma$ a Riemann surface with either connected or empty boundary.
    For generic (domain-dependent) almost complex structures $J$, the space of leafwise, trivial-holonomy $J$-holomorphic maps $u\colon \Sigma \to M$ is a smooth finite-dimensional manifold.
\end{proposition}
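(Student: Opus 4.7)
The plan is to apply the Sard-Smale theorem to the universal moduli space, in the standard Floer style, with domain-dependent perturbations of $J$ supplying enough freedom to establish transversality. Concretely, I would fix $k$ and $p$ with $(k-1)p > 2$ and take the Banach manifold $\cB$ of leafwise, trivial-holonomy $W^{k,p}$-maps $u\colon \Sigma\to M$ (with boundary on $P$ if $\partial\Sigma\neq\emptyset$) provided by \Cref{thm:leafwise_maps_trivial_hol_is_banach_submanifold_intro}. For the parameter space, I would take a separable Banach manifold $\cJ$ of domain-dependent $C^\ell$ (or, using a Floer $C^\epsilon$ norm, smooth) almost complex structures $J\colon\Sigma\to\End(T\cF)$ tame with respect to $\omega$, possibly constrained to coincide with a prescribed $J_0$ on some prescribed open set. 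Over $\cB\times\cJ$ I would form the Banach vector bundle $\cE$ whose fibre at $(u,J)$ is $W^{k-1,p}\bigl(\Sigma,\antiClinear(T\Sigma,u^*T\cF)\bigr)$, and define the universal moduli space $\cM^{\univ}$ as the zero set of the section $(u,J)\mapsto \delbar_J u = \tfrac{1}{2}(du + J(z,u)\circ du\circ j)$.

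The first main step is to show that $\cM^{\univ}$ is a smooth Banach submanifold of $\cB\times\cJ$. At $(u,J)\in\cM^{\univ}$ the linearization splits as
\begin{equation*}
    D_{(u,J)}(\xi,Y) \;=\; D_u\xi \;+\; \tfrac{1}{2}\, Y(z,u(z))\circ du\circ j \, ,
\end{equation*}
where $D_u$ is the leafwise linearized Cauchy-Riemann operator; since $u$ lies in a single leaf and has trivial holonomy, the leafwise ellipticity and Fredholm theory for $D_u$ carry over verbatim from the classical case. Surjectivity of $D_{(u,J)}$ then reduces, by a standard duality argument, to showing that any $\eta$ in the $L^q$-cokernel of $D_u$ which is $L^2$-orthogonal to all terms of the form $Y(z,u(z))\circ du\circ j$ must vanish. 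Using the simpleness / somewhere-injectivity of a non-constant leafwise $J$-holomorphic curve and localizing $Y$ near such an injective point, one forces $\eta\equiv 0$; this is the usual McDuff-Salamon argument, performed inside the leaf containing $u$.

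The second main step is to pass from the universal to the parametric moduli space. The projection $\pi\colon\cM^{\univ}\to\cJ$ is Fredholm with index equal to $\ind(D_u)$, and Sard-Smale produces a residual (hence dense) subset $\cJ_{\mathrm{reg}}\subset\cJ$ of regular values; for $J\in\cJ_{\mathrm{reg}}$ the fiber $\cM_J=\pi^{-1}(J)$ is therefore a smooth finite-dimensional manifold. Elliptic regularity upgrades $W^{k,p}$-solutions to $C^\infty$ when $J$ is smooth, and a Taubes-type argument transfers genericity from $C^\ell$ (or $C^\epsilon$) to $C^\infty$ almost complex structures. Stratifying $\cM^{\univ}$ by homotopy class (or energy) and by the index of $D_u$ reduces the Sard-Smale application to countably many pieces of fixed index, which is harmless.

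The main obstacle, as always in such arguments, is the surjectivity of the universal linearization, and here it is compounded by two foliated subtleties. First, variations $(\xi,Y)$ must be tangent to $\cB$, which carries the non-trivial trivial-holonomy constraint; this is not a real issue because $Y$-variations act only on the equation and not on $u$ itself, so they are unconstrained by the holonomy condition, while $\xi$-variations automatically lie in $T_u\cB$ from the construction in \Cref{thm:leafwise_maps_trivial_hol_is_banach_submanifold_intro}. Second, for curves with boundary on $P$ one needs somewhere-injectivity away from $\partial\Sigma$ and boundary elliptic regularity for the Lagrangian-type boundary condition cut out by $P$; both are standard once the tangent/normal splitting along $P$ is chosen appropriately, and this is precisely the point where the domain-dependence of $J$ becomes indispensable (a fixed $J$ would not in general produce a regular fiber).
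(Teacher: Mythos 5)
Your overall architecture (universal moduli space over the Banach manifold from \Cref{thm:leafwise_maps_trivial_hol_is_banach_submanifold_intro}, Fredholm projection to the parameter space, Sard--Smale, Taubes trick, elliptic regularity) matches the paper's. But the heart of the argument is the surjectivity of the universal linearization, and there you take a wrong turn. You write that one "uses the simpleness / somewhere-injectivity of a non-constant leafwise $J$-holomorphic curve and localiz[es] $Y$ near such an injective point" --- this is the standard domain-\emph{in}dependent McDuff--Salamon argument, and it is exactly what is unavailable here. You yourself flag earlier that somewhere-injectivity fails for the discs of interest, and that is precisely the reason for passing to domain-dependent $J$; invoking it again in the positivity step undoes the fix. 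The point of allowing $Y$ to depend on the domain variable $z\in\Sigma$ is that one no longer needs an injective point at all: for a non-constant curve, the set of $z_0$ with $\d_{z_0}u\neq 0$ is open and dense by unique continuation, and one chooses a cutoff $\beta$ supported near the \emph{pair} $(z_0,u(z_0))$ in $\Sigma\times M$ rather than near $u(z_0)$ in $M$. Because two distinct preimages of $u(z_0)$ sit at different points of $\Sigma$, the $z$-localization separates them automatically, and injectivity of $u$ is never used. This is the easier argument the paper runs, and it is why the domain-dependence is indispensable rather than a convenience.

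A couple of smaller discrepancies worth noting. The linearization on $T_u\cB$ is not just the leafwise operator $D_u$: by the identification $T_u\cB\cong W^{k,p}(u^*\cF)\oplus\R^q$ (or $\oplus\,\R^s$ in the boundary case), the Fredholm index acquires an extra $q$ or $s$, and the paper proves surjectivity by restricting to the subspace with vanishing transverse component and then absorbing the finite-dimensional summand. Your remark that "$\xi$-variations automatically lie in $T_u\cB$" glosses over this splitting; it is harmless but should be made explicit, as it is where the transverse term in the index formula comes from and what later feeds the orientation argument. Also, when you write that "variations $Y$ act only on the equation and not on $u$," that is correct but is not quite the resolution of the holonomy constraint; the resolution is that the constrained tangent space is exactly the one supplied by \Cref{thm:leafwise_maps_trivial_hol_is_banach_submanifold_intro}, and the surjectivity target is reached using only the leaf-tangent and $J$-directions within it.
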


\begin{remark}
    \label{rmk:domain_independent_J_for_closed_curves}
    The reason we use domain-dependent almost complex structures in the proof of \Cref{prop:mod_space_is_manifold_intro} is that in the case of our main interest, i.e.\ leafwise pseudo-holomorphic discs with boundary on a Lagrangian vanishing cycle, the geometric setup does not guarantee that the curves are somewhere injective.
    This being said, in the somewhere-injective case, e.g.\ when dealing with simple leafwise  pseudo-holomorphic spheres, generic transversality can be achieved also with domain-independent almost complex structures with an analogous proof.
\end{remark}

\subsection*{Acknowledgements}

The authors thank Fran Presas for suggesting the idea that symplectic foliations with a closed extension of the leafwise symplectic form to the surrounding manifold may provide uniform energy bounds, thereby allowing us to propagate holomorphic curves from one leaf to neighboring ones.
The authors would also like to thank Petru Mironescu, for useful discussions concerning the Poincaré inequality for Sobolev-regular functions, Georgios Dimitroglou Rizell for discussing positivity of intersection arguments for holomorphic curves with boundary, as well as Claude Viterbo for his interest in this work and for suggesting the more appropriate name ``Lagrangian'' instead of ``symplectic'' vanishing cycles.

This work was funded by the ANR grant ``COSY -- New challenges in symplectic and contact topology'' (ANR-21-CE40-0002), by the region Pays de la Loire, via the project Étoile Montante 2023 SymFol; and by the French government's “Investissements d’Avenir” program integrated to France 2030 (ANR-11-LABX-0020-01).

The third author is funded by the NWO project ``proper Fredholm homotopy theory'' (project number OCENW.M20.195) of the research programme Open Competition ENW M20-3.


\section{The Lagrangian vanishing cycle}
\label{sec:lagr_van_cycles}

Before giving the definition of the Lagrangian vanishing cycle, we will briefly recall some basic concepts.

\begin{definition}\label{def: foliated submanifold}
  Let $M$ be a manifold carrying a regular\footnote{For the rest of the paper, foliations will always be assumed to be regular, unless explicitly called ``singular''.} foliation $\cF$, and let $P$ be a submanifold (possibly with boundary).
  We say that $P$ is a \defin{foliated submanifold} if around every point~$x\in P$ there exists a chart~$(U_x,\phi_x)$ of $M$ that is simultaneously a foliation chart for $\cF$, and a submanifold chart for $P$.
\end{definition}

Using this chart, it is clear that $\cF_P := \cF\cap TP$ is a foliation on $P$. 
The following criterion, whose proof we omit, simplifies the verification that a submanifold is foliated, see \cite{AlbNied}.

\begin{proposition}
  Let $M$ be a manifold carrying a foliation~$\cF$, and let $x$ be a point on a submanifold $P\subset M$.
  Then, the manifold~$P$ is foliated on some neighborhood of $x$, if and
  only if the dimension of $\cF\cap TP$ is constant close to $x$.
\end{proposition}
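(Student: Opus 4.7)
The forward direction is immediate: in any simultaneous foliation-and-submanifold chart the subbundle $\cF \cap TP$ is identified with the intersection of two fixed linear subspaces of $\R^n$, whose dimension is trivially constant. So the plan is to establish the converse: assuming $q := \dim(\cF \cap TP)$ is locally constant near $x$, construct a simultaneous chart.

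The first step will be to apply the constant rank theorem. I would start with an arbitrary foliation chart $\phi \colon U \to V \times W \subset \R^k \times \R^{n-k}$ around $x$, with the leaves of $\cF$ corresponding to the horizontal slices $V \times \{w\}$, and let $\pi \colon V \times W \to W$ denote the transverse projection. Since the kernel of $d(\pi|_P)_y$ is exactly $T_y P \cap T_y \cF$, the hypothesis forces $\pi|_P$ to have constant rank $p - q$ near $x$, where $p = \dim P$. Composing $\phi$ with a suitable diffeomorphism in the transverse direction $W$ alone (which preserves the foliation), the constant rank theorem lets me arrange that $\pi(P) = \R^{p-q} \times \{0\} \subset W$, and in particular furnishes a smooth local section $\sigma \colon \R^{p-q} \to P$ of the submersion $\pi|_P$ through $x$.

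The second step is a leafwise straightening, performed smoothly in the transverse parameter. Each slice $P_{w'} := P \cap (V \times \{w'\} \times \{0\})$ is a $q$-dimensional submanifold of $V$ varying smoothly with $w' \in \R^{p-q}$, and contains the basepoint provided by $\sigma$. After a linear change of coordinates on $V$ arranging $T_x P_0 = \R^q \times \{0\}$, each $P_{w'}$ is realized near $\sigma(w')$ as the graph of a smooth function $g \colon \R^q \times \R^{p-q} \to \R^{k-q}$. I would then define a local diffeomorphism of $V \times W$ of the form $(v, w) \mapsto (\psi_w(v), w)$ that simultaneously straightens the whole family (concretely, translating in $V$ by $\sigma(w')$ and subtracting $g(\cdot, w')$ in the normal directions). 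Such a diffeomorphism manifestly preserves the foliation and maps $P$ onto the linear subspace $\R^q \times \{0\} \times \R^{p-q} \times \{0\}$, producing the desired simultaneous chart.

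The main obstacle I expect is the compatibility requirement in the second step: every modification of the chart has to be leafwise, i.e.\ of the form $(v, w) \mapsto (\psi_w(v), w)$, otherwise the foliation structure of the chart is destroyed. For this reason one cannot just invoke a generic submanifold chart for $P$ inside $M$, since such a chart would typically mix horizontal and vertical directions. The combined use of the constant rank theorem on $\pi|_P$ (which reparameterizes only the transverse factor $W$) together with a smoothly varying fiberwise straightening on $V$ is what navigates this constraint, and this is also why the smooth section $\sigma$ supplied by the constant rank step is indispensable.
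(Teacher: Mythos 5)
Your proof is correct. The paper explicitly omits the proof of this proposition, deferring to \cite{AlbNied}, so there is no in-paper argument to compare against; but your two-step scheme --- first applying the constant rank theorem to $\pi|_P$ to straighten the transverse image while only reparametrizing the $W$-factor, then performing a $w'$-parametrized fiberwise straightening of the slices $P_{w'}$ in the $V$-factor --- does the job, and you correctly identify and handle the key constraint that every coordinate change must be of the leaf-preserving form $(v,w)\mapsto(\psi_w(v),w)$. The only place worth a word of care is the appeal to the constant rank theorem: the image of a constant-rank map is a submanifold only after first shrinking the domain, so one should fix a small neighborhood of $x$ in $P$ before choosing the submanifold chart in $W$; with this understood, the smooth section $\sigma$ exists as you say (since $\pi|_P$ is a submersion onto $\pi(P)\cong\R^{p-q}\times\{0\}$), and the Jacobian of your final map $(v,w)\mapsto(\psi_w(v),w)$ is block upper-triangular with identity diagonal blocks, hence invertible.
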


In what follows, we will be interested in submanifolds having an induced foliation which is compatible with the leafwise symplectic structure in the following sense:

\begin{definition}\label{def: Lagrangian foliation}
    Let $M$ be a smooth manifold of dimension $2n+q$ that carries a symplectic foliation $(\cF,\omega)$ of codimension~$q$.
    A foliated submanifold $P\subset M$ has a \defin{Lagrangian foliation} if the restriction of $\omega$ to every leaf of the induced foliation $\cF_P = TP \cap \cF$ vanishes.
\end{definition}

As we will have to deal with submanifolds whose induced foliation is in fact singular, we also describe explicitly the singularity that we allow:

\begin{definition}\label{def:simple_tangency}
    Let $M$ be a $(2n+q)$-dimensional manifold carrying a codimension~$q$ symplectic foliation~$(\cF,\omega)$, and let $P$ be an $(n+1)$-dimensional submanifold of $M$.
    
    We say that $P$ has a \defin{simple Lagrangian-type tangency with $\cF$} along the subset~$S$ if the following properties hold.
    \begin{enumerate}
        \item $S$ is a codimension~$2$ submanifold of $P$ that is contained in a leaf~$F_0\in \cF$, of which it is an isotropic submanifold with trivial symplectic normal bundle.
        \item  There is a neighborhood of $S$ in $M$ that contains a foliated submanifold diffeomorphic to $\D^2_\epsilon \times T_\epsilon^*S\times \R$ such that:
        \begin{enumerate}
        \item $S$ corresponds to $\{0\}\times S \times \{0\} \subset \D^2_\epsilon \times T_\epsilon^*S\times \R$;
        \item the leaves of the foliation of $\D^2_\epsilon \times T_\epsilon^*S\times \R$ are the  slices with constant $\R$-value;
        \item the leafwise symplectic structure on every leaf $\D^2_\epsilon \times T^*_\epsilon S\times\{t\}$ is
        \begin{equation*}
            i\, \d z \wedge \d\overline{z} + \d\lambda_{\mathrm{can}} \;,
        \end{equation*} where $\lambda_{\mathrm{can}}$ is the canonical $1$-form on $T^* S$;
        \item the intersection of $P$ with this neighborhood lies in the submanifold $\D^2_\epsilon \times T_\epsilon^*S\times \R$, and is given by the embedding
        \begin{equation*}
            \D^2_\epsilon\times S \hookrightarrow
            \D^2_\epsilon \times T_\epsilon^*S\times \R, \quad
            (z,x) \mapsto \bigl(z,x; |z|^2\bigr) \;.
        \end{equation*}
        \end{enumerate}
    \end{enumerate}
\end{definition}

\begin{remark}\label{rmk:codim_1_simple_tangency}
    In the case where $\cF$ is coorientable and of codimension~$1$, the submanifold $\D^2_\epsilon \times T_\epsilon^*S\times \R$ is an open neighborhood of $S$, and the condition that the leaves in this model are slices with constant $\R$-value is equivalent by Reeb stability (see \Cref{lemma:holonomy}) to the condition that the holonomy of $\cF$ along $S$ is trivial.
\end{remark}

\begin{example}\label{exa:lagrangian_type_tangency}
    If the codimension of $\cF$ is larger than $1$, the easiest explicit model satisfying the hypothesis in \Cref{def:simple_tangency} would correspond to a foliation with trivial holonomy of the form
    \begin{equation*}
        \D^2_\epsilon \times T_\epsilon^*S \times \R^q
    \end{equation*}
    with a symplectic foliation given by the leaves with constant value in $\R^q$ and the leafwise symplectic structure given by
    $\omega_0 = i\, \d z \wedge \d\overline{z} + \d\lambda_\mathrm{can}$.
    \\
    The embedding of $P$ in this model would be
    \begin{equation*}
        \D^2_\epsilon\times S \hookrightarrow
        \D^2_\epsilon \times T_\epsilon^*S\times \R^q, \quad
        (z,x) \mapsto \bigl(z,x; |z|^2,0,\dotsc,0\bigr) \;.
    \end{equation*}
    Morally, the meaning of \Cref{def:simple_tangency} is that we do not require that the holonomy of $\cF$ along $S$ is trivial along all transverse directions to $\cF$ but just along one.
\end{example}

We are now ready to give the main definition of the paper:

Let $M$ be a closed smooth manifold of dimension $2n+q$ that carries a symplectic foliation $(\cF,\omega)$ with $\rank \cF = 2n$.
We further denote by $\D^2$ the unit disc centered at the origin in $\R^2$, and by $\S^1_r\subset \R^2$ the circle of radius $r>0$ centered at the origin; with a little abuse of notation, we will sometimes denote by $\S^1_0$ the ``circle of radius $0$'', i.e.\ the origin.

\begin{definition}\label{def:lagr_van_cycle}
    A \defin{Lagrangian vanishing cycle (modeled on a closed manifold~$S$)} is an embedding $i\colon \D^2 \times S\hookrightarrow M$, or its image~$\cC$ with some abuse of notation, satisfying the following properties.
    \begin{enumerate}

        \item\label{item:lvc_lagrangian_fol} $\cC$ is a singularly foliated submanifold, with the foliation being regular on the complement $\cC^*$ of the \defin{core} $\cC_0=\{0\}\times S$.
        The leaves are more precisely given by $L_r\coloneqq i(\S^1_r\times S)$ for $0< r \leq 1$ and are Lagrangian with respect to $\omega$.
    
        \item\label{item:lvc_core} $\cC$ has a simple Lagrangian-type tangency with $\cF$ along $\cC_0$
    
        \item\label{item:lvc_sympl_aspherical_leaves} For every $0<r<1$, $\omega\vert_{\pi_2(F_{r})}=0$, where $F_{r}$ is the leaf of $\cF$ containing $L_{r}$.

        \item\label{item:lvc_pi1_condition} For every $0< r < 1$, $\ker(\pi_1(L_r) \to \pi_1(F_{r}))$ is either trivial or generated by $[\S^1_r \times \{p\}]$ with $p\in S$.

        \item\label{item:lvc_boundary_extension} If $q>1$ then we require that we can attach a collar along the boundary of $\cC$ such that the extended manifold is everywhere (except for $\cC_0$) foliated. 

    \end{enumerate}

    A Lagrangian vanishing cycle~$\cC$ is said to be \defin{trivial} if the following two conditions hold:
    \begin{enumerate}
        \setcounter{enumi}{5}
        \item\label{item:lvc_circle_bound_sympl_disc} for any $q\in S$, $i(\S^1_1\times \{q\})$ bounds an immersed disc with positive $\omega$-area in $F_{1}$;
        \item\label{item:lvc_boundary_null_homologous} $L_1 = \partial \cC$ is null-homologous in $H_{n}(F_1;\Z_2)$.
        If $\cF$ is co-orientable and if $S$ is stably parallelizable, then it is also trivial in $H_{n}(F_1;\Z)$ and if  additionally $\rank(\cF)=4$ (in which case $S$ is necessarily a circle), $L_1\cong \T^2$ contracts to a loop inside $F_1$.
    \end{enumerate}
\end{definition}

\begin{remark}
    \label{rmk:pi1_condition_def_lagr_vanish_cycle}
    Item~\eqref{item:lvc_pi1_condition} is trivially true if $S$ is simply connected.
    In this case, the normal form in \Cref{def:simple_tangency} also simplifies considerably, as the holonomy along $\cC_0=S$ is necessarily trivial and one can always find the foliated submanifold as in the third point of the definition; see \Cref{exa:lagrangian_type_tangency}.
    \\
    Moreover, \eqref{item:lvc_pi1_condition} extends to the case of $r=1$.
    Indeed, if there were contradiction a loop~$\gamma$ in $F_1$ whose homotopy class were not a multiple of $[\S^1_1\times \{p\}]$ and which bounded a disc~$D$ contained in $F_1$, then one could push the disc~$D$, and hence $\gamma$, by Reeb stability to a nearby leaf $F_{r_0}$ for some $r_0<1$; this would contradict the assumption~\eqref{item:lvc_pi1_condition} for $r_0$, leading to a contradiction. 
\end{remark}

\begin{remark}\label{rmk:local_leaf}
    Two leaves~$F_{r}$ and $F_{r'}$ may coincide even though $r\neq r'$. 
    Hence, $F_r$ might contain $L_r$ and $L_{r'}$ for some $r'\neq r$.
    This happens for instance in the case of the standard Reeb (codimension~$1$) foliation on $\S^1\times\S^2$, with a ``horizontal'' (i.e.\ contained in a sphere $\{pt\}\times\S^2$) vanishing cycle $\D^2\hookrightarrow (\S^1\times\S^2,\cF_{Reeb})$.
\end{remark}

\begin{remark}
\label{rmk:codim_1_boundary_extension}
    Condition~\eqref{item:lvc_boundary_extension} above holds trivially if $q=1$, but is in general not automatically satisfied in higher codimensions.
    The toy model to have in mind is $\R \times \R^q$ foliated by the affine lines $\R\times \{p\}$ for any $p\in \R^q$ fixed.
    Quotient $\R \times \R^q$ by the $\Z$-action $k\cdot (t,p) = (t+k, \phi^k(p))$, where $\phi$ is a diffeomorphism $\R^q \to \R^q$ that fixes the segment $I_+ := \{(x,0,\dotsc,0)\in \R^q| x\ge 0\}$ pointwise.
    \\
    It follows that we can embed $\S^1\times [0,\infty)$ by $(e^{2\pi t},x) \mapsto (t;x,0,\dotsc,0)$ to obtain a foliated submanifold with boundary.
    If $q=1$, then $\phi$ will also fix the segment $\{(x,0,\dotsc,0)\in \R^q| x< 0\}$ (but not necessarily pointwise); if $q>1$ then it is easy to think of a $\phi$ that does not fix any $1$-dimensional extension of $I_+$.
\end{remark}

Morally speaking, conditions \eqref{item:lvc_lagrangian_fol} and \eqref{item:lvc_core} are, in a certain sense, of a semi-local nature, and together with the opposite of either \eqref{item:lvc_circle_bound_sympl_disc} or \eqref{item:lvc_boundary_null_homologous}, which are of a less local nature, they are the main topological properties that one would expect from a high-dimensional symplectic generalization of classical (non-trivial) vanishing cycles for smooth foliations on $3$-manifolds.
On the other hand, conditions 
\eqref{item:lvc_sympl_aspherical_leaves} and \eqref{item:lvc_pi1_condition}
are of a more technical nature, and we ask them in order to ensure the absence of undesired bubbling in the compactification of moduli spaces of pseudo-holomorphic discs that we will consider in order to prove \Cref{thm:trivial_lagr_vanish_cycle}.

\begin{remark}
    \label{rmk:sympl_aspheric_not_avoidable}
    Condition \eqref{item:lvc_sympl_aspherical_leaves} has several purposes.
    First, it is used to guarantee uniform energy bounds for the just mentioned moduli spaces.
    Then, it is used to exclude bubbling of spheres as it is customary, but also in combination with \eqref{item:lvc_pi1_condition} in order to exclude disc bubbling.
\end{remark}

\medskip

\begin{remark}
\label{rmk:alternative_candidate_vanishing_cycle}
    During the preparation of this work, we have also considered the following alternative candidate for a higher dimensional vanishing cycle for symplectic foliations.
      
    Let $(M,\cF,\omega)$ be a manifold with a symplectic foliation, and let $h\colon M \to \R$ be a smooth function for which $V = f^{-1}(0)$ is a regular level set.
    Assume that there is locally on an open subset $U\subset M$ a vector field $X$ that is tangent to $\cF$ and leafwise Liouville, pointing positively transversely through $V$, and hence inducing on $V\cap U$ a contact structure.

    A vanishing cycle would now be a manifold $\cC \subset V\subset M$ with $\dim \cC = \frac{1}{2}\, \dim \cF + 1$ carrying the structure of a bordered open book $(\cC_0,\vartheta)$ where the binding $\cC_0\subset \cC$ is a codimension~$2$ submanifold that plays the role of the core of the vanishing cycle, and $\vartheta\colon \cC \setminus \cC_0 \to \S^1$ is a fibration that is transverse to the boundary of $\cC$ and that agrees with the angular coordinate on the normal bundle of $\cC_0$ in $\cC$.
    The core~$\cC_0$ and the boundary~$\partial \cC$ are supposed to lie each in a leaf of $\cF$, and $\cC^*:=\cC\setminus \cC_0$ to lie in $V\cap U$ where $X$ is well defined.
    We also assume that the kernel of the $1$-form~$\alpha = \iota_{X_L}\omega$ and of $\ker \vartheta$ in each regular leaf $\cC\cap \cF$ agree, so that $\vartheta$ defines actually a Legendrian foliation.
    The triviality of this vanishing cycle would be defined similarly to the one in \Cref{def:lagr_van_cycle}.

    It is not difficult to see that the vanishing cycle for a codimension~$1$ foliation in dimension~$3$ is not only a Lagrangian vanishing cycle in the sense of \Cref{def:lagr_van_cycle} but also a vanishing cycle according to the definition we have sketched here.
    
    Among the advantages of this definition is that there is no need to have any condition on the fundamental group as in \Cref{def:lagr_van_cycle}.\eqref{item:lvc_pi1_condition} to avoid bubbling, there is no need to choose domain dependent perturbations, we could possibly obtain a positivity of intersection arguments in case $\dim\cF = 4$, etc.
    
    However, we did not investigate this direction further, because we felt that the existence of a psh-function on $(M,\cF,\omega)$ would be too restrictive for a high-dimensional generalization of the vanishing cycle.
\end{remark}


\section{Analytical foundations}
\label{sec:analitical_foundations}

This section provides the analytical foundations of the moduli space of pseudoholomorphic discs needed to prove the main result.

We point out that this has already been done to some extent, at least for leafwise pseudo-holomorphic maps of spheres, by Alboresi in his thesis \cite{AlbThesis} using a local charts approach. 

The viewpoint we take here is a more abstract one.
First, we deduce that the subspace of leafwise maps with trivial holonomy is a submanifold of the ambient space of all maps, which is well-known to be a smooth Banach manifold.  We then study the underlying Fredholm analysis in the context of domain-dependent almost complex structures and Riemann surfaces with either empty or connected boundary.
We point out that domain-dependent perturbations are necessary for us to ensure generic regularity for leafwise pseudo-holomorphic discs with totally real boundary conditions;  for closed holomorphic curves the analysis also works with standard (i.e., domain-independent) almost complex structures.


\subsection{The space of leafwise maps with trivial holonomy}
\label{sec:leafwise_maps_trivial_holonomy}

Let $M$ be a smooth manifold of dimension~$m$ equipped with a codimension~$q$ foliation~$\cF$.  We assume that $M$ has no boundary.
We fix moreover a smooth foliated submanifold~$P\subset M$ of dimension $p = l+s$, where $l = \dim (P\cap \cF)$, which will serve as the boundary condition for the maps that we want to study.

Let now $\Sigma$ be a smooth connected $n$-manifold that is compact and that might have non-empty boundary. 
Although not always explicitly pointed out, we will use the following convention throughout the rest of this section:

\begin{convention}\label{con:boundary_curves}
    In the case where $\partial \Sigma \neq \emptyset$, we require $P\neq \emptyset$ and use it as boundary conditions for the maps $\Sigma\to M$ we will look at.
    If on the other hand, $\partial \Sigma = \emptyset$, then we also simply let $P$ be the empty manifold. 
    With this in mind, to unify notation we will look at maps of pairs $f\colon (\Sigma,\partial \Sigma)\to (M,P)$, whether $\partial \Sigma$ is empty or not.
\end{convention}

\begin{definition}\label{def:leafwise_map_trivial_holonomy}
    A continuous map $f\colon (\Sigma,\partial \Sigma)\to (M,P)$ is said to be \defin{leafwise} if its image is entirely contained in a leaf~$F$ of $\cF$.
    In this case, we also say that $f$ has \defin{trivial holonomy} if, for every $q\in \Sigma$,
    the image of the induced map $f_*\colon \pi_1(\Sigma,q)\to \pi_1(F,f(q))$ 
    lies in the kernel of the holonomy representation $\Hol_{f(q)}\colon \pi_1(F,f(q))\to \Diff(\R^q,0)$.
\end{definition}
For the definition of holonomy we invite the reader to consult \cite[Chapter~2]{CandelConlon}.

\medskip

We now look at maps $f\colon (\Sigma,\partial \Sigma) \to (M,P)$. 
The regularity of such maps throughout all this section is taken to be
\begin{itemize}
    \item either $C^k$ with $k\geq 2$;
    \item or $W^{k,p}$  for $k\geq 2$ and $(k-1)\,p > \dim N$.
\end{itemize}
With this assumption, all maps will be at least $C^1$; in the first case this is just by definition, while in the second case this is due to the Sobolev embedding theorem (see e.g. \cite[Theorem~B.1.11]{McDuffSalamonBook}).
To unify notation, we will call maps having the chosen regularity \defin{$\fS$-regular}.

We denote
\begin{itemize}
    \item the set of all $\fS$-regular maps $(\Sigma,\partial \Sigma)\to (M,P)$ by $\fS\bigl((\Sigma,\partial\Sigma),(M,P)\bigr)$;
    \item the subset of \emph{leafwise} maps by $\fS\bigl((\Sigma,\partial\Sigma),(\cF,P)\bigr)$;
    \item its subset made of those with trivial holonomy by $\fS_0\bigl((\Sigma,\partial\Sigma),(\cF,P)\bigr)$.
\end{itemize}
When there is no possible confusion, we will omit the domain~$\Sigma$ in the notation, and simply write $\fS(M,P)$, $\fS(\cF,P)$  and  $\fS_0(\cF,P)$ instead of $\fS\bigl((\Sigma,\partial\Sigma),(M,P)\bigr)$, $\fS\bigl((\Sigma,\partial\Sigma),(\cF,P)\bigr)$ and $\fS_0\bigl((\Sigma,\partial\Sigma),(\cF,P)\bigr)$ respectively.

The aim of this section is to prove the following:

\begin{theorem}\label{thm:leafwise_maps_trivial_hol_is_banach_submanifold}
    Assume that $\Sigma$ has connected (possibly empty) boundary or that the boundary condition~$P$ is a transverse submanifold.
    The space~$\fS_0(\cF,P)$ of leafwise maps with trivial holonomy is then a smooth Banach submanifold of $\fS(M,P)$.
\end{theorem}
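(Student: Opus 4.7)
The plan is to realize $\fS_0(\cF,P)$ locally, around any chosen $f_0 \in \fS_0(\cF,P)$, as the zero set of a submersion $\widetilde\Psi\colon \cV \subset \fS(M,P) \to E$ into a Banach space $E$, so that the submanifold structure follows from the implicit function theorem for Banach manifolds. The geometric engine powering this is the trivial-holonomy hypothesis on $f_0$, which I would convert into the existence of a globally defined transverse coordinate on a neighborhood of $f_0(\Sigma)$.

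The first step is to construct a \emph{foliated tubular neighborhood} of $f_0$. The normal bundle $\nu = TM/T\cF$ carries along each leaf the Bott (partially) flat connection, which pulls back to a flat connection on $f_0^*\nu$ over $\Sigma$. The condition that $f_0$ has trivial holonomy is precisely the statement that this pulled-back flat connection has trivial monodromy, so $f_0^*\nu \cong \Sigma \times \R^q$ globally. Combining this trivialization with a leafwise exponential map in the transverse direction, I would build a smooth map $\Phi\colon \Sigma \times \D^q_\epsilon \to M$ for some small $\epsilon > 0$, with $\Phi(s,0) = f_0(s)$, that is a diffeomorphism along each fibre $\{s\} \times \D^q_\epsilon$ onto a local transversal to $\cF$ at $f_0(s)$, and such that each horizontal slice $\Phi(\Sigma \times \{y\})$ is contained in a single leaf of $\cF$. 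This last property relies crucially on the trivial-holonomy assumption: it is what allows the local transversal identifications to glue consistently across $\Sigma$ rather than only on a cover. Arranging compatibility with the boundary, i.e.\ ensuring $\Phi(\partial\Sigma \times \D^q_\epsilon) \subset P$, is where the assumption that either $\partial\Sigma$ is connected or $P$ is transverse enters.

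The second step uses $\Phi$ to define a transverse projection $\pi \colon \cN \to \D^q_\epsilon$ on $\cN := \Phi(\Sigma \times \D^q_\epsilon)$ by $\pi\bigl(\Phi(s,y)\bigr) = y$, well-defined again thanks to trivial holonomy. Then on $\cV := \{f \in \fS(M,P) : f(\Sigma) \subset \cN\}$ I would set
\[
    \widetilde\Psi(f) \;=\; \pi \circ f - \ev_{s_0}(\pi \circ f) \;\in\; E \;,
\]
where $s_0 \in \Sigma$ is a basepoint and $E \subset \fS(\Sigma,\R^q)$ is the closed Banach subspace of $\fS$-regular maps into $\R^q$ vanishing at $s_0$ (together with the boundary constraint inherited from $P$). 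Then $\widetilde\Psi(f)=0$ precisely means $\pi \circ f$ is constant, i.e.\ $f$ is leafwise. A separate short argument, using the product structure of the model, shows that a leafwise $f$ close to $f_0$ has automatically trivial holonomy: any loop in a horizontal slice $\Phi(\Sigma \times \{y\})$ has trivial transverse holonomy in the model, since the first-return map on the transversal $\Phi(\{s_0\} \times \D^q_\epsilon)$ is the identity. This ensures $\widetilde\Psi^{-1}(0) \cap \cV = \fS_0(\cF,P) \cap \cV$.

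The final step is to verify that $d\widetilde\Psi|_{f_0}\colon T_{f_0}\fS(M,P) \to E$ is surjective with complemented kernel, and then to invoke the implicit function theorem. Surjectivity is obtained by lifting any $h \in E$ to a section $\xi$ of the vertical subbundle $f_0^*\nu \subset f_0^*TM$ using the trivialization from Step~1; the boundary constraint $\xi|_{\partial\Sigma} \in f_0^*TP$ follows from the $P$-compatibility of $\Phi$. I expect the main technical difficulty to lie in Step~1, the construction of $\Phi$: because $f_0$ need not be an embedding or even injective, one cannot simply take an ambient tubular neighborhood of the image $f_0(\Sigma) \subset M$, but must instead build $\Phi$ intrinsically on the abstract model $\Sigma \times \D^q_\epsilon$, using the Bott-flat trivialization to certify that horizontal slices indeed land inside single leaves of $\cF$ and that the boundary compatibility with $P$ can be enforced under the stated dichotomy on $\partial\Sigma$ and $P$.
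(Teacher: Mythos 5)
Your overall strategy coincides with the paper's: build a foliated product model around $f_0$ using Reeb stability and trivial holonomy, then realize $\fS_0(\cF,P)$ locally as the zero set of a submersion to which the implicit function theorem applies. The one genuine departure is the choice of target. The paper's map is $f\mapsto(\d v_1,\dotsc,\d v_q)$, valued in exact $\fS^{-1}$-forms; establishing that this target is a closed (hence Banach) subspace requires the Poincar\'e inequality (\Cref{lemma:poincare_inequality}) and \Cref{exact forms form Banach space}. Your map $f\mapsto \pi\circ f-\ev_{s_0}(\pi\circ f)$ has target the $\fS$-regular maps $\Sigma\to\R^q$ vanishing at $s_0$ (with the appropriate boundary constraint on the last $q-s$ components), which is transparently closed -- a real simplification that bypasses the Poincar\'e lemma entirely. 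The two constructions are the two ways of saying ``$v$ is constant'': either $\d v=0$ or $v-v(s_0)=0$.

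There are, however, two imprecisions in Step~1 that need repair before the argument is airtight. First, you write that trivial holonomy of $f_0$ ``is precisely the statement that the pulled-back flat Bott connection has trivial monodromy.'' This is false in one direction: the Bott connection sees only the \emph{linearization} of the holonomy diffeomorphisms, whereas \Cref{def:leafwise_map_trivial_holonomy} requires triviality of $\Hol_{f(q)}\colon\pi_1(F,f(q))\to\Diff(\R^q,0)$ as germs of diffeomorphisms (e.g.\ holonomy $x\mapsto x+x^2$ has trivial Bott monodromy but is not trivial holonomy, and does not yield a local product structure). The Bott trivialization alone does not produce horizontal slices $\Phi(\Sigma\times\{y\})$ contained in leaves; one needs the full nonlinear holonomy transport, which is what Reeb stability / \Cref{lemma:holonomy} supplies. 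You have the right hypothesis, but supporting the construction on the Bott connection misattributes what makes it work.

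Second, when $f_0$ is not injective (the generic case, since e.g.\ $f_0$ can be a covering of its image), the projection $\pi\colon\cN\to\D^q_\epsilon$ you define by $\pi(\Phi(s,y))=y$ is not well-defined on $\cN=\Phi(\Sigma\times\D^q_\epsilon)\subset M$: one can have $\Phi(s,y)=\Phi(s',y')$ with $y\neq y'$ whenever $f_0(s)=f_0(s')$ and the transversal identifications at $s$ and $s'$ differ by the holonomy of a path that is \emph{not} in the image of $\pi_1(\Sigma)$ (so the trivial-holonomy hypothesis places no constraint on it). The fix is to work, not with $\pi\circ f$, but with the unique continuous lift $\hat f\colon\Sigma\to\Sigma\times\D^q_\epsilon$ of $f$ through the immersion $\Phi$ that is $C^0$-close to $s\mapsto(s,0)$, and extract the $\D^q_\epsilon$-component of $\hat f$. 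This is well-defined and $\fS$-regular for $f$ close to $f_0$, and then your formula goes through. Note that this lifting is exactly what the paper's passage to $\Sigma\times M$ and the graph $\Gamma_f$ codifies: the graph is always an embedding, so the analogue of $\pi$ becomes unambiguous on the image. Once these two points are addressed, your proof is correct and the choice of target is a genuine streamlining.
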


\medskip

Let us now recall some known facts about $\fS(M,P)$ that we will need in the following.
First, a slight adaptation of the analogous result for $\fS(M)$ without boundary constraints in \cite[Section~5]{Eliasson} tells us that $\fS(M,P)$ is a smooth Banach manifold.
To prove this, it suffices to use an auxiliary Riemannian metric on $M$ for which $P$ is totally geodesic.
The total space of its tangent bundle $T\fS(M,P)$ is simply given by the Banach manifold $\fS(TM,TP)$ of $\fS$-regular maps $(\Sigma,\partial \Sigma) \to (TM,TP)$.

\smallskip

We wish to generalize the construction of the tangent bundle to other vector bundles.
Consider now instead of $(TM,TP)$ some other smooth real or complex vector bundle over $M$ of finite rank:
\begin{equation*}
    \pi\colon E\to M \, .
\end{equation*}
If $P\neq \emptyset$, we also suppose that there is a subbundle~$E_P\subset E\vert_P$ over $P$.

We can then consider the space $\fS(E,E_P)$ of $\fS$-regular maps $f\colon (\Sigma,\partial \Sigma) \to (E,E_P)$ (usually we write simply $\fS(E,P)$, when $E_P$ is the full bundle~$E\vert_P$), which is also a smooth Banach manifold.
In fact, $\pi$ induces a smooth map
\begin{equation*}
    \Pi\colon \fS(E,E_P) \to \fS(M,P), \, f\mapsto \pi\circ f   \, ,
\end{equation*}
and one can check using auxiliary connections on $E$ and $M$ as done in \cite[Section~5]{Eliasson} for the case of $TM\to M$,
that this makes $\fS(E,E_P)$ a Banach space bundle over $\fS(M,P)$.
More precisely, the fiber of $\fS(E,E_P)$ over an element $f\colon (\Sigma,\partial \Sigma)\to (M,P)$ of $\fS(M,P)$ is the space of all $\fS$-sections (i.e.\ sections of regularity $\fS$) of $f^*E$ over $\Sigma$ that take values in $f^*E_P$ over $\partial \Sigma$ (recall \Cref{con:boundary_curves}). 

\medskip

The following general fact will also be useful.
If $ \Pi\colon \cE\to \cB$ is a $C^s$-Banach vector bundle and $\cB_0$ is a Banach manifold, one can pullback $\Pi$ via any $C^s$-map $F\colon \cB_0 \to \cB$ and thus obtain $F^*\cE$ a $C^s$-Banach space vector bundle over $\cB_0$. 

In particular, since we will later apply differential operators to certain sections in a vector bundle, and since such operators obviously lower the regularity of the corresponding section, the pullback construction allows us to easily obtain suitable Banach vector bundles.
Denote $\fS' = C^{k'}$ in the case of $\fS = C^k$, and similarly $\fS' = W^{k',p}$ in the case $\fS=W^{k,p}$.
The inclusion  $\iota\colon \fS'(M,P) \hookrightarrow \fS(M,P)$ with $k' > k$ is a smooth map so that we can consider the Banach space bundle~$\iota^*\cE$ of $W^{k,p}$-sections over the Banach manifold of $W^{k',p}$-maps.

\bigskip

The strategy of the proof of \Cref{thm:leafwise_maps_trivial_hol_is_banach_submanifold} will consist in finding around any point $f\in \fS_0(\cF,P)$ a neighborhood $\cU_f \subset \fS(M,P)$ and a smooth map~$\Phi_f\colon \cU_f \to \cB$ into some Banach space~$\cB$ such that $0$ is a regular value of $\Phi_f$ and $\Phi_f^{-1}(0) = \fS_0(\cF,P)\cap\, \cU_f$.
The implicit function theorem provides then the desired result.

We will first develop a semi-local description of the foliation near the image of a leafwise map with trivial holonomy, obtained essentially thanks to Reeb stability.

\begin{lemma}\label{lemma:holonomy}
    Let $X$ be a manifold that carries a smooth codimension~$q$ foliation~$\cF$.
    Assume we are in the following situation:
    \begin{itemize}
        \item $Y \subset X$ is a connected compact submanifold that is contained in a leaf~$F_0$ of $\cF$, and the holonomy of $\cF$ along any loop in $Y$ is trivial.
        \item $Z\subset X$ is a foliated submanifold (that might be empty).
    \end{itemize}

    Then, we can choose an open neighborhood~$U_0$ of $Y$ in $F_0$, and an open neighborhood~$U$ of $Y$ in $X$ with $U\cap F_0 = U_0$ with the following properties.
    \begin{enumerate}
        \item\label{item:trivial_hol_lemma_holonomy}  For every base point~$x_0\in U_0$ there exists a diffeomorphism
        \begin{equation*}
            \psi \colon U_0 \times D^q_{x_0} \to U \;, 
        \end{equation*}
        where $D^q_{x_0} \subset T_{x_0} M$ is a very small disc centered at the origin that is transverse to $T_{x_0}F_0=T_{x_0}\cF$, such that:
        \begin{enumerate}
            \item $\psi^*\cF$ is the (smooth) foliation whose leaves are the slices $U_0\times\{v\}$ with  $v\in D^q_{x_0}$; 
            \smallskip
            \item the restriction of $\psi$ to $U_0\times\{0\}$ coincides with the inclusion of $U_0$ in $X$.
        \end{enumerate}
        
      \item\label{item:submanifold_lemma_holonomy} If $Z_0 := Z\cap U_0 \ne \emptyset$, let $s$ be the codimension of the induced foliation~$\cF_Z := \cF\cap TZ$ on $Z$.
      
      Then choosing the base point $x_0$ in $Z_0$ and denoting the connected component of $Z_0$ that contains $x_0$ by $Z_0(x_0)$, we can arrange $\psi$ in such a way that its restriction to $Z_0(x_0)\times D^s_{x_0}$ 
      is a diffeomorphism onto the connected component of $Z\cap U$ containing $Z_0(x_0)$.
      Here $D^s_{x_0} \subset D^q_{x_0} \cap T_{x_0}Z$
      is a small disc centered at the origin that is transverse to $T_{x_0}\cF_Z$.
      
      \item\label{item:change_of_base_point} If we choose in item~\eqref{item:trivial_hol_lemma_holonomy} two different base points~$x_0$ and $x_1 \in U_0$ to construct the maps 
      $\psi_0 \colon U_0 \times D^q_{x_0} \to U$ and $\psi_1 \colon U_0 \times D^q_{x_1} \to U$ 
      respectively, then there is a diffeomorphism 
      $\Phi\colon D^q_{x_1} \to D^q_{x_0}$ 
      keeping the origin fixed such that $\psi_1\bigl(x,v\bigr) = \psi_0(x,\Phi(v))$ for every $x\in U_0$ and every 
      $v\in D^q_{x_1}$.
    \end{enumerate}
\end{lemma}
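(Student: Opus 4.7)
My plan is to treat this as an adaptation of Reeb stability to the compact submanifold $Y$, using the triviality of the $\cF$-holonomy along $Y$ to build a global transverse trivialization via holonomy transport.

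First I would choose $U_0 \subset F_0$ to be an open neighborhood of $Y$ that is a strong deformation retract of $Y$, so that every loop in $U_0$ is $F_0$-homotopic to one in $Y$ and hence has trivial $\cF$-holonomy. Next, cover a compact neighborhood of $\overline{U_0}$ in $X$ by finitely many foliation charts $(W_i,\phi_i)$ for $\cF$ with $\phi_i(W_i) = V_i \times B_i^q$ and plaques $V_i \times \{v\}$. For item~(ii), I would use the foliated-submanifold hypothesis on $Z$ to further arrange that each $W_i$ meeting $Z$ is a simultaneous foliation-submanifold chart, so that $\phi_i(W_i \cap Z) = V_i^Z \times B_i^s$ for appropriate linear subfactors. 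Fix $x_0$ (chosen in $Z_0$ under the hypothesis of item~(ii)) and pick a small transverse disc $D^q_{x_0}$ to $\cF$ at $x_0$, shrunk so that $D^s_{x_0} := D^q_{x_0} \cap T_{x_0}Z$ is a transverse disc to $\cF_Z$ at $x_0$ in $Z$.

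The construction of $\psi$ is then by holonomy transport: for $x \in U_0$ and $v \in D^q_{x_0}$, choose a path $\gamma$ in $U_0$ from $x_0$ to $x$, cover $\gamma$ by an ordered finite sequence of the chosen charts, and propagate the transverse parameter $v$ through the successive overlap transition maps, which preserve the foliation structure; define $\psi(x,v)$ to be the endpoint in the last chart. Path-independence of this construction is exactly the triviality of the transverse holonomy along loops in $U_0$, guaranteed by the first step. After possibly shrinking $D^q_{x_0}$, and then shrinking $U := \psi(U_0 \times D^q_{x_0})$ to arrange $U \cap F_0 = U_0$, the map $\psi$ becomes a diffeomorphism onto $U$ which satisfies (a)--(b) of item~(i) by construction. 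Item~(ii) then follows because the transport, restricted to $Z_0(x_0) \times D^s_{x_0}$, only ever uses $Z$-respecting portions of the transition maps, so its image lies in $Z$ and is in fact the connected component of $Z \cap U$ through $Z_0(x_0)$. Item~(iii) is immediate: $\Phi$ is defined as the holonomy transport from $x_1$ to $x_0$ along any path in $U_0$, and by uniqueness of the transport one has $\psi_1(x,v) = \psi_0(x, \Phi(v))$.

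The main obstacle is establishing rigorously the path-independence of the transport. The hypothesis only gives triviality of holonomy along loops in $Y$, while the construction requires it along all loops in $U_0$; bridging this gap is precisely the role of the deformation-retract choice of $U_0$, together with the standard fact that holonomy depends only on the $F_0$-homotopy class of the loop. A secondary technical point is the simultaneous compatibility of $\psi$ with $\cF$ and with $Z$ needed for item~(ii), which relies essentially on the foliated-submanifold hypothesis to produce simultaneous foliation charts for both $\cF$ and $\cF_Z$, and on the fact that $\cF_Z$ has constant rank so the relevant transverse discs fit coherently.
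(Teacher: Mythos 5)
Your proposal is correct and follows essentially the same strategy as the paper: choose $U_0$ a neighborhood deformation-retracting onto $Y$, define $\psi$ by holonomy transport of a transverse disc, derive path-independence from triviality of holonomy along $Y$ (extended to $U_0$ via the retraction), obtain item~(ii) by making the transport compatible with $Z$, and obtain item~(iii) by composing transports. The only difference is the technical implementation of the transport --- you use a finite chain of simultaneous foliation/submanifold charts, while the paper uses a Riemannian metric with $Z$ totally geodesic together with a transverse distribution $\nu$ and the pullback foliation on $\delta^*\nu_{<\epsilon}$ under the geodesic exponential map --- and these are interchangeable.
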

\begin{proof}
    The first part of the statement is essentially a consequence of the Reeb stability theorem that implies that any foliation that has a leaf with trivial holonomy will be semi-locally trivial near that leaf.
    Here are some additional details.

    Let $U_0$ be a tubular neighborhood of $Y$ inside $F_0$ such that $U_0$ deformation retracts onto $Y$.
    We will possibly need to shrink the size of $U_0$ during the proof.

    Let us start by recalling more precisely how holonomy is defined, i.e.\ how parallel transport can be performed using the foliation.
    Choose a Riemannian metric~$h$ on $X$, and let $\nu\subset TX\vert_{U_0}$ be a distribution along the neighborhood~$U_0$ that is everywhere transverse to the leaf~$F_0$. (To deal with item~\eqref{item:submanifold_lemma_holonomy}, we will later make a more careful choice of $h$ and $\nu$; see below for the details.)
    After possibly shrinking $U_0$, we can find an $\epsilon>0$ such that the geodesic exponential map restricted to the subset $\nu_{<\epsilon}$ of $\nu$ made of vectors of norm at most $\epsilon$ is a diffeomorphism onto its image.
    (In the rest of the proof, $\epsilon>0$ is allowed to shrink as needed, namely all vectors of $\nu$ that we use from now on are assumed to be sufficiently small for the required properties to be satisfied.)
    
    Choose now a point~$x_0\in U_0$ as a reference point.
    Our aim is to parallel transport $\D^q_{<\epsilon}(x_0) \coloneqq (\nu_{<\epsilon})_{x_0} \subset \nu_{x_0}$ with respect to the foliation to any other point~$x$ in $U_0$.
    Choose a path $\delta\colon [0,1]\to U_0$ connecting $x_0$ to $x$.
    The pull-back of $\nu$ along $\delta$ is a (trivial) bundle $\delta^*\nu \cong [0,1]\times \R^q$.
    Using that the geodesic exponential map $(\tau,w)\in [0,1]\times \D^q_{\epsilon} \mapsto \exp_{\delta(\tau)} (w)$ is transverse to $\cF$ (at least after shrinking $\epsilon$), we can pull-back $\cF$ to $\delta^*\nu_{<\epsilon}$, thus obtaining a $1$-dimensional foliation~$\cF_\delta$ on $\delta^* \nu_{<\epsilon}$ such that the $0$-section corresponds to the path~$\delta$ in $U_0$.
    Consider now the leaf of $\cF_\delta$ passing through the point~$(0,v) \in (\delta^*\nu)_{0}= \nu_{x_0}$ with $\|v\|<\epsilon$, which
    will be approximately parallel to the $0$-section, and will thus end up at a certain point $(1,w) \in \delta^*\nu$.
    Depending on the context, the parallel transport along $\delta$ with respect to $\cF$ will mean either the transport of $v\in \nu_{x_0}$ yielding the vector~$w\in \nu_x$ or the transport of the point $\exp_{x_0}(v) \in X$ yielding the point $\exp_{x}(w) \in X$.
    After possibly reducing the size of $\epsilon>0$ and the one of $U_0$, there is for every point~$x$ in $U_0$ a path~$\delta$ connecting $x_0$ to $x$ such that the parallel transport of $\nu_{<\epsilon}$ along $\delta$ is defined.

    Our next aim will be to show that in our case the parallel transport only depends on the end point~$x$ of $\delta$ but not on the choice of $\delta$ itself.
    Recall that parallel transport along two homotopic paths provides identical results, as one can perform the construction above not only along arcs but also along homotopies.
    Denoting $\widetilde U_0$ the universal cover of $U_0$, which we see as the space of homotopy classes $[\delta]$ of arcs $\delta\subset U_0$ starting at $x_0$, there is a well-defined smooth map $\widetilde U_0\times \D^q_{\epsilon}(x_0) \to X$ that is locally a diffeomorphism associating to each $([\delta],v)$ the parallel transport of $v$ from $x_0 = \delta(0)$ to $x = \delta(1)$ which is then mapped via $\exp_x$ into $X$.
    Moreover, since $U_0$ deformation retracts onto $Y$ and since the inclusion $i\colon Y \hookrightarrow X$ has trivial holonomy, $U_0\hookrightarrow X$ does so too.
    The local diffeomorphism $\widetilde U_0\times \D^q_{<\epsilon}(x_0)\to X$ passes down to the quotient providing us with the smooth embedding $U_0\times \D^q_{<\epsilon}(x_0) \hookrightarrow \exp(\nu)$ that is the desired map~$\psi$.
    (In the statement, we simply choose $D^q_{x_0}\coloneqq\D^q_{<\epsilon}(x_0)$.)

    \medskip
    
    Item~\eqref{item:change_of_base_point}  follows directly from this construction.
    Namely, if $x_1\in U_0$ is a different base point, then choose for any $x\in U_0$ a path~$\delta$ connecting $x_1$ to $x$ by passing through $x_0$. 
    The parallel transport of $\cF$ along this path is the composition of the parallel transport along the path from $x_1$ to $x_0$ and of the parallel transport from $x_0$ to $x$.
    If we let $\Phi$ be the parallel transport from $x_1$ to $x_0$ then we obtain the desired claim.
    (Analogously to before, we choose $D^q_{x_1}\coloneqq \D^q_{<\epsilon}(x_1)$ in the statement.)

    \medskip

    Let us now discuss the remaining point~\eqref{item:submanifold_lemma_holonomy}.
    If $Z_0 = Z\cap U_0 \ne \emptyset$, we choose the Riemannian metric~$h$ on $X$ in such a way that $Z$ is totally geodesic.
    We also need to be more precise about the choice of the distribution~$\nu$.
    Let $\mu\subset TZ\vert_{Z_0}$ be a distribution in $Z$ along $Z_0$ that is everywhere transverse to $\cF_Z$.
    By our assumption, the rank of $\mu$ is $s$.
    To obtain a suitable~$\nu$, extend $\mu$ first to a distribution that is transverse to $F_0$ (and hence of rank~$q$) over each point $Z_0$;
    then extend this further to the desired distribution~$\nu$ over all of $U_0$ in such a way that $\nu$ is transverse to $F_0$.
    So, to summarize, we have a smooth $q$-dimensional distribution~$\nu$ along $U_0$, that is transverse to $\cF$, and an $s$-dimensional sub-distribution~$\mu \subset TZ$ along $Z_0$ that is transverse to $\cF_Z$ inside $Z$.
    
    Choose now the base point~$x_0$ in $Z_0$ and denote by $Z_0(x_0)$ the component of $Z_0$ containing $x_0$.
    We show now that the parallel transport of the foliation~$\cF_Z$ in $Z$ along $Z_0(x_0)$ is simply the restriction of the parallel transport of $\cF$.
    Let $\D_{<\epsilon}^s(x_0)$ be the intersection of $\D_{<\epsilon}^q(x_0)$ with $TZ$.
    As before we obtain the map~$\psi$ that trivializes the ambient foliation~$\cF$ along $U_0$, and it only remains to understand how $\psi$ behaves when restricted to $Z_0(x_0)\times \D_{<\epsilon}^s(x_0)$.
    Let $x$ be any point in $Z_0(x_0)$, and choose a path~$\delta$ in $Z_0(x_0)$ joining $x_0$ to $x$.
    The pull-back bundle $\delta^*\nu \cong [0,1]\times \R^q$ contains the subbundle~$\delta^*\mu$ that agrees at $0$ with $\mu_{x_0}$.
    The geodesic exponential map $(\tau,v) \in \delta^*\nu \mapsto \exp_{\delta(\tau)}(v)$ sends $\delta^*\mu$ into $Z$, because $Z$ is totally geodesic.
    In particular, we find that the induced foliation~$\cF_\mu$ on $\mu$ is $\exp^*\cF_Z = \exp^*(\cF\cap TZ) = (\exp^*\cF) \cap \mu = \cF_\delta \cap \mu$.
    Both $\cF_\mu$ and $\cF_\delta$ are $1$-dimensional, and thus the parallel transport of any vector $v\in \mu_{x_0}$ yields then at $\delta(1)$ the same vector~$w$ -- independently of whether the parallel transport is performed with respect to $\cF_\mu$ in $\delta^*\mu$ or with respect to $\cF_\delta$ inside $\delta^*\nu$.

    This proves that the restriction of $\psi$ to $Z_0(x_0)\times \D_{<\epsilon}^s(x_0)$ is a diffeomorphism onto the connected component of $Z\cap U$ containing $Z_0(x_0)$.
    Denoting $D^s_{x_0}\coloneqq \D_{<\epsilon}^s(x_0)$, this concludes the proof.
\end{proof}

By applying this lemma in our situation, we obtain a semi-local description of the foliation near a given trivial-holonomy map.

\begin{corollary}\label{corollary:trivial foliation close to graph}
    Let $g\in \fS_0(\cF,P)$ be a map.
    Denote the leaf containing $g(\Sigma)$ by $F_0$.
   
    Consider on $\Sigma\times M$ the induced foliation $T\Sigma\oplus \cF$, and let
    \begin{equation*}
        \Gamma_g\colon \Sigma \to \Sigma\times M, \; x\mapsto \bigl(x,g(x)\bigr)
    \end{equation*}
    be the graph of $g$, that is hence naturally a leafwise map.
    
    We can then find a neighborhood~$U_0$ of $\Gamma_g(\Sigma)$ in the leaf $\Sigma\times F_0$, a neighborhood~$U$ of $\Gamma_g(\Sigma)$ in $\Sigma\times M$, and a diffeomorphism $\Psi_g\colon U_0\times \R^q\to U$ such that the following properties hold (recall in particular \Cref{con:boundary_curves}).
    \begin{enumerate}
        \item $\Psi_g^*(T\Sigma\oplus \cF)$ is the (smooth) foliation whose leaves are the horizontal slices $U_0\times\{x\}$ with $x \in \R^q$.
        \item The restriction of $\Psi_g$ to $U_0\times\{0\}$ is the natural inclusion of $U_0$ in $U$.
        \item Let $(\partial \Sigma\times P)_0$ be one of the connected components of $(\partial \Sigma\times P) \cap U_0$, then the image of $(\partial \Sigma\times P)_0 \times \bigl(\R^s\oplus\{0\}\bigr)$ under $\Psi_g$ is the component of $(\partial \Sigma\times P)\cap U$ containing $(\partial \Sigma\times P)_0$.
        \item Let $(\partial \Sigma\times P)_0'$ be a different connected component of $(\partial \Sigma\times P) \cap U_0$.
        Then there is a diffeomorphism $\Phi\colon \R^q \to \R^q$ keeping the origin fixed such that the the diffeomorphisms~$\Psi_g$ and $\Psi_g'$ adapted to $(\partial \Sigma\times P)_0$ and $(\partial \Sigma\times P)_0'$ respectively are related by $\Psi_g'(x,v) = \Psi_g\bigl(x,\Phi(v)\bigr)$ for every $x\in U_0$ and every $v\in \R^q$.
    \end{enumerate}
\end{corollary}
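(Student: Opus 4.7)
The plan is to reduce the statement to a direct application of \Cref{lemma:holonomy} in the ambient manifold $X := \Sigma\times M$ endowed with the product foliation $T\Sigma\oplus \cF$, taking $Y := \Gamma_g(\Sigma)$ and $Z := \partial\Sigma\times P$.

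First, I would verify the hypotheses of \Cref{lemma:holonomy}. The submanifold $Y$ is compact because $\Sigma$ is, and it is contained in the single leaf $\Sigma\times F_0$ because $g$ is leafwise. A loop in $Y$ has the form $\tau \mapsto \bigl(\gamma(\tau), g(\gamma(\tau))\bigr)$ for some loop $\gamma$ in $\Sigma$, and its holonomy with respect to $T\Sigma\oplus\cF$ coincides with the holonomy of $\cF$ along $g\circ\gamma$ in $F_0$; the latter is trivial by the definition of $\fS_0(\cF,P)$, i.e.\ by \Cref{def:leafwise_map_trivial_holonomy}. Moreover, $Z$ is a foliated submanifold of $(\Sigma\times M, T\Sigma\oplus \cF)$, because $P$ is foliated in $M$ and the product structure immediately supplies the required adapted charts; the induced foliation on $Z$ is $T\partial\Sigma \oplus \cF_P$, which has codimension $s$ inside $Z$.

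Next, I would apply \Cref{lemma:holonomy} to this data. Item~\eqref{item:trivial_hol_lemma_holonomy} yields, for any choice of base point $x_0$, a neighborhood $U_0$ of $Y$ in $\Sigma\times F_0$, a neighborhood $U$ of $Y$ in $\Sigma\times M$, and a diffeomorphism $\psi\colon U_0\times D^q_{x_0}\to U$ which trivializes the foliation and restricts to the inclusion along the zero section. Pre-composing the second factor with any smooth diffeomorphism $\R^q\to D^q_{x_0}$ sending $0$ to $0$ produces the desired $\Psi_g\colon U_0\times \R^q \to U$, and the first two conclusions of the corollary follow immediately. For the third conclusion, given a chosen connected component $(\partial\Sigma\times P)_0$ of $(\partial\Sigma\times P)\cap U_0$, I would pick the base point $x_0$ inside it and invoke item~\eqref{item:submanifold_lemma_holonomy} of \Cref{lemma:holonomy}; this yields a splitting $\R^q = \R^s\oplus\R^{q-s}$ with respect to which the restriction of $\Psi_g$ to $(\partial\Sigma\times P)_0 \times \bigl(\R^s\oplus\{0\}\bigr)$ parametrizes diffeomorphically the component of $Z\cap U$ containing $(\partial\Sigma\times P)_0$. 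Finally, the fourth conclusion is precisely item~\eqref{item:change_of_base_point} of \Cref{lemma:holonomy}, applied to base points chosen in the two different components $(\partial\Sigma\times P)_0$ and $(\partial\Sigma\times P)_0'$.

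The only mildly delicate point I foresee is the bookkeeping for the holonomy of the product foliation $T\Sigma\oplus\cF$ along loops in $\Gamma_g(\Sigma)$ and its identification with the holonomy of $\cF$ along the corresponding loops in $F_0$ via $g$; once this identification is in place, the remainder is a formal translation of \Cref{lemma:holonomy} into the product setting.
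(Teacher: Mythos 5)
Your proposal is correct and follows exactly the same route as the paper's proof: apply \Cref{lemma:holonomy} with $X = \Sigma\times M$, $\cF$ replaced by $T\Sigma\oplus\cF$, $Y = \Gamma_g(\Sigma)$, and $Z = \partial\Sigma\times P$, then reparametrize $(D^q_{x_0},D^s_{x_0})$ to $(\R^q,\R^s)$. Your additional care in identifying the holonomy of the product foliation along $\Gamma_g\circ\gamma$ with the holonomy of $\cF$ along $g\circ\gamma$ is precisely the implicit justification the paper omits, and the one technical point you should also record (as the paper does) is that $\Gamma_g$ is at least a $C^1$-embedding, so that $Y$ is genuinely a compact submanifold to which the lemma applies.
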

\begin{proof}
    Up to reparametrizing the pair $(D^q_{x_0},D^s_{x_0})$ as $(\R^q,\R^s)$, this is a direct corollary of \Cref{lemma:holonomy}, with the following choices.
    The manifold~$X$ is $\Sigma\times M$, and the foliation is $T\Sigma\oplus \cF$.
    The compact submanifold~$Y$ is the image of the the graph~$\Gamma_g$ which by our choice of $\fS$ is at least a $C^1$-embedding. 
    For the submanifold~$Z$ we use $\partial \Sigma\times P$ and for $Z_0$ we use the connected component $(\partial \Sigma\times P)_0$.
\end{proof}

In the proof of \Cref{thm:leafwise_maps_trivial_hol_is_banach_submanifold}, we will use the following adapted version of the Poincaré inequality to obtain necessary $\fS$-norm bounds.
(Recall that throughout all this section we assumed $k\geq 2$ and $p>1$.)

\begin{lemma}[Poincaré inequality]\label{lemma:poincare_inequality}
    For every compact manifold~$\Sigma$ with (possibly empty) boundary, and for a regularity class~$\fS$ that is either $W^{k,p}$ or $C^k$, we find a constant~$C = C(\Sigma,\fS) >0$ such that every $\fS$-regular function $f\colon \Sigma \to \R$ vanishing somewhere on $\Sigma$ satisfies the inequality:
    \begin{equation}\label{eqn:gaffney}
        \|f\|_{\fS}\leq C \, \|\d f\|_{\fS^{-1}} \; .
    \end{equation}
\end{lemma}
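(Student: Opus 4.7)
The plan is to reduce the inequality to a pointwise bound on $f$ obtained by integrating $\d f$ along paths in $\Sigma$, exploiting the fact that $\Sigma$ is connected (as assumed at the beginning of \Cref{sec:leafwise_maps_trivial_holonomy}). The first observation is that, in both regularity classes, the derivatives of $f$ of order $\geq 1$ are nothing but the derivatives of $\d f$ of order $\leq k-1$, so there is an elementary inequality of the form
$$\|f\|_{W^{k,p}} \leq c\, \bigl(\|f\|_{L^p} + \|\d f\|_{W^{k-1,p}}\bigr) \quad \text{and} \quad \|f\|_{C^k} \leq c\, \bigl(\|f\|_{C^0} + \|\d f\|_{C^{k-1}}\bigr)$$
for some universal constant $c$. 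Combined with the trivial estimate $\|f\|_{L^p} \leq \mathrm{Vol}(\Sigma)^{1/p}\, \|f\|_{C^0}$ valid on the compact manifold $\Sigma$, both cases reduce to producing a bound of the form $\|f\|_{C^0} \leq C\, \|\d f\|_{\fS^{-1}}$.

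To obtain this $C^0$-bound, I would fix an auxiliary Riemannian metric $h$ on $\Sigma$, denote its diameter by $D$, and let $x_0 \in \Sigma$ be a point at which $f$ vanishes. For every $q \in \Sigma$, by connectedness one can choose a piecewise smooth path $\gamma_q \colon [0,1] \to \Sigma$ from $x_0$ to $q$ of $h$-length at most $D+1$. Under our standing regularity assumptions -- namely $k \geq 2$ in the $C^k$ case, or $k \geq 2$ and $(k-1)p > \dim \Sigma$ in the Sobolev case -- the Sobolev embedding theorem ensures that $\d f$ admits a continuous representative, so the fundamental theorem of calculus applied to $f\circ \gamma_q$ yields
$$|f(q)| = |f(q)-f(x_0)| = \left|\int_0^1 \d f_{\gamma_q(t)}\bigl(\dot\gamma_q(t)\bigr)\, \d t\right| \leq (D+1)\, \|\d f\|_{C^0}.$$
Taking the supremum over $q \in \Sigma$ then gives $\|f\|_{C^0} \leq (D+1)\, \|\d f\|_{C^0}$. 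This directly finishes the $C^k$ case via the trivial bound $\|\d f\|_{C^0} \leq \|\d f\|_{C^{k-1}}$, and the Sobolev case after invoking once more the continuous embedding $W^{k-1,p}(\Sigma) \hookrightarrow C^0(\Sigma)$, which holds precisely because $(k-1)p > \dim \Sigma$.

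The only real subtlety is that the path-integration identity must make classical sense even when $f \in W^{k,p}$; this is exactly what the assumption $(k-1)p > \dim \Sigma$ ensures, since it forces both $f$ and $\d f$ to possess continuous representatives. Beyond this, the proof is essentially a book-keeping exercise, with no need for a compactness or contradiction argument, and the constant $C$ depends only on $\Sigma$ (through its diameter, volume and the relevant Sobolev embedding constant) and on $\fS$.
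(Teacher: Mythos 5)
Your argument is correct, but it takes a genuinely different route from the paper's. The paper proceeds by contradiction: it assumes a sequence $(f_j)_j$ with $\|\d f_j\|_{\fS^{-1}} < \tfrac{1}{j}\|f_j\|_\fS$, each vanishing at some $p_j$, normalizes by $\|f_j\|_{\fS^{-k}}=1$, extracts an $\fS^{-1}$-convergent subsequence using the compact embeddings $C^k\hookrightarrow C^{k-1}$ (Arzelà--Ascoli) and $W^{k,p}\hookrightarrow W^{k-1,p}$ (Kondrachov), and observes that the limit $f_\infty$ must simultaneously be a nonzero constant (because $\d f_\infty=0$ and $\|f_\infty\|_{\fS^{-k}}=1$) and vanish at the accumulation point of the $p_j$'s --- a contradiction. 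You give instead a direct estimate: reduce to a $C^0$-bound via the elementary decomposition $\|f\|_\fS\lesssim\|f\|_{C^0}+\|\d f\|_{\fS^{-1}}$, obtain the $C^0$-bound by integrating $\d f$ along a path of length $\lesssim\operatorname{diam}(\Sigma)$ joining a zero of $f$ to an arbitrary point, and then control $\|\d f\|_{C^0}$ by $\|\d f\|_{\fS^{-1}}$, via Morrey's embedding in the Sobolev case. Your approach yields an explicit constant (diameter, volume, and a Sobolev constant for $\Sigma$), whereas the paper's proof is non-constructive. Both arguments need the same underlying Sobolev input --- the continuous embedding $\fS^{-1}\hookrightarrow C^0$, i.e.\ $(k-1)p>\dim\Sigma$ in the $W^{k,p}$ case: you use it so the path-integral identity is classical, while the paper uses it implicitly to upgrade $\fS^{-1}$-convergence of $(f_j)_j$ to uniform convergence and thereby conclude $f_\infty(p_\infty)=0$ (so the parenthetical ``$k\geq 2$ and $p>1$'' in the paper's proof actually understates what is being used). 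Both approaches also rely tacitly on $\Sigma$ being connected, which is a standing assumption at the start of the section; you make this explicit, while in the contradiction argument it is hidden in the step ``$\d f_\infty=0\Rightarrow f_\infty$ is constant.''
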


Here, $\fS^{-1}$ is just a short-hand notation for $W^{k-1,p}$ in the case $\fS=W^{k,p}$ and for $C^{k-1}$ in the case $\fS=C^k$.
Note also that by our assumption of $k\geq 2$, $f$ is $C^1$ and $\d f$ is its honest differential.

\begin{proof}[Proof of \Cref{lemma:poincare_inequality}]
    Suppose by contradiction that there is a sequence $(f_j)_{j\geq 1}$ of $\fS$-functions such that each  $f_j$ vanishes at some point $p_j\in \Sigma$ but such that     
    \begin{equation}
    \label{eqn:first_bound_dfj}
        \|\d f_j\|_{\fS^{-1}} < \frac{1}{j} \| f_j\|_{\fS} \; .
    \end{equation}
    Passing to a subsequence we can assume that $(p_j)_j$ converges to a point $p_\infty \in \Sigma$.
    
    After rescaling by a constant, we can furthermore assume that $\|f_j\|_{\fS^{-k}} = 1$ for all $j$, where $\fS^{-k}$ denotes the $L^p$-norm if $\fS=W^{k,p}$, and the $C^0$-norm if $\fS = C^k$.
    Indeed, the inequality above is homogeneous, and thus preserved by rescaling, and clearly, $f_j(p_j) = 0$ also holds after rescaling the function by a constant.
    
    Then, using \eqref{eqn:first_bound_dfj}, the fact that (by definition) $\|f_j\|_{\fS} = \|f_j\|_{\fS^{-k}} + \|\d f_j\|_{\fS^{-1}}$ implies that 
    \begin{equation}
        \label{eqn:bound_norm_fj}
        \|f_j\|_{\fS} \leq \frac{j \| f_j\|_{\fS^{-k}}}{j- 1} \leq 2 \;,
    \end{equation}
    and hence that
    \begin{equation}
        \label{eqn:bound_norm_dfj}
        \|\d f_j \|_{\fS^{-1}} \leq \frac{ \| f_j\|_{\fS^{-k}}}{j-1}\leq \frac{2}{j}
        \quad\text{for $j>1$.}
    \end{equation}

    Now, recall that for maps on compact domains the embedding $\fS \to \fS^{-1}$ is compact:
    if $\fS = C^k$, this is just the Arzelà-Ascoli theorem; if $\fS = W^{k,p}$, then this is the content of the Kondrachov embedding theorem (note that $p^*\coloneqq np/(n-p)>p$).
    Hence using $\eqref{eqn:bound_norm_fj}$ and after passing to a subsequence, $f_j$ converges in $\fS^{-1}$-norm to a certain $f_\infty$.
    In particular, $\|f_\infty\|_{\fS^{-k}} = 1$.  
    Moreover, \Cref{eqn:bound_norm_dfj} implies that $\d f_\infty = 0$ so that $f_\infty$ is a constant function that is different from $0$, because $\|f_\infty\|_{\fS^{-k}} = 1$.

    Note however that the $\fS^{-1}$-convergence implies that $f_j$ converges uniformly to $f_\infty$, so that in particular $f_\infty$ will vanish at $p_\infty$.
    Since $f_\infty$ is a constant function, $f_\infty = 0$ everywhere, in contradiction to the previous claim.
    This concludes the proof.
\end{proof}

In order to apply the implicit function theorem for Banach spaces later on, we have to make sure that the target space of the considered map is indeed a Banach space.

\begin{corollary}\label{exact forms form Banach space}
    Let $\Sigma$ be a compact manifold that might have non-empty boundary.
    Let $\fS^{-1}\bigl(\Omega^1(\Sigma)\bigr)$ be the Banach space of $1$-forms with $\fS^{-1}$-regularity.
    The linear subspace of $\fS^{-1}\bigl(\Omega^1(\Sigma)\bigr)$ made of exact $1$-forms 
    \begin{equation*}
        \cB^{-1}\bigl(\Omega^1(\Sigma)\bigr) = \bigl\{\d g \in \fS^{-1}(\Omega^1(\Sigma))\bigm|\; g\colon \Sigma \to \R\text{ is $\fS$-regular}\bigr\}
    \end{equation*}
    and the one made of exact $1$-forms having a primitive that vanishes along the boundary
    \begin{equation*}
        \cB_0^{-1}\bigl(\Omega^1(\Sigma)\bigr) = \bigl\{\d g \in \fS^{-1}(\Omega^1(\Sigma))\bigm|\; g\colon \Sigma \to \R \text{ is $\fS$-regular with } g\vert_{\partial \Sigma} = 0\bigr\}
    \end{equation*}
    are both closed, and thus in particular they are Banach spaces.
\end{corollary}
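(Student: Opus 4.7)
The plan is to show both spaces are closed in $\fS^{-1}\bigl(\Omega^1(\Sigma)\bigr)$ by reconstructing a primitive of a candidate limit using the Poincaré inequality from \Cref{lemma:poincare_inequality}. Throughout, I will use the Sobolev embedding (which holds since $(k-1)p>n=\dim\Sigma$, so $kp>n$ and $\fS \hookrightarrow C^0$) to make sense of pointwise values and boundary restrictions of $\fS$-functions.

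Consider first $\cB_0^{-1}$ and assume $\partial\Sigma\neq\emptyset$ (the case $\partial\Sigma=\emptyset$ is vacuous, see the next paragraph). Let $\alpha_j = \d g_j$ be a sequence in $\cB_0^{-1}$ converging to some $\alpha\in \fS^{-1}(\Omega^1(\Sigma))$, with each $g_j\in\fS$ vanishing on $\partial \Sigma$. Then $g_j - g_k$ is an $\fS$-regular function vanishing somewhere on $\Sigma$, so \Cref{lemma:poincare_inequality} yields
\begin{equation*}
    \|g_j - g_k\|_{\fS} \leq C\, \|\d g_j - \d g_k\|_{\fS^{-1}} \; ,
\end{equation*}
which shows that $(g_j)_j$ is Cauchy in $\fS$. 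Let $g\in\fS$ be its limit. Since $\d\colon \fS\to \fS^{-1}(\Omega^1(\Sigma))$ is continuous, $\d g = \alpha$; and since $\fS$ embeds continuously into $C^0$, the restriction to $\partial\Sigma$ is continuous, so $g\vert_{\partial \Sigma} = 0$. Hence $\alpha\in\cB_0^{-1}$, proving closedness.

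For $\cB^{-1}$, the argument is the same after a preliminary normalization. Fix any point $p_0\in\Sigma$ (which makes sense thanks to the Sobolev embedding into $C^0$ mentioned above). If $\alpha_j = \d g_j$ converges to $\alpha$ in $\fS^{-1}$, replace each $g_j$ by $\widetilde{g}_j\coloneqq g_j - g_j(p_0)$; this does not change the exact differential and guarantees that $\widetilde g_j$ vanishes at $p_0$. Applied to $\widetilde g_j - \widetilde g_k$, \Cref{lemma:poincare_inequality} again gives a Cauchy sequence in $\fS$, whose limit $g$ satisfies $\d g = \alpha$, so $\alpha \in \cB^{-1}$. Note that when $\partial \Sigma = \emptyset$, the definition of $\cB_0^{-1}$ becomes vacuous and reduces to $\cB^{-1}$, so the same argument covers this case.

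Both $\cB^{-1}$ and $\cB_0^{-1}$ are thus closed subspaces of the Banach space $\fS^{-1}\bigl(\Omega^1(\Sigma)\bigr)$, and hence Banach spaces themselves. The only delicate point is ensuring that the Poincaré inequality is applicable, which only requires the existence of one point where the function vanishes — provided either by the boundary condition or by the explicit normalization at $p_0$; all other steps are immediate consequences of continuity of $\d$ and of the Sobolev embedding.
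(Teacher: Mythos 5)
Your proof is correct and follows essentially the same approach as the paper's: both use the Poincaré inequality (\Cref{lemma:poincare_inequality}) applied to differences of primitives (after a pointwise normalization for $\cB^{-1}$, or using the boundary vanishing for $\cB_0^{-1}$) to conclude that the sequence of primitives is Cauchy in $\fS$, then invoke continuity of $\d$ and the Sobolev embedding to identify the limit and its boundary value. The only cosmetic difference is the order in which you treat the two subspaces.
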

\begin{proof}
    Clearly both these subsets are linear subspace of $\fS^{-1}\bigl(\Omega^1(\Sigma)\bigr)$.  We only need show that they are closed.

    Let $\alpha$ be an element in the closure of $\cB^{-1}\bigl(\Omega^1(\Sigma)\bigr)$, then there exists a sequence $(g_j)_j$ of real-valued functions of regularity $\fS$ on $\Sigma$ such that $\d g_j \to \alpha$ in $\fS^{-1}\bigl(\Omega^1(\Sigma)\bigr)$.
    Choose a point $p\in \Sigma$.
    Then, up to replacing each of the $g_j$ by $g_j - g_j(p)$, we can assume that all the $g_j$'s vanish at $p$;
    note that, since adding a constant to a function does not change its exterior derivative, we still have that $(\d g_j)_j$ converges to $\alpha$.

    We want to prove that $(g_j)_j$ converges in the space $\fS(\R)$ of $\fS$-regular functions $\Sigma\to\R$ to a function~$g_\infty$ such that $\alpha = \d g_\infty$, thus showing that $\alpha\in \cB^{-1}\bigl(\Omega^1(\Sigma)\bigr)$.
    \Cref{lemma:poincare_inequality} provides us with a constant~$C$ such that $\|g_j - g_{j'}\|_{\fS} \le C\,\|\d g_j - \d g_{j'}\|_{\fS^{-1}}$. 
    This implies that $(g_j)_j$ is a Cauchy and hence also a converging sequence, thus giving the claim.
    
    The proof that $\cB_0^{-1}\bigl(\Omega^1(\Sigma)\bigr)$ is closed is almost identical using \Cref{lemma:poincare_inequality} for functions
    vanishing along the boundary, and we omit it here.
    The only detail to add is that, since $\fS$-convergence implies uniform convergence, we find a primitive for $\alpha$ that vanishes along the boundary.
\end{proof}

We are now ready to give a proof of the main result of this section.
A note on convention: to simplify the exposition, when we need to talk about properties that certain maps defined on $\Sigma$ satisfy along $\partial \Sigma$, we will not treat explicitly the case of $\partial \Sigma = \emptyset$, in which case we implicitly intend the described conditions to be trivially satisfied.

\begin{proof}[Proof of \Cref{thm:leafwise_maps_trivial_hol_is_banach_submanifold}]
    Let $f_0\in \fS_0(\cF,P)$ be a map, and let $\Gamma_{f_0}$ be its graph in $\Sigma\times M$.
    Let $\Psi_{f_0}\colon U_0\times \R^q \to U\subset \Sigma\times M$ be the smooth map obtained in \Cref{corollary:trivial foliation close to graph}.
    Note that $f_0$ factors as follows:
    \begin{equation*}
        \begin{tikzcd}
            \Sigma \arrow[rr, "\Gamma_{f_0}'"] \arrow[rrrr, "f_0"', bend right] \arrow[rrr, "\Gamma_{f_0}", bend left] &  & U_0\times\R^q \arrow[r, "\Psi_{f_0}"'] & U\subset \Sigma\times M \arrow[r, "\pi_M"] & M
        \end{tikzcd}
    \end{equation*}
    where $\pi_M\colon \Sigma\times M\to M$ is the obvious projection and $\Gamma_{f_0}'$ is just the composition $\Psi_{f_0}^{-1}\circ\Gamma_{f_0}$.

    The graph~$\Gamma_f$ of every map $f\colon \Sigma \to M$ that is $C^0$-close to $f_0$ lies in $U$.
    Choose an open neighborhood $\cU_{f_0} \subset \fS(M,P)$ of $f_0$ such that $\Gamma_f(\Sigma) \subset U$ for every $f\in \cU_{f_0}$.
    We can thus associate to every $f\in \cU_{f_0}$ a map $\Gamma_f' := \Psi_{f_0}^{-1}\circ \Gamma_f\colon \Sigma \to U_0\times \R^q$ such that $f$ factors according to the diagram above, and such a $\Gamma_f'$ is of regularity~$\fS$ as composition of a smooth map with an $\fS$-regular map is $\fS$-regular, see for instance \cite[Proposition~B.1.20]{McDuffSalamonBook}.
    In fact, according e.g.\ to \cite[Lemma 2.98]{WenNotes}, post-composition with a smooth function is a smooth map between Banach spaces of $\fS$-regular maps, so that $\Gamma' \colon f\mapsto \Psi_{f_0}^{-1}\circ \Gamma_f$ is a diffeomorphism between $\cU_{f_0}$ and the space of $\fS$-regular maps $\Sigma \to U_0\times \R^q$ with a suitable boundary condition that we will describe now.  

    Since the boundary of $\Sigma$ is mapped by $f$ into $P$, $\partial \Sigma$ is mapped by $\Gamma_f$ into $\partial \Sigma\times P$.
    Consider the following two cases:

    \textbf{(i)} If $\partial\Sigma$ is connected (possibly empty), then let $(\partial \Sigma\times P)_0$ be the connected component of $\bigl(\partial \Sigma\times P\bigr)\cap U_0$ containing $\Gamma_{f_0}(\partial\Sigma)$.
    By the properties of $\Psi_{f_0}$ it then follows that $\Gamma_f'(\partial \Sigma)$ lies in $(\partial \Sigma\times P)_0\times \R^s$ with the notation of \Cref{corollary:trivial foliation close to graph}, where we consider $\R^s$ as $\R^s\times\{0\}\subset \R^q$.
    
    \textbf{(ii)} If $P$ is transverse to $\cF$, then the induced foliation~$\cF_P$ will be of codimension~$s=q$.
    It follows that $\bigl(\partial \Sigma\times P\bigr)\cap U$ agrees with the image of $\bigl(\bigl(\partial \Sigma\times P\bigr)\cap U_0\bigr)\times \R^s$  under $\Psi_{f_0}$, i.e.\ the desired form given in \Cref{corollary:trivial foliation close to graph}.(iii) holds for all components of $\bigl(\partial \Sigma\times P\bigr)\cap U_0$ simultaneously.

    \smallskip

    The map~$\Gamma_f'$ takes thus in both cases the form $x \mapsto \bigl(x,\hat f(x), v_1(x),\dotsc,v_q(x)\bigr)$, where the $v_{s+1}, \dotsc,v_q$ vanish along $\partial \Sigma$.
    It is clear that the image of $f$ lies in a leaf of the foliation if and only if all the $v_j$ for $j\in\{1,\dotsc,q\}$ are constant.
    Equivalently, we can define a smooth function of smooth Banach manifolds
    \begin{equation}
    \label{eqn:implicit_function}
        F\colon \cU_{f_0} \to \fS^{-1}\bigl(\Omega^1(\Sigma)\bigr) \times \dotsm \times \fS^{-1}\bigl(\Omega^1(\Sigma)\bigr), f\mapsto \bigl(\d v_1,\dotsc,\d v_q\bigr)
    \end{equation}
    where each of the factors $\fS^{-1}\bigl(\Omega^1(\Sigma)\bigr)$ denotes the space of $1$-forms on $\Sigma$ of class~$\fS^{-1}$, and
    \begin{equation*}
        \fS(\cF,P) \cap \, \cU_{f_0} = F^{-1}(0,\dotsc,0) \;.
    \end{equation*}
    
    \medskip

    It remains to apply the implicit function theorem.
    Note that the image of $F$ consists only of "exact" forms.  In particular, to have any hope of establishing the transversality of $F$ at $(0,\dotsc,0)$, it is necessary to reduce the target space of the map~$F$ to a suitable subspace.
    We will show that the correct target space to use is
    \begin{equation*}
        \underbrace{\cB^{-1}\bigl(\Omega^1(\Sigma)\bigr) \times \dotsm \times \cB^{-1}\bigl(\Omega^1(\Sigma)\bigr)}_{\text{$s$ times}}
        \times 
        \underbrace{\cB_0^{-1}\bigl(\Omega^1(\Sigma)\bigr) \times \dotsm \times \cB_0^{-1}\bigl(\Omega^1(\Sigma)\bigr)}_{\text{$q-s$ times}} \;,
    \end{equation*}
     where $\cB^{-1}\bigl(\Omega^1(\Sigma)\bigr)$ and $\cB_0^{-1}\bigl(\Omega^1(\Sigma)\bigr)$ are the Banach spaces introduced in \Cref{exact forms form Banach space} above.
    
    \medskip
    
    We now linearize the map~$F$ at the element~$f_0$.
    Consider a smooth path of maps $f_t \subset \fS(M,P)$ for $t\in (-\epsilon,\epsilon)$ passing through $f_0$, and denote its tangent vector at $t=0$ by $\dot f_0$.
    The linearization of $F$ at $f_0$ satisfies
    \begin{equation*}
        \frac{d}{dt}\Bigr\vert_{t=0}  F(f_t) = DF_{f_0}\cdot \dot f_0 \;.
    \end{equation*}
    Recall that $\Gamma'$ is a diffemorphism, 
    so writing in components $\Gamma'_{f_t} = \bigl(\Id_\Sigma,\hat f_t, v_{1,t}\, ,\dotsc, v_{q,t}\bigr)$
    we have (by definition of $F$) that
    $DF_{f_0}\cdot \dot f_0 = \bigl(\d \dot v_1,\dotsc, \d \dot v_q\bigr)$, where $\d \dot v_j = \frac{d}{dt}\bigr\vert_{t=0} \d v_{j,t}$.
    
    To see now that any element $\bigl(\alpha_1, \dotsc, \alpha_s, \alpha_{s+1}, \dotsc, \alpha_q\bigr)$, for arbitrary $\alpha_j \in \cB^{-1}\bigl(\Omega^1(\Sigma)\bigr)$ if $j\le s$ and $\alpha_j \in \cB_0^{-1}\bigl(\Omega^1(\Sigma)\bigr)$ if $j> s$, lies in the image of $DF_{t_0}$, choose functions $g_1,\dotsc, g_q\colon \Sigma \to \R$ of regularity~$\fS$ such that $g_j\vert_{\partial \Sigma} = 0$ if $j>s$, and such that $\d g_j = \alpha_j$.
    
    We can then define a path $\Gamma_{f_t}' = \bigl(\Id_\Sigma, f_0, t\,g_1,\dotsc, t\,g_q\bigr)$.
    This path corresponds to $f_t := \pi_M\circ\Psi_{f_0}\circ \Gamma_{f_t}'$ lying in $\cU_{f_0}\subset \fS(M,P)$, see also the diagram above.
    In particular, all $f_t$ respect the boundary condition $f_t(\partial \Sigma) \subset P$.
    
    It follows then directly that $DF_{f_0}\cdot \dot f_0 = (\d g_1,\dotsc,\d g_q) = (\alpha_1,\dotsc,\alpha_q)$ so that $DF_{f_0}$ is surjective.
    In particular, after possibly reducing the domain~$\cU_{f_0}$, $0$ is a regular value of $F$ and $F^{-1}(0) = \fS(N,\cF)\cap \cU_{f_0}$ is a smooth Banach manifold. 
    
    \medskip

    We still need to prove the every $f$ in $\fS(N,\cF)\cap \cU_{f_0}$ has trivial holonomy; together with the statement that we have just shown, it will then follow that $\fS_0(N,\cF)\cap \cU_{f_0} = \fS(N,\cF)\cap \cU_{f_0}$ is a Banach manifold, thus concluding the proof.
    Note that $\Gamma_f'$ is of the form $\bigl(\Id_\Sigma, f, c_1,\dotsc,c_s,0,\dotsc,0\bigr)$ inside $U_0\times \R^q$ where the foliation is given by leaves corresponding to the slices with constant $\R^q$-value.
    Clearly, $\Gamma_f'$ has thus trivial holonomy, and $\Psi_{f_0}$ being a diffeomorphism the same holds for $\Gamma_f$.

    In order to prove that a given $f$ in $\fS(N,\cF)\cap \cU_{f_0}$ has trivial holonomy, it is enough to interpret its holonomy as that of its graph, which we know has image in the open set $U$ from \Cref{corollary:trivial foliation close to graph} and hence has trivial holonomy; here are additional details.
    \\
    Let now $\gamma\subset \Sigma$ be a closed loop, and consider the parallel transport of $\cF$ along $f\circ\gamma$ (as described in the proof of \Cref{lemma:holonomy}) by choosing a normal bundle~$\nu$ that is transverse to the foliation~$\cF$ so that the total space of $\nu$ inherits on a neighborhood of the $0$-section an induced foliation that is transverse to the fibers.
    The pull-back bundle $(f\circ\gamma)^*\nu$ has then a $1$-dimensional foliation that is approximately parallel to the $0$-section, and which allows us to transport the fiber from $f(\gamma(0))$ to $f(\gamma(1))$ defining the parallel transport along $f\circ \gamma$.
    \\
    The bundle~$\nu$ can be lifted to $\Sigma\times M$ where it will be transverse to the product foliation~$\Sigma \oplus \cF$.
    Since $\Gamma_f$ lies in our model neighborhood $U$, the parallel transport along $\Gamma_f\circ \gamma$ will be trivial, but note that the construction of parallel transport agrees with the one performed for $f\circ \gamma$.
    This shows that $f$ has trivial holonomy, thus concluding.
\end{proof}

    \medskip
    
\begin{remark}\label{rmk:tangent_bdle_to_space_maps}
    Let $f_0$ be an element in $\fS_0(\cF,P)$.
    The implicit function theorem used in the proof of \Cref{thm:leafwise_maps_trivial_hol_is_banach_submanifold} allows us to identify the tangent space at any $f\in \fS_0(\cF,P)\cap \cU_{f_0}$ with the kernel of $DF_f$, the linearization of the map~$F$ in \eqref{eqn:implicit_function} at the point $f$.
    \\
    More precisely, recall that the tangent space $T_f\fS(M,P)$ is naturally identified with the space of $\fS$-sections of $f^*TM$ over $\Sigma$.
    Now, we also have an isomorphism of vector bundles $\bigl(D\Psi_{f_0}\bigr)^{-1}\colon TM\vert_{\Psi_{f_0}(U_0\times\R^q)}\xrightarrow{\cong} T(U_0\times\R^q)\cong TU_0\times T\R^q$ in the case where $\partial \Sigma = \emptyset$ (and $P=\emptyset$) and $\bigl(D\Psi_{f_0}\bigr)^{-1}\colon TM\vert_{\Psi_{f_0}(U_0\times\R^s)}\xrightarrow{\cong} T(U_0\times\R^s)\cong TU_0\times T\R^s$ in the case where $\partial \Sigma \neq \emptyset$ (and $P\neq \emptyset$ is a foliated submanifold with $\cF_P$ having rank $s$).
    Hence, we can identify $T_f\fS_0(\Sigma,\cF)$ with the set made of those $\fS$-sections $X$ of $f^*TM$ such that $\bigl(D\Psi_{f_0}\bigr)^{-1}(X)$ have constant components along the fiber direction of $T\R^q = \R^q\times\R^q_{\mathrm{fiber}}$ and, if $\partial N \neq \emptyset$, the $\R^q_{\mathrm{fiber}}$-component of $\bigl(D\Psi_{f_0}\bigr)^{-1}(X)$ has values in $\R^s\times\{0\}\subset \R^q_{\mathrm{fiber}}$.
    From this presentation, it is clear that this space can be naturally identified with the pairs~$(X',v)$, where $X'$ is an $\fS^{-1}$-section in $f^*\cF$ over $\Sigma$,
    and $v$ is an element of $\R^s$ or $\R^q$ depending whether $\Sigma$ has boundary or not.
    In other words, denoting by $L_f$ the leaf of $\cF$ containing the image of $f$, we then have a natural identification
    \begin{equation}
    \label{eqn:splitting_tangent_bundle_foliated_maps}
        T_f\fS_0(\Sigma,\cF) \simeq 
        \begin{cases}
            \fS(f^*TL_f) \oplus \R^q & \text{if } \partial \Sigma = \emptyset \\
            \fS(f^*TL_f) \oplus \R^s & \text{if } \partial \Sigma \neq  \emptyset
        \end{cases}
    \end{equation}
\end{remark}

\begin{remark}\label{rmk:foliation_space_foliated_maps}
    It is not hard to see that the foliation~$\cF$ on $M$ induces a foliation $\widetilde{\cF}$ on $\fS_0(\Sigma,\cF)$ with leaves of finite codimension, which is either equal to the codimension~$q$ of $\cF$ in $M$ if $\partial \Sigma = \emptyset$, or to the codimension~$s$ of the foliation~$\cF_P$ on the submanifold~$P$ serving as boundary condition in the case $\partial \Sigma \neq \emptyset$.    \\
    Indeed, in the proof of \Cref{thm:leafwise_maps_trivial_hol_is_banach_submanifold}, we found for every $f\in \fS_0(\Sigma,\cF)$ an open neighborhood~$\cU_f$ of $f$ in $\fS(M,P)$ and a submersion $\cU_f \cap \fS_0(\Sigma,\cF) \to \R^q$ or $\cU_f \cap \fS_0(\Sigma,\cF) \to \R^s$.
    The level sets of this map defines a foliation on $\cU_f \cap \fS_0(\Sigma,\cF)$, and each plaque in this chart consists of those maps that lie with respect to the chosen Reeb chart in \Cref{corollary:trivial foliation close to graph} in the same plaque of $\cF$.
    This implies that if $g\in \fS_0(\Sigma,\cF)$ lies close to $f$ such that $\cU_f \cap \fS_0(\Sigma,\cF)$ and $\cU_g \cap \fS_0(\Sigma,\cF)$ overlap, then the two local foliations will patch up well.
    We obtain thus a globally defined foliation on $\fS_0(\Sigma,\cF)$.
    
    Lastly, note that if $\cF$ on $M$ is cooriented, the Reeb chart above can be constructed in such a way that the orientation of the $\R^q$, resp.\ $\R^s$, factor coincides with the given coorientation of $\cF$.
    In this case, $\widetilde\cF$ on $\fS_0(\Sigma,\cF)$ will then also be naturally equipped with a coorientation.
    
\end{remark}


\subsection{Pseudo-holomorphic discs and linearized Cauchy-Riemann operator}
\label{sec:pseudo_hol_and_CR_operator}

Consider now a smooth manifold $M^{2n+q}$ equipped with a codimension~$q$ foliation $\cF$. 
Let moreover $\omega$ be a leafwise symplectic structure on $\cF$, and for the rest of this section restrict the attention to Sobolev maps of regularity $\fS = W^{k,p}$ with $k\geq 2$ and $(k-1)\,p > 2$.
Fix moreover a Riemann surface~$(\Sigma,j)$ and, in the case where the latter has non-empty boundary, also a foliated submanifold $P\subset M$.
From now on, we furthermore ask that the induced foliation on $P$ is Lagrangian, as in \Cref{def: Lagrangian foliation}.

The aim in this section is then to look at the subspace~$\cM$ of $W^{k,p}_0(\cF,P)$ made of pseudo-holomorphic maps $u\colon (\Sigma,\partial \Sigma) \to (M,P)$ that are foliated with trivial holonomy, and to prove that, for generic choices of auxiliary data, this is a finite dimensional smooth manifold. 
Let us explain the details.

\bigskip

Given a (domain-dependent) almost complex structure~$J$ on $\cF$, 
we are interested in \emph{$J$-holomorphic} maps, i.e.\ in maps $u\in W^{k,p}_0(\Sigma,\cF)$ verifying the \emph{Cauchy-Riemann equation}, i.e.\ 
\begin{equation}\label{eqn:pseudo_hol}
    \delbar^\cF_Ju=0,
    \quad
    \text{with}
    \quad
    (\delbar^\cF_Ju)(z)= \frac{1}{2}
    \left(
    \d_z u + J_{(z,u(z))}\circ \d_z u \circ j_z
    \right) \;,
\end{equation}
for every $z\in \Sigma$.
(Note that $\d u$ takes values in $\cF$, so the composition with $J$ indeed makes sense.)

As in the usual theory in the non-foliated setting, in order to prove that the space of $J$-holomorphic maps we are interested in is a smooth manifold, we now need to see the operator $\delbar^\cF$ as a section of a certain Banach vector bundle, and prove that its linearization is a surjective Fredholm operator at each $J$-holomorphic curve (for well chosen $J$).
In order to do so, having in mind the later application with the case of the \emph{universal moduli space}, we start by introducing the relevant Banach vector bundle whose fiber over a pair $(u,J)$ is made of all \emph{(domain-dependent) complex anti-linear forms}, or \emph{$(0,1)$-forms} in short, on $\Sigma$ and with values in $u^*\cF$.
This can be done as follows.

\medskip

Let $(E, \omega_E)$ be a symplectic vector bundle on a manifold $X$.
For fixed $p\in X$, an endomorphism~$J_p \in \End(E_p)$ is a complex structure on $E_p$ if $J_p^2=-\Id_{E_p}$; if it furthermore satisfies that
\begin{equation}
\label{eqn:def_compatible_J}
    \begin{split}
    (\omega_E)_p(J_p v,J_p w) &= \omega_E(v,w) \quad \text{ for every $v, w\in E_p$, and } \\
     (\omega_E)_p(v,J_pv) &\ge 0  \quad\text{ with equality if and only if $v = 0$,}    
    \end{split}
\end{equation}
then we say that it is \emph{compatible} with $(\omega_E)_p$.
We denote by $\Cpx(E,\omega_E)\subset \End(E)$ the space, for all points~$p\in M$, of all complex structures on $E_p$ that are \emph{compatible}\footnote{As in the non-foliated case, the whole theory can also be carried out analogously using leafwise almost complex structures which are just \emph{tamed}, i.e.\ that satisfy only the second condition specified in \eqref{eqn:def_compatible_J}.
For simplicity, we stick in this paper to the case of $\omega_E$-compatible almost complex structures.} 
with $(\omega_E)_p\in \Lambda^2(E_p^*)$.
By a slight adaptation of the classical proof due to Sevennec (see e.g.\ \cite[Chapter~II, Proposition~1.1.6]{AudLafBook}), one can see that $\Cpx(E,\omega_E)$ really is a smooth fiber bundle over $M$;
in fact, given a symplectic vector space~$(V,\Omega)$, and fixing any reference complex structure~$J_0$ that is compatible with $\Omega$, $\Cpx(V,\Omega)$ is diffeomorphic to the open unit ball (w.r.t.\  to an auxiliary metric) in the vector space of 
symmetric endomorphisms~$\psi$ of $V$ satisfying $J_0\circ \psi = \psi \circ J_0$.

We denote the Banach manifold of $C^l$-sections of the bundle $\Cpx(E,\omega_E) \to M$ by $\cJ^l\bigl(M,(E,\omega_E)\bigr)$, 
and we call almost complex structure on $X$ any section $J$ of $\Cpx(E,\omega_E)$.

\medskip

Let's now go back to our specific case of a symplectic foliation.
A compatible ($C^l$-regular) almost complex structure on a symplectic foliation~$(\cF,\omega_\cF)$ is an element of $\cJ^l\bigl(M,(\cF,\omega_\cF)\bigr)$.
Working with such almost complex structures would be sufficient for our purposes if we were only interested in closed holomorphic curves that are somewhere injective.
Unfortunately, using $\cJ^l\bigl(M,(\cF,\omega_\cF)\bigr)$ we cannot obtain the generic transversality property for curves with boundary\footnote{We will be interested mainly in moduli space of foliated pseudo-holomorphic discs with foliated Lagrangian boundary conditions.
As in the non-foliated case, the set of somewhere-injective points of a holomorphic disc does not need to be dense.
This is why we use domain-dependent perturbations of $J$ in this article in order to achieve transversality.} or for multiply covered curves, we will thus need to consider the space of \emph{domain-dependent} compatible almost complex structures.
For this, let $\pi_M\colon \Sigma\times M \to M$ be the obvious projection, and consider the pullback vector bundle $\pi_M^*(\cF,\omega_\cF)$ over $\Sigma \times M$.
We write $\Cpx\bigl(\Sigma\times M, (\cF,\omega_\cF)\bigr)$ instead of $\Cpx\bigl(\Sigma\times M, \pi_M^*(\cF,\omega_\cF)\bigr)$ to simplify notation. 
We then denote by $\cJ^l\bigl(\Sigma\times M,(\cF,\omega)\bigr)$ the space of $C^l$-regular sections of the fiber bundle $\Cpx\bigl(\Sigma\times M, (\cF,\omega_\cF)\bigr)\to \Sigma \times M$, and call its elements \emph{domain-dependent compatible almost complex structures} on $(\cF,\omega_\cF)$.

Now, over $\Cpx\bigl(\Sigma\times M, (\cF,\omega_\cF)\bigr)$ we can consider the (smooth, finite rank) vector bundle
\begin{equation*}
    \antiClinear\bigl(T\Sigma,\cF\bigr) \to \Cpx\bigl(\Sigma\times M, (\cF,\omega_\cF)\bigr)
\end{equation*}
that consists at a point $(z,p,J) \in \Cpx\bigl(\Sigma\times M, (\cF,\omega_\cF)\bigr)$ of all linear maps $\alpha\colon T_z\Sigma \to T_p\cF$ that anti-commute with $j$ and $J$, that is, $\alpha\circ j = - J\circ \alpha$.

By the general discussion about spaces of maps to the total space of a vector bundle in the beginning of \Cref{sec:leafwise_maps_trivial_holonomy} (and in particular about choosing sections of lower regularity than the maps to the base manifold),
\begin{equation*}
    \widetilde{\cE} \coloneqq  W^{k-1,p}\Bigl(\Sigma,\antiClinear\bigl(T\Sigma,\cF\bigr)\Bigr)
\end{equation*}
is a smooth Banach space bundle over 
\begin{equation*}
    \widetilde{\cB} \coloneqq W^{k,p}\bigl(\Sigma, \Cpx\bigl(\Sigma\times M, (\cF,\omega_\cF)\bigr)\bigr) \, .
\end{equation*}
Here, in the case where $\partial \Sigma \neq \emptyset$, $\widetilde{\cB}$ is more precisely to be intended as the space $W^{k,p}\bigl((\Sigma,\partial \Sigma), \bigl(\Cpx\bigl(\Sigma\times M, (\cF,\omega_\cF)\bigr), \Cpx\bigl(\Sigma\times M, (\cF,\omega_\cF)\bigr)\vert_{\partial\Sigma\times P}\bigr)\bigr)$;
for simplicity, we will stick to the previous shorter notation even when $\partial \Sigma \neq \emptyset$.

In \Cref{sec:leafwise_maps_trivial_holonomy} we have shown that, for $\fS = W^{k,p}$ with $k\geq 2$ and $(k-1)\,p > \dim \Sigma = 2$, the set $\fS_0\bigl(\cF,P\bigr)$ is a smooth Banach manifold if one of the following conditions is satisfied:
\begin{itemize}
    \item $\partial \Sigma$ is connected (possibly empty);
    \item  $P$ is transverse to $\cF$.
\end{itemize}
We will hence work under one of these two assumptions from now on.

It is also clear that the space of $C^l$-sections~$\cJ^l\bigl(\Sigma\times M,(\cF,\omega)\bigr)$ of $\Cpx\bigl(\Sigma\times M, (\cF,\omega_\cF)\bigr)$ with $l>k$ is a smooth Banach manifold.

To prepare for the later construction of the universal moduli space, we introduce now the product Banach manifold
\begin{equation*}
    \cB^\univ  \coloneqq \fS_0\bigl(\cF,P\bigr) \times \cJ^l\bigl(\Sigma\times M,(\cF,\omega)\bigr)
\end{equation*}
that allows us to vary the maps $u\colon \Sigma \to M$ and the almost complex structure~$J$ simultaneously.
For a fixed almost complex structure~$J \in \cJ^l\bigl(\Sigma\times M,(\cF,\omega)\bigr)$, we denote the slice $\fS_0\bigl(\cF,P\bigr) \times \{J\} \subset \cB^\univ$ by $\cB^J$.
(If directly working with $\cB^\univ$ causes too much psychological discomfort, the reader can restrict the following considerations to $\cB^J$, and only come back later in \Cref{sec:transversality_CR_operator} to the full product space.)

Consider the map
\begin{equation*}
    \Phi\colon \cB^\univ  \to \widetilde{\cB} ,
    \quad 
    (f,J) \mapsto J\circ \Gamma_f \;.
\end{equation*}
Note that even though $\Phi$ is a map between smooth Banach manifolds, it is a priori only of regularity $C^{l-k}$, see \cite[Lemma 2.98]{WenNotes}.
The pull-back of $\widetilde{\cE}$ to $\cB^\univ$ is then a $C^{l-k}$-Banach vector bundle that we denote by $\cE^\univ$; see the following diagram.

\begin{equation*}
    \begin{tikzcd}
            \cE^\univ \coloneqq \Phi^*\widetilde{\cE} \arrow[d, " "]   & \widetilde{\cE} = W^{k-1,p}\Bigl(\Sigma,\antiClinear\bigl(T\Sigma,\cF\bigr)\Bigr)  \arrow[d] \\
            \cB^\univ  = \fS_0\bigl(\cF,P\bigr) \times \cJ^l\bigl(\Sigma\times M,(\cF,\omega)\bigr) \arrow[r, "\Phi"] & \widetilde{\cB} = W^{k,p}\bigl(\Sigma, \Cpx\bigl(\Sigma\times M, (\cF,\omega_\cF)\bigr)\bigr)
    \end{tikzcd}
\end{equation*}

Note that, by definition of pull-back, the fiber of $\cE^\univ\to \cB^\univ$ over a point $(u,J)\in \cB^\univ = \fS_0(\cF,P) \times \cJ^l\bigl(\Sigma\times M,(\cF,\omega)\bigr)$ is the desired space $W^{k-1,p}\bigl(\Sigma,\antiClinear\bigl(T\Sigma,u^*\cF\bigr)\bigr)$, where $\Sigma$ is equipped with the complex structure~$j$, and $u^*\cF$ with $z\mapsto J\bigl(\Gamma_u(z)\bigr)$.
In particular, the restriction of $\cE^\univ$ to $\cB^J$ is then $\cE^J$.

\bigskip

Now that we have found a suitable Banach vector bundle $\cE^\univ\to \cB^\univ$, let's go back to the main aim, i.e.\ interpreting the Cauchy-Riemann operator in our setting as a section of $\cE^\univ$.
For the reader's sake, we start by explicitly arguing the following:

\begin{lemma}\label{lem:smooth_diff}
    Let $X$ and $Y$ be smooth manifolds and let $W^{k,p}(X,Y)$ be the smooth Banach manifold of Sobolev maps from $X$ to $Y$ with $kp > \dim X$ and $k>2$. 
    
    Then, the map associating to $f\in W^{k,p}(X,Y)$ its differential~$\d f$ gives a smooth section 
    \begin{equation*}
        \d \colon W^{k,p}(X,Y) \to W^{k-1,p}\bigl(X,\Hom(TX,TY)\bigr)
    \end{equation*}
\end{lemma}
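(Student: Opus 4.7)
The plan is to reduce the claim to the local observation that, in a chart around any $f_0\in W^{k,p}(X,Y)$, the assignment $f\mapsto \d f$ decomposes into the bounded linear derivative operator on Sobolev sections and the post-composition with a smooth fiberwise nonlinearity; both pieces are smooth between the relevant Banach spaces under the hypothesis $kp>\dim X$.

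First, I would set up charts on both the source Banach manifold and the target Banach bundle. Fix $f_0\in W^{k,p}(X,Y)$, choose an auxiliary Riemannian metric on $Y$, and parametrize a neighborhood of $f_0$ in $W^{k,p}(X,Y)$ by
\[
    \xi \in W^{k,p}(X,f_0^*TY) \;\longmapsto\; f_\xi(x):=\exp_{f_0(x)}\xi(x),
\]
as in the construction of the Banach manifold structure recalled in Section~\ref{sec:leafwise_maps_trivial_holonomy}. On the target side, the bundle $W^{k-1,p}(X,\Hom(TX,TY))\to W^{k,p}(X,Y)$ has fiber over $f$ given by the $W^{k-1,p}$-sections of $\Hom(TX,f^*TY)$; it can be trivialized near $f_0$ via parallel transport of $TY$ along the geodesics $t\mapsto \exp_{f_0}(t\xi)$, producing an identification with the fixed Banach space $W^{k-1,p}(X,\Hom(TX,f_0^*TY))$.

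Second, I would compute the map in these charts. Differentiating $f_\xi(x)=\exp_{f_0(x)}\xi(x)$ with respect to $x$ in a direction $v\in T_xX$ gives, schematically,
\[
    \d(f_\xi)_x(v)\;=\;F_1\bigl(x,\xi(x)\bigr)\cdot \d (f_0)_x(v)\;+\;F_2\bigl(x,\xi(x)\bigr)\cdot (\nabla\xi)_x(v),
\]
where $F_1$ and $F_2$ are smooth fiberwise bundle maps built from the partial derivatives of $\exp$, and $\nabla$ is the Levi-Civita connection used to define the chart. After inverting the chosen trivialization, the right-hand side becomes a smooth fiberwise function of $\xi$ and $\nabla\xi$, with values in the fixed Banach space of $W^{k-1,p}$-sections of $\Hom(TX,f_0^*TY)$.

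Finally, I would invoke the standard $\omega$-lemma: under $kp>\dim X$, the space $W^{k,p}$ embeds into $C^0$ and is a Banach algebra, so post-composition with a fixed smooth fiberwise map defines a smooth operator between the corresponding Banach spaces of Sobolev sections in either regularity $W^{k,p}$ or $W^{k-1,p}$; see \cite[Lemma 2.98]{WenNotes} and \cite[Proposition~B.1.20]{McDuffSalamonBook}. The linear operator $\xi\mapsto \nabla\xi$ from $W^{k,p}(X,f_0^*TY)$ to $W^{k-1,p}(X,\Hom(TX,f_0^*TY))$ is bounded, hence smooth; composing it with the post-composition operators associated to $F_1$ and $F_2$ shows that $\xi\mapsto \d f_\xi$ is smooth in the chosen chart. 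Since the chart was arbitrary, this yields the global statement. The only remaining task, essentially bookkeeping, is to verify compatibility of the two trivializations on chart overlaps, which reduces to another application of the $\omega$-lemma to the smooth transition data; no analytical ingredient beyond the hypothesis $kp>\dim X$ is required.
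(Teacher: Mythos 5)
Your argument is correct, but it follows a genuinely different route from the one in the paper. You work intrinsically: exponential charts on $W^{k,p}(X,Y)$, parallel-transport trivializations of the target bundle, an explicit local formula $\d f_\xi = F_1(\cdot,\xi)\cdot\d f_0 + F_2(\cdot,\xi)\cdot\nabla\xi$, and then the $\omega$-lemma plus boundedness of $\xi\mapsto\nabla\xi$ and the Sobolev multiplication $W^{k,p}\cdot W^{k-1,p}\subset W^{k-1,p}$ (valid since $kp>\dim X$). The paper instead goes extrinsic: it fixes an embedding $Y\hookrightarrow\R^n$, regards $W^{k,p}(X,Y)$ as a Banach submanifold of the Banach \emph{space} $W^{k,p}(X,\R^n)$, observes that on this ambient space the differential $f\mapsto\d f\in W^{k-1,p}(\Omega^1(X;\R^n))$ is a \emph{bounded linear} operator and therefore trivially smooth, rewrites it as $f\mapsto(f,\d f)$ into a trivial Banach bundle, and finally restricts both source and fiber (here the hypothesis $k>2$ enters, to ensure differentials are continuous and hence land in $TY\subset T\R^n$). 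The paper's route buys a near-tautological smoothness proof, since linearity of $\d$ in the ambient picture removes any need for the $\omega$-lemma or multiplication estimates; your route buys independence from a choice of embedding and is closer to the standard Eliasson-style intrinsic construction already invoked elsewhere in Section~\ref{sec:leafwise_maps_trivial_holonomy}, at the cost of having to track fiberwise nonlinearities and chart compatibility by hand. Both are sound, and both ultimately lean on the same hypothesis $kp>\dim X$.
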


(To make sense of the above statement, one would need to first argue that the space $W^{k-1,p}\bigl(X,\Hom(TX,TY)\bigr)$ is a Banach vector bundle over $W^{k,p}(X,Y)$.
This is analogous to the case of the Banach bundle $\widetilde\cE$ above, hence we will not give details here.)

\begin{proof}
    One way to see that the space of Sobolev maps $W^{k,p}(X,Y)$ is a Banach manifold consists in embedding $Y$ into $\R^n$ for $n$ sufficiently large and show via the implicit function theorem that $W^{k,p}(X,Y)$ is a Banach submanifold of the Banach \emph{space} $W^{k,p}(X,\R^n)$.
    
    Then, consider the differential on $W^{k,p}(X,\R^n)$.
    This is a bounded linear map that associates to every function $f=(f_1,\dotsc,f_n)\colon X \to \R^n$ an element $\d f = (\d f_1,\dotsc,\d f_n)$ each of whose components is an element of the space of Sobolev $1$-forms $W^{k-1,p}\bigl(\Omega^1(X)\bigr)$.
    For simplicity of notation denote the target space by $W^{k-1,p}\bigl(\Omega^1(X, \R^n)\bigr)$.

    To reinterpret this operator in terms of Banach bundles, replace the Banach space $W^{k-1,p}\bigl(\Omega^1(X, \R^n)\bigr)$ by the trivial Banach bundle $W^{k,p}(X,\R^n)\times W^{k-1,p}\bigl(\Omega^1(X;\R^n)\bigr)$.
    Clearly then we obtain from $\d$ the section
    \begin{equation*}
        \widetilde{\d}\colon W^{k,p}(X,\R^n) \to W^{k,p}(X,\R^n)\times W^{k-1,p}\bigl(\Omega^1(X;\R^n)\bigr),\;
        f \mapsto (f,\d f)
    \end{equation*}
    which is still bounded and linear, and thus in particular smooth.
    
    The desired bundle $W^{k-1,p}\bigl(X,\Hom(TX,TY)\bigr)$ is then given by the restriction of the previous bundle over $W^{k,p}(X,Y)$, and then further restricting the fibers to those elements of $W^{k,p}(X,Y)\times W^{k-1,p}\bigl(\Omega^1(X;\R^n)\bigr)$ whose second component is a differential taking values in $TY\subset T\R^n$.
    (Here, we use the hypothesis $k>2$ guaranteeing that these differentials are at least continuous.) 
    Then, the desired operator~$\d$ is just given by the restriction of $\widetilde{\d}$.
\end{proof}

By the definition of $\delbar_J^\cF$ in \eqref{eqn:pseudo_hol}, \Cref{lem:smooth_diff} and \cite[Lemma 2.98]{WenNotes} then tell us that, for all $l> k$, we have the following $C^{l-k}$-section of $\cE$ over $\cB = \fS_0\bigl(\cF,P\bigr) \times \cJ^l\bigl(\Sigma\times M,(\cF,\omega)\bigr)$:
\begin{equation*}
    \cS^\univ\colon \cB^\univ\to \cE^\univ\, , 
    \quad 
    \cS^\univ(u,J) = \delbar^\cF_{J\circ \Gamma_u} u \; .
\end{equation*}

Choose now a fixed almost complex structure~$J\in \cJ^l\bigl(\Sigma\times M,(\cF,\omega)\bigr)$ and restrict $\cS^\univ$ to the slice $\cB^J=\fS_0\bigl(\cF,P\bigr)\times\{J\}\subset \cB^\univ$ thus obtaining
\begin{equation*}
    \cS^J\colon \fS_0\bigl(\cF,P\bigr) \to \cE^J\, , 
    \quad 
    \cS^J(u) = \delbar^\cF_{J\circ \Gamma_u} u \; .
\end{equation*}
Using the canonical splitting $T\cE^J\vert_{O_{\cE^J}} = TO_{\cE^J}\times \cE^J\vert_{O_{\cE^J}}$ over the zero section $O_{\cE^J}\simeq \fS_0\bigl(\cF,P\bigr)$ of $\cE^J$, the vertical differential of $\cS^J$, denoted $D\cS^J$, at a point $u\in \fS_0\bigl(\cF,P\bigr)$ is a map 
\begin{equation*}
    D_u\, \cS^J \colon T_u \fS_0(\cF,P) \to \cE_u^J = W^{k-1,p}\bigl(\Sigma,\antiClinear\bigl(T\Sigma,u^*\cF\bigr)\bigr) \; .
\end{equation*}
In fact, using \Cref{rmk:tangent_bdle_to_space_maps} one can identify $T_u \fS_0(\cF,P)$ with $W^{k,p}(u^*\cF) \times \R^q$ if $\partial \Sigma = \emptyset$ and with $W^{k,p}(u^*\cF,u^*\cF_P) \times \R^s$ if $\partial \Sigma \neq \emptyset$ and if $s$ is the codimension of $\cF_P = \cF \cap TP$ in $P$.

This way, if $\partial\Sigma = \emptyset$ we can split $D_u \cS^J$ using the Reeb chart around $u$ as 
\begin{align}
        D_u\cS^J \colon & W^{k,p}(u^*\cF) \times \R^q \to W^{k-1,p}\bigl(\Sigma,\antiClinear\bigl(T\Sigma,u^*\cF\bigr)\bigr)   \label{eqn:linearized_CR_operator_without_bdry}  \, ;
        \intertext{similarly, if $\partial \Sigma\neq \emptyset$ (in which case recall we assume that either $\partial\Sigma$ is connected or $P$ is transverse to $\cF$) $D_u\cS^J$ splits as}
        D_u\cS^J \colon & W^{k,p}(u^*\cF,u^*\cF_P) \times \R^s \to 
        W^{k-1,p}\bigl(\Sigma,\antiClinear\bigl(T\Sigma,(u^*\cF,u^*\cF_P)\bigr)\bigr)
        \label{eqn:linearized_CR_operator_with_bdry}
\end{align}
where $W^{k-1,p}\bigl(\Sigma,\antiClinear\bigl(T\Sigma,(u^*\cF,u^*\cF_P)\bigr)\bigr)$ denotes the closure of the linear subspace of smooth sections 
$\eta\colon\Sigma\to \antiClinear\bigl(T\Sigma,u^*\cF\bigr)$ satisfying $\eta(T\partial \Sigma)\subset \cF_P\subset P$ in the Sobolev space 
$W^{k-1,p}\bigl(\Sigma,\antiClinear\bigl(T\Sigma,u^*\cF\bigr)\bigr)$.
More explicitly, the maps in (\ref{eqn:linearized_CR_operator_without_bdry},\ref{eqn:linearized_CR_operator_with_bdry}) are in both cases simply $(\xi_\cF,\xi_{\mathrm{transv}})  \mapsto D_u \cS^J (\xi_\cF,0) + D_u\cS^J(0,\xi_{\mathrm{transv}})$.
Note that $D_u \cS^J (\xi_\cF,0)$ is nothing else than the linearisation of the usual (non-foliated) Cauchy-Riemann operator (with totally real boundary if $\partial \Sigma\neq\emptyset$) associated to $u$, seen as a map into the symplectic manifold given by the leaf of the symplectic foliation that contains its image.

\begin{corollary}\label{lemma:lin_CR_op_is_fredholm}
    The linearised operators $D_u\cS^J$ in \Cref{eqn:linearized_CR_operator_without_bdry,eqn:linearized_CR_operator_with_bdry} are Fredholm and their respective indices are given by: 
    \begin{itemize}
        \item $\ind(D_u\cS^J) = n  \chi(\Sigma) + 2 \,\langle\, c_1(u^*\cF), [\Sigma]\, \rangle + q $ in the case $\partial \Sigma = \emptyset$.
        Here, $\chi(\Sigma)$ denotes the Euler characteristic of $\Sigma$, $2n$ is the dimension of the leaves of $\cF$, $c_1(\cF)$ is the first Chern class of $\cF$ with the complex structure given by $J\circ \Gamma_u$, and $q$ is the codimension of $\cF$ in $M$.
        \item $\ind(D_u\cS^J)=n  \chi(\Sigma) + \mu(u^*\cF,u^*\cF_P)+ s$ in the case $\partial \Sigma \neq \emptyset$.
        Here, $s$ is the codimension of $\cF_P = \cF \cap TP$ in $P$, and $\mu(u^*\cF,u^*\cF_P)$ denotes the \emph{relative Maslov index} of the complex vector bundle $u^*\cF$ over $\Sigma$ with respect to the totally real sub-bundle $u^*\cF_P$ defined over the boundary $\partial \Sigma$.
    \end{itemize}
\end{corollary}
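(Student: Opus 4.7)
The plan is to exploit the direct-sum decomposition of $T_u \fS_0(\cF,P)$ recorded in \Cref{rmk:tangent_bdle_to_space_maps} and write
\begin{equation*}
D_u\cS^J(\xi_\cF,\xi_{\mathrm{transv}}) = D_u\cS^J(\xi_\cF,0) + D_u\cS^J(0,\xi_{\mathrm{transv}}),
\end{equation*}
with the first summand acting on the infinite-dimensional leafwise piece $W^{k,p}(u^*\cF)$, respectively $W^{k,p}(u^*\cF,u^*\cF_P)$, and the second summand acting on the finite-dimensional transverse factor $V = \R^q$, respectively $V = \R^s$.

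My first task would be to identify the restriction $\xi_\cF \mapsto D_u\cS^J(\xi_\cF,0)$ with the classical linearised Cauchy--Riemann operator associated to $u$ viewed as a map into the symplectic leaf $L_u$ of $\cF$ containing $u(\Sigma)$, endowed with the compatible almost complex structure $z\mapsto J(z,u(z))\vert_{TL_u}$ (and, when $\partial \Sigma \neq \emptyset$, with boundary values in the totally real subbundle $u^*\cF_P \subset u^*TL_u$; the totally real property is automatic since $\cF_P$ is Lagrangian for $\omega$ and $J$ is $\omega$-compatible). Working in the Reeb-type chart of \Cref{corollary:trivial foliation close to graph} around $u$, in which the leaves of $\cF$ are flat horizontal slices, the expression defining $\delbar^\cF_{J\circ\Gamma_u}$ restricts along $\xi_{\mathrm{transv}}=0$ to the intrinsic Cauchy--Riemann operator along $L_u$. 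Once this is verified, the classical Riemann--Roch theorem for Cauchy--Riemann operators on Riemann surfaces (e.g.\ McDuff--Salamon, Theorem~C.1.10), together with its totally real analogue, immediately gives that this leafwise piece is Fredholm with index $n\chi(\Sigma) + 2\langle c_1(u^*\cF), [\Sigma]\rangle$ in the closed case, and $n\chi(\Sigma) + \mu(u^*\cF,u^*\cF_P)$ in the boundary case.

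It then remains to account for the finite-dimensional factor. The map $D_u\cS^J(0,\cdot)$ is a bounded linear map from the finite-dimensional space $V$ into the target Banach space. Hence, viewing $D_u\cS^J$ as a bounded operator from the direct sum to the target, it differs from its leafwise piece (extended by $0$ on $V$) by a finite-rank, in particular compact, operator. A short algebraic argument in Fredholm theory (the snake lemma for $T \oplus 0$ versus $T \oplus 0 + A\circ\mathrm{pr}_V$ with $\dim V <\infty$) shows that $D_u\cS^J$ is Fredholm and
\begin{equation*}
\ind(D_u\cS^J) = \ind\bigl(D_u\cS^J\vert_{\mathrm{leafwise}}\bigr) + \dim V,
\end{equation*}
from which the two stated formulas follow with $\dim V = q$, resp.\ $\dim V = s$.

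The main point of care is the identification carried out in the second paragraph: one must check that in the chart of \Cref{corollary:trivial foliation close to graph} the $(\xi_\cF,0)$-component of $D_u\cS^J$ really reproduces the intrinsic linearised Cauchy--Riemann operator of the leaf $L_u$, without hidden contributions coming from the transverse variation of $J$ (which here are absorbed into the second summand). The remaining ingredients are off-the-shelf: Riemann--Roch on $\Sigma$, the Maslov-index computation for totally real boundary conditions, and stability of Fredholm operators under finite-rank domain extensions.
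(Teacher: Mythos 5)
Your argument follows the paper's proof essentially verbatim: split the tangent space along the decomposition of \Cref{rmk:tangent_bdle_to_space_maps}, identify the leafwise piece with the classical (non-foliated) linearised Cauchy--Riemann operator whose index is given by Riemann--Roch, and then observe that the remaining finite-dimensional block is a compact (finite-rank) perturbation that raises the index by $\dim V = q$ or $s$. The paper states this more tersely but the structure and the key observation are identical.
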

\begin{proof}
    As already pointed out above, first note that the restriction of $D_u\cS^J$ to $W^{k,p}(u^*\cF)$ or to $W^{k,p}(u^*\cF,u^*\cF_P)$ in \Cref{eqn:linearized_CR_operator_without_bdry,eqn:linearized_CR_operator_with_bdry} are precisely the linearisations of the Cauchy-Riemann operators that one would consider if we were studying holomorphic curves in one fixed leaf of $\cF$.
    According to the established theory in the non-foliated setting, it is well known that these operators are Fredholm  \cite[Page~39 and Theorem~C.1.10]{McDuffSalamonBook} and that their indices agree with the ones stated above without the addition of the $q$- or $s$-terms.

    Since the complements to $W^{k,p}(u^*\cF)$ in $\fS_0\bigl(\cF\bigr)$ and to $W^{k,p}(u^*\cF,u^*\cF_P)$ in $\fS_0\bigl(\cF,P\bigr)$ are isomorphic to $\R^q$ and $\R^s$ respectively, and because the restriction of the continuous operator~$D_u\cS^J$ to a finite dimensional space is a compact operator, $D_u\cS^J$ is in both cases a Fredholm operator, and
    its index is simply the one used in the non-foliated setting, increased by the dimension of the finite dimensional complement, i.e.\  by $q$ or by $s$ respectively.
\end{proof}

\medskip

Analogously to what done in the non-foliated setup \cite{McDuffSalamonBook}, in order to prove in the next section that the space of holomorphic curves is for a generic choice of the domain-dependent almost complex structure in $\cJ^l\bigl(\Sigma\times M,(\cF,\omega)\bigr)$ a (finite dimensional) manifold, we will in fact first prove that the universal moduli space, i.e.\ the zero set of the section $\cS^\univ\colon \cB^\univ\to \cE^\univ$, $\cS^\univ(u,J) = \delbar^\cF_{J\circ \Gamma_u} u$, is a Banach manifold.
To this end, we first explicitly describe the tangent space to the second factor $\cJ^l\bigl(\Sigma\times M,(\cF,\omega)\bigr)$ of $\cB^\univ$ and then the linearisation of $\cS^\univ$ in $J$-direction.

Recall (e.g.\ from \cite[Page~47]{McDuffSalamonBook}) that, for a given symplectic vector bundle~$(E, \omega_E)$ over $M$, the space of $C^l$-sections~$\cJ^l\bigl(M,(E,\omega_E)\bigr)$ of $\Cpx\bigl(M, (E,\omega_E)\bigr)$ is a smooth Banach manifold.
Its tangent space $T_J\cJ^l\bigl(M,(E,\omega_E)\bigr)$ at a point~$J$ consists of $C^l$-sections of the bundle $\overline{\End}(E,\omega_E,J)\subset \End(E)$ over $M$ whose fiber over $p\in M$ is given by the space of linear maps $Y_p\colon E_p\to E_p$ such that
\begin{equation}
    \label{eqn:tangent_space_alm_compl_str}
    Y_pJ_p+J_pY_p=0 \, ,
    \quad
    \omega_E(Y_p\cdot,\cdot) + \omega_E(\cdot,Y_p\cdot)=0 \, .    
\end{equation}
In our case, where $(E,\omega_E)$ is the pull-back bundle of $(\cF,\omega)$ over $\Sigma\times M$, the tangent space $T_J\cJ^l\bigl(\Sigma\times M,(\cF,\omega)\bigr)$ at an element~$J$ consists of $C^l$-sections of the vector bundle $\overline{\End}\bigl(\Sigma\times M, (\cF,\omega,J)\bigr)$ over $\Sigma\times M$, where the fiber over $(z,p)\in \Sigma\times M$ is the set of all $Y(z,p)\colon \cF_p\to \cF_p$ that satisfy the conditions in \eqref{eqn:tangent_space_alm_compl_str} with $\omega_E$ replaced by $\omega$, and $J_p$ by $J(z,p)$.

Now, using as before the canonical splitting $T\cE^\univ\vert_{O_{\cE^\univ}} = TO_{\cE^\univ}\times \cE^\univ\vert_{O_{\cE^\univ}}$ over the zero section $O_{\cE^\univ} \simeq \fS_0\bigl(\cF,P\bigr)\times \cJ^l\bigl(\Sigma\times M,(\cF,\omega)\bigr)$ of $\cE$, the vertical differential~$D\cS^\univ$ of $\cS^\univ$ at a pair~$(u,J)$ is given by
\begin{equation}
    \label{eqn:linearized_CR_operator_univ}
    \begin{split}
        D_{(u,J)}\, \cS^\univ \colon&  T_u\fS_0\bigl(\cF,P\bigr)\times T_J\cJ^l\bigl(\Sigma\times M,(\cF,\omega)\bigr) \\
        &\qquad\qquad\qquad \longrightarrow \quad \cE^\univ_{(u,J)} = W^{k-1,p}\bigl(\Sigma,\antiClinear\bigl(T\Sigma,u^*\cF\bigr)\bigr) \\
        &(\xi_u,Y_J)  \longmapsto D_u \cS^J (\xi_u) + \frac{1}{2}Y_J(\cdot , u(\cdot)) \circ \d u  \circ j
    \end{split} 
    \, .
\end{equation}


\subsection{Transversality of the linearized Cauchy-Riemann operator}
\label{sec:transversality_CR_operator}

Given an almost complex structure $J \in \cJ^l\bigl(\Sigma\times M,(\cF,\omega)\bigr)$, let $\cMtilde^J$ be the space of $J$-holomorphic maps $u\in \fS_0\bigl(\cF,P\bigr)$ (note that if $\Sigma = \D$ or $\Sigma = \S^2$, we could replace $\fS_0\bigl(\cF,P\bigr)$ by $\fS\bigl(\cF,P\bigr)$ as $u$ will automatically have trivial holonomy), i.e.\ $\cMtilde^J$ can be interpreted as the intersection locus between the section $\cS^J\colon \fS_0\bigl(\cF,P\bigr) \to \cE^J$ and the zero section of $\cE^J$. 

A leafwise $J$-holomorphic curve $u\colon \Sigma\to M$ is said to be \emph{regular}, if $D_u\cS^J$ is surjective.
By the implicit function theorem, $\cMtilde^J$ is a manifold around any such $u$.
The goal of this section is to prove that $J$ can be chosen in such a way that every $u\in \cMtilde^J$ will be regular.
In fact, for later purposes we must require some additional control on the almost complex structures.

More precisely, for a given open subset $U\subset \Sigma \times M$, and a given domain-dependent almost complex structure~$J_0$ defined on its complement~$U^c$,
we denote by $\cJ^l\bigl(\Sigma\times M,(\cF,\omega) \,\text{rel.\ } J_0\vert_{U^c}\bigr)$ the subspace of $\cJ^l\bigl(\Sigma\times M,(\cF,\omega)\bigr)$ made of those domain-dependent leafwise almost complex structures which coincide with $J_0$ on the complement~$U^c$.

With this additional notation set up, we can now state the main result of this section:

\begin{proposition}\label{prop:mod_space_is_manifold}
    Let $U$ be an open subset of $\Sigma \times M$ (possibly the whole of $\Sigma \times M$ itself), and $J_0$ be a given domain-dependent almost complex structure defined on its complement~$U^c$.
    In the case of $\partial \Sigma \neq \emptyset$, fix also a closed leaf $F_0$ of $\cF_P$ on $P$.
    Then, for generic choice of $J$ in $\cJ^l\bigl(\Sigma\times M,(\cF,\omega) \,\text{rel.\ } J_0\vert_{U^c}\bigr)$, we have the following properties: 
    \begin{enumerate}
        \item every \emph{non-constant} leafwise $J$-holomorphic map $u \in \cMtilde^J$ whose graph in $\Sigma\times M$ intersects $U$ is regular, and in particular $\cMtilde^J$ is a $C^{l-k}$-manifold of dimension $\ind(D_u\cS^J)$ (given in \Cref{lemma:lin_CR_op_is_fredholm}) near $u$;
        
        \item if $\cF$ is coorientable and $\partial \Sigma = \emptyset$, $\cMtilde^J$ carries a natural orientation, and the same holds under the assumptions that $\cF$ is coorientable and $P$ is parallelizable in the case $\partial \Sigma\neq \emptyset$;
        \item if $\partial \Sigma\neq \emptyset$, the boundary evaluation map $\ev_\partial \colon \cMtilde^J \times \partial \D^2 \to P $ is transverse to $\cF_P$ along $F_0$.
    \end{enumerate}
\end{proposition}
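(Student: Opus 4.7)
The strategy follows the classical Sard-Smale scheme of \cite[Section~3.1]{McDuffSalamonBook}, adapted to the present foliated and domain-dependent setting. All of the required infrastructure is already in place: we have a smooth Banach bundle $\cE^\univ \to \cB^\univ$ and a section $\cS^\univ$ whose zero locus over a fixed $J$-slice is $\cMtilde^J$, together with the linearization $D_{(u,J)}\cS^\univ$ computed in \eqref{eqn:linearized_CR_operator_univ}.

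The plan is to first restrict $\cB^\univ$ to pairs $(u,J)$ with $J$ coinciding with $J_0$ on $U^c$, and to show that the universal moduli space $\cMtilde^\univ \coloneqq (\cS^\univ)^{-1}(0)$ is then a $C^{l-k}$-Banach submanifold. The key step is surjectivity of $D_{(u,J)}\cS^\univ$ whenever $u$ is non-constant and $\Gamma_u(\Sigma)\cap U \neq \emptyset$. Since $D_u\cS^J$ is already Fredholm by \Cref{lemma:lin_CR_op_is_fredholm}, its image is closed of finite codimension, so surjectivity reduces to density. Via an $L^p$-$L^q$ pairing, it suffices to rule out a nonzero annihilator $\eta$: the leafwise Carleman similarity principle (applied inside the leaf of $\cF$ containing $u(\Sigma)$, using that $u$ has trivial holonomy) forces $\eta$ to vanish on at most a discrete set, so I would pick an interior point $z_0\in \Sigma$ at which $\eta(z_0)\neq 0$, $(\d u)(z_0)\neq 0$, and $(z_0, u(z_0))\in U$, and then construct $Y\in T_J\cJ^l$ supported in a small $\Sigma\times M$-neighborhood of $(z_0,u(z_0))$ with $Y(z_0,u(z_0))\cdot(\d u)(z_0)\circ j_{z_0}$ pairing nontrivially with $\eta(z_0)$, yielding a contradiction.

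The main technical obstacle is precisely this surjectivity. The key advantage of working with domain-dependent almost complex structures (see \Cref{rmk:domain_independent_J_for_closed_curves}) is that $Y$ may be localized in the $\Sigma$-factor to an arbitrarily small disc around $z_0$, so no somewhere-injectivity of $u$ as a map into $M$ is needed; this is crucial since Lagrangian boundary conditions in general do not guarantee somewhere-injective boundary discs. In the boundary case, one picks $z_0$ in the interior of $\Sigma$ and uses unique continuation for $\eta$. With $\cMtilde^\univ$ a Banach manifold, the projection to $\cJ^l\bigl(\Sigma\times M,(\cF,\omega)\,\mathrm{rel.\ }J_0|_{U^c}\bigr)$ is $C^{l-k}$-Fredholm of the index computed in \Cref{lemma:lin_CR_op_is_fredholm}. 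The Sard-Smale theorem produces a Baire subset of regular values $J$, for which $\cMtilde^J$ is a manifold of the expected dimension near each of its non-constant elements whose graph meets $U$; a standard Taubes trick upgrades this to a Baire set of smooth almost complex structures, establishing (i).

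For (ii), I would combine the splittings \eqref{eqn:linearized_CR_operator_without_bdry} and \eqref{eqn:linearized_CR_operator_with_bdry} with \Cref{rmk:tangent_bdle_to_space_maps} and \Cref{rmk:foliation_space_foliated_maps}: the kernel of $D_u\cS^J$ fits in a short exact sequence with finite-dimensional quotient $\R^q$ (closed case) or $\R^s$ (boundary case), canonically oriented by the coorientation of $\cF$ (resp.\ of $\cF_P$ in $P$). The sub-object is the kernel of the standard leafwise linearized Cauchy-Riemann operator, which in the closed case is oriented by the canonical homotopy of its principal symbol to a $\C$-linear operator, and in the boundary case by a choice of trivialization of the totally real bundle $u^*\cF_P$ along $\partial\Sigma$; the parallelizability of $P$ guarantees that $u^*\cF_P$ is trivializable, so the resulting determinant-line orientation is globally consistent on $\cMtilde^J$. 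Finally, for (iii), I would run a second Sard-Smale argument on the enriched universal moduli $\cMtilde^\univ \times \partial\Sigma$ with evaluation $(u,J,z)\mapsto u(z)\in P$: surjectivity onto the conormal bundle of $\cF_P$ along $F_0$ is achieved again by varying $J$ near $(z,u(z))\in U\cap(\partial\Sigma\times P)$ using the same localization trick. Intersecting the Baire set produced by this second argument with the one obtained for (i) yields the desired generic class of domain-dependent almost complex structures.
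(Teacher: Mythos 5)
Your approach for part~(i) matches the paper's proof almost exactly: one sets up the universal moduli space over the space of $C^l$ almost complex structures frozen to $J_0$ on $U^c$, proves surjectivity of the universal linearization via an $L^p$--$L^q$ duality argument together with localization in the $\Sigma$-factor (which is precisely why domain-dependent $J$'s remove the somewhere-injectivity requirement), and then runs Sard--Smale on the projection to $\cJ^l$. The paper deduces that the annihilator $\eta$ vanishes on all of $\Gamma_u^{-1}(U)$ and then invokes Aronszajn's unique continuation theorem, whereas you invoke a Carleman similarity principle to argue that $\eta$ can vanish only on a discrete set; this is a legitimate variant since $\eta$ satisfies a formal-adjoint CR-type equation on the leaf. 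Your mention of the Taubes trick is an addition the paper does not carry out, but it is not needed for the statement as phrased.

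Your sketch for part~(ii) follows the same determinant-line strategy as the paper: split out the finite-dimensional $\R^q$ or $\R^s$ factor (canonically oriented by the coorientation of $\cF$, resp.\ of $\cF_P$), then deform the leafwise Cauchy--Riemann operator to a complex-linear one. In the boundary case, however, asserting that parallelizability of $P$ (and hence trivializability of $u^*\cF_P$) immediately gives a globally consistent orientation of the determinant line over $\cMtilde^J$ elides a well-known subtlety about sign coherence for families of totally-real boundary conditions. The paper explicitly outsources this step to the appendix of \cite{GNW16}; you should either cite that or supply the equivalent coherence argument rather than treat it as automatic.

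Part~(iii) contains a genuine gap. You claim that surjectivity of $\d\ev^\univ_\partial$ onto the conormal of $\cF_P$ is ``achieved again by varying $J$ near $(z,u(z))\in U\cap(\partial\Sigma\times P)$ using the same localization trick.'' This cannot work as stated: the $Y$-direction tangent to the $\cJ^l$-factor contributes nothing to the differential of the evaluation map, since $\d_{(p_0,u,J)}\ev^\univ_\partial(v,\xi,Y) = \d_{p_0}u(v) + \xi(p_0)$ is independent of $Y$, so perturbing $J$ alone (or even localized near $\partial\Sigma\times P$) produces zero transverse component. The mechanism used in Lemma~\ref{lemma:univ_mod_space_is_manifold} is different: the transversality comes from the constant transverse direction $\xi_{\transv}\in\R^s\subset T_u\fS_0(\cF,P)$ provided by the Reeb chart, which directly surjects onto $T_{u(p_0)}P/(\cF_P)_{u(p_0)}$. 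The $J$-variation (together with a leafwise $\xi_\cF$ vanishing in the transverse direction at $p_0$) is then used only as a \emph{correction}, to cancel $D\cS^\univ(\xi_{\transv})$ so that the total triple is tangent to $\cMtilde^\univ$, and this correction is supported in the interior of $\Sigma$ where $\d u \neq 0$, not at boundary points. You should replace your argument for the transversality of the universal evaluation map accordingly.
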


We will deduce \Cref{prop:mod_space_is_manifold} from the following lemma:

\begin{lemma}\label{lemma:univ_mod_space_is_manifold}
    Consider integers $l\geq 2$ and $2\leq k \leq l$, and a real $p>2$.
    Let also $U$ be an open subset of $\Sigma \times M$ (possibly the whole of $\Sigma \times M$ itself), and $J_0$ be a given domain-dependent almost complex structure defined on its complement~$U^c$. 
    The \emph{universal moduli space}
    \begin{equation*}
        \cMtilde^\univ = \bigl\{\,
        (u,J)\in \fS_0\bigl(\cF,P\bigr) \times \cJ^l\bigl(\Sigma\times M,(\cF,\omega) \,\text{rel.\ } J_0\vert_{U^c}\bigr)
        \; \bigm\vert \; \cS^\univ (u,J) = 0  \,\bigr\} \; ,
    \end{equation*}
    is a $C^{l-k}$-Banach submanifold of $\fS_0\bigl(\cF,P\bigr) \times \cJ^l\bigl(\Sigma\times M,(\cF,\omega)\bigr)$, and the restriction of the projection map
    \begin{align*}
        \Pi_2\colon \fS_0\bigl(\cF,P\bigr) \times \cJ^l\bigl(\Sigma\times M,(\cF,\omega) \,\text{rel.\ } J_0\vert_{U^c}\bigr)
        & \to \cJ^l\bigl(\Sigma\times M,(\cF,\omega) \,\text{rel.\ } J_0\vert_{U^c}\bigr) \\
        (u,J) & \mapsto J
    \end{align*}
    to $\cMtilde^\univ$ is a map whose linearization at every point is a Fredholm operator of index $\ind \bigl(\d\Pi_2\vert_{T\cMtilde^\univ}\bigr) = \ind D_u\cS^J$ given in \Cref{lemma:lin_CR_op_is_fredholm}. 
    
    Moreover, in the case where $\partial\Sigma \neq \emptyset$, the universal boundary evaluation map 
    \begin{equation*}
        \ev^\univ_\partial \colon \cMtilde^\univ \times \partial \Sigma \to P,
        \quad (u,p) \mapsto u(p)
    \end{equation*}
    is transverse to $\cF_P$.
\end{lemma}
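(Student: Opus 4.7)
The plan is to follow the standard universal moduli space argument of \cite[\S 3.2]{McDuffSalamonBook}, adapted to the foliated setting and to domain-dependent almost complex structures. Everything reduces to proving that the vertical differential $D_{(u,J)}\cS^\univ$ displayed in \Cref{eqn:linearized_CR_operator_univ} is surjective at every $(u,J)$ with $\cS^\univ(u,J)=0$. Granted this, the implicit function theorem applied to the $C^{l-k}$-section $\cS^\univ$ of the Banach bundle $\cE^\univ$ produces the $C^{l-k}$-Banach submanifold structure on $\cMtilde^\univ$, while the remaining assertions about $\Pi_2$ and $\ev^\univ_\partial$ follow by standard linear algebra from surjectivity together with \Cref{lemma:lin_CR_op_is_fredholm}.

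For the surjectivity step, which is the main obstacle, we argue by contradiction. Since $D_u\cS^J$ is Fredholm by \Cref{lemma:lin_CR_op_is_fredholm}, its image is closed of finite codimension, so the image of $D_{(u,J)}\cS^\univ$ is also closed. If it were a proper subspace, Hahn--Banach would produce a non-zero element $\eta$ in the $L^{p'}$-dual (with $1/p+1/p'=1$) annihilating the image. Annihilation against the image of $D_u\cS^J$ means that $\eta$ satisfies a formal adjoint Cauchy--Riemann equation on $u^*\cF$; elliptic regularity then gives that $\eta$ is as regular as $J$ allows, and unique continuation forces its zeros to be isolated. Since $u$ is non-constant and leafwise $J$-holomorphic, the classical unique-continuation principle for pseudo-holomorphic curves also ensures that $\d u$ has only isolated zeros. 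Pick $z_0 \in \Sigma$ with $\d u(z_0) \neq 0$, $\eta(z_0) \neq 0$, and $(z_0, u(z_0)) \in U$, using the hypothesis that the graph of $u$ intersects $U$. Following \cite[Lemma 3.2.2]{McDuffSalamonBook}, one then constructs an admissible $Y_J \in T_J \cJ^l\bigl(\Sigma\times M,(\cF,\omega)\,\text{rel.\ }J_0\vert_{U^c}\bigr)$, supported in a small neighborhood of $(z_0, u(z_0))$ inside $U$, compatible with \eqref{eqn:tangent_space_alm_compl_str}, and such that the pairing of $\eta$ against $\tfrac{1}{2}Y_J(\cdot,u(\cdot))\circ \d u \circ j$ is non-zero, giving the desired contradiction. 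The crucial point is that $Y_J$ is allowed to depend on the domain variable $z\in \Sigma$, so one can localize the perturbation near $z_0$ without needing somewhere-injectivity of $u$ as a map into $M$; when $\partial \Sigma \neq \emptyset$ the dense set of injective points of $z\mapsto (z,u(z))$ allows us to pick $z_0$ in the interior.

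For the Fredholm index of $\Pi_2$, observe that $T_{(u,J)}\cMtilde^\univ = \ker D_{(u,J)}\cS^\univ$ and that $\d\Pi_2$ restricted there is the projection $(\xi_u, Y_J)\mapsto Y_J$. Its kernel is exactly $\ker D_u\cS^J$, and the standard algebraic lemma (surjectivity of $D_{(u,J)}\cS^\univ$ combined with Fredholmness of $D_u\cS^J$) identifies its cokernel with $\operatorname{coker} D_u\cS^J$, so it is Fredholm of the same index as $D_u\cS^J$. Finally, for the transversality of $\ev^\univ_\partial$ to $\cF_P$ when $\partial \Sigma \neq \emptyset$: using the splitting of $T_u\fS_0(\cF,P)$ from \Cref{rmk:tangent_bdle_to_space_maps}, for any $\xi_{\mathrm{transv}} \in \R^s$ the surjectivity of $D_{(u,J)}\cS^\univ$ yields $(\xi_\cF, Y_J)$ such that $(\xi_\cF, \xi_{\mathrm{transv}}, Y_J)\in T_{(u,J)}\cMtilde^\univ$. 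Under $\d\ev^\univ_\partial$ the $\R^s$-component is mapped precisely onto the transverse direction of a Reeb chart for $\cF_P$ in $P$ around $u(p)$, whence the image surjects onto $T_{u(p)}P/(\cF_P)_{u(p)}$, which is exactly transversality to $\cF_P$.
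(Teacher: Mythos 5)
Your proposal follows essentially the same strategy as the paper's proof: reduce to surjectivity of $D_{(u,J)}\cS^\univ$, establish closedness of the image from Fredholmness of the leafwise Cauchy--Riemann operator, apply Hahn--Banach to get a putative annihilator $\eta$, use elliptic regularity and unique continuation to localize, and exploit domain-dependence of $Y_J$ to dispense with somewhere-injectivity; then derive the $\Pi_2$ index statement from the standard linear-algebra lemma and the $\ev^\univ_\partial$-transversality from the fact that surjectivity is achieved without touching the $\R^s$-factor. The only substantive thing you gloss over is the intermediate regularity bookkeeping: your Hahn--Banach step tacitly assumes the target is $L^p$, which is only the case $k=1$; the paper handles this by first extending the linearized operator to $W^{1,p}\to L^p$, proving surjectivity there, and then bootstrapping to $k>1$ via elliptic regularity. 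You would need to include that step (or justify explicitly why the annihilator argument works for the $W^{k-1,p}$-dual), but the architecture of the argument is otherwise the same and correct.
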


A clarification of our choices of $k$ and $p$ is in order.
Recall that these parameters determine the Sobolev regularity~$W^{k,p}$ of the maps contained in $\fS_0\bigl(\cF,P\bigr)$.
The assumptions that $k\geq 2$ and $p>2$ imply in particular that $(k-1)\,p > 2$, that is needed in \Cref{thm:leafwise_maps_trivial_hol_is_banach_submanifold} in order to ensure that $\fS_0(\cF,P)$ is a smooth Banach manifold.
Moreover, the assumption $p > 2$ comes into play in the proof below, where a continuous extension of the linearized Cauchy--Riemann operator to a space of $W^{1,p}$ maps needs to be considered.

\begin{remark}\label{rmk:generic_J_relative_subset}
    In what follows, we will need the following (trivial) consequence to the second part of the statement: if one can prove regularity by hand for those curves in $\cMtilde^J$ whose graph is completely contained in $U^c$, transversality holds at every point of $\cMtilde^J$, which is hence a manifold around any of its points. 
    \\
    This observation could sound like a qualitative improvement on the well-known analogous statement in the case of domain-independent almost complex structure, because one only needs to prove regularity by hand for those curves whose \emph{graph} is included in the closed set $U^c$ (which a priori need not be of the form $\Sigma\times V \subset \Sigma\times M$ for $V\subset M$ a closed subset).
    This said, in practice there is no sensible general way to control which points in the domain of the curves in the moduli space maps in $U$, hence one is in fact forced to check regularity by hand for all the curves whose \emph{image} is contained in the \emph{closed subset of $M$ given by the projection of $U^c$ in $M$}.
\end{remark}

\begin{proof}[Sketch of proof of \Cref{prop:mod_space_is_manifold}, using \Cref{lemma:univ_mod_space_is_manifold}]
The first conclusion in \Cref{prop:mod_space_is_manifold} follows directly from the \Cref{lemma:univ_mod_space_is_manifold}, using the same argument as in the non-foliated case explained in detail for instance in \cite[Proof of 3.1.6~(ii) at page~54]{McDuffSalamonBook}, but using \Cref{lemma:lin_CR_op_is_fredholm} in our setting.
More precisely, with \Cref{lemma:univ_mod_space_is_manifold} it is enough to choose a $J\in \cJ^l\bigl(\Sigma\times M,(\cF,\omega)\bigr)$ that is a regular value of the projection map~$\Pi_2$.

\medskip

The smooth moduli space~$\cMtilde^J$ is orientable if and only if $\Lambda^{\max} \bigl(T\cMtilde^J\bigr)$ is trivial.
The idea to show that the latter is trivial is to deform the family  of linearized Cauchy-Riemann operators over $\cMtilde^J$ through Fredholm operators to operators with more pleasant properties.
We work with a $K$-theoretic generalization of $\Lambda^{\max} \bigl(T\cMtilde^J\bigr)$ called the determinant bundle~$\det(X)$, which does not require us to assume that the considered operators are transverse.
This is explained in \cite[Appendix A.2]{McDuffSalamonBook} for the non-foliated case;
we limit ourselves here to describe the needed adaptations in our foliated setup.
The determinant bundle~$\det(X)$ is representable by a genuine real line bundle, which in our case will be a line bundle over the space~$X := \cMtilde^J\times [0,2]$ satisfying $\det(X)\rvert_{\cMtilde^J\times\{0\}} = \Lambda^{\max} \bigl(T\cMtilde^J\bigr)$.
The fiber of $\det(X)$ over $(u,\tau)\in X$ is given by
\begin{equation*}
    \det\nolimits_{(u,\tau)}(X) = \Lambda^{\max}(\ker D_{(u,\tau)})\otimes \Lambda^{\max}(\ker D_{(u,\tau)}^*) \; ,
\end{equation*}
where $D_{(u,\tau)}$ is the real linear Cauchy-Riemann operator over the point~$(u,\tau)$, that we will obtain as a deformation (parametrized by $\tau \in [0,2]$) of $D_u\cS^J$.

Let us first discuss the case of $\partial \Sigma = \emptyset$.
To deform $D_u\cS^J$, recall that this operator acts on $T_u\cMtilde^J$ which consists of vector fields along $u$.
Using a Riemannian metric, we can split $TM$ into $T\cF$ and the orthogonal complement to the foliation.
Denote the projection $TM \to T\cF$ by $\pi$ and consider the family of operators $D_{(u,\tau)}:=(1-\tau)\, D_u\cS^J + \tau\,D_u\cS^J\circ\pi$ for $(u,\tau)\in \cMtilde^J\times[0,1]$.
The linearized Cauchy-Riemann operator~$D_u\cS^J$ which we are studying in the foliated case splits with respect to a Reeb chart according to \eqref{eqn:linearized_CR_operator_without_bdry} as an operator~$D_u \oplus A$ defined on $W^{k,p}(u^*\cF) \otimes \R^q$, where $A$ is obviously a compact operator.
Now, $D_u\cS^J\circ\pi$ agrees with $D_u\cS^J$ along $W^{k,p}(u^*\cF)$ and only differs along a $q$-dimensional subspace, showing that the $D_{(u,\tau)}$ are Fredholm operators and proving that $\ker D_{(u,1)} = \ker D_u \oplus \R^q$, and the $\R^q$-factors can be identified with the normal bundle to the foliation~$\widetilde\cF$ on the space of foliatied maps, see \Cref{rmk:tangent_bdle_to_space_maps}.

This gives a family over the subset $\cMtilde^J\times[0,1]\subset X$ which restricts over $\{\tau=1\}$ as
\begin{equation}\label{eqn:fiber_det_bundle}
\begin{split}
    \det\nolimits_{(u,1)}(X) & \,= \Lambda^{\max}(\ker D_u \oplus \R^q) \otimes  \Lambda^{\max}(\ker D_u^*) 
    \\ &\, = 
    \Lambda^{\max}(\ker D_u) \otimes  \Lambda^{\max}(\ker D_u^*) \otimes \Lambda^q(\R^q) \; .
\end{split}
\end{equation}
The first part $\Lambda^{\max}(\ker D_u) \otimes  \Lambda^{\max}(\ker D_u^*)$ defines the determinant bundle at $u$ for the moduli space of curves that are contained in a single leaf, which agrees with the standard situation studied in the non-foliated theory.
In particular, we can deform $D_u$ by a second linear homotopy to a complex linear Cauchy-Riemann operator $C_u$ (as explained e.g.\ \cite[Appendix A.2]{McDuffSalamonBook}).
We use this homotopy to extend $\det(X)$ over the remaining piece $\cMtilde^J\times[1,2]$ of $X$.

It is now not hard to prove triviality of $\det(X)$ over all of $X$.
Indeed, recall that both the kernel and the cokernel of $C_u$ are complex vector spaces which are naturally oriented.
The last factor of the tensor product in \eqref{eqn:fiber_det_bundle} also has a natural trivialization, since $\R^q$ can naturally be identified with the normal bundle to the (cooriented) foliation~$\widetilde\cF$ on the space of foliated maps (at points of $\cMtilde^J$).
The natural trivialization along $\cMtilde^J\times\{2\}$ shows then that $\det(X)$ and in particular also $\det(X)\rvert_{\cMtilde^J} = \Lambda^{\max} (T\cMtilde^J)$ are globally trivial.

In the case where $\partial \Sigma \neq \emptyset$, under the additional assumption that $P$ is stably parallelizable, orientability of $\det(D_u)$ where $D_u$ is as in \eqref{eqn:linearized_CR_operator_with_bdry} can be proven similarly to the closed case above, using the additional arguments in \cite[Appendix]{GNW16} which deals with the orientability of a moduli space of pseudo-holomorphic discs having boundary condition on a (family of) contact Legendrian open books.
We refer the reader to \cite[Appendix]{GNW16} for the details.

\medskip

The proof of the third claim follows instead closely the argument in \cite[Pages~151 and 152]{WenNotes}.
The only adaptation needed is the fact that one needs to look at $(\ev_\partial^\univ)^{-1}(F_0)$ at the beginning of the proof in our case, which is a submanifold as $\ev_\partial^\univ$ is transverse to $\cF_P$ according to \Cref{lemma:univ_mod_space_is_manifold}. 
The rest of the argument is then formally the same, and one can reach the desired conclusion.
\end{proof}

\begin{proof}[Proof of \Cref{lemma:univ_mod_space_is_manifold}]
    The proof follows essentially the line of \cite[Proof of Proposition~3.2.1]{McDuffSalamonBook}.
    However, some adjustments are necessary for two reasons.
    The first is that we are in a leafwise setting; hence we need to use the leafwise linearized Cauchy-Riemann operator introduced earlier.
    Secondly, because we ultimately want to apply our results to discs, one cannot guarantee that injective points are dense in the domain; in our situation, we need thus to use domain-dependent perturbations of the (leafwise) almost complex structure to achieve the desired transversality of the linearized (leafwise) Cauchy-Riemann operator.
    Contrary to what one might expect, this actually makes the proof considerably easier.
    Here are the details.

    \medskip

    Let $(u,J)$ be any element of $\cMtilde^\univ$.
    According to \Cref{eqn:linearized_CR_operator_univ}, $D_{(u,J)}\cS^\univ$ is 
    \begin{equation*}
        \begin{split}
            D_{(u,J)}\, \cS^\univ \colon&  T_u\fS_0\bigl(\cF,P\bigr)\times T_J\cJ^l\bigl(\Sigma\times M,(\cF,\omega)\bigr)
            \to \cE^\univ_{(u,J)} = W^{k-1,p}\bigl(\Sigma,\antiClinear\bigl(T\Sigma,u^*\cF\bigr)\bigr) \\
            & (\xi_u,Y_J)  \longmapsto D_u \cS^J (\xi_u) + \frac{1}{2}\,Y_J(\cdot , u(\cdot)) \circ \d u  \circ j
        \end{split}
    \end{equation*}
    Recall also that $T_u\fS_0\bigl(\cF,P\bigr)$ is $W^{1,p}(u^*\cF)\times \R^q\times T_J\cJ^l\bigl(\Sigma\times M,(\cF,\omega)\bigr)$ in the case $\partial \Sigma = \emptyset$ or $W^{1,p}(u^*\cF,u^*\cF_P)\times \R^s\times T_J\cJ^l\bigl(\Sigma\times M,(\cF,\omega)\bigr)$ if $\partial\Sigma\neq \emptyset$.
    Denote $V_u\subset T_u\fS_0\bigl(\cF,P\bigr)$ the Banach subspace given by vectors with trivial $\R^q$- or $\R^s$-coordinate respectively, and $W_{(u,J)}:=V_u\times T_J\cJ^l\bigl(\Sigma\times M,(\cF,\omega)\bigr)\subset T_u\fS_0\bigl(\cF,P\bigr)\times T_J\cJ^l\bigl(\Sigma\times M,(\cF,\omega)\bigr)$.
    We are now going to prove that $D_{(u,J)}\, \cS^\univ\vert_{W_{(u,J)}}$ is surjective (which in particular implies surjectivity of $D_{(u,J)}\, \cS^\univ$).
    
    First, the image of $D_{(u,J)}\, \cS^\univ\vert_{W_{(u,J)}}$ \Fabio{modifying here} is closed for the following reason.
    The restriction of $D_{(u,J)}\, \cS^\univ\vert_{W_{(u,J)}}$ to $V_u\subset W_{(u,J)}$ is just the non-foliated linearized CR operator $D_u\, \cS^J_\cF$ obtained by looking at $u$ as simply a curve in the almost complex leaf $L$ of $\cF$ in which it is contained.
    This is a Fredholm operator according to the Riemann-Roch theorem, see \cite[Theorem C.1.10]{McDuffSalamonBook}.
    Hence, the image of $D_u\, \cS^J_\cF$ is a closed subspace of $\cE^\univ_{(u,J)}$ of finite codimension.
    This allows us to write $\cE^\univ_{(u,J)}$ as $\Image \bigl(D_u\, \cS^J_\cF\bigr) \oplus \R^a$ with $a\in \N$.
    The image of the initial operator~$D_{(u,J)}\, \cS^\univ\vert_{W_{(u,J)}}$ is then of the form $\Image \bigl(D_u\, \cS^J_\cF\bigr) \oplus V$ with $V$ a linear subspace of $\R^a$, and this is clearly closed.
    
    \smallskip
    
    For technical reasons, we would like to prove the surjectivity of $D_{(u,J)}\, \cS^\univ\vert_{W_{(u,J)}}$ in the case $k=1$ first so that $W^{k-1,p}\bigl(\Sigma,\antiClinear\bigl(T\Sigma,u^*\cF\bigr)\bigr)$ is simply $L^p\bigl(\Sigma,\antiClinear\bigl(T\Sigma,u^*\cF\bigr)\bigr)$, and then increase $k$ successively by a bootstrapping argument until arriving at the desired value chosen above.
    
    Now, $k=1$ however is not allowed under our hypothesis as we require the condition $k \geq 2$ to apply the \Cref{thm:leafwise_maps_trivial_hol_is_banach_submanifold} which tells us that the space $\fS_0(\cF,P)$ is a Banach manifold.
    
    Nevertheless, recall that $u$ is at least of class~$C^{l}$ (for $k\geq 2$ as in the hypothesis) by elliptic regularity as $k=\min(l,k)$ \cite[Theorem~B.4.1]{McDuffSalamonBook}. 
    Then, if we consider $(u,J) \in \fS_0(\cF,P)\times \cJ^l\bigl(\Sigma\times M,(\cF,\omega)\bigr)$ fixed, then $D_{(u,J)}\cS^\univ\vert_{W_{(u,J)}}$ above can be seen as naturally defined on sections of regularity $C^k$ with values in sections also of regularity $C^{k-1}$, for any $2\leq k\leq l$.
    Moreover, as it is simply a first order linear differential operator defined on these spaces of $C^k$-sections, it can be naturally extended to a continuous operator defined on the space $W^{1,p}(u^*\cF)\times T_J\cJ^l\bigl(\Sigma\times M,(\cF,\omega)\bigr)$, or the space $W^{1,p}(u^*\cF,u^*\cF_P)\times T_J\cJ^l\bigl(\Sigma\times M,(\cF,\omega)\bigr)$ respectively, and taking values in $L^{p}\bigl(\Sigma,\antiClinear\bigl(T\Sigma,u^*\cF\bigr)\bigr)$.
    For simplicity of the notation, we will also write $D_{(u,J)}\cS^\univ\vert_{W_{(u,J)}}$ for this extension to $k = 1$ in this part of the proof (until we specify otherwise).
    
    \smallskip

    Note also that the argument given above to show that $D_{(u,J)}\cS^\univ\vert_{W_{(u,J)}}$ has closed image carries over word for word to our situation where $k=1$, since the restriction of the operator to the first factor is still Fredholm \cite[Theorem C.2.3]{McDuffSalamonBook}.
    
    Because of its closedness, to prove that $D_{(u,J)}\, \cS^\univ\vert_{W_{(u,J)}}$ is surjective, it is thus enough to prove that its image is dense in $L^p\bigl(\Sigma,\antiClinear\bigl(T\Sigma,u^*\cF\bigr)\bigr)$.
    If there were an element in $L^{p}\bigl(\Sigma,\antiClinear\bigl(T\Sigma,u^*\cF\bigr)\bigr)$ not contained in $\Image D_{(u,J)}\cS^\univ\vert_{W_{(u,J)}}$, then we would find by the Hahn-Banach theorem a non-zero continuous operator $\eta'\in (L^p\bigl(\Sigma,\antiClinear\bigl(T\Sigma,u^*\cF\bigr)\bigr))^*$ in the dual space of $L^p\bigl(\Sigma,\antiClinear\bigl(T\Sigma,u^*\cF\bigr)\bigr)$ vanishing on the image of $D_{(u,J)}\cS^\univ\vert_{W_{(u,J)}}$.
    Choosing scalar products on the vector bundles and using the Hölder inequality, we can identify $(L^p\bigl(\Sigma,\antiClinear\bigl(T\Sigma,u^*\cF\bigr)\bigr))^*$ with $L^q\bigl(\Sigma,\antiClinear\bigl(T\Sigma,u^*\cF\bigr)\bigr)$, for $1/p + 1/q = 1$, via the pairing
    \begin{equation*}
        \alpha'(\beta) := \int_\Sigma \langle \alpha,\beta\rangle \Vol_\Sigma
    \end{equation*}
    for an $L^p$-section~$\beta$ and an $L^q$-section~$\alpha$.
    To prove the surjectivity of $D_{(u,J)}\, \cS^\univ$ it is thus enough to show that the only $\eta \in L^q\bigl(\Sigma,\antiClinear\bigl(T\Sigma,u^*\cF\bigr)\bigr)$ such that
    \begin{equation*}
        \int_\Sigma \langle \eta,\beta\rangle \Vol_\Sigma = 0
    \end{equation*}
    for every $\beta \in \Image D_{(u,J)}\, \cS^\univ\vert_{W_{(u,J)}}$ is the trivial one.
    That is, if $\eta$ is an element in $L^q\bigl(\Sigma,\antiClinear\bigl(T\Sigma,u^*\cF\bigr)\bigr)$ such that
    \begin{equation*}
        \int_\Sigma \langle \eta, D_{(u,J)}\, \cS^\univ(\xi,Y) \rangle = 0 \quad
        \text{for every  } (\xi, Y) \, ,
    \end{equation*}
    where $Y\in T_J\cJ^l\bigl(\Sigma\times M,(\cF,\omega)\bigr)$ and  $\xi\in W^{1,p}(u^*\cF)$ if $\partial \Sigma =\emptyset$, respectively $\xi\in W^{1,p}(u^*\cF,u^*\cF_P)$ if $\partial \Sigma\neq \emptyset$,
    then necessarily $\eta = 0$.
    This is equivalent to 
    \begin{equation}\label{eqn:eta}
        \int_\Sigma \langle \eta, D_u \cS^J (\xi) \rangle = 0
        \;\text{ for all $\xi$, and } \;
        \int_\Sigma \langle \eta,  Y(\cdot , u(\cdot)) \circ \d u  \circ j \rangle = 0 
        \text{ for all $Y$.}
    \end{equation}
    As $\xi$ is tangent to the foliation, $D_u\cS^J\vert_{W_{(u,J)}}(\xi)= D_u\cS^J_\cF(\xi)$, and hence
    it follows from the first of the two conditions in \eqref{eqn:eta} (together with \cite[Theorem~C.2.3]{McDuffSalamonBook}, similarly to the explanations in \cite[Proposition~3.1.11]{McDuffSalamonBook} for the non-foliated case) that $\eta$ is in fact a section in  $W^{1,q}\bigl(\Sigma,\antiClinear\bigl(T\Sigma,u^*\cF\bigr)\bigr)$, and that $(D_u \cS^J_\cF)^*\eta = 0$, where $(D_u \cS^J_\cF)^*$ is the formal adjoint of $D_u \cS^J_\cF\colon W^{k,p}(u^*\cF,u^*\cF_P) \to W^{1,p}\bigl(\Sigma,\antiClinear\bigl(T\Sigma,u^*\cF\bigr)\bigr)$.
    In particular, it follows that $\eta$ is continuous.
    
    Now, recall again that $u$ is at least of class $W^{1,p}$, and in fact even $C^k$ by elliptic regularity \cite[Theorem~B.4.1]{McDuffSalamonBook} as $k = \min(l,k)$ due to our hypothesis.
    In particular, by the assumption that $u$ is not constant, we know that the subset of points $z_0\in \Sigma$ such that $\d_{z_0} u \neq 0$ is open and dense:
    this is a consequence of the Unique Continuation theorem \cite[Theorem 2.3.2]{McDuffSalamonBook}.
    We will show that $\eta(z_0) = 0$ at any such point.
    Because if this were false, then we could find a $Y_0 \in \overline{\End}\bigl(\cF_{u(z_0)},\omega_{u(z_0)},J_{z_0,u(z_0)}\bigr)$ such that
    \begin{equation*}
        \langle \eta(z_0), \, Y_0 \circ d_{z_0}u\circ j_{z_0} \rangle >0 \, .
    \end{equation*}
    Extend $Y_0$ to a section $Y \in T_J\cJ^l\bigl(\Sigma\times M,(\cF,\omega)\bigr)$ such that $Y\bigl(z_0,u(z_0)\bigr) = Y_0$ (recall that we are using \emph{domain-dependent} $J$'s, so the $Y$ is a section of the bundle $\overline{\End}\bigl(\cF,\omega,J\bigr)\to \Sigma\times M$).
    By continuity of $\eta$, $\d u$, and $Y$, there is then an open neighborhood~$U_{z_0}$ of $z_0$ in $\Sigma$ such that
    \begin{equation*}
        \langle \eta(z), \, Y(z,u(z)) \circ d_{z}u\circ j_{z} \rangle >0
    \end{equation*}
    for every $z\in U_{z_0}$.
    Fix moreover a small open neighborhood $V_{u(z_0)}$ of $u_{z_0}$ in $M$.
    We can then choose a smooth cutoff function $\beta\colon \Sigma\times M \to [0,1]$ with support contained in a sufficiently small neighborhood of $(z_0,u(z_0))$ entirely contained in $U_{z_0}\times V_{u(z_0)}$ such that $\beta(z_0,u(z_0)) = 1$.
    Replacing $Y$ by $\beta \cdot Y$ yields a new element that still satisfies \Cref{eqn:tangent_space_alm_compl_str} and thus lies in $T_J\cJ^l\bigl(\Sigma\times M,(\cF,\omega)\bigr)$.
    With this new $Y$, the term on the right-hand side of \eqref{eqn:eta} is strictly positive, because the only points that contribute to the integral lie very near $z_0$ where the argument is positive.
    This contradiction together with the continuity of $\eta$ proves that $\eta$ must in fact be trivial, i.e.\ $D_{(u,J)}\, \cS^\univ\vert_{W_{(u,J)}}$ is in fact surjective as desired.
    
    \smallskip
    
    To prove surjectivity in the case where we look at $J$'s that are equal to a prescribed $J_0$ on the complement of a given open subset $U\subset \Sigma\times M$, and at the moduli space~$\cMtilde_U^\univ$ of those curves $u$ whose graph $\Gamma_u\colon \Sigma\to \Sigma\times M$ intersects $U$, it is easy to convince oneself that the argument above shows that $\eta$ vanishes on all of the (non-empty) open set $\Gamma_u^{-1}(U)$.
    Since $(D_u \cS^J_\cF)^*\eta = 0$, we obtain by Aronszajn's theorem  \cite[Theorem~2.3.4]{McDuffSalamonBook} that $\eta$ vanishes everywhere if it vanishes on any non-trivial open set.
    This shows then that $\eta$ also needs to vanish on $U^c$, so that $D_{(u,J)}\, \cS^\univ\vert_{W_{(u,J)}}$ is indeed surjective for $J$'s with prescribed on $U^c$. 
    
    \smallskip
    
    It is then left to prove surjectivity for the case $k>1$.
    Given an element $\mu \in W^{k-1,p}\bigl(\Sigma,\antiClinear\bigl(T\Sigma,u^*\cF\bigr)\bigr)$, we can use that $\mu$ lies in $L^p\bigl(\Sigma,\antiClinear\bigl(T\Sigma,u^*\cF\bigr)\bigr)$ and the surjectivity shown above to find $(\xi,Y)\in W^{1,p}(u^*\cF) \times T_J\cJ^l\bigl(\Sigma\times M,(\cF,\omega)\bigr)$ such that $D_{(u,J)}\, \cS^\univ(\xi,Y) = \mu$, or, equivalently,
    $D_u\cS^J(\xi) = \mu - \frac{1}{2}Y(\cdot , u(\cdot)) \circ \d u \circ j$.
    As $\xi$ is tangent to a leaf of $\cF$ and as $l\geq k$, by elliptic regularity in the non-foliated setting (see e.g.\ \cite[Theorem~C.2.3]{McDuffSalamonBook}) this implies that in fact $\xi\in W^{k,p}\Gamma(u^*\cF)$ as desired, showing that $D_{(u,J)}\cS^\univ\vert_{W_{(u,J)}}$ is also surjective for $k>1$.

    Note that the step to go from surjectivity for $k=1$ to surjectivity for $k>1$ does not require any particular adaptation in the case of the space of $J$ with prescribed values over $U^c$.

    \medskip

    Now that the surjectivity of $D_{(u,J)}\, \cS^\univ\vert_{W_{(u,J)}}$, and hence of $D_{(u,J)}\, \cS^\univ$ as a direct consequence, has been established, we apply the implicit function theorem to conclude. 
    Here are the details.

    Splitting $D_{(u,J)}\, \cS^\univ$ as in \Cref{eqn:linearized_CR_operator_univ} allows us to directly apply \cite[Lemma~A.3.6]{McDuffSalamonBook}, because 
    $D_u\, \cS^J$ is by \Cref{lemma:lin_CR_op_is_fredholm} a Fredholm operator.
    It follows that $D_{(u,J)}\, \cS^\univ$ moreover has a right-inverse operator and the claim about the projection operator~$\Pi_2$.
    The implicit function theorem \cite[Theorem~A.3.3]{McDuffSalamonBook} then applies to $D_{(u,J)}\, \cS^\univ$ thus giving that the universal moduli space~$\cMtilde^\univ$ is a $C^{l-k}$-manifold at any of its points, as $(u,J)$ was chosen arbitrarily.
    
    \medskip

    The only thing left to prove is the claim about $\ev_\partial^\univ$;
    for this, we follow closely the proof in \cite[Proposition~4.55]{WenNotes}.
    
    Let $p_0\in \partial \D^2$ and $(u_0,J_0)\in \cMtilde^\univ$.
    Recall that in the case where $\Sigma$ has non-empty boundary we have an identification $T_{u_0}\fS_0\bigl(\cF,P\bigr)=W^{k,p}((u_0)^*\cF,u^*\cF_P) \oplus \R^s$.
    Then, using the fact that the domain of $\ev^\univ_\partial\colon \partial \D^2\times  \cMline^\univ\to \cF_P\subset \cC$ naturally lives in $\partial \D^2 \times \cB^\univ  = \partial \D^2\times \fS_0\bigl(\cF,P\bigr) \times \cJ^l\bigl(\Sigma\times M,(\cF,\omega)\bigr)$, we can naturally write tangent vectors to $\partial \D^2\times  \cMline^\univ$ at a point $(p_0,u_0,J_0)$ as 
    \[
        (v,\xi_{u_0}=(\xi_{\cF}, \xi_{\transv}), Y)\in T_{p_0}(\partial \D^2)\times  W^{k,p}(u_0^*\cF,{u_0}^*\cF_P) \times \R^s \times T_{J_0}\cJ^l\bigl(\Sigma\times M,(\cF,\omega)\bigr) \; .
    \]
    Using this notation, we simply have that
    \begin{align*}
        \d_{(p_0,u_0,J_0)}\ev^\univ_\partial \colon & T_{(p_0,u_0,J_0)}(\partial \D^2 \times \cMtilde^\univ) \to TP 
        \\
        \quad 
        & (v,\xi_{\cF}, \xi_{\transv}, Y) \mapsto \d_{p}u(v) + \xi_{\cF}(p_0) + \xi_{\transv}(p_0) 
    \end{align*}
    As $u_0$ has boundary on $\cF_P$ and $\xi_{\cF}$ is tangent to $\cF_P$ by definition, it then follows that, in order to show that the image of $\d_{(p_0,u_0,J_0)}\ev^\univ_\partial$ to be
    transverse to $(\cF_{P})_{p_0}$ inside $T_{p_0}P$, it is enough to prove that, for any given $w_0\in T_{p_0}P$ transverse to $\cF_P$ there is
    $(v^0,\xi_{\cF}^0,\xi_{\transv}^0, Y^0)\in T_{(p_0,u_0,J_0)}(\partial \D^2 \times \cB^\univ)$ such that $\xi_{\transv}^0(p_0)=w_0$.

    Consider any $\eta^0_{\transv} \in \{0\}\times \R^s \subset  W^{k,p}(u_0^*\cF,{u_0}^*\cF_P) \times \R^s \simeq T_{u_0}\cB^\univ$.
    Recall now that we have already proved above that $D_{(u,J)}\cS^\univ$ is surjective.
    In fact, surjectivity has been achieved without using the direction transverse to the foliation: 
    namely, only vector fields with trivial component on the factor $\R^s$ have been used.
    In other words, we can find $(\xi_{\cF}^1,\xi_{\transv}^1=0, Y^1)\in T_{(u_0,J_0)}\cB^\univ$ such that 
    \[
        D_{(u_0,J_0)} \cS^\univ(\xi_{\cF}^1,0,Y^1) = - D_{(u_0,J_0)} \cS^\univ (\eta^0_{\transv}) \, .
    \]
    Then, the vector $(v^0,\xi_{\cF}^0,\xi_{\transv}^0, Y^0) \coloneqq (0,\xi_{\cF}^1,\eta^0_{\transv},Y^1)$ is by construction in the tangent space to $\cMtilde^\univ$ and satisfies at the same time that $\xi_{\transv}^1(p_0)=w_0$, as desired.
\end{proof}


\section{Moduli space of discs: topological properties}
\label{sec:mod_space_topological_properties}

Let $(M^{2n+q},\cF^{2n},\omega)$ be a symplectically foliated manifold, and $\cC$ a Lagrangian vanishing cycle embedded in $(M,\cF,\omega)$ and modeled on a closed $(n-1)$-dimensional manifold~$S$.

\Cref{sec:bishop_family} details how, for a particular almost complex structure $J$ on a neighborhood of the core, this normal form guarantees the existence of a parametric Bishop family of leafwise $J$-holomorphic discs.
We then explain in \Cref{sec:mod_space_disc_properties} that, via a semi-local uniqueness lemma for such discs, the parametric Bishop family gives rise to a finite dimensional moduli space of leafwise $J$-holomorphic discs with boundary in the Lagrangian foliation $P\cap \cF$.
We then derive in \Cref{sec:energy} the energy bounds which are necessary to apply Gromov compactness in the proof of \Cref{thm:trivial_lagr_vanish_cycle}, that is the content of \Cref{sec:proof_obstr}.


\subsection{Bishop family}
\label{sec:bishop_family}

According to \Cref{item:lvc_core} in \Cref{def:lagr_van_cycle}, the Lagrangian vanishing cycle has a simple Lagrangian-type tangency with the foliation along the core~$\cC_0$.
By \Cref{def:simple_tangency} there is then a model close to core~$\cC_0$, which we will denote $\Umod$ in the rest of this section, and which is given by $(D^*_\epsilon \times T^*_\epsilon S \times \R, i \d z \wedge \d \overline{z} + \d \lambda_0)$.
(Recall that $\Umod$ is not a neighborhood of the core inside $M$, but rather a local $1$-dimensional extension of a neighborhood of the core inside the leaf in which it is contained.
As will become clear later, this is sufficient for our purposes, since we are only considering pseudo-holomorphic curves with boundary on the Lagrangian vanishing cycle).
With a slight abuse of notation, we also denote $\Umod$ its image in $(M,\cF,\omega)$.

We equip the leaves of $\Umod = \D^2_\epsilon \times T^*_\epsilon S \times \R$ with the split leafwise almost complex
structure $\Jmod = i \times J_0$, where $i$ is the standard complex structure on the $\D^2_\epsilon$-factor and $J_0$ is an almost complex structure on $T_\epsilon^*S$ that is compatible with $d\lambda$.
The structure~$i\oplus J_0$ is then compatible with $\omega$, and although $\Jmod$ is not domain-dependent, it can of course be considered as such by simply assuming that it is defined on $\D^2 \times \Umod$ but that it happens to be constant in the first component.
Later we will extend this $\Jmod$ to a domain-dependent almost complex structure outside of $\Umod$, which is not constant in the $\D^2$ factor.
This will allow us to use the results obtained in \Cref{sec:transversality_CR_operator}.

Now, for any constant value of $\bfq_0 \in S$ and every $y_0\in \R$, the discs
\begin{equation*}
   \D^2_\epsilon\times \{(\bfq_0, \bfp=0)\} \times \{y_0\} \subset U =
    \D^2_\epsilon \times T^*_\epsilon S \times \R 
\end{equation*}
are clearly contained in the leaves of the foliation~$\cF$ and are $\Jmod$-holomorphic. 
Moreover, these naturally contain sub-discs that have boundary in $\cC^* = \cC\setminus \cC_0$ and that can be parametrized as maps
\begin{equation}
\label{eqn:def_bishop_disc}
  u_{r,\bfq}\colon (\D^2,\partial \D^2) \to (\Umod = \D^2_\epsilon \times T^*_\epsilon S \times \R, \cC^*) \, , 
  \quad 
  z\mapsto
  \bigl(\sqrt{r} z;  \bfq, 0; r\bigr) \, .
\end{equation}
for fixed choice of $r\in (0,\sqrt{\epsilon}/2)$ and $\bfq\in S$.
The image of each $u_{r,\bfq}$ is contained in the leaf $F_{r}$, and its boundary lies on the Lagrangian $L_{r}\subset \cC$.
We will refer to the family $\{u_{r,\bfq}\}_{(r,\bfq)\in [0,\sqrt{\epsilon}/2]\times S}$ as \emph{Bishop family stemming from $\cC$}, and call each $u_{r,\bfq}$ a \emph{Bishop disc}.

The following property follows directly from the fact that $\mu(T\C,T\S^1) = 2$ and from the properties of the Maslov index \cite[Appendix~C.3]{McDuffSalamonBook}:

\begin{lemma}
    \label{lemma:maslov_index}
    For every Bishop disc $u_{r,\bfq}$ as above, we have
    \begin{equation*}
        \mu\bigl(u_{r,\bfq}^*\cF, u_{r,\bfq}^*(T\cC^*\cap\cF)\bigr) = 2 \, .
    \end{equation*}
\end{lemma}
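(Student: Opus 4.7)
The plan is to trivialize the pulled-back complex bundle $u_{r,\bfq}^*\cF$ over $\D^2$, identify the totally real boundary subbundle $u_{r,\bfq}^*(T\cC^*\cap\cF)$ explicitly in this trivialization, and then reduce to two standard computations via additivity of the Maslov index under direct sums.

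First I would exploit the product structure of the model $\Umod = \D^2_\epsilon\times T^*_\epsilon S\times\R$. The image of $u_{r,\bfq}$ lies entirely in $F_r\cap\Umod = \D^2_\epsilon\times T^*_\epsilon S\times\{r\}$, whose tangent bundle along the zero section canonically splits as $T\C\oplus T(T^*_\epsilon S)|_{(\bfq,0)}$. Since the $T^*_\epsilon S$ component of $u_{r,\bfq}$ is constant, the pullback trivializes as
\begin{equation*}
    u_{r,\bfq}^*\cF \;\cong\; \D^2\times \bigl(\C\oplus T_{(\bfq,0)}(T^*_\epsilon S)\bigr),
\end{equation*}
equipped with the constant complex structure $i\oplus J_0$.

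Next I would compute $T\cC^*\cap\cF$ along the boundary. Differentiating the embedding $(z,x)\mapsto (z,x,0,|z|^2)$ at a point $(z_0,\bfq_0,0,|z_0|^2)$ with $z_0\neq 0$ shows that a vector $(\dot z,\dot x,0,2\,\mathrm{Re}(\bar z_0\dot z))$ tangent to $\cC$ lies in the leaf $F_{|z_0|^2}$ (which has the last coordinate frozen) if and only if $\mathrm{Re}(\bar z_0\dot z)=0$, i.e.\ $\dot z$ is tangent to the circle of radius $|z_0|$ through $z_0$. Pulling back by $u_{r,\bfq}$ along $\partial\D^2$, this yields
\begin{equation*}
    u_{r,\bfq}^*\bigl(T\cC^*\cap\cF\bigr)\big\vert_{\partial\D^2} \;=\; T\S^1\,\oplus\, T_{\bfq}S,
\end{equation*}
sitting inside the trivialization above, where $T_{\bfq}S\subset T_{(\bfq,0)}(T^*_\epsilon S)$ is the tangent space to the zero section.

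Finally, I would invoke additivity of the Maslov index under direct sums \cite[Appendix~C.3]{McDuffSalamonBook}. The first summand is precisely the standard pair $(T\C,T\S^1)$, contributing $\mu=2$. The second is a constant Lagrangian (the zero section of $T^*S$ is Lagrangian for $\d\lambda_{\mathrm{can}}$, hence totally real for the compatible $J_0$) in a constant complex vector space, contributing $\mu=0$. Summing yields the claimed value $2$. I do not expect any real obstacle here; the only point requiring mild care is checking that the zero-section Lagrangian is genuinely totally real for the chosen $J_0$, which is immediate from $\omega$-compatibility.
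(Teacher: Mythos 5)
Your proposal is correct and follows the same route the paper intends: the paper's one-line justification ("follows directly from $\mu(T\C,T\S^1)=2$ and the properties of the Maslov index") is precisely the split $u_{r,\bfq}^*\cF \cong \underline{\C}\oplus\underline{T_{(\bfq,0)}T^*_\epsilon S}$ with boundary condition $T\S^1\oplus T_{\bfq}S$, and additivity giving $2+0$. Your check that the zero section is totally real for any $\d\lambda_{\mathrm{can}}$-compatible $J_0$ is the right point to flag, and it closes the argument.
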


Because the induced foliation on $\cC^*$ is of codimension~$1$, according to \Cref{lemma:lin_CR_op_is_fredholm} and \Cref{lemma:maslov_index}, the index of the linearized Cauchy-Riemann operator at $u_{r,\bfq}$ then becomes
\begin{equation}
    \label{eqn:index_bishop_discs}
    \ind  D_{u_{r,\bfq}}\cS = n + \mu\bigl(u_{r,\bfq}^*\cF, u_{r,\bfq}^*(T\cC^*\cap\cF)\bigr) + 1 = n+3 \, .
\end{equation}
(For simplicity, we omit here, and later, explicit mention of $J$ from the notation for $\cS$, as contrary to the previous section we will only work with a fixed $J$ from now on.)

\begin{remark}
    The expected dimension of the space of Bishop discs given by this index is $n+3$.
    The parameters $(r,\bfq)$ however form only an $n$-dimensional space; 
    the remaining three dimensions can be recovered by acting on each $u_{r,\bfq}$ with the Möbius group.
    \\
    The usual way to reduce from the space of $J$-holomorphic maps to the space of geometric holomorphic curves is by passing to the moduli space identifying any two holomorphic maps that are equal to one another after reparametrization by the automorphism group of the domain.
    In the setup we have chosen, the almost complex structure is domain-dependent (besides on the neighborhood of the Lagrangian vanishing cycle), so that it is not invariant by the action of the Möbius group; we will instead reduce the dimension of the moduli space by asking that the maps evaluate to certain fixed submanifolds of the Lagrangian vanishing cycle at fixed points in the boundary of the domain.
    We will discuss these important points in more detail in \Cref{sec:mod_space_disc_properties}.
\end{remark}

To guarantee that the space of holomorphic discs is a smooth finite dimensional manifold, one usually appeals to the choice of a generic almost complex structure, since we have chosen $\Jmod$ however to be of a specific form on $\Umod$, we need to verify the transversality for the maps~$u_{r,\bfq}$ by hand, see also \Cref{rmk:generic_J_relative_subset}.

\begin{lemma}\label{lemma:regularity_by_hand}
  The discs $u_{r,\bfq}$ are regular.
\end{lemma}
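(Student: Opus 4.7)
The plan is to use the explicit product structure of the model $\Umod$ and of $\Jmod=i\oplus J_0$ in order to decouple the linearized Cauchy--Riemann operator $D_{u_{r,\bfq}}\cS$ into two standard pieces, each of which is a textbook example that is known to be regular.

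Along $u_{r,\bfq}$, whose image lies in the single leaf $\D^2_\epsilon\times T_\epsilon^*S\times\{r\}$, the pull-back bundle $u_{r,\bfq}^*\cF$ decomposes into complex sub-bundles $H\coloneqq u_{r,\bfq}^*T\D^2_\epsilon$ (with complex structure $i$) and $V\coloneqq u_{r,\bfq}^*T(T_\epsilon^*S)$ (with complex structure $J_0|_{(\bfq,0)}$), and the totally real boundary condition $u_{r,\bfq}^*(T\cC^*\cap\cF)$ splits accordingly as $u_{r,\bfq}^*T\S^1_{\sqrt{r}}\oplus u_{r,\bfq}^*TS$. Since $\Jmod$ is a product and is moreover constant in the $T_\epsilon^*S\times\R$-directions, the restriction of $D_{u_{r,\bfq}}\cS$ to the $W^{k,p}$-factor of its domain in~\eqref{eqn:linearized_CR_operator_with_bdry} is diagonal with respect to $H\oplus V$. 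The horizontal summand is the linearization of $\bar\partial$ at the standard embedding $z\mapsto\sqrt{r}z$ of $(\D^2,\partial\D^2)$ into $(\C,\S^1_{\sqrt{r}})$; this is the prototypical Bishop disc problem, with Maslov index $2$, Fredholm index $3$, and surjective linearization whose $3$-dimensional kernel is generated by the infinitesimal action of the M\"obius group. The vertical summand is the linearization at the constant map $z\mapsto(\bfq,0)$ into $(T_\epsilon^*S,J_0)$; here the pull-back bundle $V$ is the trivial complex bundle $\D^2\times\C^{n-1}$ (with $n-1=\dim S$) with constant totally real boundary condition $\partial\D^2\times\R^{n-1}\simeq\partial\D^2\times T_\bfq S$, so this is the standard Cauchy--Riemann operator whose kernel is the $(n-1)$-dimensional space of real-constant sections and whose cokernel is trivial.

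It follows that $D_{u_{r,\bfq}}\cS$ is already surjective on the $W^{k,p}$-factor of its domain, and the additional $\R^s=\R$ summand in~\eqref{eqn:linearized_CR_operator_with_bdry} (here $s=1$ is the codimension of $\cF\cap T\cC^*$ in $\cC^*$) can only enlarge the kernel; hence $D_{u_{r,\bfq}}\cS$ is surjective and $u_{r,\bfq}$ is regular, as claimed. As a sanity check, the three contributions $3+(n-1)+1$ to the kernel dimension sum to $n+3$, matching the index computed in~\eqref{eqn:index_bishop_discs}, with the last $\R$-direction corresponding geometrically to the infinitesimal variation of the radius $r$. The one point of the argument requiring care is the splitting of the linearized operator along $H\oplus V$, which hinges on $\Jmod$ being a product that is constant along the image of $u_{r,\bfq}$; once this splitting is in place, the rest reduces to two well-known standard disc problems and is routine.
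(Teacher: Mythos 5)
Your proof is correct and follows essentially the same route as the paper: both exploit the split form $\Jmod = i\oplus J_0$ in the model neighborhood $\Umod$ to decouple the linearized operator at $u_{r,\bfq}$ into a horizontal block (the planar Bishop problem) and a vertical block (a constant disc into $T^*_\epsilon S$ with zero-section boundary condition), and the $\R^s=\R$ direction in \eqref{eqn:linearized_CR_operator_with_bdry} is handled for free. The only presentational difference is that the paper works out the kernel of $D_{u_{r,\bfq}}\cS_\cF$ by hand---harmonicity for the vertical part, power-series expansion for the horizontal part---and deduces surjectivity from the index $n+2$, whereas you simply invoke the standard surjectivity of the two diagonal blocks; your remark that the block-diagonality (which relies on the $H$-block of $\nabla_\xi J$ vanishing because $i$ is constant, together with $\partial_J u_{r,\bfq}$ being purely horizontal) is the one point that needs care is exactly right, and is implicitly what the paper's coordinate derivation is verifying.
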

The proof of this lemma follows closely the one given in \cite[Section 4.1.3]{NieNotes} for the case of contact bordered Legendrian open books.
\begin{proof}
    According to the index computation in \eqref{eqn:index_bishop_discs}, the linearized Cauchy--Riemann operator $D_{u_{r,\bfq}}\cS$ has a kernel $\cK$ of dimension at least $n+3$, and, more precisely, it is surjective if and only if $\cK$ has dimension \emph{at most} (and hence exactly) $n+3$.
    Now, recalling that 
    \[
    D_{u_{r,\bfq}}\cS \colon  W^{k,p}(u^*\cF,u^*\cF_P) \times \R^s \to 
        W^{k-1,p}\bigl(\Sigma,\antiClinear\bigl(T\Sigma,(u^*\cF,u^*\cF_P)\bigr)\bigr) \, ,
    \]
    we see that it is hence enough to prove that the restriction $D_{u_{r,\bfq}}\cS_\cF$ of $D_{u_{r,\bfq}}\cS$ to the first factor $W^{k,p}(u^*\cF,u^*\cF_P)$ has dimension \emph{at most} $n+2$ (in which case it will automatically be $n+2$).
    Note that $D_{u_{r,\bfq}}\cS_\cF$ is nothing else than the usual non-foliated linearized Cauchy--Riemann operator.
    
    Now, in the explicit neighborhood  $\Umod = \D^2_\epsilon \times T^*_\epsilon S \times \R$ where $u_{r,\bfq}$ lives, we have $\cF= \bigcup_{s} \D^2_\epsilon \times T^*_\epsilon S\times\{s\}$, with the standard leafwise symplectic structure, and $u_{r,\bfq}$ lives on the local leaf $F_r^{\mathrm{loc}}=\D^2_\epsilon \times T^*_\epsilon S\times\{r\}$, which we naturally identify with $\D^2_\epsilon \times T^*_\epsilon S$.
    One has $P_r\coloneqq P\cap F_r^{\mathrm{loc}} = \{\vert z \vert =\sqrt{r}\}\times S \subset \D^2_\epsilon \times T^*_\epsilon S$.
    Moreover, as $u_{r,\bfq}$ has constant value $\bfq\in S$ in the factor $T^*_\epsilon S$, we can find a local chart $T^*\R^{n-1}=\R^{2n-2}$ in $T^*_\epsilon S$ centered at $\bfq$, such that $\D^2_\epsilon \times \R^{2n-2}$ contains the image of any disc near to $u_{r,\bfq}$ in the space of trivial-holonomy leafwise maps $W^{k,p}_0(\cF,P)$.
    Recall that the almost complex structure we use is $i\oplus J_0$ on $\D^2_\epsilon \times T^*_\epsilon S$, as chosen at the beginning of \Cref{sec:bishop_family}, and so it is also of this split form in this open subset $\D^2_\epsilon \times \R^{2n-2}$.
    Up to a change of coordinates on the $\R^{2n-2}$ factor, we can also assume that $J_0$ on $\R^{2n-2}\subset T^*S$ coincides with the standard complex structure on $\R^{2n-2}=\C^{n-1}$ at the origin.
    
    Given a family $v_s$ in $W^{k,p}_0(\cF,P)$ starting at $v_0=u_{r,\bfq}$, we now want to understand when 
    \[
        D_{u_{r,\bfq}}\cS_\cF (\dot v_0) = \left.\frac{\d}{\d s}\right\vert_{s=0}\delbar_J v_s
    \]
    vanishes.
    In the chosen chart, we can write in coordinates $v_s=(a_s,b_s;\bfx_s,\bfy_s)\colon \D^2 \to \D^2_\epsilon \times \R^{2n-2}$.
    As the almost complex structure splits as $i\oplus J_0$ with $i$ translation invariant, $D_{u_{r,\bfq}}\cS \dot \dot v_0=0$ is equivalent to  
    \begin{align*}
    & \d \dot a_0 - \d \dot b_0 \circ i = 0 \, ,
    \quad 
    \d \dot b_0 + \d \dot a_0 \circ i = 0 \, , 
    \\
    & (\d \dot \bfx_0, \d\dot\bfy_0) - \left.\frac{\d}{\d s}\right\vert_{s=0} 
    (J_{(\bfx_s,\bfy_s)}\circ (\d \bfx_0, \d\bfy_0) \circ i) = 0 \, .
    \end{align*}
    Now, the first two identities mean that $(\dot a_0,\dot b_0)$ is a holomorphic map $\D^2\to \D^2_\epsilon$, while
    the second identity above can be simplified by using Leibniz' rule and the fact that $J_0(0,0)=i$ and $(\bfx_0,\bfy_0)=(0,0)$ to get 
    \begin{equation}
    \label{eqn:xy_coord_hol}
    \d \dot\bfx_0-\d\dot\bfy_0\circ i = 0 \, ,
    \quad 
    \d \dot\bfy_0+\d\dot\bfx_0\circ i = 0 \, ,
    \end{equation}
    which then implies that $(\dot\bfx_0,\dot\bfy_0)$ is in fact holomorphic $\D^2\to \R^{2n-2}=\C^{n-1}$ with respect to the standard complex structure on the target.
    Now, by harmonicity of $\dot\bfy_0$, we deduce that it must attain maximum and minimum at the boundary; however, by the required boundary conditions we know that $\dot\bfy_0=0$ on $\partial \D^2$, hence we get $\bfy_0$ vanishes identically.
    By \eqref{eqn:xy_coord_hol}, this implies that $\dot\bfx_0$ must be constant.
    This accounts for $n-1$ directions in the kernel of $D_{u_{r,\bfq}}\cS_\cF$; we hence need to prove that there are at most another $3$.
    
    Now, we know that $\dot w_0=\dot a_0+ i\dot b_0$ is a holomorphic map $\D^2\to \D^2_\epsilon$, and, by the explicit form of $P_r$ described above, its boundary condition is given by the equation $\frac{\d}{\d s}\vert_{s=0}(w_0\overline{w}_0-r)=0$,
    i.e.\ 
    \begin{equation}
        \label{eqn:boundary_cond_w_0}
        \dot w_0 \overline{w}_0 + \dot{\overline{w}}_0 w_0 = 0 \, .    
    \end{equation}
    Now, we can write in power series expansion
    \[
        \dot w_0(z) = \sum_{i=0}^\infty \alpha_k z^k \, .
    \]
    Evaluating this at $z=e^{i\varphi}\in \partial \D^2$, and recalling that $w_0(e^{i\varphi})=\sqrt{r}e^{i\varphi}$ by the boundary conditions for $u_{r,\bfq}$, we can hence substitute everything in \eqref{eqn:boundary_cond_w_0} and we get, by simply equating the coefficients in the expansion, that
    \[
    \alpha_1 + \overline{\alpha}_1=0 \, ,
    \quad 
    \alpha_0 + \overline{\alpha}_2 = 0 \, ,
    \quad
    \alpha_k = 0 \text{ for } k\geq 3 \, ,
    \]
    which shows that the choice of $\dot w_0$, i.e.\ of $(\dot a_0,\dot b_0)$, has $3$ free parameters, namely the whole complex parameter $\alpha_0$ and the imaginary part of $\alpha_1$.
    This concludes.
\end{proof}


\subsection{A priori energy bounds}
\label{sec:energy}

\begin{lemma}\label{lemma:primitive_on_cC}
    Let $\Omega\in\Omega^2(M)$ be an $\cF$-closed extension of the leafwise symplectic structure~$\omega$ on $\cF$.
    Then, the restriction of $\Omega$ to $\cC$ has a primitive $\lambda\in\Omega^1(\cC)$ such that $\lambda\vert_{L_{r}}$ is closed on $L_{r}$ for every $0\leq r\leq 1$, and $\lambda\vert_{L_0}=0$.
\end{lemma}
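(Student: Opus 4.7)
The plan is to first establish that $\Omega|_\cC$ is a closed $2$-form on $\cC$, and then to construct $\lambda$ via the standard Poincaré homotopy formula along the radial retraction of $\cC = \D^2\times S$ onto its core $\cC_0 = \{0\}\times S$.

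For the closedness, I will work on the open dense subset $\cC^* = \cC \setminus \cC_0$, where the induced foliation is regular with Lagrangian leaves $L_r \subset F_r$, and then conclude on all of $\cC$ by continuity. At any $p \in L_r \subset \cC^*$, the tangent space splits as $T_p\cC^* = T_p L_r \oplus \R\,\partial_r$, where $T_p L_r \subset \cF$ (since $L_r$ lies inside the leaf $F_r$) and $\partial_r$ denotes the radial direction in the $\D^2$-factor. For three tangent vectors $X_i = Y_i + r_i\,\partial_r$ with $Y_i \in T_pL_r$, expanding $\d\Omega(X_1, X_2, X_3)$ multilinearly kills any monomial containing $\partial_r$ at least twice by antisymmetry; the remaining monomials have at least two arguments in $\cF$, and vanish either because $\d\Omega$ restricts to $\d\omega = 0$ on any leaf (three arguments in $\cF$), or by the $\cF$-closed hypothesis $\d\Omega(Y, Y', V) = 0$ for $Y, Y' \in \cF$ and $V \in TM$ (two arguments in $\cF$). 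Therefore $\d(\Omega|_{\cC^*}) = 0$, and by density and continuity $\Omega|_\cC$ is closed on all of $\cC$.

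Next I define the smooth homotopy $H\colon [0,1]\times \cC \to \cC$ by $H(t, (z, x)) = (tz, x)$, so that $H_1 = \Id_\cC$ and $H_0 = \iota_0 \circ \pi$, where $\pi\colon \cC \to \cC_0$ is the projection and $\iota_0\colon \cC_0 \hookrightarrow \cC$ the inclusion. I then set
\[
    \lambda \coloneqq \int_0^1 H_t^*\bigl(\iota_{\partial_t H}(\Omega|_\cC)\bigr)\,\d t,
\]
and the Cartan-style homotopy identity for the closed form $\Omega|_\cC$ yields $H_1^*(\Omega|_\cC) - H_0^*(\Omega|_\cC) = \d\lambda$. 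Since $\cC_0$ is an isotropic submanifold of the leaf $F_0$ by \Cref{def:simple_tangency}, we have $\Omega|_{\cC_0} = \omega|_{\cC_0} = 0$, so $H_0^*(\Omega|_\cC) = \pi^*(\Omega|_{\cC_0}) = 0$, and thus $\d\lambda = \Omega|_\cC$.

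The two remaining properties follow immediately from the construction. For $\lambda|_{L_0} = 0$: the homotopy $H$ is constant at each point of $\cC_0$, so the generating vector field $\partial_t H$ vanishes identically along $\cC_0$, making the integrand defining $\lambda$ zero there. For $\lambda|_{L_r}$ closed with $r > 0$: since $L_r$ is $\omega$-Lagrangian inside $F_r$, we have $\Omega|_{L_r} = \omega|_{L_r} = 0$, and hence $\d(\lambda|_{L_r}) = (\d\lambda)|_{L_r} = (\Omega|_\cC)|_{L_r} = 0$. The main subtle point of the argument is verifying closedness of $\Omega|_\cC$ despite the weakness of the $\cF$-closed condition in positive codimension; everything else is a routine application of the Poincaré homotopy operator combined with the isotropic/Lagrangian features of the core and leaves of $\cC$.
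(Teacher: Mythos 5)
Your proof is correct and follows essentially the same two-step strategy as the paper: first verify that $\Omega|_\cC$ is closed by exploiting the codimension-one position of the induced foliation inside $\cC^*$ together with the $\cF$-closedness of $\Omega$ and the leafwise closedness of $\omega$, then produce a primitive via the retraction of $\cC$ onto its core. The one small difference is that you use the explicit Poincaré homotopy operator along the radial contraction, which gives $\lambda|_{L_0} = 0$ automatically from the vanishing of $\partial_t H$ along the core, whereas the paper first invokes abstract homotopy invariance of de Rham cohomology to get some primitive and then corrects it by subtracting $\pi^*(\lambda|_{L_0})$; your route is a minor streamlining that avoids that final adjustment, but both are instances of the same idea.
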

\begin{proof}
    We first claim that the restriction of $\Omega$ to $\cC^* = \cC \setminus \cC_0$ is closed.
    Indeed, $\cF$ induces a codimension~$1$ foliation on $\cC^*$.
    Thus, in particular we can choose at every point~$p\in \cC^*$ a basis of $T_p\,\cC^*$ of the form $(v_1,\dotsc,v_n,w)$ with all $v_j$ lying in the foliation. 
    Clearly, any combination of three vectors from this base to be plugged into $\d\Omega$ will contain at least two $v_j$'s so that $(\d\Omega)\vert_{T\cC^*}$ vanishes.
    
    Now, since $\d\Omega$ restricts to a smooth $3$-form on $\cC$ and $\d\Omega$ vanishes on the dense subset~$\cC^*$, it follows that $(\d\Omega)\vert_{T\cC}=0$, i.e.\ $\Omega\vert_{T\cC}$ is in fact also closed.
    
    The core~$\cC_0$ of the Lagrangian vanishing cycle is $\omega$-isotropic, and hence $\Omega\vert_{T\cC_0}=0$ as well.
    In particular, since $\cC$ deformation retracts onto $\cC_0$, by homotopy invariance of de Rham cohomology there is a $\lambda \in \Omega^1(\cC)$ such that $\Omega\rvert_{T\cC} =  \d\lambda$.
    
    Now, it is easy to see that $\lambda$ is closed on $L_r$ for every $r\in [0,1]$, because
    \begin{equation*}
        \d(\lambda\vert_{TL_r}) = (\d\lambda)\vert_{TL_r} = \Omega\vert_{TL_r} = \omega\vert_{TL_r} = 0 \; ,
    \end{equation*}
    since $L_r$ is isotropic in the symplectic leaves.
    
    Lastly, as $\d\lambda\vert_{L_0}=0$ on $L_0 = \cC_0$ and as $\cC$ has a natural projection map $\pi$ onto $\cC_0$, up to replacing $\lambda$ with $\lambda-\pi^*\bigl(\lambda\vert_{L_0}\bigr)$ (which is another primitive for $\Omega$ by closedness of $\lambda\vert_{T L_0}$), we can also arrange that $\lambda\vert_{L_0} = 0$ as desired.
\end{proof}

\begin{lemma}\label{lem:action_functional_bounded}
    Let now $[\gamma_0]$ be the homotopy class of a free loop~$\gamma_0$ in $\cC^*$, and denote by $C^1_{[\gamma_0]}(\S^1, \cF_{\cC^*})$ the space of all leafwise $C^1$-loops $\gamma\colon \S^1\to \cF_{\cC^*}$ representing $[\gamma_0]$, where $\cF_{\cC^*} = \cF\cap T\cC^*$. 
    The functional 
    \begin{equation*}
        A\colon C^1_{[\gamma_0]}(\S^1, \cF_{\cC^*})\to \R\, ,
        \quad 
        \gamma \mapsto \int_\gamma \lambda
    \end{equation*}
    descends to a continuous function $A_{[\gamma_0]}\colon  (0,1]\to \R$ such that the evaluation of $A$ on any loop $\gamma\in C^1_{[\gamma_0]}(\S^1, \cF_{\cC^*})$ whose image lies in $L_r$ is simply $A_{[\gamma_0]}(r)$.
    
    Furthermore, $A_{[\gamma_0]}$ can be extended continuously to $r=0$ by setting $A_{[\gamma_0]}(0) = 0$.    
    In particular, there is a uniform bound $C_{[\gamma_0]}$ for the functional $A$ on $C^1_{[\gamma_0]}(\S^1, \cF_{\cC^*})$.
\end{lemma}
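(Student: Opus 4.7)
The plan is to exploit the product structure $\cC \cong \D^2 \times S$ coming from the parametrization of the Lagrangian vanishing cycle, together with the properties of $\lambda$ established in \Cref{lemma:primitive_on_cC}.

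First I would observe that for every $r\in(0,1]$ the inclusion $L_r\hookrightarrow \cC^*$ is a homotopy equivalence: indeed, $\cC^* \cong (\D^2\setminus\{0\})\times S$ deformation retracts radially onto each $L_r = \S^1_r \times S$. Consequently, the induced map on free homotopy classes of loops is a bijection, so the class $[\gamma_0]$ has a representative in $L_r$, and any two such representatives are freely homotopic inside $L_r$. Since $\lambda\vert_{L_r}$ is closed, Stokes' theorem applied to such a free homotopy shows that $\int_\gamma \lambda$ depends only on $r$ and on $[\gamma_0]$; this defines $A_{[\gamma_0]}(r)\in\R$ and gives the desired factorization of $A$ through the leaf parameter.

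For continuity on $(0,1]$, I would fix a smooth representative $\tilde\gamma_0\in L_1$ of $[\gamma_0]$, write it in product coordinates as $\tilde\gamma_0(t) = \bigl(e^{2\pi i\theta(t)}, s(t)\bigr)$, and for each $r\in(0,1]$ set $\gamma_r(t) := \bigl(r\,e^{2\pi i\theta(t)}, s(t)\bigr) \in L_r$. The straight-line homotopy $(\tau,t)\mapsto \bigl((1-\tau+\tau r)\,e^{2\pi i\theta(t)}, s(t)\bigr)$ lies entirely in $\cC^*$ and shows $[\gamma_r] = [\gamma_0]$, so $A_{[\gamma_0]}(r) = \int_{\gamma_r}\lambda$ by Step~1. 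Since $(r,t)\mapsto \gamma_r(t)$ is smooth and $\lambda$ is smooth, the function $r\mapsto A_{[\gamma_0]}(r)$ is continuous (in fact smooth) on $(0,1]$. For the limit at $r=0$, the loops $\gamma_r$ converge in $C^1$ as $r\to 0^+$ to the loop $\gamma^\circ(t) := \bigl(0, s(t)\bigr)$ contained in the core $\cC_0 = L_0$. Because $\lambda\vert_{L_0}=0$ by \Cref{lemma:primitive_on_cC}, we get $\int_{\gamma^\circ}\lambda = 0$, and continuous dependence of the integrand on $r$ gives $A_{[\gamma_0]}(r)\to 0$; declaring $A_{[\gamma_0]}(0):=0$ extends $A_{[\gamma_0]}$ continuously to the compact interval $[0,1]$, and the uniform bound $C_{[\gamma_0]} := \max_{r\in[0,1]}|A_{[\gamma_0]}(r)|$ follows immediately.

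The main subtlety, on which the whole argument rests, is Step~1: one must verify that any two leafwise loops lying in the same leaf $L_r$ and representing the same class in $\pi_1(\cC^*)$ are in fact freely homotopic \emph{within} $L_r$, so that closedness of $\lambda\vert_{L_r}$ can be invoked. This is exactly what the homotopy equivalence $L_r\hookrightarrow \cC^*$, provided by the product structure of $\cC$, supplies.
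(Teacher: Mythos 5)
Your argument is correct and follows essentially the same route as the paper's: you use that $\cC^*$ deformation retracts onto each $L_r$ to show that $A$ depends only on $r$, you exhibit a smooth family of loops $\gamma_r$ crossing the $L_r$'s to obtain continuity, and you invoke $\lambda\vert_{L_0}=0$ from \Cref{lemma:primitive_on_cC} to conclude $A_{[\gamma_0]}(r)\to 0$. The extra details you supply (the explicit radial retraction and the explicit product-coordinate family $\gamma_r$) simply make the paper's ``it is easy to see'' steps fully concrete.
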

\begin{proof}
    If $\gamma$ and $\gamma'$ are two loops in $C^1_{[\gamma_0]}(\S^1, \cF_{\cC^*})$ that lie both in the same $L_r$, then they are homotopic to each other in $L_r$, as they are homotopic in $\cC^*$, and $\cC^*$ retracts to $L_r$.
    Then, as the restriction of $\lambda$ to $L_r$ is closed, Stokes' theorem gives us that $\int_{\gamma}\lambda = \int_{\gamma'}\lambda$.
    In particular, the value of $A(\gamma)$ really only depends on the leaf~$L_r$ containing $\gamma$, so that the functional $A$ descends indeed to a well-defined function $A_{[\gamma_0]}\colon (0,1]\to \R$ with $A_{[\gamma_0]}(r) = A(\gamma)$ for any loop $\gamma \in C^1_{[\gamma_0]}(\S^1, \cF_{\cC^*})$ such that $\gamma \subset L_r$.
    
    Note also that $A_{[\gamma_0]}(r)$ is a smooth function of $r$. 
    This can be seen for instance by choosing a smooth family of loops in $C^1_{[\gamma]}(\S^1, \cF_\cC)$ that crosses all $L_r$ with $r\in (0,1]$.
    
    Now, recalling that the restriction of $\lambda$ to $L_0 = \cC_0$ is zero, by the very definition of $A$ one can see that $A_{[\gamma_0]}(r)\to 0$ as $r\to 0$.
    In particular, $A_{[\gamma_0]}$ can be extended to a continuous function $[0,1]\to \R$, as desired.
\end{proof}

\begin{proposition}\label{prop:bounded_energy}
    Consider the space $\cB = \cB\bigl((\D^2,\S^1), (\cF, \cF_{\cC^*})\bigr)$ of all smooth leafwise maps $f\colon (\D^2,\S^1) \to (\cF, \cF_{\cC^*})$, and let $\cB_0\subset \cB$ be the connected component of a given $f_0\in \cB$.
    Then, there exists a continuous function $E_{[f_0]}\colon [0,1] \to \R$ such that the symplectic energy
    \begin{equation*}
        E(f) := \int_{\D^2} f^*\omega
    \end{equation*}
    of an $f\in \cB_0$ is given by $E(f) = E_{[f_0]}(r)$, where $r\in (0,1]$ is such that $f(\S^1) \subset L_r$.
    In particular, the symplectic energy is uniformly bounded on $\cB_0$ by some constant~$C_{f_0}>0$ that depends only on $f_0$.
\end{proposition}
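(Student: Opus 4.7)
The plan is to reduce the energy $E(f)$ to a boundary integral by Stokes' theorem, using the primitive furnished by Lemma~\ref{lemma:primitive_on_cC}, and then to recognize the resulting expression via Lemma~\ref{lem:action_functional_bounded} as a continuous function of $r$ alone.

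I would begin by fixing an $\cF$-closed extension $\Omega \in \Omega^2(M)$ of $\omega$ together with the primitive $\lambda \in \Omega^1(\cC)$ of $\Omega|_\cC$ given by Lemma~\ref{lemma:primitive_on_cC}. Since every $f \in \cB$ has image inside a single leaf of $\cF$, where $\Omega$ and $\omega$ agree, $E(f) = \int_{\D^2} f^*\Omega$. The key step is to show that, if $f_t : (\D^2, \S^1) \to (\cF, \cF_{\cC^*})$ is a smooth path in $\cB$, then the map $F : [0,1] \times \D^2 \to M$, $F(t,z) := f_t(z)$, satisfies
\[
\int_{\D^2} f_1^*\Omega - \int_{\D^2} f_0^*\Omega \;=\; \int_{[0,1] \times \S^1} F^*\Omega
\]
with no bulk contribution. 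This would follow from Stokes' theorem applied to $F^*\Omega$ once one observes that $F^* d\Omega = 0$: for any $v, w$ tangent to $\D^2$ the vectors $F_*v$ and $F_*w$ lie in $T\cF$ because each $f_t$ is leafwise, so $d\Omega$ is evaluated on triples with at least two entries in $T\cF$, and the $\cF$-closedness of $\Omega$ forces this to vanish. Since in addition $F|_{[0,1] \times \S^1}$ takes values in $\cC^*\subset \cC$, another application of Stokes together with $\Omega|_\cC = d\lambda$ gives $\int_{[0,1] \times \S^1} F^*\Omega = \int_{\S^1} f_1^*\lambda - \int_{\S^1} f_0^*\lambda$. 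Hence the quantity $E(f) - \int_{\S^1} f^*\lambda$ is locally, and thus by connectedness globally, constant on $\cB_0$.

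To conclude, I would invoke Lemma~\ref{lem:action_functional_bounded}: the free-homotopy class $[\gamma_0] := [f_0|_{\S^1}]$ in $\cC^*$ is constant along $\cB_0$, so $\int_{\S^1} f^*\lambda = A_{[\gamma_0]}(r)$ whenever $f(\S^1) \subset L_r$. Setting $E_{[f_0]}(r) := A_{[\gamma_0]}(r) + c$, with $c := E(f_0) - A_{[\gamma_0]}(r_0)$ and $f_0(\S^1) \subset L_{r_0}$, yields the desired function; its continuous extension to $r=0$ is immediate from the last part of Lemma~\ref{lem:action_functional_bounded}, and the uniform bound then follows from continuity on the compact interval $[0,1]$. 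The main substantive obstacle is the vanishing of $F^* d\Omega$, which is precisely what forces one to work with an $\cF$-closed extension of $\omega$ (rather than merely some extension) and in turn is the reason why the strong symplectic hypothesis is indispensable for obtaining uniform energy bounds. A minor technical issue -- that an arbitrary continuous path in $\cB_0$ must first be smoothly approximated to allow Stokes -- is handled by the standard density of smooth paths in the function space at hand.
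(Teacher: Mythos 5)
Your proposal is correct and follows essentially the same route as the paper's proof: apply Stokes to $F^*\Omega$ over $[0,1]\times\D^2$, use $\cF$-closedness of $\Omega$ to kill the bulk term $F^*\d\Omega$, reduce the boundary contribution along $[0,1]\times\S^1$ to the action functional $A$ via the primitive $\lambda$ from Lemma~\ref{lemma:primitive_on_cC}, and invoke Lemma~\ref{lem:action_functional_bounded}. Your explicit pointwise argument for why $F^*\d\Omega=0$ (at most one $\partial_t$ direction among any three tangent vectors, so at least two push forward into $T\cF$) is a welcome elaboration of a step the paper states without justification.
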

\begin{proof}
    Let $\lambda$ be as in \Cref{lemma:primitive_on_cC}, and  $C_{[\gamma_0]}$ with $\gamma_0 =\partial f_0$ be as in \Cref{lem:action_functional_bounded}.
    
    For every $f\in \cB_0$, we can find a smooth $[0,1]$-family of discs~$f_t$ with $f_1=f$ and $f_0$ being the reference disc such that each disc~$f_t$ is foliated.
    Define now the smooth map $F\colon [0,1]\times \D^2 \to M$, $F(t,p)=f_t(p)$.
    
    Using that $\Omega$ is $\cF$-closed, it follows that $F^*\d\Omega = 0$, and Stokes' theorem gives
    \begin{equation}\label{eqn:energy_1}
        0= \int_{[0,1]\times \D^2} F^* \d\Omega = 
        \int_{\partial([0,1]\times \D^2)} F^*\Omega = 
        \int_{[0,1]\times \overline{\S^1}} F^*\Omega + \int_{\D^2}f_1^*\Omega - \int_{\D^2}f_0^*\Omega.
    \end{equation}
    Now, as that the restriction of $\Omega$ to $\cC$ is $\d\lambda$ and that $F([0,1]\times \S^1)$ lies in $\cC$, we have
    \begin{equation*}
        - \int_{[0,1]\times \overline{\S^1}} F^*\d\lambda =
        + \int_{[0,1]\times \S^1} F^*\d\lambda =
        \int_{\{0,1\}\times \S^1} F^*\lambda =
        \int_{\S^1}f^*\lambda - \int_{\gamma_0} \lambda =
        A(\partial f) - A(\gamma_0)\; ,
    \end{equation*}
    and hence we can rewrite \eqref{eqn:energy_1} as
    \begin{equation*}
        E(f) = A(\partial f) - A(\gamma_0) + E(f_0) \; .
    \end{equation*}
    The only term on the righthand side depending on $f$ is $A(\partial f)$ which by \Cref{lem:action_functional_bounded}, can be replaced by
    $A_{[\gamma_0]}(r)$ for $r \in (0,1]$ being the index of the Lagrangian~$L_r$ that contains the boundary of $f$.
    
    The first two terms are bounded by $2 C_{[\partial f_0]}$, hence $C_0 \coloneqq E(f_0) + 2 C_{[\partial f_0]}$ is a suitable bound.
\end{proof}

\begin{remark}\label{rmk:closed_curves_energy_bound_zero_variation}
    When studying moduli spaces of \emph{closed} pseudo-holomorphic curves in a symplectic foliation admitting an $\cF$-closed global $2$-form, the same exact computation using Stokes' theorem in \Cref{eqn:energy_1} ensures that every curve in the same path-connected component of the moduli space has equal $\omega$-energy, thus giving directly a global $\omega$-energy bound (for that path-component).
\end{remark}


\subsection{The moduli space of discs and its properties}
\label{sec:mod_space_disc_properties}

We now look at the space of domain-dependent leafwise almost complex structures that extend the $\Jmod$ chosen in the local model~$\Umod$ as in \Cref{sec:bishop_family}, and fix any such $J$.

\medskip

For technical reasons that will be clarified later in the section, we make a preliminary modification to the Lagrangian vanishing cycle $\cC$.
According to \Cref{item:lvc_boundary_extension} of \Cref{def:lagr_van_cycle} and \Cref{rmk:codim_1_boundary_extension}, we know that $\cC$ can be smoothly extended to a slightly bigger foliated submanifold.
Note that the induced foliation in the extended region will a priori not be Lagrangian, but up to taking a smaller extension one can guarantee that the leaves are totally real with respect to the just chosen almost complex structure $J$, as being totally real is an open condition and it holds up to the boundary of $\cC$.
(In fact, considering a leafwise totally real condition would be enough for all the Fredholm analysis in \Cref{sec:analitical_foundations} to work --- just as in the usual non-foliated setup.)
In the following, we will denote by $\cCext$ this slight extension $\D^2_{1+\epsilon}\times S$ of the original vanishing cycle $\cC$; similarly, $\cCext^*$ will denote $\cCext\setminus \cC_0$.

\medskip

Let now $\cMtilde^J$ be the connected component of the space of $J$-holomorphic (parametrized) leafwise discs $u\colon (\D^2,\partial \D^2)\to (\cF, \cCext^*)$ that contain the Bishop discs~$u_{r,\bfq}$.
We claim that $\cMtilde^J$ satisfies the following properties:
\begin{enumerate}
    \item every disc in $\cMtilde^J$ has boundary with linking number $1$ w.r.t.\ $\cC_0$ inside $\cC$;
    \item there is a leafwise almost complex structure $J$ such that 
    $\cMtilde^J$ is a smooth manifold of dimension $n+3$; 
    \item every $u\in \cMtilde^J$ whose boundary intersects a sufficiently small neighborhood of the core~$\cC_0$ of $\cCext$ is a reparametrization of a Bishop disc.
\end{enumerate}

The first claim simply follows from the fact that, by choice of the path-connected component, every disc in $\cMtilde^J$ is homotopic  to a Bishop disc through discs having boundary on $\cCext^*$, and that Bishop discs satisfies this property.

The second property follows directly from \Cref{prop:mod_space_is_manifold} for all discs that are not contained in $\Umod$ by choosing a generic $J$ that agrees with $\Jmod$ on $\Umod$.
By \Cref{rmk:generic_J_relative_subset} this together with the explicit verification in \Cref{lemma:regularity_by_hand} is enough to prove the desired claim for all of $\cMtilde^J$.

Let us now discuss the third property. 
For this, we start by proving the following essential uniqueness near the core:

\begin{lemma}\label{lemma:semi_uniqueness_general_u}
    Let $\Umod$ be the model neighborhood introduced in \Cref{sec:bishop_family}.
    The only \emph{closed} $J$-holomorphic curves lying in $\Umod$ are the constant ones.
    
    Furthermore, if $\partial\Sigma\neq \emptyset$ the following holds: for every sufficiently small~$r_0 > 0$, there exists a minimal energy~$\hbar = \hbar(r_0,J) > 0$ such that the image of every $J$-holomorphic map $u\colon (\Sigma, \partial \Sigma) \to (M,L_r)$ is entirely contained in $\Umod$, whenever $u(\partial\Sigma)$ lies in a Lagrangian~$L_r$ with $r < r_0$ and the energy of $u$ is less than $\hbar$.
    In this case, there is also a holomorphic map $\varphi\colon (\Sigma, \partial \Sigma) \to (\D^2,\partial \D^2)$ such that $u = u_{r,\bfq}\circ \varphi$ where $u_{r,\bfq}$ is one of the Bishop discs.
\end{lemma}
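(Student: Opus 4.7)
The plan is to exploit the explicit form of $\Jmod = i \oplus J_0$ on the leaves of the model neighborhood $\Umod = \D^2_\epsilon \times T^*_\epsilon S \times \R$, combined with the standard monotonicity lemma for pseudo-holomorphic curves and the maximum principle, in order to compare arbitrary $J$-holomorphic curves to the explicit Bishop family $u_{r,\bfq}$.

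For the first claim I would argue as follows. A closed leafwise map necessarily takes values in a single leaf of $\cF$, and on each leaf of $\Umod$ the symplectic form $\omega = i\,\d z \wedge \d\overline{z} + \d\lambda_{\mathrm{can}}$ is exact. Hence Stokes' theorem gives $E(u) = \int_\Sigma u^*\omega = 0$ for any closed $\Sigma$, and since energy is non-negative for a $\Jmod$-holomorphic map and vanishes only on constants, $u$ must be constant.

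For the second claim I would proceed in two steps. First, the confinement: for $r_0$ small enough the Lagrangians $L_r$ with $r \leq r_0$ all lie in a fixed compact subset $K \subset \Umod$ whose distance from $M \setminus \Umod$, measured in an auxiliary leafwise Riemannian metric compatible with $J$, is bounded below by a uniform $\delta_0 > 0$. If the image of $u$ were not contained in $\Umod$, there would be a point $p_0 \in u(\Sigma) \setminus \Umod$ at distance at least $\delta_0$ from $u(\partial \Sigma) \subset L_r$, and the standard monotonicity lemma for $J$-holomorphic curves (applied in the interior, and with totally-real boundary if $p_0$ happens to be close to $\partial \Umod$ rather than to the core) produces an energy lower bound $E(u) \geq C\delta_0^2$, where $C > 0$ depends only on the geometry of $J$ on a fixed compact neighborhood of $\overline{\Umod}$. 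Choosing $\hbar = \hbar(r_0,J) < C\delta_0^2$ then forces $u$ to stay inside $\Umod$. Second, the uniqueness: once $u$ is entirely inside $\Umod$, I would write it as $u = (u_1, u_2; r)$ with $u_1 \colon \Sigma \to \D^2_\epsilon$ and $u_2 \colon \Sigma \to T^*_\epsilon S$ (the $\R$-coordinate is constant equal to $r$ since $u$ is leafwise). Because $\Jmod$ is split, $u_1$ is standard holomorphic and $u_2$ is $J_0$-holomorphic, with boundary conditions $u_1(\partial \Sigma) \subset \{|z|=\sqrt{r}\}$ and $u_2(\partial \Sigma) \subset S$ (the zero section). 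Since $S$ is exact Lagrangian in $(T^*_\epsilon S, \d\lambda_{\mathrm{can}})$ with $\lambda_{\mathrm{can}}\vert_S = 0$, Stokes' theorem yields $E(u_2) = \int_{\partial \Sigma} u_2^*\lambda_{\mathrm{can}} = 0$, so $u_2 \equiv \bfq \in S$ is constant. Then $u_1$ is a holomorphic map into $\D^2_\epsilon$ with $|u_1| = \sqrt{r}$ on $\partial \Sigma$, and the maximum principle gives $|u_1| \leq \sqrt{r}$ on all of $\Sigma$; setting $\varphi := u_1/\sqrt{r}$ produces a holomorphic map $(\Sigma, \partial \Sigma) \to (\D^2, \partial \D^2)$ with $u = u_{r,\bfq}\circ \varphi$, as desired.

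The main obstacle I anticipate is ensuring that the monotonicity step is genuinely uniform as $r \to 0$: the Lagrangians $L_r$ collapse to the core $\cC_0$, and one must verify that the monotonicity constant does not degenerate. This should be manageable by fixing once and for all a compact neighborhood of $\overline{\Umod}$ on which a background metric and $J$ are uniformly controlled, so that the monotonicity constant depends only on $r_0$ and $J$. A related mild subtlety is choosing the auxiliary metric so that $L_r$ (a totally-real submanifold for $J$) has bounded second fundamental form uniformly in $r \leq r_0$, allowing the boundary version of monotonicity to apply when $p_0$ is close to $u(\partial\Sigma)$; this is again a compactness statement once $r_0$ is small and fixed.
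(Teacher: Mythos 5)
Your proposal is correct and follows essentially the same route as the paper: exactness of $\omega\vert_{\Umod}$ to kill closed curves, the monotonicity lemma (with constants controlled by the relative compactness of $\Umod$) for confinement, the split structure $\Jmod = i\oplus J_0$ together with exactness of $\lambda_{\mathrm{can}}$ to force the $T^*S$-component to be constant, and the maximum principle plus the explicit Bishop parametrization to produce $\varphi$. The paper phrases the middle step via the projection $\pi_S\circ u$ rather than writing $u$ in components, and defines $\varphi = u_{r,\bfq_0}^{-1}\circ u$ rather than $u_1/\sqrt{r}$, but these are the same computation.
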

\begin{proof}
    The restriction of the symplectic structure~$\omega$ to $\Umod$ is exact.
    Any closed holomorphic curve lying in $\Umod$ will have zero energy, and thus be constant.

    \smallskip

    Let us now assume that $u\colon (\Sigma, \partial \Sigma) \to (M,L_r)$ is a $J$-holomorphic map defined on a compact Riemann surface~$\Sigma$ with non-empty boundary.
    Since $\Umod$ is relatively compact, there is a uniform lower bound on the constants in the monotonicity lemma.
    This implies that if we choose an $r_0 > 0$ such that all $L_r$ with $r\leq r_0$ are contained in $\Umod$, there is an $\hbar > 0$ such that any holomorphic curve~$u$ with boundary in $L_r$ for $r \leq r_0$ and such that $u(\Sigma) \not\subset \Umod$ needs to have at least the minimal amount of energy~$\hbar$.
    
    \smallskip
    
    Assume now that the image of $u$ does lie inside $\Umod$.
    Note then that the composition of $u$ with the natural projection $\pi_S\colon \Umod = \D^2_\epsilon \times T^*_\epsilon S  \times \R \to T_\epsilon^*S$ is a $J_0$-holomorphic curve (recall $\Jmod = (i,J_0)$ on every leaf of $\Umod$ by the choice in \Cref{sec:bishop_family}).  
    Moreover, the symplectic energy of this projected holomorphic curve $u_S = \pi_S\circ u$ is just given by
    \begin{equation*}
        \int_{\Sigma} u^*\d\lambda_0 = \int_{\partial \Sigma}  u^*\lambda_0 = 0 \, ,
    \end{equation*}
    because the boundary of $\Sigma$ is being mapped by $u$ into $\cC$ so that its projection lies in the zero section of $T^*S$ where the standard Liouville form~$\lambda_0$ vanishes.
    It follows then that the projected curve is constant; in other words, $u$ is of the form $u(z) = (f(z); \bfq_0, 0;r) \in \Umod = \D^2_\epsilon\times T^*_\epsilon S \times \R$, for some holomorphic function $f\colon \Sigma \to \D^2_\epsilon$, fixed $\bfq_0 \in S$, and ---since the image of $u$ lies in the leaf of the foliation on $\Umod$--- for a constant $r \in [0,\sqrt{\epsilon}/2)$.
    
    In fact, $f$ maps the boundary of $\Sigma$ into a circle of radius~$\sqrt{r}$ in $\D^2_\epsilon$, and by the maximum principle, it follows that the image of $f$ has to be completely contained in the disc of radius~$\sqrt{r}$, that is, the image of $u$ lies in the image of the Bishop disc~$u_{r,\bfq_0}$.

    The Bishop discs are embedded, hence we can define a holomorphic map $\varphi\colon (\Sigma, \partial \Sigma) \to (\D^2,\partial \D^2)$ by $\varphi := u_{r,\bfq_0}^{-1}\circ u$ so that $u = u_{r,\bfq_0} \circ \varphi$, concluding the proof.
\end{proof}

As promised, we now upgrade \Cref{lemma:semi_uniqueness_general_u} to all the discs in $\cMtilde^J$ whose boundary lie sufficiently near the core:

\begin{proposition}\label{prop:uniqueness_Mtilde_J}
    Every $u\in \cMtilde^J$ whose boundary intersects a sufficiently small neighborhood of the core~$\cC_0$ of $\cCext$ is a reparametrization of a Bishop disc.
\end{proposition}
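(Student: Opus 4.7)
The strategy is to reduce the statement to Lemma~\ref{lemma:semi_uniqueness_general_u}, by showing that a sufficiently small neighborhood of $\cC_0$ forces the image of any $u\in\cMtilde^J$ with boundary there to lie entirely in the local model $\Umod$. Once the image is in $\Umod$, the second part of that lemma directly gives the desired factorization $u = u_{r,\bfq}\circ\varphi$.

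First I observe that the boundary $u(\partial\D^2)$ of any $u\in\cMtilde^J$ lies in a single leaf $L_{r_u}$ of the induced foliation on $\cCext^*$. Indeed, $u(\partial\D^2)$ is connected and contained in $F\cap\cCext^*$ for the leaf $F$ of $\cF$ containing $u(\D^2)$, and $F\cap\cCext^*$ is a disjoint union of the regular leaves $L_r$. Thus the assumption that $\partial u$ meets a sufficiently small neighborhood of $\cC_0$ in $\cCext$ translates exactly into $r_u$ being small; in particular $\partial u\subset\cC^*$.

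Next I control the symplectic energy using Proposition~\ref{prop:bounded_energy}. Choose as reference a Bishop disc $f_0=u_{r_*,\bfq_*}$; by construction $\cMtilde^J$ is the path-component of the Bishop family in the moduli space, so every $u\in\cMtilde^J$ whose boundary lies in $\cC^*$ lies in the same connected component of $\cB\bigl((\D^2,\S^1),(\cF,\cF_{\cC^*})\bigr)$ as $f_0$. Proposition~\ref{prop:bounded_energy} then gives $E(u)=E_{[f_0]}(r_u)$ for a continuous function $E_{[f_0]}\colon[0,1]\to\R$. Evaluating this identity on the Bishop discs themselves and using the explicit formula \eqref{eqn:def_bishop_disc} together with the leafwise symplectic form $i\,\d z\wedge\d\overline z+\d\lambda_{\mathrm{can}}$ on $\Umod$, one computes $E_{[f_0]}(r)=c\,r$ for all $r\in(0,\sqrt{\epsilon}/2]$ with $c>0$ a universal constant; continuity then forces $E_{[f_0]}(0)=0$.

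Now apply Lemma~\ref{lemma:semi_uniqueness_general_u}: it provides thresholds $r_0,\hbar>0$ such that any $J$-holomorphic disc with boundary on $L_r$ for $r<r_0$ and energy less than $\hbar$ has its image entirely inside $\Umod$, in which case it factors as $u_{r,\bfq}\circ\varphi$ for a holomorphic $\varphi\colon(\D^2,\partial\D^2)\to(\D^2,\partial\D^2)$. Using the continuity of $E_{[f_0]}$ at $0$, pick $r_1\le r_0$ with $E_{[f_0]}(r)<\hbar$ for all $r<r_1$, and take the ``sufficiently small neighborhood'' of $\cC_0$ in $\cCext$ to be $\bigcup_{r<r_1}L_r$. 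The only delicate point is the vanishing $E_{[f_0]}(0)=0$, which ensures that the energy of $u$ drops below the monotonicity threshold $\hbar$ as soon as $r_u$ is small enough; the remainder of the argument is a direct assembly of Proposition~\ref{prop:bounded_energy} and Lemma~\ref{lemma:semi_uniqueness_general_u}.
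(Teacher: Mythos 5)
Everything you do up through applying Lemma~\ref{lemma:semi_uniqueness_general_u} is fine and essentially matches the paper: the boundary of $u$ lies in a single $L_{r_u}$, Proposition~\ref{prop:bounded_energy} reduces the energy to a continuous function of $r_u$ that vanishes at $0$ (you don't actually need the explicit linear formula $E_{[f_0]}(r)=c\,r$ --- continuity plus $E(u_{r,\bfq})\to 0$ suffices), and Lemma~\ref{lemma:semi_uniqueness_general_u} then forces $u(\D^2)\subset\Umod$ and $u=u_{r,\bfq_0}\circ\varphi$. But that lemma only gives a \emph{holomorphic map} $\varphi\colon(\D^2,\partial\D^2)\to(\D^2,\partial\D^2)$, not a biholomorphism: it could a priori be a branched cover such as $z\mapsto z^2$, which would not make $u$ a reparametrization of a Bishop disc. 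Proving the proposition requires upgrading $\varphi$ to a Möbius transformation, and your proof stops before this step, treating the factorization as if it were already the claim.

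\textbf{What is missing.} The paper's proof has a second half entirely devoted to this upgrade. First, one uses that every disc in $\cMtilde^J$ lies in the same path-component as the Bishop family and hence has boundary linking number $1$ around the core $\cC_0$; this pins down the winding number of $\varphi\vert_{\partial\D^2}$ around the origin to be exactly $1$. Degree $1$ forces $\varphi$ to be surjective onto $\D^2$. Then a Schwarz-lemma-type argument together with the Hopf boundary point lemma shows that the auxiliary function $g(z)=\varphi(z)/z$ (after normalizing $\varphi(0)=0$) is a unimodular constant, hence $\varphi$ is a Möbius transformation. Without this argument --- or some replacement for it --- the proof is incomplete.
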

\begin{proof}
    Let $u$ be a disc in $\cMtilde^J$.  Choose an $r_0$ and its corresponding~$\hbar$ as in \Cref{lemma:semi_uniqueness_general_u}.
    We know from \Cref{prop:bounded_energy} that the energy of a disc in $\cMtilde^J$ only depends on the Lagrangian~$L_r$ on which the disc has boundary.
    Furthermore, by considering the energy of the Bishop discs, we can find an $r_0' < r_0$ such that the energy of every disc in $\cMtilde^J$ with boundary on an $L_r$ with $r<r_0'$ will be smaller than $\hbar$.
    We obtain then from \Cref{lemma:semi_uniqueness_general_u} that every such disc~$u$ is entirely contained in $\Umod$ and that it can be written as $u = u_{r,\bfq_0} \circ \varphi$ for a holomorphic map $\varphi\colon (\D^2, \partial \D^2) \to (\D^2,\partial \D^2)$.
    
    Since the boundaries of $u\in \cM^J$ and of $u_{r,\bfq_0}$ wind once around the core, the winding number of $\varphi\rvert_{\partial \D^2}$ around the origin needs also to be $1$.
    Every such $\varphi$ is surjective onto $\D^2$ as it would otherwise provide a retraction of $\D^2$ onto $\S^1$.

    We now aim to prove that $\varphi$ is a Möbius transformation, which will conclude the proof that $u$ is a reparametrization of a Bishop disc.
    For this, up to first composing with a suitable Möbius transformation, we can assume that $\varphi$ fixes the origin.
    As in the proof of the Schwarz lemma, we can then define an auxiliary function $g(z) = \varphi(z)/z$, which holomorphically extends to the origin by setting $g(0) := \varphi'(0)$.
    The holomorphic function~$g$, sends $\partial \D^2$ to $\partial \D^2$ and hence, by the maximum principle, sends the interior of $\D^2$ into the interior of $\D^2$.
    The winding number of $g\rvert_{\S^1}$ around the origin is the difference of the winding number of $z\mapsto \varphi(z)$ and of $z\mapsto z$, that is, $g\rvert_{\S^1}$ has zero winding number.
    
    Write $g$ in polar coordinates as $(R,\vartheta) \mapsto \bigl(g_R(R,\vartheta),g_\vartheta(R,\vartheta)\bigr)$.
    Since $g(\S^1) \subset \S^1$, $g_R(1,\vartheta) = 1$ does not depend on $\vartheta$.
    Using that the winding number of $g\vert_{\S^1}$ is trivial, there is a $\vartheta_0$ such that $\frac{\partial g_\vartheta}{\partial \vartheta}(e^{i\vartheta_0}) = 0$, and combined with the previous statement, $\frac{\partial g}{\partial \vartheta} (e^{i\vartheta_0}) = 0$.
    By the Cauchy-Riemann equation $Dg$ can only have either rank two or zero at any point, and in particular we obtain then that $\frac{\partial g}{\partial r}(e^{i\theta_0})$ also needs to vanish. 
    The Hopf boundary point lemma \cite[Lemma~3.4]{GilTruBook} shows that $g$ is constant, and thus necessarily of the form $g(z) = z_0$ with $|z_0| = 1$.
    The map $\varphi$ is thus a Möbius transformation, and $u$ is as desired a reparametrization of the Bishop disc~$u_{(r,\bfq_0)}$.
\end{proof}

\medskip

From now on, with a little abuse of notation, we replace $\Umod$ by the neighborhood (inside the original $\Umod$) in \Cref{prop:uniqueness_Mtilde_J}.
In other words, we shrink $\Umod$ so that every holomorphic disc in $\cMtilde^J$ intersecting $\Umod$ is a model Bishop disc up to parametrization. 

\bigskip

As mentioned above, the usual way to get the moduli space~$\cM^J$ is to identify all maps in $\cMtilde^J$ that match after reparametrizing their domains.
There are several advantages in using $\cM^J$ instead of working with the original $J$-holomorphic maps, $\cMtilde^J$.
Firstly, from an intuitive point of view, all maps that are equal up to reparametrization have the same image and represent thus a single geometric holomorphic curves.
Secondly, the Möbius group is not compact.
In $\cMtilde^J$, any holomorphic disc~$u$ can degenerate simply by reparametrizing it with a suitable family of elements in the Möbius group.
By working in $\cM^J$, these trivial degenerations are avoided.

Unfortunately, however, we are forced to use domain-dependent almost complex structures in this article, and in this case, it is no longer true that the Möbius transformations preserve the space of $J$-holomorphic discs.

Instead, to enforce in our case a reduction that will have a similar effect as the one sought by taking the quotient by the Möbius group, we will proceed as follows.
Select in the extended Lagrangian vanishing cycle $\cCext = j(\D^2_{1+\epsilon}\times S)$ the three "radial hypersurfaces" $N_1$, $N_i$, and $N_{-1}$ that are the image under $j$ of the three subsets $\bigl\{(r,\bfq)\in \D^2_{1+\epsilon}\times S\bigm| r\in [0,1+\epsilon]\bigr\}$,  $\bigl\{(ir,\bfq)\in \D^2_{1+\epsilon}\times S\bigm| r\in [0,1+\epsilon]\bigr\}$,  and $\bigl\{(-r,\bfq)\in \D^2_{1+\epsilon}\times S\bigm| r\in [0,1+\epsilon]\bigr\}$ respectively.
These will later be slightly perturbed to obtain transversality.
We define then
\begin{equation}
\label{eqn:constrained_mod_space}
    \cM^J = \bigl\{u \in \cMtilde^J \bigm|\;  u(1) \in N_1, \, u(i) \in N_i,\, u(-1) \in N_{-1}\bigr\} \;.
\end{equation}
Since we are imposing three codimension~$1$ conditions, we might hope by the standard formula for the expected dimension that $\dim \cM^J = \dim \cMtilde^J - 3 = n + 3 -3=n$.

It is easy to verify that the Bishop discs~$u_{r,\bfq}$ all lie in $\cM^J$.
Furthermore, given any Bishop disc~$u_{r,\bfq}$, the only Möbius transformation $\varphi\in \Aut(\D^2)$ such that $u_{r,\bfq}\circ \varphi$ still lies in $\cM^J$ is $\varphi = \Id_{\D^2}$, that is, in $\Umod$ our space $\cM^J$ is naturally diffeomorphic to the usual moduli space obtained by taking the quotient by the Möbius group.

Recalling the notation $\ev_\partial$ for the boundary evaluation map as in \Cref{prop:mod_space_is_manifold}, 
we can now give the main result in this section.

\begin{proposition}
\label{prop:moduli_space_plus_constant_discs}
    For generic choice of $J$, $\cMtilde^J$ is a smooth finite dimensional manifold, and the boundary evaluation map $\ev_\partial$ is transverse to $\partial\cC$.
    It is moreover naturally oriented if $S$ is stably parallelizable.
    Furthermore, up to slight perturbation of (the product of) the $N_1$, $N_i$ and $N_{-1}$, 
    the space~$\cM^J_\cC\coloneqq \ev_\partial^{-1}(\cC)\cap \cM^J$ is a smooth manifold with boundary of dimension $\dim \cM^J_\cC = n$.

    We can add the set of constant Bishop discs~$u_{(0,\bfq_0)}$ for $\bfq \in S$ to $\cM^J_\cC$, and a natural structure of smooth manifold with boundary to the resulting $\cM^J$ by gluing the constant Bishop discs to the other points via the collar model $[0,\epsilon) \times S$ glued in via  $(r,\bfq) \mapsto u_{r,\bfq}$.

    Furthermore, the only discs in $\cM^J$ that can approach the core~$\cC_0$ are Bishop discs. 
\end{proposition}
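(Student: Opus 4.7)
The plan is to combine the Fredholm machinery of \Cref{sec:analitical_foundations} with the explicit rigidity of the Bishop family near the core, obtaining first the smooth structure on $\cMtilde^J$, then the cut-down $\cM^J_\cC$, and finally the glued manifold-with-boundary $\cM^J$. Concretely, one applies \Cref{prop:mod_space_is_manifold} with boundary condition $P = \cCext^*$ and with the almost complex structure prescribed to equal $\Jmod$ on $\Umod$ (see \Cref{rmk:generic_J_relative_subset}): for generic such $J$, every non-constant disc in $\cMtilde^J$ whose graph meets the complement of $\Umod$ is regular; combined with \Cref{lemma:regularity_by_hand}, which verifies regularity by hand for the Bishop discs, together with \Cref{prop:uniqueness_Mtilde_J}, which shows that any disc lying in a sufficiently shrunken $\Umod$ is a Bishop disc, this yields regularity of all of $\cMtilde^J$. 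By \Cref{lemma:lin_CR_op_is_fredholm} and \Cref{lemma:maslov_index}, its dimension is $n\chi(\D^2) + 2 + 1 = n+3$, where the last summand reflects the codimension $s=1$ of $\cF_{\cCext^*}$ in $\cCext^*$. Transversality of $\ev_\partial$ to $\partial\cC = L_1$ is immediate from \Cref{prop:mod_space_is_manifold}(iii), since $L_1$ is a leaf of $\cF_{\cCext^*}$, and orientability follows from \Cref{prop:mod_space_is_manifold}(ii), using that $\cCext \cong \D^2 \times S$ is stably parallelizable whenever $S$ is.

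To cut down to $\cM^J$, I impose the three codimension-$1$ conditions $u(1) \in N_1$, $u(i) \in N_i$, $u(-1) \in N_{-1}$. By a parametric transversality argument --- either by perturbing the radial hypersurfaces $(N_1, N_i, N_{-1})$ slightly inside $\cCext$ away from $\cC_0$, or by incorporating the triple boundary evaluation into the universal setup of \Cref{lemma:univ_mod_space_is_manifold} and applying Sard--Smale --- one achieves transversality of the product evaluation $(\ev_1, \ev_i, \ev_{-1})$ with $N_1 \times N_i \times N_{-1}$. This produces $\cM^J$ as a smooth submanifold of dimension $(n+3)-3=n$, and intersecting with $\ev_\partial^{-1}(\cC)$ then yields $\cM^J_\cC$ as a manifold with boundary of dimension $n$, its boundary corresponding to discs whose boundary meets $\partial\cC = L_1$, transversely by the first step.

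Finally, the non-constant Bishop discs $(u_{r,\bfq})_{(r,\bfq) \in (0,\epsilon) \times S}$ form a collar inside $\cM^J_\cC$ near the core: the $N_j$-perturbations, being supported away from $\cC_0$, do not affect the Bishop discs, so these satisfy the marked-point conditions; conversely, \Cref{prop:uniqueness_Mtilde_J}, together with the rigidity of the three marked points (which eliminates the Möbius reparametrization freedom), guarantees that every disc in $\cM^J_\cC$ with boundary near $\cC_0$ is itself a Bishop disc. Gluing in the constants $\{u_{0,\bfq} : \bfq \in S\} \cong S$ via $(r,\bfq) \mapsto u_{r,\bfq}$ closes the collar to $[0,\epsilon) \times S$ and yields the advertised smooth manifold-with-boundary structure on $\cM^J$; the final assertion of the proposition is then a direct restatement of \Cref{prop:uniqueness_Mtilde_J}. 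The main subtlety throughout is the coupled nature of the genericity requirements (on $J$, on the $N_j$, and on the by-hand model near the core): the relative formulation of \Cref{prop:mod_space_is_manifold} combined with the uniqueness of \Cref{prop:uniqueness_Mtilde_J} allows one to consistently localize all perturbations away from $\Umod$ and $\cC_0$, preserving the explicit Bishop model while achieving transversality everywhere else.
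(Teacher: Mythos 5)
Your proposal uses the same ingredients as the paper's proof --- \Cref{prop:mod_space_is_manifold}, \Cref{lemma:regularity_by_hand}, \Cref{prop:uniqueness_Mtilde_J}, the dimension count $n\,\chi(\D^2) + \mu + s = n + 3$ --- and the overall strategy is sound. Two technical points, however, are glossed over that the paper treats explicitly.

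First, you reverse the order of the two cut-down operations: you impose the three $N_j$-constraints on $\cMtilde^J$ to get $\cM^J$, and only afterwards intersect with $\ev_\partial^{-1}(\cC)$. With this order, to see $\cM^J_\cC$ as a smooth manifold with boundary you need the restriction of $\ev_\partial$ to the cut-down $\cM^J$ to be transverse to $\cF_P$ along $L_1$. You assert this is ``transversely by the first step,'' but transversality of $\ev_\partial$ on all of $\cMtilde^J$ does not survive restriction to an arbitrary codimension-$3$ submanifold; an additional perturbation or genericity argument is needed. The paper avoids this by performing the operations in the opposite order: it first forms $\cMtilde^J_\cC := \ev_\partial^{-1}(\cC)$ directly from \Cref{prop:mod_space_is_manifold}\,(iii), and only then cuts by $EV = (\ev_1,\ev_i,\ev_{-1})$ with target $\cC^*\times\cC^*\times\cC^*$; the price is that the target is a manifold with corners, so one must argue separately that $\cM^J_\cC$ acquires no corners (which holds because if any one of $u(1), u(i), u(-1)$ lies on $\partial\cC$ then all three do).

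Second, any perturbation of $N_1\times N_i\times N_{-1}$ must be supported away from $\Umod$ (so as not to spoil the explicit Bishop model near $\cC_0$); hence the transversality of the $EV$-constraint must be verified by hand on the unperturbed region near the core. You only observe that the Bishop discs satisfy the marked-point conditions and thus lie in $\cM^J$, which is weaker than transversality. The paper does the missing check by exploiting that $\Jmod$ is domain-independent on $\Umod$, so the M\"obius group acts on the moduli space there; differentiating that action at the three marked points $1, i, -1$ produces exactly the three transverse directions needed.
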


\begin{proof}
    According to \Cref{prop:mod_space_is_manifold}, if we choose a generic domain-dependent compatible almost complex structure~$J$ that agrees on $\Umod$ with $\Jmod$, then $\cMtilde^J$ will be a smooth manifold of dimension $n+3$, which is naturally oriented if $S$ is stably parallelizable, and $\ev_\partial$ will be transverse to $\partial \cC$.
    (Recall that we shrank $\Umod$ right after \Cref{prop:uniqueness_Mtilde_J}, so that the uniqueness property stated in that proposition holds on the whole of $\Umod$.)
    In particular, $\cMtilde^J_\cC\coloneqq \ev_\partial^{-1}(\cC)$ is a smooth manifold with boundary, of the same dimension of $\cMtilde^J$.

    Now we can define a smooth map $EV\colon\cMtilde^J_{\cC} \to \cC^*\times \cC^* \times \cC^*$,  $u\mapsto \bigl(u(1), u(i), u(-1)\bigr)$ and define $\cMtilde^J_{\cC} = EV^{-1}\bigl(N_1\times N_i \times N_{-1}\bigr)$.
    Slightly perturbing the product of the three radial manifolds away from $\Umod\times \Umod\times\Umod$,  we can assume that $EV \pitchfork \bigl(N_1\times N_i \times N_{-1}\bigr)$ so that $\cM^J_{\cC}$ will be a smooth manifold with boundary, of dimension $n+3 + 3\,(n-1) - 3n = n$. 
    Note that, a priori, as $\cC^*\times \cC^* \times \cC^*$ is a manifold with boundary with corners, the transversality theorem only guarantees that $\cMtilde^J_\cC$ is a manifold with boundary also having corners; however, the special situation at hand guarantees that if any of the three components of $EV$ take values in $\partial \cC^*$ then so do the other two, which guarantees that $\cMtilde^J_\cC$ is in fact simply a manifold with boundary (without corners).
    We also point out that, using Möbius transformations, it is easy to show that the desired transversality already holds close to $\Umod$ where we did not use any perturbation (recall the model almost complex structure is \emph{not} domain dependent).
    
    What's more, by \Cref{prop:uniqueness_Mtilde_J} any disc in $\cMtilde^J_{\cC}$ lying close to $\cC_0$ must be a reparametrization of a Bishop disc.
    However, the remark above implies that none of the reparametrizations of a Bishop disc (other than the Bishop disc itself) lies in $ \cM^J_{\cC}$, so that indeed the only discs in $ \cM^J_{\cC}$ getting close to $\cC_0$ are Bishop discs.
    
    Finally, it is easy to see that the constant Bishop discs can be glued onto $ \cM^J_{\cC}$ to give a boundary component of the resulting manifold.
\end{proof}

It remains to study the Gromov compactness of $\cM^J_\cC$ from \Cref{prop:moduli_space_plus_constant_discs}:

\begin{proposition}
\label{prop:compactness}
    The manifold with boundary~$\cM^J_{\cC}$ is compact. 
\end{proposition}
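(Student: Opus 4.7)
The plan is to take an arbitrary sequence $\{u_n\}_{n\geq 1}\subset \cM^J_\cC$ and show that it admits a subsequence converging to an element of $\cM^J_\cC$. The key inputs are: the uniform $\omega$-energy bound from \Cref{prop:bounded_energy}; a foliated version of Gromov's compactness theorem, along the lines of \cite{AlbNied}; and the topological conditions built into \Cref{def:lagr_van_cycle}, which will rule out all bubbling. The domain $(\D^2;1,i,-1)$ is a stable marked disc with trivial residual automorphism group, so no compactness can be lost to reparametrization by Möbius transformations.

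Applying foliated Gromov compactness to $\{u_n\}$ produces a subsequence converging to a stable nodal $J$-holomorphic curve consisting of a main disc $u_\infty$ plus possibly sphere bubbles at interior nodes and disc bubbles at boundary nodes, with each bubble confined to a single leaf of $\cF$. After passing to a further subsequence, the parameter $r_n$ determined by $\partial u_n\subset L_{r_n}$ converges to some $r_\infty\in[0,1]$. The case $r_\infty=0$ is handled directly by \Cref{prop:uniqueness_Mtilde_J}, which forces $u_n$ to be a Bishop disc for $n$ large and hence converges to a (possibly constant) Bishop disc already lying in $\cM^J_\cC$. I may therefore assume $r_\infty>0$ in what follows.

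Sphere bubbles are ruled out by \Cref{def:lagr_van_cycle}\eqref{item:lvc_sympl_aspherical_leaves}: any $J$-holomorphic sphere $v\colon\S^2\to F_{r_\infty}$ satisfies $\int_{\S^2}v^*\omega=0$ by symplectic asphericity of the leaf, hence is constant, contradicting bubble stability. For disc bubbles, I combine \Cref{def:lagr_van_cycle}\eqref{item:lvc_pi1_condition} with the primitive $\lambda$ on $\cC$ from \Cref{lemma:primitive_on_cC}: any disc bubble $v\colon(\D^2,\partial\D^2)\to(F_{r_\infty},L_{r_\infty})$ has boundary class in $\ker\bigl(\pi_1(L_{r_\infty})\to\pi_1(F_{r_\infty})\bigr)$ and is thus homotopic, inside $L_{r_\infty}$, to $k\cdot[\S^1_{r_\infty}\times\{p\}]$ for some $k\in\Z$, whence
\begin{equation*}
    \int_{\D^2}v^*\omega \;=\; \int_{\partial v}\lambda \;=\; k\cdot E(u_{r_\infty,\bfq})
\end{equation*}
by Stokes, with $E(u_{r_\infty,\bfq})>0$ the symplectic area of a Bishop disc on $L_{r_\infty}$. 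Positivity of energy gives $k\geq 0$, with equality iff $v$ is constant.

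The winding number of $\partial u_n$ around the core $\cC_0$ equals $1$ for every $n$ and is additive under the nodal decomposition, so $k_{\mathrm{main}}+\sum_i k_i=1$ with all summands non-negative. The generic pairwise disjointness of $N_1,N_i,N_{-1}$ arranged in \Cref{prop:moduli_space_plus_constant_discs} prevents $u_{\mathrm{main}}$ from collapsing to a single point, so it is non-constant and $k_{\mathrm{main}}\geq 1$; consequently $k_{\mathrm{main}}=1$ and every $k_i=0$, i.e.\ all disc bubbles are constant and hence do not actually appear. With no bubbling, standard boundary elliptic regularity upgrades convergence to $C^\infty$; the limit $u_\infty$ is a $J$-holomorphic disc with boundary on $L_{r_\infty}\subset\cC$ satisfying the three evaluation conditions, so $u_\infty\in\cM^J_\cC$. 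The main obstacle I anticipate is the careful bookkeeping of boundary winding numbers across the nodal tree, together with a stability argument ensuring that hypothetical ghost components are genuinely obstructed by the marked-point constraints; a secondary technicality is the limiting case $r_\infty=1$, not directly covered by \eqref{item:lvc_sympl_aspherical_leaves} and \eqref{item:lvc_pi1_condition}, which I would resolve by a continuity argument using \Cref{lem:action_functional_bounded} together with \Cref{rmk:pi1_condition_def_lagr_vanish_cycle}.
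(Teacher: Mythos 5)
Your proposal follows essentially the same strategy as the paper's proof -- energy bound, foliated Gromov compactness, asphericity to kill sphere bubbles, and the $\pi_1$ condition to reduce disc bubbling to a winding-number count -- and your final step (each bubble has $k_i\ge 0$ because it is a $J$-holomorphic disc, so $k_{\mathrm{main}}+\sum_i k_i=1$ forces all bubbles to be constant) is a clean shortcut compared to the paper's two-case analysis according to whether some $a_j$ is negative. Two points, however, need repair. First, the line
\begin{equation*}
    \int_{\D^2}v^*\omega = \int_{\partial v}\lambda = k\cdot E(u_{r_\infty,\bfq})
\end{equation*}
is not a Stokes computation: $\lambda$ from \Cref{lemma:primitive_on_cC} is a $1$-form on $\cC$, while the bubble $v$ maps into the leaf $F_{r_\infty}$ and not into $\cC$, so $v^*\lambda$ does not make sense on $\D^2$. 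The conclusion is nonetheless true, but by a leafwise argument: homotope $\partial v$ inside the Lagrangian $L_{r_\infty}$ to a $k$-fold cover of $\gamma_0$ (zero energy change, since $L_{r_\infty}$ is $\omega$-Lagrangian), then compare with a $k$-fold branched cover of a reference disc bounding $\gamma_0$ and use $\omega\vert_{\pi_2(F_{r_\infty})}=0$; the $\lambda$-machinery of \Cref{lemma:primitive_on_cC,lem:action_functional_bounded,prop:bounded_energy} is designed to compare discs in \emph{different} leaves and is not what is needed here. Also, $u_{r_\infty,\bfq}$ does not literally exist for $r_\infty$ outside the Bishop range; the quantity you want is $E_{[f_0]}(r_\infty)$, and its strict positivity, which your ``$k\ge 0$'' step uses, deserves a sentence (e.g.\ if it were $\le 0$, every component of the limit would have non-positive $\omega$-area and so be constant, which the three evaluation constraints forbid).

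Second, and more substantively, you invoke a ``foliated Gromov compactness'' directly for the domain-dependent $J$, citing a reference that treats the domain-independent case. The paper's proof begins by passing to the graphs $\Gamma_{u_n}\colon\D^2\to\D^2\times M$, which are holomorphic for the domain-\emph{independent} structure $i\oplus J$ on $T\D^2\oplus\cF$; this is exactly how it reduces to the available compactness theorem and, as a byproduct, guarantees that the marked points $\{1,i,-1\}$ stay on the main component and that each bubble is holomorphic for the single frozen $J(z_0,\cdot)$. Without this reduction (or an explicit domain-dependent version of foliated Gromov compactness), the opening of your argument is a citation gap. The remaining bookkeeping -- additivity of the winding number over the bubble tree and non-constancy of $u_\infty$ from the pairwise disjointness of $N_1,N_i,N_{-1}$ -- is correct once the Gromov limit is known to have the expected form.
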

\begin{proof}
    We use a standard trick that allows us to rewrite curves that are holomorphic with respect to a domain-dependent $J$ as curves that are holomorphic with respect to a classical almost complex structure, namely, if $u\colon (\D^2, \partial \D^2) \to (M,\cC^*)$ is a foliated $J$-holomorphic disc for the chosen domain-dependent leafwise almost complex structures~$J$, then its graph $\Gamma_u\colon (\D^2, \partial \D^2) \to \bigl(\D^2\times M, \partial \D^2\times \cC^*\bigr)$, $z\mapsto \bigl(z,u(z)\bigr)$ is a foliated holomorphic disc with respect to the foliation~$\D^2\times \cF$ and the leafwise almost complex structure~$i\oplus J$ on $\D^2\times \cF$.
    Note that in contrast to $J$ on $\cF$, $i\oplus J$ on $\D^2\times \cF$ is \emph{not} anymore domain-dependent.
    It is easy to see that $i\oplus J$ is tamed by $dx\wedge dy \oplus \omega$ so that the energy of $\Gamma_u$ is just $\pi + E(u)$.
    Finally note that the boundary of $\Gamma_u$ lies in the totally real submanifold $\partial \D^2\times L_r$.
    
    If $(u_n)_n$ is now a sequence of holomorphic discs lying in $\cM^J_\cC$, so that their energy is bounded by a constant $C>0$, then the associated graphs~$\Gamma_{u_n}$ are holomorphic discs whose energy is bounded by $C + \pi$.
    By \Cref{prop:moduli_space_plus_constant_discs} it follows that any disc $u_n$ sufficiently close to the core~$\cC_0$ will be a Bishop disc, we assume thus that there is a $r_0>0$ such that none of the $u_n$ has boundary in $L_r$ with $r<r_0$.
    
    Except for the fact that the considered holomorphic curves live in a foliated symplectic manifold, and not just in a symplectic manifold, we could now apply standard Gromov compactness to the sequence of $\Gamma_{u_n}$.
    The additional complications coming from the foliated setup are fortunately minor because the isoperimetric inequality and the monotonicity lemma also hold in this situation.
    In particular the bubble tree is connected and all components of a nodal curve appearing as a Gromov limit lie in a single leaf of $\D^2\times\cF$.
    (A detailed proof of the leafwise version of Gromov compactness will appear in \cite{AlbNied}.)

    We will now show that the bubble tree consists only of a single component that lies in $\cM^J_\cC$.
    In fact, if the gradient of $\Gamma_{u_n}$ is uniformly bounded, then we can find a subsequence that converges uniformly to a disc $\Gamma_\infty\colon \D^2 \to \D^2\times M$ that is $i\oplus J$-holomorphic and whose first factor is the identity (as this is true for every $\Gamma_{u_n}$), so that $\Gamma_\infty$ is the graph of an element $u_\infty \in \cMtilde^J$ and hence the corresponding subsequence of $(u_n)_n$ converges to $u_\infty$.
    Clearly, $1$, $i$, and $-1 \in \partial \D^2$ are mapped to the corresponding submanifolds $N_1$, $N_i$, and $N_{-1}$ respectively, so that $u_\infty \in \cM^J_\cC$ as desired.
    
    \smallskip
    
    Suppose now instead that the gradient of the $\Gamma_{u_n}$ does blow-up.
    Since the first factor of $\Gamma_{u_n}$ is just the identity, gradient blow-up can only occur on the second factor of $\Gamma_{u_n}$.
    More precisely, if we find a point~$z_0$ in the domain of $\Gamma_{u_n}$ at which the gradient explodes,
    we need to rescale $\Gamma_{u_n}$ around this point, so that in the limit we will obtain a curve that is equal to the constant map~$z_0$ in the first factor, and whose second factor is a non-constant holomorphic curve (it is holomorphic with respect to the non-domain dependent almost complex structure~$J_{(z_0,\cdot)}$).
    
    The assumption \Cref{def:lagr_van_cycle}.\eqref{item:lvc_sympl_aspherical_leaves} excludes the existence of non-trivial holomorphic spheres.
    Rescaling can thus only lead in the limit to disc bubbling.
    
    As a result we obtain that $\bigl(\Gamma_{u_n}\bigr)_n$ has a subsequence and a finite collection of points $z_1,\dotsc,z_N \in \D^2$ such that $\bigl(\Gamma_{u_n}\bigr)_n$ converges on $\D^2\setminus \{z_1,\dotsc,z_N\}$ locally uniformly to a disc in $\D^2\times M$.
    This disc is the identity in the first component, while the second factor is a $J$-holomorphic disc that we denote by $u_\infty$.
    
    For each of the points $z_1,\dotsc,z_N$, we recover at least one  non-constant holomorphic disc obtained by rescaling at $z_j$ and possibly an additional finite collection of further holomorphic discs. 
    
    If $N=0$, then we are done as already explained above.
 
    If $N=1$, then there need to be at least two non-constant holomorphic components in the nodal curve in $M$:  discs obtained from rescaling are always non-constant. 
    The reason why the ``main'' component~$u_\infty$ is non-trivial is that $\D^2\setminus \{z_1\}$ contains at least two of the points $\{1,i,-1\}$.  As these two points are mapped by $u_\infty$ to different submanifolds $N_1$, $N_i$, or $N_{-1}$, the two points need to have positive distance, and $u_\infty$ also cannot be constant.
    
    If $N\ge 2$, then there are at least two non-constant holomorphic discs in the nodal curve in $M$ obtained from rescaling.
    
    Denote the projection to $M$ of all the components of the bubble tree by $v_1,\dotsc,v_{N'}$, with $N' \ge N + 1$.
    All of these discs are holomorphic maps $(\D^2,\partial \D^2) \to (F_r,L_r)$ for the same $r\in [r_0,1]$.
    (Note however that they are holomorphic with respect to different almost complex structures.)
    By the argument given above, at least two of the discs $v_1,\dotsc,v_{N'}$ are non-constant.
    
    The boundary of each disc~$v_j$ represents a loop in $L_r$ that is trivial in $\pi_1(F_r)$, so that by condition
    \Cref{def:lagr_van_cycle}.\eqref{item:lvc_pi1_condition} we find integers $a_1,\dotsc,a_{N'}$ such that $v_j \sim a_j\cdot\gamma_0$ in $\cC^*$, where $\gamma_0$ is the boundary of a Bishop disc.  
    Similarly, denote $a_\infty$ the coefficient so that $u_\infty\sim a_\infty\cdot\gamma_0$ in $\cC^*$.
    
    Using that the bubble tree is the limit of a sequence of Bishop discs, 
    we obtain the identity
    \begin{equation}
        \label{eqn:sum_homology_class}
        [u_\infty\vert_{\partial \D^2}] + [v_1\vert_{\partial \D^2}] + \dotsm + [v_{N'}\vert_{\partial \D^2}]  = [\gamma_0] \in H_1(\cC^*, \Z)
    \end{equation}
    leading to the condition
    \begin{equation}
        \label{eqn:sum_homology_class_in_terms_ai}
        a_\infty+
        a_1 + \dotsm + a_{N'} = 1 \;.
    \end{equation}

    If all the $a_i$'s are non-negative, they must in fact all be $0$ besides one, that we know must be $a_\infty$ and  which would have to be equal to $1$.
    Any other disc $v_j$ can then be capped-off with a disc lying inside $L_r$, because its boundary is null-homotopic in $\cC^*$ and thus also in $L_r$.
    We obtain a sphere in $F_r$ that needs to has trivial $\omega$-energy by \eqref{item:lvc_sympl_aspherical_leaves} of \Cref{def:lagr_van_cycle}.
    But since the cap-off disc lies in $L_r$, we deduce that $E(v_j) = 0$ for any $j\ge 2$.
    This implies that all discs~$v_j$ except for $v_1$ are trivial, which is a contradiction to the fact that there need to be at least \emph{two} discs that are not.
    
    It remains to exclude that some of the $a_j$'s are negative.
    By \Cref{eqn:sum_homology_class_in_terms_ai}, there needs to be a at least one disc~$v_{j'}$ such that $a_{j'}>0$; up to exchanging indices, we will suppose that $a_1>0$ and $a_2<0$.
    
    Define for every~$k\in \Z\setminus\{0\}$, a map $\phi_k\colon \D^2\to \D^2$ by $z\mapsto z^k$ if $k>0$, and $z\mapsto \bar z^{-k}$ if $k<0$.
    Then, it follows that $v := v_1\circ \phi_{a_2}$ and $v' := v_2\circ \phi_{a_1}$ have homotopic boundaries in the Lagrangian~$L_r$ so that $E(v) = E(v')$ by \Cref{prop:bounded_energy}. 
    This leads to a contradiction, because $E(v) = a_2\cdot E(v_1)$ and $E(v') = a_1\cdot E(v_2)$, but $E(v_1)$ and $E(v_2)$ are strictly positive because $v_1$ and $v_2$ are non-trivial holomorphic discs.
    
    This shows that there cannot be any bubbling.
\end{proof}


\subsection{Proof of \Cref{thm:trivial_lagr_vanish_cycle}}
\label{sec:proof_obstr}

Consider $(M,\cF,\omega)$ a strong symplectic foliation admitting a Lagrangian vanishing cycle~$\cC$.
As explained in the previous sections, for a conveniently chosen $J$, we obtain a compact $n$-dimensional moduli space $\cM^J_\cC$ of leafwise $J$-holomorphic discs with boundary on the Lagrangian foliation on $\cC$.
With a slight abuse of notation, we drop the explicit mention of $J$ and just denote such moduli space by $\cM^\cC$.
We start by proving that \eqref{item:lvc_circle_bound_sympl_disc} in \Cref{def:lagr_van_cycle} must hold.

\bigskip

Consider the moduli space $\cM_*^\cC = \cM^\cC\times \partial \D^2$, namely the space of pairs $(u,p)$ with $u\in\cM^\cC$ and $p\in \partial\D^2$. 
This comes equipped with a natural evaluation map
\begin{equation}
\label{eqn:evaluation}
    \ev_\partial\colon \cM_*^\cC\to M \, ,
    \quad 
    (u,p) \mapsto u(p) \; ,
\end{equation}
which is just the restriction of the one considered for $\cM^J_* = \cM^\cC\times \D^2$ in \Cref{prop:mod_space_is_manifold} to $\cM^\cC_*$.

Consider now an arc $\gamma\colon [0,1]\to \cC^*$ given by $\gamma(r) = i(r,\bfq_0)\in \cC = i\bigl(\D^2\times S\bigr)$, where $i\colon \cC\hookrightarrow M$ is the embedding of \Cref{def:lagr_van_cycle}.
By a small smooth perturbation of $\gamma$ away from $\Umod$ and $r=1$, we can assume that $\ev \pitchfork \gamma$; in particular, $\ev_\partial^{-1}(\gamma)$ is a $1$-dimensional (possibly disconnected and not necessarily closed) submanifold of $\cM_*^\cC$. 

According to \Cref{prop:moduli_space_plus_constant_discs}, there is exactly one connected component of $\ev^{-1}_\partial(\gamma)\subset \cM_*^\cC$ that comes arbitrarily close to the core~$\cC^0$.
This component, which we will call $\cM_\gamma$, is then diffeomorphic to an interval, closed at least at one of its ends, consisting of a constant Bishop disc at $i(0,\bfq_0)$.
We denote this endpoint by $e_0$.

By Gromov compactness  \Cref{prop:compactness}, $\cM_\gamma$ is compact and so will be homeomorphic to a \emph{closed} interval.
Let's then look at its other endpoint, which we will denote by $e_1$.

Now, $e_1$ cannot be a constant Bishop disc, because the only point where $\gamma$ intersects the core is $\gamma(0)$, which would mean that $e_0=e_1$ in $\cM^J_*$ and hence, again by \Cref{prop:moduli_space_plus_constant_discs}, points in $\cM_\gamma$ near $e_1$ would also coincide with points near $e_0$ contradicting the fact that $\cM_\gamma$ is not a circle but an interval.

According to \Cref{prop:compactness}, $e_1$ is then simply a disc in the leaf~$F_1$.
After a small perturbation, we can assume that it is immersed (and if $\dim \cF \ge 6$ also embedded) proving that \eqref{item:lvc_circle_bound_sympl_disc} of \Cref{def:lagr_van_cycle} holds as desired.

\bigskip

We now prove that \eqref{item:lvc_boundary_null_homologous} of \Cref{def:lagr_van_cycle} also holds.
Let us first deal with the homological statements, that hold for any even rank of $\cF$; the homotopy statement in the case $\rank\cF=4$ will be dealt with afterwards.
For this, we first note that according to \Cref{prop:moduli_space_plus_constant_discs}, $\cM^\cC$ (or rather its compactification) has two boundary components: one made of the discs sitting on the boundary of the vanishing cycle, and one made of constant Bishop discs~$u_{0,\bfq}$ with $\bfq \in S$.
Recall from \Cref{prop:moduli_space_plus_constant_discs} that $\cM^\cC$ is naturally oriented   in the case where $\partial \Sigma = \emptyset$, and also if $\partial \Sigma \neq \emptyset$ if $S$ is stably parallelizable.

Adding a marked point to each disc, we obtain the manifold $\cM_{\D^2}^\cC = \cM^\cC\times \D^2$ with boundary and corners that comes with a differentiable evaluation map $\ev\colon \cM_{\D^2}^\cC \to M, (u,z) \mapsto u(z)$.
The boundary of $\cM_{\D^2}^\cC$ consists of $\bigl(\partial \cM^\cC\bigr)\times \D^2 \cup \cM^\cC\times \partial\D^2$.
One can naturally obtain a new moduli space $\hat\cM_{\D^2}^\cC$ by blowing down each constant Bishop disc~$u_{0,\bfq}\times \D^2$ to a single point that we can identify with $\bfq\in S$.
Clearly, the evaluation map on $\cM_{\D^2}^\cC$ descends to a continuous evaluation map on $\hat\cM_{\D^2}^\cC$, and using the parametrization~\eqref{eqn:def_bishop_disc} in \Cref{sec:bishop_family} provides us with an explicit model that shows that the blown down space can even be given a $C^{l-k}$-structure such that $\ev$ is also $C^{l-k}$.

The boundary of this new space~$\hat \cM_{\D^2}^\cC$ is the union of $\hat \cM_{\partial \D^2}^\cC$ which is $\cM^\cC\times \S^1$ blown-down at the constant Bishop discs and $\cM^{\partial\cC}_{\D^2} = \cM^{\partial\cC} \times \D^2$ with $\cM^{\partial\cC}$ consisting of the discs with boundary in $\partial \cC$.  
The two parts intersect along their common boundary~$\cM^{\partial\cC}\times \S^1$, and they are both naturally oriented if $\cM^\cC_{\D^2}$ is.

Note then that $H_n(\hat \cM_{\partial \D^2}^\cC; \Z_2)$ is trivial because $\hat \cM_{\partial \D^2}^\cC$ has non-empty boundary, while $H_n(\hat \cM_{\partial \D^2}^\cC,  \partial \hat \cM_{\partial \D^2}^\cC; \Z_2) \cong \Z_2$.
By \Cref{prop:moduli_space_plus_constant_discs}, there is a neighborhood~$V$ of the core~$\cC_0$ in $\cC$ such that the evaluation map~$\ev\colon \hat \cM_{\partial \D^2}^\cC \to \cC$ restricts to a diffeomorphism $\ev^{-1}(V) \to V$.
This implies that $\ev\colon \bigl(\hat\cM_{\partial \D^2}^\cC, \partial\hat \cM_{\partial \D^2}^\cC\bigr)\to (\cC,\partial \cC)$ is a degree~$1$ map.

From the pair of long exact sequences
\begin{equation*}
    \begin{tikzcd}
        \ldots \arrow[r] & H_n(\hat \cM_{\partial \D^2}^\cC; \Z_2) \arrow[r] \arrow[d, "\ev_*"] &
        {H_n(\hat \cM_{\partial \D^2}^\cC, \partial\hat \cM_{\partial \D^2}^\cC; \Z_2)} \arrow[r, "\delta"] \arrow[d, "\ev_*"] 
        & H_{n-1}\bigl(\partial\hat \cM_{\partial \D^2}^\cC; \Z_2\bigr) \arrow[r] \arrow[d, "\ev_*"] 
        & \ldots \\
        \ldots \arrow[r]  & H_n(\cC; \Z_2) \arrow[r]                                           & {H_n(\cC, \partial\cC; \Z_2)} \arrow[r, "\delta"]                                                                    & H_{n-1}(\partial\cC; \Z_2) \arrow[r]                                                     & \ldots
    \end{tikzcd}
\end{equation*}
we see that $\delta\circ \ev_*$ and $\ev_*\circ \, \delta$ induce the same isomorphism $H_n(\hat \cM_{\partial \D^2}^\cC, \partial\hat \cM_{\partial \D^2}^\cC; \Z_2) \to H_{n-1}\bigl(\partial\cC; \Z_2\bigr)$.
Since $\delta$ is also an isomorphism,  $\ev_*\bigl([\partial\hat \cM_{\partial \D^2}^\cC]\bigr) = [\partial\cC] \in  H_{n-1}\bigl(\partial\cC; \Z_2\bigr)$.

If the involved manifolds are all oriented, we deduce using the exactly same arguments with homology with $\Z$-coefficients that $\ev_*\bigl([\partial\hat \cM_{\partial \D^2}^\cC]\bigr) = \pm [\partial\cC] \in  H_{n-1}\bigl(\partial\cC; \Z\bigr)$.

\smallskip

Let us now consider $\cM^{\partial\cC}_{\D^2} = \cM^{\partial\cC} \times \D^2$, the second part of the boundary of $\hat \cM_{\D^2}^\cC$.
Clearly, $[\partial \cM^{\partial\cC}_{\D^2}] = 0$ in $H_{n-1}\bigl(\cM^{\partial\cC}_{\D^2}; \Z_2\bigr)$, and since $\cM^{\partial\cC}_{\D^2}$ is mapped by the evaluation map into the leaf~$F_1$ of $\cF$, it follows that $\ev_*\bigl([\partial \cM^{\partial\cC}_{\D^2}]\bigr)$ is trivial in $H_{n-1}\bigl(F_1; \Z_2\bigr)$.
On the other hand, $\partial \cM^{\partial\cC}_{\D^2}$ agrees with $\partial\hat \cM_{\partial \D^2}^\cC$ (in fact, equipped with the opposite orientation) and we have shown that $\ev_*\bigl([\partial\hat \cM_{\partial \D^2}^\cC]\bigr) = [\partial \cC]$, we thus obtain that $\partial \cC$ is null-homologous with $\Z_2$-coefficients in the corresponding leaf, that is, property \Cref{def:lagr_van_cycle}.~\eqref{item:lvc_boundary_null_homologous}.
The result for $\Z$-homology is the same, if all moduli spaces are oriented.

\bigskip

The only thing left to prove is that if $\dim\cF = 4$, $\partial\cC$ is homotopic to a loop inside the leaf~$F_1$ it is contained in.
Note that in this case the connected closed moduli space $\cM^{\partial\cC}$ made of leafwise pseudo-holomorphic discs with boundary on $\partial \cC$ is just a circle so that $\partial \cM^{\partial\cC}_{\D^2}$ is diffeomorphic to the $2$-torus.
According to what we have said in the previous paragraphs, $\ev\colon \partial \cM^{\partial\cC}_{\D^2}  \to \partial \cC$ is a degree~$\pm 1$, and by a result due to Nielsen \cite{Nielsen29} (c.f.\ \cite{Edmonds79}), a degree~$1$ map from the torus~$\T^2$ to itself is homotopic to a homeomorphism (hence the same holds for degree $\pm1$ maps as well).

Since $\partial \cC$ is also a torus, we can thus homotope the evaluation map $\ev\colon \cM_{\D^2}^{\partial\cC} \to F_1$ in such a way that it restricts along the boundary $\partial \cM^{\partial\cC}_{\D^2}$ to a homeomorphism onto $\partial\cC$.
For simplicity, we denote this deformed evaluation map also by $\ev$.

We obtain now the desired homotopy $f_t\colon  \partial \cM^{\partial\cC}_{\D^2} \to F_1$ for $t\in [0,1]$ by  $f_t\bigl(u,z\bigr) = \ev\bigl(u,(1-t)\,z\bigr)$.
By our previous argument, $f_0$ is a homeomorphism onto $\partial \cC$, and $f_1$ does not depend on the $z$-coordinate, and represents a loop in $F_1$.
This concludes the proof of \Cref{thm:trivial_lagr_vanish_cycle}.
\hfill
\qedsymbol
\qedhere


\section{Examples of non-strong symplectic foliations without closed leaves}
\label{sec:examples}

There are plenty of closed manifold admitting a symplectic foliation with a Lagrangian vanishing cycle. For example, consider any closed $3$-manifold $M$ with a foliation $\cF$ containing at least one Reeb component, and $\omega$ a leafwise area form. If $(X,\Omega)$ is a symplectic manifold containing a weakly exact Lagrangian $L$, then the symplectic product foliation
\[(\cF \times X, \omega + \Omega) \;,\]
on $M \times X$ contains a Lagrangian vanishing cycle, given by the product of a disc slice of a Reeb component in $\cF$ with $L$.

In these examples \Cref{thm:trivial_lagr_vanish_cycle} is in fact not needed. 
Indeed, all these symplectic foliations contain a closed separating leaf, given by the the product of boundary leaf in a Reeb component with $X$, and hence are not strong by \Cref{obs:obstr_strongness} (and in fact not even taut). 
To obtain more interesting examples we show in this section that (under certain conditions) the underlying smooth foliation can be made taut while preserving the Lagrangian vanishing cycle. 
The construction is follows an argument of Meigniez \cite{Meigniez}, that in turn takes inspiration from work of Thurston \cite{Thurston}.

Foliated bundles play a key role in the construction so we start by briefly recalling their definition in \Cref{sec:FoliatedBundles}. \Cref{sec:eliminating_closed_leaves} explains how to open closed leaves and \Cref{sec:eliminating_preserving_vanishingcycle} shows that the latter can moreover be made in a way that preserves a given Lagrangian vanishing cycle in the original symplectic foliation.
Lastly \Cref{sec:proof_of_codim1_examples} contains the proof of \Cref{thm:codim_1_examples}.


\subsection{Foliated bundles}
\label{sec:FoliatedBundles}

Consider manifolds $M$ and $X$, together with a representation $\rho:G \to \Diff(X)$, for a subgroup $G\subset \pi_1(M)$. This allows us to form a bundle with fiber $X$ by taking
\[ \wtd{M} \times_{G} X := \wtd{M} \times X/(p,x) \sim (g\cdot p, \rho(g)\cdot x),\quad \forall g\in G,\]
where $\wtd{M}\to M$ denotes the universal cover on which $\pi_1(M)$ acts by deck transformations. The projection $\pi\colon \wtd{M} \times_{G} X \to M$ is induced by $\wtd{M} \to M$.

Observe that $\wtd{M} \times X$ is foliated by $\wtd{M} \times \{x\}$ for all $x \in X$, and that this foliation passes to a foliation $\cF$ on the quotient whose leaves are transverse to the fibers of $\pi$. Moreover, if $M$ admits a symplectic form $\omega$, then $\Omega := \pi^*\omega$ makes $\cF$ into a strong symplectic foliation.

The two main cases of interest to us are the following.
\begin{itemize}
    \item \textbf{Round case:} when $X = \S^1$ we assume that the representation $\rho$ takes values in $\Diff_+(\S^1)$, the space of orientation preserving diffeomorphisms.
    \item \textbf{Straight case:} when $X = (-1,1)$ we assume that $\rho$ takes values in $\Diff_c(-1,1)$, the space of compactly supported diffeomorphisms.
\end{itemize}


\subsection{Eliminating closed leaves of symplectic foliations}
\label{sec:eliminating_closed_leaves}

The aim of this section is to show that, under certain conditions, it is possible to eliminate closed leaves from a symplectic foliation without changing the ambient manifold.
The idea is as follows.

Suppose we are given a symplectic foliation $(\cF,\omega)$ on $M$ containing a closed leaf $L$.
Then, the steps we perform are the following.
\begin{enumerate}
    \item We remove an open subset $H$ of $M$ creating a hole that intersects the closed leaf $L$ we wish to eliminate, so that the fundamental group $\pi_1(L\setminus H)$ has an extra generator.
    Seeing the foliation near $L$ as a foliated bundle, we use this new element in the fundamental group to change $(\cF\vert_{M\setminus H},\omega)$ locally around $L\setminus H$ so that the closed leaf gets eliminated.
    \item The previous step changes the induced characteristic foliation on the boundary of the holes. 
    It remains to be shown that the holes can be filled again without changing the topology of $M$. We achieve this by generalizing one of the key results from \cite{Meigniez} to the symplectic setting. 
    (To be slightly more precise, this step requires enlarging the hole $H$ to a bigger one $H'$, which is the hole that will be filled.)
\end{enumerate}

\medskip

The full details of this process will be described in the proof of \Cref{lem:RemovingClosedLeaf}, and we now introduce all the necessary preliminaries.

First, we give the precise definition of the type of holes involved in the construction. 
Suppose we are given the following data:
\begin{enumerate}
    \item a symplectic manifold $(X \times \D^2,\omega)$ where $X$ is closed;
    \item a representation $\rho\colon \pi_1(X) \to \Diff_c(-1,1)$;
    \item a distinguished element $\phi \in \Diff_c(-1,1)$.
\end{enumerate}
As in Section \ref{sec:FoliatedBundles} we consider two cases:
\begin{itemize}
    \item \textbf{Round hole:} 
    We identify $\Diff_c(-1,1)$ with $\Diff_+(\S^1,*)$, the orientation preserving diffeomorphisms of $\S^1$ fixing the basepoint. 
    Thus, $\rho$ and $\phi$ together define a representation $\lambda\colon \pi_1(X\times \S^1) \to \Diff_+(\S^1,*)$ of the fundamental group of $\partial(X\times \D^2)=X\times \S^1$.
    The suspension foliation $(\cF,\omega)$ associated to $\lambda$ defines a germ of symplectic foliation around the boundary of $X \times \D^2 \times \S^1$. 
    We call $(X\textbf{}\times \D^2 \times \S^1,\cF,\omega)$ the \emph{round hole} associated to $(\rho,\phi)$.

    \item \textbf{Straight hole:} The manifold $X\times \D^2 \times I$ has boundary and corners.
    Analogously to the round case, $\rho$ and $\phi$ define a representation $\lambda\colon \pi_1(X\times \S^1) \to \Diff_c(-1,1)$, 
    whose suspension defines a germ of symplectic foliation $(\cF,\omega)$ around the vertical boundary $X \times \S^1 \times I$. 
    It can naturally be extended over the horizontal boundary as the product foliation
    \[ \cF = \bigcup_{t \in \Op(\partial I)} X \times \D^2 \times \{t\} \;,\]
    with the leafwise symplectic form $\omega$. We refer to $(X\times \D^2\times I,\cF,\omega)$ as the \emph{straight hole} associated to $(\rho,\phi)$.
\end{itemize}

Adapting one of the key results from \cite{Meigniez}, we obtain a sufficient condition to fill such holes by a symplectic foliation.

\begin{lemma}\label{lemma:filling_hole}
    Given $\rho$ and $\phi$ as above, suppose that:
    \begin{enumerate}
        \item $\phi = [\alpha,\beta]$ is a commutator in $\Diff_c(-1,1)$,
        \item $\lambda(\gamma_1), \dotsc, \lambda(\gamma_r)$, where the $\gamma_i$ denote generators of $\pi_1(X)$, have pairwise disjoint supports, and the union of their supports is non-empty and disjoint from the supports of $\alpha$ and $\beta$.
        \item In the straight case we additionally assume that the union of the supports of the $\lambda(\gamma_i)$'s ``brackets'' the supports of $\alpha$ and $\beta$. 
    \end{enumerate}
    Then the hole $(H,\cF,\omega)$ associated to $\rho$ and $\phi$ can be filled with a symplectic foliation without interior closed leaves.
\end{lemma}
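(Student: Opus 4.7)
The strategy is to adapt the topological filling construction of Meigniez to the symplectic setting, by first building a preliminary foliated bundle whose boundary matches the given germ, and then reducing its topology back to that of the hole via surgery performed in regions where the transverse dynamics of the $\lambda(\gamma_i)$'s is non-trivial.

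The starting point is the observation that the commutator hypothesis $\phi = [\alpha,\beta]$ allows us to reinterpret the boundary holonomy as arising from a representation of a free group of rank two. Concretely, let $T_0$ be a once-punctured torus with $\pi_1(T_0) = \langle a, b\rangle$ and boundary loop $[a,b]$, and define $\mu\colon \pi_1(X \times T_0) \to \Diff_c(-1,1)$ by $\mu\vert_{\pi_1(X)} = \rho$, $\mu(a) = \alpha$, $\mu(b) = \beta$. The foliated bundle $Y_0 \coloneqq (X \times T_0)\widetilde{\times}_\mu F$, with $F = \S^1$ or $F = (-1,1)$ respectively, then carries a natural strong symplectic foliation transverse to the $F$-direction (with leafwise symplectic form pulled back from $(X \times T_0, \omega_{X\times T_0})$, for an appropriate extension of the symplectic form), and its boundary foliation agrees with that of the hole because $\mu(\partial T_0) = \mu([a,b]) = \phi$.

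The next step is to pass from $Y_0$ to a manifold diffeomorphic to $H = X \times \D^2 \times F$. Since $T_0$ is obtained from $\D^2$ by attaching a $1$-handle, $Y_0$ differs from $H$ by a handle region of the form $X \times (D^1 \times D^1) \widetilde{\times}_\mu F$. The plan is to perform a surgery that trivializes this handle, replacing $Y_0$ by $H$. Hypothesis (2) is used precisely here: after an isotopy, the $1$-handle can be chosen so that inside it the monodromy of the auxiliary representation involves only some of the $\lambda(\gamma_i)$'s, whose supports are disjoint from those of $\alpha$ and $\beta$ and pairwise disjoint amongst themselves. This means that on the handle region, the foliation splits as a trivial product in the $\alpha, \beta$-directions, and the transverse dynamics along the $\lambda(\gamma_i)$ is non-trivial in the prescribed way. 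The non-triviality of some $\lambda(\gamma_i)$ on every point of its support is what prevents the appearance of closed leaves after the handle is replaced; in the straight case, hypothesis (3) is used to ensure the same property holds on the horizontal boundary components of $I$, where closed leaves are a priori allowed by the boundary germ. The leafwise symplectic form throughout the construction is defined as the restriction to each leaf of a globally closed $2$-form on $H$ (the pullback of $\omega$ via the projection to $X \times \D^2$), so the symplectic condition is automatic once the foliation is transverse to the $F$-direction, which it is by construction of foliated bundles.

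The main obstacle is the surgery step. One must show that the $1$-handle in $Y_0$ can be effectively replaced by a product region compatible with both the foliated structure (no closed leaves appear, the foliation extends smoothly) and the symplectic structure (the pullback of $\omega$ remains non-degenerate on the new leaves). The disjointness of supports in hypothesis (2) is the only tool available to localize the argument, and the bracketing in hypothesis (3) is needed to avoid creating closed leaves touching $X \times \D^2 \times \partial I$ in the straight case; verifying both items will require a careful model computation in the handle region, analogous to the one carried out in \cite{Meigniez} in the purely topological setting, but with additional care taken to extend $\omega$ smoothly.
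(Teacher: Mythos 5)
Your proposal takes a genuinely different route from the paper (you build a foliated bundle over $X\times T_0$ and then surger off the handle, whereas the paper directly follows Meigniez's construction of pulling back the suspension of $\lambda(\gamma_1),\dots,\lambda(\gamma_r)$ on $X\times I$ via a Morse map $F\colon X\times\D^2\times I\to X\times I$ with a cancelling pair of critical points). However, your argument contains a genuine gap in the symplectic part, and it is exactly the part of the lemma that goes beyond Meigniez. You claim that after the surgery the resulting foliation is ``transverse to the $F$-direction'' so that the pullback of $\omega$ via the projection to $X\times\D^2$ restricts to a leafwise symplectic form. This cannot be true: if the filling foliation were everywhere transverse to the vertical $F$-direction, it would be a suspension foliation over the simply connected base $X\times\D^2$, forcing the holonomy around $\partial\D^2$ to be trivial, in contradiction with the prescribed non-trivial boundary monodromy $\phi=[\alpha,\beta]$. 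Any filling of the hole necessarily contains a region where the foliation is tangent to the vertical direction, and exactly there your projection to $X\times\D^2$ fails to be a submersion leaf-by-leaf, so its pullback of $\omega$ is not leafwise non-degenerate.

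The paper's proof confronts this head-on: the Morse map $F$ has two critical points, and near them $\partial_z$ is \emph{not} transverse to the new foliation. The work of the proof is to construct a replacement transverse vector field $V$ (a perturbation $W+\partial_z$ near the critical loci, using a vector field $W$ on $X$ tangent to $\gamma_1$ and the assumption that the critical values avoid the fixed-point set of $\lambda(\gamma_1)$), then use the flow of $V$ --- corrected by a bump function $\tau$ so that the time-one flow returns --- to define a fibration $\pi$ whose fibers are $V$-flowlines, and finally take $\pi^*\omega$. Your surgery step is also only described at the level of intention; even granting it produces the right total space $X\times\D^2\times F$, you would still need an explicit transverse vector field in the tangency region before you can pull back $\omega$. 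Without that ingredient, the symplectic part of the lemma is not established.
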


As in \cite[Vocabulary 3.2]{Meigniez}, here a subset $A\subset (-1,1)$ is said to \emph{bracket} another subset $B\subset (-1,1)$ if  every point of $B$ lies between two points of $A$.

\begin{proof}
    The proof being completely analogous for the straight and round case, we treat the first case only.

    The construction of the foliation $\cF$ filling the hole is exactly the same as in \cite[Lemma 3.3]{Meigniez}, except that we use $X$ instead of the $r$-torus and that, under our hypotheses, it can be endowed with a leafwise symplectic form inducing $\mu$ on the boundary. 
    Here are more details.
    
    The foliation in \cite[Proof of Lemma 3.3]{Meigniez} is constructed as the pullback of the suspension foliation $\mathcal{S}(\lambda(\gamma_1),\dots,\lambda(\gamma_r))$ on $X \times I$ under a map of the form
    \[F\colon X \times \D^2 \times I \to X\times I,\quad (x,y,z) \mapsto (x,f(y,z)) \;,\]
    where $f\colon\D^2 \times I \to I$ is a Morse function obtained from the height function, by adding two critical points, $(y_1,z_1)$ and $(y_2,z_2)$ of index $1$ and $2$ respectively, in canceling position. 
    The corresponding critical values are denoted $c_1, c_2 \in I$.

    We claim that there exists a vector field $V\in \X(X\times \D^2 \times I)$, which is transverse to $\cF$, equal to $\partial_z$ near the boundary,satisfies $\d z(V) = 1$ everywhere. Away from the slices $X \times (y_i,z_i)$, $i = 1,2$ the map $F$ is just a projection (since $f$ is the height function on $\D^2 \times I$). Thus, $\d F(\partial_z) = \partial_z$ which is transverse to $\cS$ implying $\partial_z$ is transverse to $\cF$. 
    Furthermore, we may assume without loss of generality that $c_i$ is not a fixed point of $\lambda(\gamma_1)$. 
    Since the generators of $\pi_1(X)$ can be represented by embedded curves, we can find a vector field $W \in \X(X)$ satisfying
    \[ W(\gamma_1(t)) = \dot{\gamma}_1(t)\;.\]
    Then, we can arrange that $\mathcal{S}$ is transverse to $W$ along $X \times \{c_i\}$.
    Hence, along $X \times (y_i,c_i)$ we have that $\d F(W + \partial_z)$ is transverse to $\mathcal{S}$. Since transversality is open, $W + \partial_z$ is transverse to $\cF$ on a neighborhood of the singular tori. A simple interpolation argument provides the existence of $V$.

    For the rest of the argument, we fix any vector field $V$ as above, and we denote by
    \[ \phi_t:X \times \D^2 \times \{0\} \to X \times \D^2 \times \{t\},\quad (x,y,0) \mapsto \phi_t(x,y,0),\]
    the image of the bottom face under its flow. 
    The induced map $X\times\D^2\times\{0\}\to X\times\D^2\times\{1\}$ is a priori not the identity;
    however, as this is just a time-$1$ flow, one can easily correct this issue as follows.
    We fix a smooth bump function $\tau:I \to [0,1]$ satisfying
    \[ \tau(z) = \begin{cases} z  & 0 \leq z \leq 1 - 2\varepsilon \\ 1-z & 1-\varepsilon \leq z \leq 1, \end{cases}\]
    where $\varepsilon >0$ is so small that $\cF$ equals the product foliation on $X \times \D^2 \times [1-2\varepsilon,1]$. 
    
    Using this data we can define a fibration $\pi:X \times \D^2 \times I \to X \times \D^2 \times \{0\}$ by setting
    \[ \pi(x,y,z) := \pr \circ \phi^{-1}_{\tau(z)}(x,y,\tau(z)),\]
    where $\pr:X \times \D^2 \times I \to X \times \D^2 \times \{0\}$ denotes the projection map.
    
    Observe that on $X \times \D^2 \times [0,1-2\varepsilon]$ the fibers of $\pi$ equal the flowlines of $V$, and hence are transverse to $\cF$. 
    On the other hand, $\cF$ is a product foliation on $X \times \D^2 \times [1-2\varepsilon,1]$. Since $V$ always has a positive $\partial_z$-component, the fibers of $\pi$ are transverse to $\cF$ also on this region. 
    Lastly, note that by construction $\pi$ equals $\pr$ on a neighborhood of the boundary of $X \times \D^2 \times I$. As such the pullback $\pi^*\omega$ defines a leafwise symplectic form on $\cF$, equal to $\pr^* \omega$ near the boundary.

\end{proof}

\medskip

In order to fill the hole with a symplectic foliation, we will need to first ``enlarge'' it; this will be done as follows.
Denote by
\[ (\D^1 \times \D^{2n},\cF_{st},\omega_{st}),\]
the $1$-handle foliated by standard Darboux discs $(\D^{2n},\omega_{st})$. 
For any straight hole $(H,\cF,\omega)$ the symplectic foliation is tangent to the top and bottom horizontal boundary pieces.
By taking the union with a handle, starting at the top boundary of $H$ and ending at the bottom one and attached along Darboux charts, 
one can obtain an enlarged hole
\begin{equation}\label{eq:enlarged_hole}
    (\widehat{H},\widehat{\cF},\widehat{\omega}) := (H,\cF,\omega) \cup (\D^1\times \D^{2n},\cF_{st},\omega_{st}).
\end{equation}

The following proposition follows from a straightforward adaptation of \cite[Proposition~3.5]{Meigniez} using the symplectic version of \cite[Lemmas 3.3 and 3.4]{Meigniez} that we described above, i.e.\ \Cref{lemma:filling_hole}:

\begin{proposition}\label{prop:filling_enlarged_hole}
    Let $(H,\cF,\omega)$ be the straight hole associated to the trivial representation $\rho:\pi_1(X) \to \Diff_c(-1,1)$, and a non-trivial element $\phi \in \Diff_c(-1,1)$.  
    Then, the enlarged hole $(\widehat{H},\widehat{\cF},\widehat{\omega})$ can be filled without interior leaves.
\end{proposition}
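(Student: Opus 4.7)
The plan is to apply \Cref{lemma:filling_hole} to the enlarged hole $\widehat H$, after reinterpreting it as a straight hole whose associated representation $\rho'$ carries non-trivial information on a new generator coming from the attached handle. The key point is that for the \emph{original} hole $H$, hypothesis~(2) of \Cref{lemma:filling_hole} fails in a bad way: since $\rho$ is trivial, every $\lambda(\gamma_i)$ is the identity, so the union of the supports is empty rather than non-empty. The $1$-handle attachment is precisely engineered to create an extra loop $\gamma_0$ in the fundamental group, which can then play the role of the ``active'' generator in the application of \Cref{lemma:filling_hole}.

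First, I would carry out the topological reinterpretation: up to a suitable foliation-preserving diffeomorphism, the enlarged space $\widehat H$ can be viewed as a straight hole $X' \times \D^2 \times I$ associated to new data $(\rho',\phi)$, where $X'$ is obtained from $X$ by a topological modification accounting for the handle and $\pi_1(X')$ contains $\pi_1(X)$ together with an additional generator $\gamma_0$. The diffeomorphism should be arranged so as to preserve the product structure near the horizontal faces, and so that the vertical boundary foliation becomes the suspension of a representation $\rho' \colon \pi_1(X') \to \Diff_c(-1,1)$ that remains trivial on the original generators, while $\rho'(\gamma_0)$ is prescribed by the germ of the foliation in the handle region; the freedom in how the handle's product foliation $\cF_{st}$ is interpolated into the surrounding suspension allows us to set $\rho'(\gamma_0)$ to any diffeomorphism we wish.

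Next, I would choose the auxiliary data. Using the classical result that $\Diff_c(-1,1)$ is uniformly perfect, write $\phi = [\alpha,\beta]$ as a commutator. Then choose a non-trivial diffeomorphism $\psi \in \Diff_c(-1,1)$ whose support is disjoint from $\operatorname{supp}(\alpha) \cup \operatorname{supp}(\beta)$ and brackets it; this can easily be arranged with $\psi$ supported in two small intervals, one on either side. Setting $\rho'(\gamma_0) = \psi$ and leaving $\rho'$ trivial on the other generators of $\pi_1(X')$, all three hypotheses of \Cref{lemma:filling_hole} are satisfied (the trivial old generators contribute trivially to (2) and (3), since the only non-empty supports involved are those of $\psi$, $\alpha$ and $\beta$). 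Direct application of the lemma then produces the desired symplectic filling of $\widehat H$ without interior closed leaves.

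The main difficulty I expect is in the first step: justifying rigorously that the enlarged hole $\widehat H$ admits the claimed identification with a straight hole $X' \times \D^2 \times I$ in a way compatible both with the foliation and with the leafwise symplectic form, and that the prescribed $\rho'(\gamma_0)$ can indeed be realized by a genuine extension of the product foliation on the handle. In practice this should reduce to an explicit local adjustment in a neighborhood of the handle, interpolating between its standard product foliation and a small piece of suspension foliation realizing the chosen monodromy $\psi$ across $\gamma_0$; carrying out this interpolation while preserving the leafwise symplectic form (as in the proof of \Cref{lemma:filling_hole}) is the technical heart of the argument.
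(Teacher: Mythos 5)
Your strategy of assigning non-trivial holonomy to the extra loop created by the handle is the right intuition, but the first step --- declaring a foliation-preserving diffeomorphism identifying the enlarged hole $\widehat H$ with a straight hole $X' \times \D^2 \times I$ over some modified closed base $X'$ --- is topologically impossible. After attaching a $1$-handle $\D^1 \times \D^{2n}$ from the top plaque of $H = X \times \D^2 \times I$ to the bottom one, $\widehat H$ deformation retracts onto $X \vee \S^1$, the new circle being the loop running through the handle and back through $H$. Any straight hole $X' \times \D^2 \times I$ deformation retracts onto its closed $(2n-2)$-dimensional base $X'$, and $X \vee \S^1$ is not the homotopy type of any closed $(2n-2)$-manifold: with $\Z_2$-coefficients one has $\dim H_1(X \vee \S^1) = \dim H_1(X) + 1$ while, for $n\geq 3$, $\dim H_{2n-3}(X \vee \S^1) = \dim H_{2n-3}(X) = \dim H_1(X)$ by Poincar\'e duality on $X$, which is incompatible with $\Z_2$-Poincar\'e duality on $X'$; for $n=2$ the cup-product structure of a wedge shows that the class of the new circle pairs trivially with all of $H^1$, again violating Poincar\'e duality. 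So there is no closed base $X'$ on which to define the modified representation $\rho'$, and \Cref{lemma:filling_hole} cannot be applied as a black box to the enlarged hole.

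The paper's proof defers to Meigniez's Proposition 3.5. Whatever the precise implementation, the filling must be built on $\widehat H$ directly rather than by recognizing $\widehat H$ as a straight (or round) hole: the handle loop supplies a new generator of $\pi_1(\widehat H)$ which can be exploited inside a Morse-theoretic pull-back construction (in the spirit of the proof of \Cref{lemma:filling_hole}) to produce non-trivial holonomy even though the prescribed boundary representation $\rho$ is trivial. The commutator decomposition $\phi = [\alpha,\beta]$ in $\Diff_c(-1,1)$ and the disjointly supported, bracketing diffeomorphism you propose are indeed the right auxiliary inputs, but they need to be routed through such a direct construction rather than through a nonexistent reinterpretation of $\widehat H$ as a straight hole.
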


\bigskip

Let us now go back to our main purpose of finding a procedure to eliminate closed leaves from a symplectic foliation.
The overall idea to achieve this is the following: one should cut along a local transversal of codimension $1$ in the leaves, and glue back in a way that creates holonomy and hence opens up the closed leaf. 
This cut-and-paste procedure requires creating some holes, which one will then fill (in fact, after further enlarging them with several handles) using \Cref{prop:filling_enlarged_hole}.
We summarize the output of this strategy, which will be detailed further below, as the following result:

\begin{proposition}\label{prop:RemovingClosedLeaf}
    Let $L$ be a closed leaf of a codimension~$1$ symplectic foliation $(\cF,\omega)$ on $M^{2n+1}$ for $n \geq2$. 
    Suppose there exists a neighborhood $U$ of $L$ such that $\cF|_U$ has leaves accumulating on $L$ from both sides. 
    Then, there exists symplectic foliation $(\cF',\omega')$ on $M$ such that:
    \begin{itemize}
        \item $(\cF',\omega')$ and $(\cF,\omega)$ agree outside $U$,
        \item  every leaf of $\cF'$ intersecting $U$ is open.
    \end{itemize}
\end{proposition}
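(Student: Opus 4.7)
The plan is to adapt the opening-up procedure of \cite{Meigniez} to the symplectic setting, using \Cref{prop:filling_enlarged_hole} as the key symplectic filling result. The overall strategy, already sketched before the statement, is to isolate a standard neighborhood of $L$, carve out a straight hole intersecting $L$, and then refill an enlargement of this hole by a symplectic foliation without closed leaves; joined with $(\cF,\omega)$ outside, this opens up $L$.

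I would first produce a small foliated chart $\cU_0 \subset U$ around a chosen point $p \in L$ of the form $(X \times \D^2) \times (-1,1)$, with $\cF|_{\cU_0}$ restricting to horizontal slices, $L \cap \cU_0 = (X \times \D^2) \times \{0\}$, and $\omega$ splitting as $\omega_X \oplus \omega_{\operatorname{st}}$ along each slice. Here $X$ is a closed symplectic $(2n-2)$-manifold, e.g.\ a $(2n-2)$-torus; the hypothesis $n \geq 2$ ensures $\dim X \geq 2$, so that $X$ carries a symplectic form. Such a chart exists by combining a Darboux chart on $L$ around $p$ with a small transverse interval; the transverse interval is available since $\cF$ has leaves accumulating on $L$ from both sides, so $L$ is genuinely a hypersurface with a transverse direction on each side.

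Next, inside $\cU_0$ I would pick a sub-region $H = (X \times \D^2) \times I$ with $I \subset (-1,1)$ a compact sub-interval containing $0$, so that $H \cap L = (X \times \D^2) \times \{0\}$. As it stands, $H$ carries the straight-hole foliation associated to the trivial representation $\rho$ of $\pi_1(X)$ and the identity element $\phi = \operatorname{id}$; in order to apply \Cref{prop:filling_enlarged_hole}, one must first modify the foliation in a collar around $\partial H$ so as to replace $\phi = \operatorname{id}$ by a non-trivial commutator $\phi' = [\alpha, \beta]$ of compactly supported diffeomorphisms of $(-1,1)$. Choosing $\alpha$ and $\beta$ to be Hamiltonian with respect to the transverse symplectic coordinate ensures the modified foliation still carries a leafwise symplectic form extending $\omega$. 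Enlarging $H$ by a Darboux $1$-handle as in \eqref{eq:enlarged_hole} yields $\widehat H$, and \Cref{prop:filling_enlarged_hole} then fills $\widehat H$ symplectically with no interior closed leaves.

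Finally, I would verify that in the resulting symplectic foliation $(\cF', \omega')$, every leaf intersecting $U$ is open. Leaves contained in $\widehat H$ are open by the filling property; leaves crossing from $M \setminus \widehat H$ into $\widehat H$ are pieces of $\cF$ glued to open leaves of the filling along $\partial \widehat H$, hence themselves open. In particular $L$ has been opened up. The main obstacle is the cut-and-paste step that realizes the commutator $\phi'$ inside the foliation in a symplectic way: this is the symplectic analogue of the central technical step of Meigniez's smooth construction, and the commutator realization of $\phi'$ by Hamiltonian generators in the transverse direction is precisely what makes \Cref{prop:filling_enlarged_hole} applicable and also what forces the preliminary enlargement of $H$ by a Darboux handle.
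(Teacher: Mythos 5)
There are genuine gaps in this proposal, and they sit exactly at the heart of the argument.

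First, the claimed chart cannot exist. You want a foliated chart $\cU_0$ around a \emph{point} $p\in L$ of the form $(X\times\D^2)\times(-1,1)$ with $X$ a closed symplectic $(2n-2)$-manifold. Such an open set is never contractible (for instance $H^2(X\times\D^2)\neq 0$), so it cannot be a chart around a point combining a Darboux chart on $L$ with a transverse interval. In the paper's proof of \Cref{lem:RemovingClosedLeaf}, the closed manifold $X$ is \emph{found inside} an honest foliated Darboux chart $V\cong\R\times\R^{2n}$: one takes $X=\partial T$ for a solid torus $T\subset\D^{2n-1}\subset\R^{2n-1}\times\{0\}\subset\R^{2n}$, so $X$ is a codimension-$2$ submanifold of a leaf (not a symplectic factor), and the hole's symplectic base is its tubular neighborhood $\partial T\times\D^2\subset\R^{2n}$ with the restriction of $\omega_{\mathrm{st}}$ (no product splitting $\omega_X\oplus\omega_{\mathrm{st}}$ is involved, and $\partial T$ is not symplectic). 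The role of $n\geq 2$ is to have room in $\R^{2n-1}$ for a connected separating hypersurface $\partial T$ with $\Z\subset\pi_1(\partial T)$, not to get a symplectic form on $X$.

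Second, and more seriously, the step ``modify the foliation in a collar around $\partial H$ so as to replace $\phi=\operatorname{id}$ by a non-trivial commutator'' is precisely where the closed leaf is supposed to be opened, and it is left unjustified. You cannot change the germ of $\cF$ at $\partial H$ without altering $\cF$ in $M\setminus H$: the germ of $\cF$ at $\partial H$ in the straight-hole sense \emph{is} trivial, because $L$ has trivial holonomy around the meridians of $\partial T\times\D^2$. With such a local collar modification sketched but not constructed, $L\setminus H$ would be entirely unchanged, so the resulting foliation would in general still have a closed (or newly created) leaf; nothing forces it open. The paper's actual mechanism is a cut-and-paste along the ``wall'' $W=[-\delta,\delta]\times T$, a hypersurface that extends \emph{from} $\partial H$ out across $L\cap V$; regluing with a shift $\wtd{\phi}$ with $\wtd\phi(t)<t$ on $(-\delta,\delta)$ simultaneously (a) gives every loop in $L$ crossing $W$ once nontrivial holonomy, so all leaves meeting $U$ in the modified foliation are open in $M\setminus H$, and (b) forces the germ at $\partial H$ to acquire the nontrivial distinguished element $\phi$ needed for \Cref{prop:filling_enlarged_hole}. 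Your proposal conflates (a) and (b) and effectively skips (a).

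Third, you do not address leafwise exactness of $\omega'$ on the region being refilled, which is the actual technical cost of working symplectically (and the reason for the handlebody $X$ in \Cref{lem:RemovingClosedLeaf}). The assertion that choosing $\alpha,\beta$ ``Hamiltonian with respect to the transverse symplectic coordinate'' ensures a leafwise symplectic extension is not meaningful here (the transverse direction is $1$-dimensional) and does not replace the paper's construction of a globally closed extension via the adapted vector fields $Y$ and $Z$, the transverse curve $\gamma$ joining the top and bottom plaques of the hole, and the attached $1$-handles chosen so that $\omega'$ becomes leafwise exact on the resulting handlebody. Without this step \Cref{prop:filling_enlarged_hole} is not applicable.
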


The previous proposition is an immediate consequence of the following, more technical, lemma, which says in particular that it is enough to modify $(\cF,\omega)$ in a foliated chart $B\coloneqq \D^1\times \D^{2n}$, where $\D^{2n}$ is a symplectic Darboux ball and the foliation is given by the second factor, union a neighborhood of a transverse loop intersecting $B$ several times.

\begin{lemma}\label{lem:RemovingClosedLeaf}
    Let $L$ be a closed leaf of a codimension-one symplectic foliation $(\cF,\omega)$ on $M^{2n+1}$ for $n \geq2$. 
    Suppose that $U$ is as in \Cref{prop:RemovingClosedLeaf}, and that $(V,\cF|_{V},\omega|_V)$ is an (arbitrarily small) foliated Darboux chart intersecting $L$ and contained in $U$. 
    Then there are a new symplectic foliation $(\cF',\omega')$ on $M$ and an embedded handlebody
    \[ X := B^{2n+1} \cup \bigcup_{1}^k \D^1 \times \D^{2n},\]
    contained in $U$, such that:
\begin{enumerate}
    \item the restriction $(M\setminus X, \cF'|_{M\setminus X}, \omega'|_{M \setminus X})$ admits an embedding into $(M,\cF,\omega)$,
    \item the restriction $\omega'\vert_X$ is leafwise exact,
    \item the leaves of the characteristic foliation $\cF' \cap \partial X$ are diffeomorphic to $\S^{2n-1}$ or a wedge of two copies of $\S^{2n-1}$. 
    \end{enumerate}
\end{lemma}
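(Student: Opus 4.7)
The plan is to adapt the topological construction of Meigniez~\cite{Meigniez} for opening closed leaves of smooth foliations to the leafwise symplectic setting. The idea is to find a closed loop $\gamma$ transverse to $\cF$ and based at a point of $V \cap L$, thicken $\gamma$ to produce the handlebody $X$, and then modify $\cF$ inside $X$ by a cut-and-paste procedure that introduces nontrivial holonomy along $\gamma$, thereby opening up $L$. The hypothesis of two-sided accumulation is what guarantees the existence of $\gamma$: starting from a base point $p_0 \in V \cap L$, one alternately travels a short transverse arc into a nearby open leaf and then follows that leaf close to $L$, returning to $L$ at a new point, until a concatenation of such arcs closes up into a loop $\gamma$ meeting $L$ at finitely many points $p_0, p_1, \ldots, p_k$ (with $k \geq 1$) and lying entirely in $U$.

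With $\gamma$ chosen, I would take $B \coloneqq B^{2n+1} \subset V$ to be a small foliated Darboux box around $p_0$, with $\cF|_B$ given by the horizontal slices $\D^{2n} \times \{t\}$ and $\omega|_B$ the standard form. For each $i = 1, \ldots, k$, the arc of $\gamma$ between $p_{i-1}$ and $p_i$ lying outside $B$ is thickened to a tube $N_i$ symplectomorphic to $\D^1 \times \D^{2n}$ carrying the product foliation $\{t\} \times \D^{2n}$, where the $\D^{2n}$-factor is a Darboux disc. Set $X \coloneqq B \cup N_1 \cup \ldots \cup N_k$. Inside $X$, I would then define $\cF'$ by cutting $B$ along the horizontal slice through $p_0$ and regluing by a diffeomorphism whose composition with the parallel transport along the handles produces nontrivial holonomy. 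The leaves of $\cF'|_X$ are then open $2n$-manifolds obtained by splicing a slice of $B$ across successive handles, which are open subsets of discs and therefore carry an exact restriction of $\omega'$, establishing property~(2). Outside $X$, the foliation is left unchanged, giving property~(1).

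For property~(3), note that $\partial X$ is diffeomorphic to a boundary connect sum of $k$ copies of $\S^1 \times \S^{2n-1}$, and that the characteristic foliation is induced by the intersection of each leaf of $\cF'|_X$ with $\partial X$. Generic leaves are $(2n-1)$-spheres, coming from a slice of $B$ meeting $\partial B$ in an $\S^{2n-1}$ and extending through the handles as an $\S^{2n-1}$-family on each attached tube. The tangencies occur at the top and bottom horizontal slices $\D^{2n} \times \{\pm 1\}$ of $B$, which partially bound the attached handles: the resulting critical leaf is then a wedge $\S^{2n-1} \vee \S^{2n-1}$ of a sphere on $\partial B$ and a belt sphere of the handle. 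The hard part will be ensuring the cut-and-paste can be performed symplectically: unlike in the purely topological setup of~\cite{Meigniez}, where the gluing diffeomorphisms are arbitrary, the gluing here must preserve the leafwise symplectic form, which I would achieve by performing the cut-and-paste in Darboux coordinates and then applying a leafwise Moser argument to produce the global form $\omega'$ that extends smoothly across the gluing region.
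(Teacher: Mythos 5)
There is a genuine gap at the heart of your cut-and-paste step, and a second significant omission about where the real technical work lies.

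\textbf{The cut is along the wrong kind of hypersurface.} You propose to cut $B$ ``along the horizontal slice through $p_0$'' and reglue with a shift. A horizontal slice of a foliated Darboux box is a plaque — it is \emph{tangent} to $\cF$. Cutting along a plaque and shifting the transverse coordinate cannot modify the holonomy of $L$ around elements of $\pi_1(L)$, because those classes are represented by loops that stay at a fixed transverse height and never cross a leaf slice in the transverse direction; moreover, such a cut cannot be glued compatibly with the unmodified $\cF$ outside of $B$ unless the shift vanishes near $\partial(\D^{2n})$, in which case $L$ itself survives as a closed leaf. What is needed, and what the paper (following Meigniez) actually does, is to cut along a hypersurface $W = [-\delta,\delta]\times T$ that is \emph{transverse} to $\cF$ and whose intersection with each plaque is a codimension-one domain $T \subset \D^{2n-1}$ with $\pi_1(\R^{2n}\setminus \partial T)\cong\Z$ (e.g.\ a solid-torus neighborhood of an unknot). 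The cut-and-paste is done by $(t,x)\mapsto(\wtd\phi(t),x)$ where $\wtd\phi$ has no interior fixed point; this adds a nontrivial $\Diff_c(-\delta,\delta)$ element to the holonomy along the new generator of $\pi_1(\text{plaque}\setminus\partial T)$, which is precisely what opens up $L$.

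\textbf{The hole and its symplectic filling are the real content, and they are missing.} The transverse cut along $W$ cannot be performed near $[-\delta,\delta]\times\partial T$, where the gluing map $\psi$ is not the identity; one must first excise a hole $H := [-\delta,\delta]\times \partial T\times \D^2$. After the surgery this hole must be refilled by a symplectic foliation without interior closed leaves, and \emph{that} is where the adaptation of \cite{Meigniez} requires genuine new work in the symplectic setting: one first enlarges $H$ by attaching a $1$-handle, and then one needs the symplectic analogue of Meigniez's filling lemma (\Cref{prop:filling_enlarged_hole}), which hinges on producing a vector field $V$ transverse to the filling foliation whose flowlines define a fibration $\pi$ for which $\pi^*\omega$ is a leafwise symplectic form agreeing with $\omega$ at the boundary. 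Your proposal makes no mention of the hole, its enlargement, or the filling lemma; the transverse loop $\gamma$ and the handlebody $X$ do appear in the paper's proof, but only at the later stage of arranging the closed extension $\Omega'=\pi_Y^*\omega_{st}$ so that $H^2(\wtd X)=0$ yields leafwise exactness on $X$. Appealing instead to ``leaves of $\cF'|_X$ are open subsets of discs'' is not justified — after the surgery the leaves intersected with $X$ need not be planar — and a ``leafwise Moser argument'' is not a substitute for the hole-filling construction, which is the nontrivial step.
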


\begin{proof}
    Identify the foliated Darboux chart $(V,\cF|_V,\omega|_V) \subset U \subset M$ with the subset $[-2,2]\times \D^{2n}$ in
    \begin{equation}\label{eq:foliated_chart}
        (\R \times \R^{2n}, \cF = \bigcup_{t \in \R}\, \{t\} \times \R^{2n},\omega = \omega_{st}),
    \end{equation}
    such that the intersection with $L$ equals $\{0\} \times \R^{2n}$.
    We further identify $\R^{2n} = \R^{2n-1} \times \R$ and choose a domain $T\subset \D^{2n-1}\subset \R^{2n-1}\times \{0\}$ whose boundary $\partial T$ is a (smooth) connected, separating hypersurface in $\R^{2n-1}$ containing a copy of $\Z$ in its fundamental group.  
    For example, we can take $T$ to be a solid torus $\S^1 \times \D^{2n-2}$ given by a small neighborhood of an unknot in $\D^{2n-1}\subset\R^{2n-1}$.  
    Observe that 
    \[ \pi_1(\R^{2n} \setminus \partial T) = \Z \;,\]
    generated by a closed loop in $\R^{2n}\setminus \partial T$ having a single intersection with $T$.

    We fix a tubular neighborhood $\partial T \times \D^2$ of $\partial T \subset \R^{2n}$, and cut out a straight hole
    \[ H := [-\delta,\delta] \times \partial T \times \D^2 \subset \R \times \R^{2n} \;,\]
    where $\delta > 0$ is chosen such that $\{ \delta\} \times \R^{2n}$ in the foliated chart of \Cref{eq:foliated_chart} lies in a leaf of $\cF\vert_U$ accumulating on $L$. 
    The resulting straight hole $(H,\cF,\omega)$ has symplectic base $(\partial T \times \D^2,\omega_{st})$ and the associated representation $\rho: \pi_1(\partial T) \to \Diff_c(-\delta,\delta)$, and distinguished element $\phi \in \Diff_c(-\delta,\delta)$ are trivial.

    Choose $\wtd{\phi} \in \Diff([-\delta,\delta])$ satisfying:
    \begin{equation}\label{eq:infinite_orbit}
        \wtd{\phi}(t) < t,\quad \text{ for all $-\delta < t < \delta$}.
    \end{equation}

    Then, we can cut $V = \R \times \R^{2n}$ along
    \begin{equation}\label{eq:wall}
        W := [-\delta,\delta] \times T  \subset \R \times \R^{2n},
    \end{equation}
    and glue back using the map
        \[ \psi\colon W \to W,\quad (t,x)\mapsto (\wtd{\phi}(t),x) \;.\]
    It is easily checked that this yields a new symplectic foliation~$(\wtd{\cF},\wtd{\omega})$ which agrees with $(\cF,\omega)$ outside $W$. 
    Moreover, \Cref{eq:infinite_orbit} implies that every $t \in (-\delta,\delta)$ has infinite orbit.
    Together with the fact that $\{\pm \delta\} \times T$ lie in open leaves (since they accumulate on $L$), it follows that all leaves of $\wtd{\cF}$ intersecting $U$ are open.

    It remains to fill the straight hole $(H,\wtd{\cF},\wtd{\omega})$ without creating closed leaves.
    To this end we want to apply  \Cref{prop:filling_enlarged_hole} for which we first need to enlarge the hole as in \Cref{eq:enlarged_hole}.
    The main technical difficulty is to ensure that the resulting leafwise $\omega'$ is leafwise exact on $X$ given by $V$ with the handles attached. 
    In order to do so, we will need that the core of the $1$-handle is tangent to $\ker \Omega$, for some globally closed extension $\Omega$ of $\omega$ over $\wtd V$ obtained from $V$ by shrinking it in a specific way, away from the attaching regions of the handles.
    This will imply that one can find a globally closed extension $\wtd \Omega$ of $\omega\vert_{\wtd X}$, agreeing with $\Omega\vert_{\wtd V}$ on $\wtd V$, to all of $\wtd X$, where the latter denotes the union of $\wtd V$ and the handles attached to $V$. 
    As $X$ retracts onto $\wtd X$, this will give the conclusion.
    Here are the details.  
    \medskip

    We start by choosing an embedded curve $\gamma\colon I \to M \,\setminus\, H$ \emph{positively} transverse to $\wtd{\cF}$ and connecting the top and bottom plaques of $H$.
    This is possible since the top and bottom plaque of $H$ lie in leaves of $\cF$ accummulating on $L$, and that by construction the holonomy map $\phi$ accumulates everything onto $-\delta$.
    Without loss of generality we may assume that 
    each connected component $\gamma_i$ of $\gamma \cap V$ intersects the wall $W$ positively and transversely exactly once.
    Note that this means that $\gamma$ will intersect the foliated Darboux ball $V$ multiple times, qualitatively as shown in \Cref{fig:qualitative}.

\begin{figure}[ht]
     \centering
     \def\svgwidth{0.30\textwidth}
     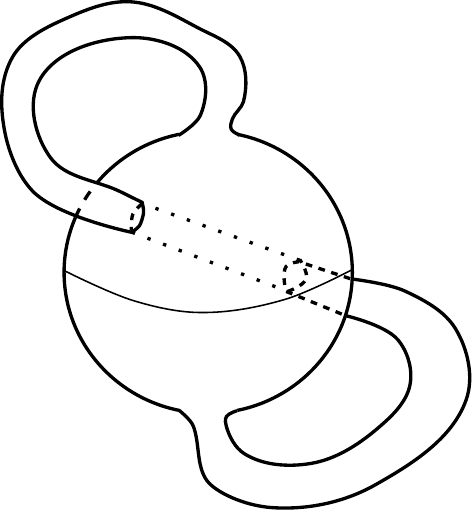
\caption{A qualitative embedded picture of the neighborhood of the curve $\gamma$ union the Darboux ball.}
        \label{fig:qualitative}
\end{figure}

    Next, we choose a nowhere vanishing vector field $Y$ near $V\subset  \R_t \times \R^{2n}$, obtained as perturbation of $\partial_t$ and satisfying that:
    \begin{itemize}
        \item $Y$ is tangent to $\gamma$ near $W$,
        \item $Y = \partial_t$ outside an (arbitrarily) small neighborhood of $W$, and near the boundary of the hole $\partial H$,
        \item $Y$ has positive $\partial_t$ component everywhere.
    \end{itemize}
    For a qualitative illustration, see \Cref{fig:wall_piercing_section} and \Cref{fig:wall_piercing_section_after_perturbation}.
    The first shows, near one of the intersection points, how the handle given by a thickening of $\gamma$ (in green) intersects the given chart near the wall $W$ (in red); the picture also indicates the vector field $\partial_t$ (in blue) and the profile of the neighborhood $V$ (in thickened black lines).
    The second picture shows how the vector field $Y$ (in blue) looks like in that same region, with the profile of the shrinked domain $\wtd V$ (in thickened black lines) around the wall $W$ (in red).

The flowlines of $Y$ define a projection $\pi_Y:\R \times \R^{2n} \to \{0\} \times \R^{2n}$ and, as in the proof of \Cref{lemma:filling_hole}, this allows us to replace $\omega_{st}$ by $\pi_Y^*\omega_{st}$ which is naturally defined on $M$ (and not only leafwise), ambiently closed and whose kernel is tangent to $\gamma$ in a neighborhood of $W$.

\begin{figure}[ht]
     \centering
     \def\svgwidth{0.30\textwidth}
    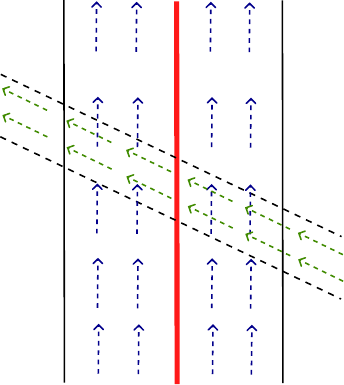
    \caption{The picture is in section. In red the wall, in blue the kernel direction of the Darboux ball, in green that of the handle.}
        \label{fig:wall_piercing_section}
\end{figure}

\begin{figure}[ht]
     \centering
     \def\svgwidth{0.40\textwidth}
    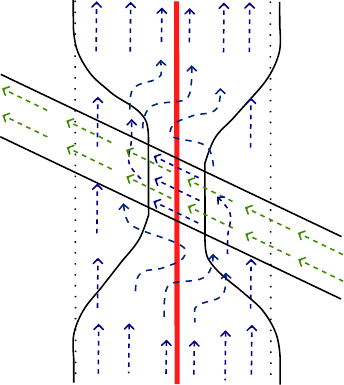
    \caption{The picture is in section. In red the wall, in blue the kernel direction of the Darboux ball, after perturbation, and in green that of the neighborhood of $\gamma$. 
    The neighborhood $B$ of the wall $W$ is smaller than $V$ in the previous picture: it is shrinked near the intersection point of $\gamma$ with $W$, but it is elsewhere the same; over the intersection of the neighborhood of $\gamma$ and $B$, the two kernel directions then coincide.}
        \label{fig:wall_piercing_section_after_perturbation}
\end{figure}

We now consider an embedded ball $B^{2n+1} \subset V$ around $W$ such that:
\begin{itemize}
    \item the leaves of the induced characteristic foliation $\wtd{\cF} \cap \partial B$ are diffeomorphic to $\S^{2n-1}$ or a point,
    \item $\partial B$ is transverse to $\gamma$,
    \item $\ker \pi_Y^*\omega_{st}$ is tangent to $\gamma$ at points where $\gamma$ intersects $\partial B$.
\end{itemize}

Such a $B$ can be obtained for instance as follows.
Recall that each connected component $\gamma_i$ of $\gamma \cap V$ intersects the wall $W$ exactly once; let us parametrize $\gamma_i\colon[-1,1]\to M$ so that $\gamma_i\cap W = \gamma_i(0)$. 
Then, $B$ is simply obtained by considering a slightly shrinked copy $V'$ of $V$, that is further shrinked by ``pushing its boundary'' towards $W$ along $\gamma_i$, in the direction of $\gamma_i'$ along $\gamma_i\vert_{[-1,-\epsilon)}$ and in the direction of $-\gamma_i'$ along $\gamma_i\vert_{(\epsilon,1]}$, for some $\epsilon>0$ very small.
See \Cref{fig:wall_piercing_section_after_perturbation} for a pictorial representation.

Note then that $\gamma$ may enter and exit $B$ several times. 
Hence the union of $B$ with the tubular neighborhood of $\gamma$ defined above can be interpreted as a ball $B$ with finitely many $1$-handles attached.
We denote the union by:
\[ \wtd{X} := B^{2n+1} \cup \bigcup_{1}^k \D^1 \times \D^{2n}.\]

Using a leafwise Moser argument, we see that (after shrinking the tubular neighborhood of $\gamma$ if necessary) on each of the handles we can write $(\wtd{\cF},\wtd{\omega})$ as
\[ \big(\D^1 \times \D^{2n},\wtd{\cF} = \bigcup_{s \in \D^1} \{s\} \times \D^{2n},\wtd{\omega} = \omega_{st}\big),\]
where $\omega_{st}$ denotes the standard symplectic form on $\D^{2n}$. 
Let $Z$ be a vector field on $M$, which in the above coordinates equals $\partial_s$. 
Then, we obtain a closed extension of the leafwise symplectic form (on $\D^1 \times \D^{2n})$ by taking $\pi^*_Z\omega_{st}$. 
By construction, $Z$ and $Y$ agree near $(\partial \D^1) \times \D^{2n}$ so that the extension $\wtd{\omega}$ is well-defined on the whole of $\wtd{X}$.

Since $\wtd{X}$ is a ball with finitely many $1$-handles attached it follows that $H^2(\wtd{X}) = 0$. Hence, since $\wtd{\omega}$ is closed, it is exact. This implies that its restriction to the leaves $\wtd{\omega}|_{\cF}$ is exact as well.

Let now $X$ be a thickening of $\wtd{X}$ with the following properties
\begin{itemize}
    \item the inclusion $\wtd{X}\hookrightarrow X$ is an $H^2$-isomorphism,
    \item the vector field $Y$ equals $\partial_t$ near the boundary of $X$.
\end{itemize}
Then, the first property implies that $\wtd\omega$ is leafwise exact on $X$, while the second property implies that $\wtd{\cF}$ and $\wtd{\omega}$ are unchanged on $M \setminus X$. In particular, the complement $(M\setminus X, \wtd{\cF}|_{M \setminus X},\wtd{\omega}_{M \setminus X})$ embeds into $(M,\cF,\omega)$.
\end{proof}

\subsection{Eliminating closed leaves while preserving a vanishing cycle}\label{sec:eliminating_preserving_vanishingcycle}

The results of the previous section allow us to eliminate closed leaves from a symplectic foliation. We now show that this construction preserves Lagrangian vanishing cycles.

\begin{lemma}\label{lem:closedleafvanishingcycle}
    Suppose we are in the setup of \Cref{prop:RemovingClosedLeaf}.
    Then $(\cF',\omega')$ can be arranged to satisfy the following property:
    if $(\cF,\omega)$ contains a Lagrangian vanishing cycle, then so does $(\cF',\omega')$.
\end{lemma}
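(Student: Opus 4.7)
The plan is to arrange the construction in \Cref{lem:RemovingClosedLeaf} so that all the surgery takes place in a region disjoint from the Lagrangian vanishing cycle $\cC$, and then to check that every condition in \Cref{def:lagr_van_cycle} is preserved. The key preliminary observation is that the Darboux chart $V$ and the transverse arc $\gamma$ appearing in the proof of \Cref{lem:RemovingClosedLeaf} may be chosen so as to avoid $\cC$: indeed, $\cC\cap L$ is either empty or equal to the $(n-1)$-dimensional core $\cC_0$, while the closed leaf $L$ has dimension $2n$, so for $n\geq 2$ there is ample room in $L\cap U$ to place $V$ outside $\cC$; similarly, $\cC$ has codimension $n\geq 2$ in $M$, so the $1$-dimensional arc $\gamma$ together with a sufficiently narrow tubular neighborhood (providing the $1$-handles) may be taken in $M\setminus\cC$. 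Consequently, the handlebody $X$ from \Cref{lem:RemovingClosedLeaf} can be arranged to be disjoint from $\cC$, and neither $\cC$, nor the foliation on a neighborhood of $\cC_0$, nor the leafwise symplectic form restricted to $\cC$ is altered by the modification.

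With this setup the semi-local items \eqref{item:lvc_lagrangian_fol}, \eqref{item:lvc_core} and \eqref{item:lvc_boundary_extension} of \Cref{def:lagr_van_cycle} for $\cC$ inside $(\cF',\omega')$ follow immediately, since they depend only on data that is preserved by the construction. The substantive step is to verify \eqref{item:lvc_sympl_aspherical_leaves} and \eqref{item:lvc_pi1_condition} for the new leaf $F_r'$ of $\cF'$ containing $L_r$, which may differ globally from the old $F_r$ because the surgery in $X$ cuts and reglues pieces of several leaves of $\cF$. For \eqref{item:lvc_sympl_aspherical_leaves} I would use the leafwise exactness of $\omega'$ on $X$ (conclusion (ii) of \Cref{lem:RemovingClosedLeaf}) together with the existence of a globally closed extension $\Omega'$ of $\omega'$ which agrees with the original $\Omega$ outside $X$ and defines the same class in $H^2(M;\R)$: for any $\sigma\colon S^2\to F_r'$, $\int_{S^2}\sigma^*\omega' = \int_{S^2}\sigma^*\Omega'$ depends only on $[\sigma]\in H_2(M)$, which reduces the vanishing to the original asphericity of $F_r$ in $(\cF,\omega)$. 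For \eqref{item:lvc_pi1_condition} a van Kampen argument on $F_r'$, viewed as the amalgamated pushout of $F_r'\cap X$ and $F_r\cap(M\setminus X)$, would show that any newly introduced generator of $\pi_1(F_r')$ is represented by a loop running along the surgery wall; this loop is homotopic inside $\cC$ (which lies in $M\setminus X$) to the admissible generator $[\S^1_r\times\{p\}]$.

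The hard part is precisely the control of the leaf topology after surgery carried out in the previous paragraph: a priori, new spheres or fundamental-group elements could appear in $F_r'$. The combination of the codimension of $\cC$ inside $M$, the leafwise exactness of $\omega'|_X$, and the explicit cut-and-paste form of the surgery should make this work. The same arguments will also preserve \eqref{item:lvc_circle_bound_sympl_disc} and \eqref{item:lvc_boundary_null_homologous} for the boundary leaf $F_1'$ containing $L_1$, so that a non-trivial Lagrangian vanishing cycle in $(\cF,\omega)$ remains a non-trivial Lagrangian vanishing cycle in $(\cF',\omega')$, which is what the applications to \Cref{cor:mitsumatsu_sympl_fol_no_closed_leaves} and \Cref{thm:codim_1_examples} require.
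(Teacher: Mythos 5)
Your setup is right: $X$ can be pushed off $\cC$, items \eqref{item:lvc_lagrangian_fol}, \eqref{item:lvc_core}, \eqref{item:lvc_boundary_extension} are then immediate, and the real work is \eqref{item:lvc_sympl_aspherical_leaves} and \eqref{item:lvc_pi1_condition} for the new leaves. But your proposed argument for \eqref{item:lvc_sympl_aspherical_leaves} has a fatal gap: you invoke "the original $\Omega$" and a globally closed extension $\Omega'$ of $\omega'$, but no such global closed extension is assumed or available. For a codimension-one foliation, a global closed extension of the leafwise symplectic form is exactly the condition of being a strong symplectic foliation, and by \Cref{thm:trivial_lagr_vanish_cycle} that forces every Lagrangian vanishing cycle to be trivial --- precisely the opposite of what the examples are designed for. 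The surgery in $\Cref{lem:RemovingClosedLeaf}$ only produces a closed extension \emph{locally}, on $\wtd V$, and the foliations to which \Cref{lem:closedleafvanishingcycle} is applied are deliberately non-strong. So reducing $\int_{\S^2}\sigma^*\omega'$ to a class in $H_2(M;\R)$ via $\Omega'$ is not an available move; and even if it were, the original hypothesis \eqref{item:lvc_sympl_aspherical_leaves} controls $\omega$ only on $\pi_2$ of the old leaves, not on all of $H_2(M)$.

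What actually makes \eqref{item:lvc_sympl_aspherical_leaves} work is a topological cut-and-cap argument that never leaves the leaves. You make a putative energy-positive sphere $f\colon\S^2\to F_r'$ transverse to $\partial X$, decompose $\S^2=\Sigma_{in}\cup\Sigma_{out}$ along the circles $\gamma_i=f^{-1}(\partial X)$, and crucially use conclusion (iii) of \Cref{lem:RemovingClosedLeaf}: the characteristic leaves of $\cF'\cap\partial X$ are copies of $\S^{2n-1}$ or wedges of two such, hence simply connected (this is where $n\ge 2$ enters), so each $\gamma_i$ caps off by a disc $g_i$ with image in $\partial X$. Together with leafwise exactness of $\omega'$ on $X$ this lets you replace $\Sigma_{in}$ by the caps without changing the total $\omega'$-area, producing a union of spheres in $M\setminus X$ with positive energy mapping into leaves of the \emph{unchanged} foliation $(\cF,\omega)$ --- a contradiction to the original \eqref{item:lvc_sympl_aspherical_leaves}. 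Item \eqref{item:lvc_pi1_condition} is handled the same way (cap off the boundary circles that a putative nullhomotopy creates along $\partial X$), which is a different route than the van Kampen pushout you sketch, and it again leans on the simple-connectedness of the characteristic leaves that your proposal does not invoke.
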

\begin{proof}

    Since the $X$ constructed in \Cref{prop:RemovingClosedLeaf} can be arranged to be disjoint from a given Lagrangian vanishing cycle $\cC$ in $(M,\cF,\omega)$, it immediately follows that the embedding $\iota$ of $\cC$ into $M$ also defines an embedding into $(M,\cF',\omega')$ satisfying  \Cref{item:lvc_lagrangian_fol,item:lvc_core,item:lvc_boundary_extension} of \Cref{def:lagr_van_cycle}.

    For \Cref{item:lvc_sympl_aspherical_leaves}, 
    suppose that there is $f:\S^2 \to \cF'$ such that $\int_{\S^2} f^*\omega' > 0$.
Up to a homotopy of $f$, we can also assume that $f \pitchfork \partial X$. Hence we obtain a decomposition 
\[ \S^2 = \Sigma_{in} \cup \Sigma_{out},\]
where $\Sigma_{in} := f^{-1}(\overline{X})$ and $\Sigma_{out} := f^{-1}(M \setminus X)$ are surface with (common) boundary, consisting of a finite collection of circles $\gamma_1,\dots,\gamma_k \subset \S^2$, that we consider oriented as boundary of $\Sigma_{in}$.
Note that, as $\omega$ is leafwise exact on $X$, $\Sigma_{out}$ must be non-empty and $f$ cannot be constant on it.

Now, the topology of the leaves of $\cF' \cap \partial X$ implies that there exist $g_i : \D^2 \to \partial X$ whose image is contained in a leaf of $\cF' \cap \partial X$, and hence of $\cF' \cap X$, and with $g_i|_{\partial \D^2} = \gamma_i$.
Since $\omega'\vert_X$ is leafwise exact it also follows that
\[ \int_{\Sigma_{in}} f^*\omega' =  \int_{\D^2} g_i^*\omega'. \]
In turn this implies that
\[ \int_{\Sigma_{out}} f^*\omega' + \int_{\D^2} g_i^*\omega' = \int_{\S^2} f^*\omega' > 0.\]

    Consider now
    \[ \wtd{\Sigma}_{out} := \left( \bigsqcup_{i=1}^k D^2 \right) \bigcup_{\overline{\gamma_1},\ldots,\overline{\gamma_k}} \Sigma_{out} \;,\]
    i.e.\ the closed surface obtained by capping off every boundary component of $\Sigma_{out}$ with a disc. 
    Note that $\wtd{\Sigma}_{out}$ is homeomorphic to a union of $\S^2$'s.
    The map $f$ on $\Sigma_{out}$ can then be extended as $g_i$ on the disc capping $\overline{\gamma_i}$, which hence define a map $\wtd{f}:\wtd{\Sigma}_{out} \to M$. 
    Moreover, the symplectic area computation above implies that
    \[ \int_{\wtd{U\Sigma}_{out}} \wtd{f}^*\omega'> 0\]

    However, since the image of $\wtd{f}$ is contained in $M \setminus X$ and is non-constant (because $\Sigma_{out}$ is non-empty and $f$ is non-constant on it), this is not possible since it would contradict \Cref{item:lvc_sympl_aspherical_leaves} of \Cref{def:lagr_van_cycle} for the original foliation $(M,\cF,\omega)$. 
    We conclude $(M,\cF',\omega')$ satisfies \Cref{item:lvc_sympl_aspherical_leaves}.

    Lastly, the proof of \Cref{item:lvc_pi1_condition} is completely analogous, and we omit the details.
\end{proof}


\subsection{Proof of \Cref{thm:codim_1_examples}}\label{sec:proof_of_codim1_examples}

The fact that high-codimensional examples can be obtained from the codimension~$1$ ones is obvious; we hence deal with the codimension $1$ part of the statement.

Let $\cF_{Reeb}$ be the Reeb foliation on $\S^1 \times \S^2$, and $\Sigma^{2n}$ a product of $2$-dimensional closed surfaces of genus $\geq 1$. We consider the product manifold $M = \S^1 \times \S^2 \times \T^2 \times \Sigma$, endowed with the product foliation
\[ \cF  = \cF_{Reeb} \times T(\T^2 \times \Sigma).\]
Note that $\cF$ has only one closed leaf (diffeomorphic to $\T^2 \times \T^2 \times \Sigma)$, and carries a natural leafwise symplectic form $\omega$ coming from the sum of area forms on $\cF_{Reeb}$, $\T^2$, and the factors of $\Sigma$.

Observe that $(M,\cF,\omega)$ naturally admits a Lagrangian vanishing cycle $\cC=\D^2\times \T^{n+1}$.
This is simply given by the product of a $\D^2\subset (\S^1\times\S^2,\cF_{Reeb})$ with induced foliation given by concentric circles (i.e.\ a section of one of the two Reeb tubes $\S^1\times \D^2$ forming $(\S^1\times\S^2,\cF_{Reeb})$) together with an isotropic $\T^{n+1}$ inside $\T^2\times \Sigma$.
The latter exists because the symplectic structure there is just a product of symplectic structures of surfaces, and each surface factor clearly contains an isotropic loop.

Combining \Cref{prop:RemovingClosedLeaf} and \Cref{lem:closedleafvanishingcycle} guarantees that we can modify the above symplectic foliation by opening up the unique closed leaf while at the same time keeping the Lagrangian vanishing cycle.
Then, the new symplectic foliation doesn't satisfy any of the criteria in \Cref{obs:obstr_strongness} by construction.
Here, tautness follows from the fact that there are now only open leaves, and that one can always find a transverse loop passing through any given open leaf $F$ in any compact (cooriented) foliated manifold, as $F$ must necessarily intersect at least one foliated chart in a finite cover of the ambient manifold at least twice.
This concludes the proof. 
\hfill\qedsymbol

\bibliographystyle{alpha}
\bibliography{biblio}

\end{document}